\documentclass[11pt]{amsart}

\usepackage[top=27 mm, bottom=20 mm, left=25 mm, right=25mm]{geometry}

\usepackage[all]{xy}

\usepackage{mathrsfs}
\usepackage{amsmath,amsthm, amssymb, amsgen,amsxtra,amsfonts, amsbsy} 
\usepackage{xcolor}
\usepackage{verbatim}
\usepackage{ragged2e}

\usepackage{graphicx}
\usepackage{caption,rotating}
\usepackage{color}
\usepackage{subcaption}

\usepackage{tikz}
\usetikzlibrary{trees}
\usetikzlibrary{decorations.pathmorphing}
\usetikzlibrary{decorations.markings}
\tikzset{
    triple/.style args={[#1] in [#2] in [#3]}{
        #1,preaction={preaction={draw,#3},draw,#2}
    }
} 

\tikzstyle{nodal}=[circle,draw,fill=black,inner sep=0pt, minimum width=4pt]

\usepackage{multirow}

\usepackage{times}

\usepackage[font=small,labelfont=small]{caption}

\usepackage{enumitem}

\usepackage{mathtools,stackengine}

\usepackage{pdflscape}
\usepackage{array}

\usepackage{float}
\restylefloat{table}
\usepackage{adjustbox}

\definecolor{uclablue}{rgb}{0.33, 0.41, 0.58}
\definecolor{ultramarine}{rgb}{0.07, 0.04, 0.56}
\definecolor{tropicalrainforest}{rgb}{0.0, 0.46, 0.37}
\definecolor{tealgreen}{rgb}{0.0, 0.51, 0.5}
\definecolor{sapphire}{rgb}{0.03, 0.15, 0.4} 
\definecolor{st.patricksblue}{rgb}{0.14, 0.16, 0.48} 
\definecolor{royalblue(traditional)}{rgb}{0.0, 0.14, 0.4} 
\definecolor{sacramentostategreen}{rgb}{0.0, 0.34, 0.25} 
\definecolor{myrtle}{rgb}{0.13, 0.26, 0.12}
\definecolor{forestgreen(traditional)}{rgb}{0.0, 0.27, 0.13}
\definecolor{tealblue}{rgb}{0.21, 0.46, 0.53}
 \definecolor{mediumpersianblue}{rgb}{0.0, 0.4, 0.65}
  \definecolor{smalt(darkpowderblue)}{rgb}{0.0, 0.2, 0.6}
  \definecolor{pakistangreen}{rgb}{0.0, 0.4, 0.0}
  	\definecolor{phthaloblue}{rgb}{0.0, 0.06, 0.54}
   \definecolor{dartmouthgreen}{rgb}{0.05, 0.5, 0.06}
   \definecolor{lincolngreen}{rgb}{0.11, 0.35, 0.02}
   	\definecolor{pinegreen}{rgb}{0.0, 0.47, 0.44}

\usepackage{setspace}

\usepackage[
           colorlinks=true,
            urlcolor=pinegreen,       
            filecolor=pinegreen,     
            linkcolor=phthaloblue,       
            citecolor=pinegreen,         
            pdftitle= {Title},
            pdfauthor={Author},
            pdfsubject={Subject},
            pdfkeywords={Key},
            pagebackref,
            bookmarks=false,
            bookmarksopen=true]
            {hyperref}

\theoremstyle{definition}
\newtheorem{Definition}{Definition}[section]

\newtheorem{Remark}[Definition]{Remark}
\newtheorem{Remark/Notation}[Definition]{Remark/Notation}
\newtheorem{Notation}[Definition]{Notation}
\newtheorem{Strategy}[Definition]{Strategy}
\newtheorem{Discussion}[Definition]{Discussion}

\newtheorem*{Acknowledgements}{Acknowledgements}

\theoremstyle{plain}

\newtheorem*{Main Theoremx}{Main Theorem}
\newtheorem{Proposition}[Definition]{Proposition}

\newtheorem{Lemma}[Definition]{Lemma}
\newtheorem{Corollary}[Definition]{Corollary}

\newtheoremstyle{voiditstyle}{3pt}{3pt}{\itshape}{\parindent}%
{\bfseries}{.}{ }{\thmnote{#3}}%
\theoremstyle{voiditstyle}

\newtheoremstyle{voidromstyle}{3pt}{3pt}{\rm}{\parindent}%
{\bfseries}{.}{ }{\thmnote{#3}}%
\theoremstyle{voidromstyle}

\numberwithin{equation}{section}

\newcommand{\bbP}{{\mathbb{P}}}

\newcommand{\Aut}{\operatorname{Aut}}

\newcommand{\Spec}{\operatorname{Spec}}

\newcommand{\Pic}{\operatorname{Pic}}

\newcommand{\Bl}{\operatorname{Bl}}

\newcommand{\Char}{\operatorname{char}}
\newcommand{\Stab}{\operatorname{Stab}}

\newcommand{\bsm}{\left(\begin{smallmatrix}}
\newcommand{\esm}{\end{smallmatrix}\right)}

\newcommand{\beq}{\begin{equation}}
\newcommand{\eeq}{\end{equation}}

\usepackage[square, numbers, comma, sort&compress]{natbib}

\usepackage{etoolbox}
\usepackage{tocvsec2}
\usepackage{dashrule}

\usepackage[none]{hyphenat}

\makeatletter

\def\@tocline#1#2#3#4#5#6#7{\relax
  \ifnum #1>\c@tocdepth
  \else
    \par \addpenalty\@secpenalty\addvspace{#2}%
    \begingroup \hyphenpenalty\@M
    \@ifempty{#4}{%
      \@tempdima\csname r@tocindent\number#1\endcsname\relax
    }{%
      \@tempdima#4\relax
    }%
    \parindent\z@
    \leftskip#3\relax
    \advance\leftskip\@tempdima\relax
    \rightskip\@pnumwidth plus4em
    \parfillskip-\@pnumwidth
    #5\leavevmode
    \hskip-\@tempdima
      \ifcase #1
        \or
        \or \hskip 2em
        \or \hskip 2.5em
        \else \hskip 3em
      \fi
      #6\nobreak\relax
    \dotfill
    \hbox to\@pnumwidth{\@tocpagenum{#7}}\par
    \nobreak
    \endgroup
  \fi
}

\renewcommand{\tocsubsection}[3]{%
  \indentlabel{%
    \makebox[\dimexpr5pc-2em\relax][l]{%
      \@ifnotempty{#2}{%
        \@ifempty{#1}{}{#1\space}#2.%
      }%
    }%
  }%
  #3%
}

\makeatother

\renewcommand{\arraystretch}{1.15}

\begin{document}

\title[Rational (quasi-)elliptic surfaces with global vector fields in odd characteristic]{Rational (quasi-)elliptic surfaces with global vector fields\\ in odd characteristic}

\author{Claudia Stadlmayr}
\address{Univerit{\'e} de Neuch{\^a}tel, Institut de math{\'e}matiques, Rue Emile-Argand 11, 2000 Neuch{\^a}tel, Switzerland}
\curraddr{}
\email{\underline{claudia.stadlmayr@unine.ch}}

\date{\today}
\subjclass[2020]{
14J27,
14J50,
14L15, 
14J26
14G17
}

\maketitle

\begin{abstract}
We classify rational elliptic and quasi-elliptic
surfaces with global vector fields over arbitrary algebraically closed fields of
characteristic $p \geq 0$ different from $2$. For every such surface, we determine the multiple and reducible fibers, the identity component of the automorphism
scheme, and the
moduli. As a corollary, we deduce that rational (quasi-)elliptic surfaces with global vector fields are Jacobian if $p \neq 3,5$ and we describe all counterexamples in small characteristics.
\end{abstract}

\tableofcontents

\section{Introduction}

Blowing up a point of the projective plane is among the simplest
operations in algebraic geometry; iterating it is not. Over an algebraically closed
field, a smooth del Pezzo surface is either $\mathbb P^1 \times \mathbb P^1$ or the blow-up of
$\mathbb P^2$ in at most eight points in general position, i.e. no three are
collinear, no six lie on a conic, and, in the eight-point case, the
points do not lie on a cubic with one of them being the singular point
(see \cite[Chapter 8]{Dolgachev-classical}). Already their classical automorphism
groups show a rigidity threshold that depends on the number of points in
$\mathbb P^2$ that have been blown up: the blow-up of three non-collinear
points is toric with connected automorphism group $\mathbb G_m^2$, whereas
the automorphism group of a smooth del Pezzo surface of degree at most $5$,
equivalently the blow-up of at least four points in general position, is finite
(see \cite{DolgachevMartinOddAut} or \cite{WeakDelPezzoGlobalVectorFields}).

The apparently elementary problem of blowing up points of $\mathbb P^2$
becomes subtler in three directions of possible generalization: one may increase
the number of points, impose special incidence conditions on them, or pass to a
ground field of positive characteristic.

For $X_r$ the blow-up of $\mathbb P^2$ in $r$ points, we have
$K_{X_r}^2=9-r$. For $r \leq 8$ in general position, $-K_{X_r}$ is ample and
the $(-2)$-classes in $K_{X_r}^{\perp}$ form a finite root system of type $E_r$
(see e.g. \cite{RDPDelPezzoGlobalVectorFields}). At $r=9$, the canonical class
is isotropic and $K_{X_9}^{\perp}$ becomes affine of type $\widetilde E_8$.
For $r \geq 10$, the lattice $K_{X_r}^{\perp}$ has signature $(1,r-1)$ and
hence becomes indefinite; for $r=10$, this is the hyperbolic $E_{10}$ lattice.
In this range, infinitely many negative curves or automorphisms of positive
entropy may occur \cite{CantatDolgachev,ArtebaniLafaceCox,McMullenDynamicsBlowups}.
Thus, the ninth blow-up is the transition from the finite, negative-definite
root systems associated with del Pezzo surfaces to the affine case.

Moreover, even for a fixed number $r$ of blown-up points, the geometry heavily
depends on their incidences. Passing from points in general position to points
in almost general position generalizes classical del Pezzo surfaces to
weak del Pezzo surfaces: $-K_X$ stays nef and big, but may have degree
zero on $(-2)$-curves. Such curves arise, for example, from three collinear
points, six points on a conic, or infinitely near points. Their ADE-configurations
are contracted by the anti-canonical model to rational double points
\cite{WhichRDPsOccurOnDelPezzoSurfaces,RDPDelPezzoGlobalVectorFields}.
Special incidence can also force the existence of a pencil or of a
positive-dimensional stabilizer in $\operatorname{PGL}_3$.

Furthermore, the incidence geometry itself can depend on the characteristic. For
example, in characteristic $2$, the blow-up of the seven points
$\mathbb P^2(\mathbb F_2)$ yields a configuration of seven disjoint $(-2)$-curves,
i.e. a $7A_1$-configuration, respectively seven $A_1$ rational double points on
the anti-canonical model. This is a configuration that occurs on del Pezzo
surfaces only in characteristic $2$
\cite{KeelMcKernan,CasciniTanakaKawamata,KawakamiNagaokaNeu-Pathologies,WhichRDPsOccurOnDelPezzoSurfaces}.
Small characteristic also permits quasi-elliptic fibrations in $p=2,3$,
non-taut rational double points in $p=2,3,5$, non-reduced automorphism schemes,
and infinitesimal automorphisms of a singular model that do not lift to its
minimal resolution
\cite{CossecDolgachevLiedtke,WeakDelPezzoGlobalVectorFields,RDPDelPezzoGlobalVectorFields}.

These three ways of generalizing the geometry of classical del Pezzo surfaces
all yield interesting \linebreak phenomena at the nine-point threshold, where we leave the
world of Fano varieties. Under suitable incidence conditions on the nine
blown-up points, one obtains a so-called Halphen pencil
\cite{Halphen-original}. A Halphen pencil of index $m$ is a pencil of curves of
degree $3m$ with nine, possibly infinitely near, base points of multiplicity $m$.
Resolving the base points gives a rational surface with a relatively minimal
genus-one fibration. It is elliptic if the geometric generic fiber is smooth,
quasi-elliptic, which can occur only in $p=2,3$, if the geometric generic fiber
is cuspidal, and Jacobian if it has a section. For nine distinct points
$p_1,\ldots,p_9$ on a smooth cubic $C$, the pencil exists exactly when
$\mathcal O_C(3H-p_1-\cdots-p_9)$ has finite order $m$ in $\operatorname{Pic}^0(C)$;
then $|-mK_X|$ is a Halphen pencil of index $m$. Here $m=1$ is the Jacobian case,
whereas $m>1$ gives a non-Jacobian fibration with one multiple fiber of
multiplicity $m$ \cite{CossecDolgachev,CantatDolgachev}.
\footnote{The relevance of nine blow-ups, as opposed to other numbers, is not confined to
classical birational geometry. It also appears as a boundary case when studying splinters \cite{KrahVialSplinters} or phantoms \cite{KrahPhantom, LoomingPhantom}, just to name a few other perspectives.}

In this article, we study, scheme-theoretically and
infinitesimally, the automorphisms of rational genus-one surfaces, which can be realized as
blow-ups of $\mathbb P^2$ in nine points. By Matsumura–Oort
\cite{MatsumuraOort}, the \linebreak automorphism functor of a proper variety is
represented by a group scheme locally of finite type, and its tangent space at
the identity is the space of global vector fields. In positive characteristic,
finite non-reduced group schemes such as $\mu_p$ and $\alpha_p$ may have
non-zero tangent space while having no non-trivial geometric points. Thus,
classifying surfaces with global vector fields is genuinely finer than
classifying surfaces with infinite abstract automorphism group
\cite{MatsumuraOort,Martin}. We classify all rational (quasi-)elliptic surfaces
with global vector fields, describe their connected components of the identity
explicitly, and determine the types of reducible and multiple fibers that occur.

Singular fibers and Mordell–Weil groups of rational (quasi-)elliptic surfaces
are well studied \cite{ExtremalChar0,ExtremalCharpII, EllipticChar3, HarbourneLang, QuasiEllipticChar2,QuasiEllipticChar3}.
Dolgachev and Martin developed a structure theory for the \linebreak automorphism groups
of rational genus-one surfaces in all characteristics, without assuming a
section \cite{DolgachevMartin}. Their description shows that every such surface
with global vector fields is the blow-up of a weak del Pezzo surface of degree
$1$ with global vector fields \cite[Theorem 4.2 and Remark 4.4]{DolgachevMartin};
this is our starting point. The other ingredients are the RDP configurations in
degree $1$ \cite{WhichRDPsOccurOnDelPezzoSurfaces}, the weak del Pezzo surfaces
of degree $1$ with global vector fields together with their automorphism schemes
\cite{WeakDelPezzoGlobalVectorFields}, and, in odd characteristic, the explicit
anti-canonical equations and liftable infinitesimal automorphisms
\cite{RDPDelPezzoGlobalVectorFields}, combined with Blanchard’s Lemma.

We use the notation for the types $1A$–$1I$ of weak del Pezzo surfaces
$\widetilde X$ of degree $1$ with non-trivial global vector fields from
\cite[Table 6]{WeakDelPezzoGlobalVectorFields}. If a fiber is denoted by
$m\Sigma$, then $m$ is its multiplicity and $\Sigma$ is the Kodaira–-N{\'e}ron type
of its primitive fiber divisor.

\begin{Main Theoremx}
\label{thm: class jaco und non-jaco, char all}
\label{MainTheorem}
Let  $f\colon\widetilde Z\to \mathbb P^1$
be a rational (quasi-)elliptic surface over an algebraically closed field $k$ of characteristic $\Char(k)=p\neq 2$. Then, $h^0(\widetilde Z,T_{\widetilde Z})\neq0$
if and only if exactly one of the following holds.
\begin{enumerate}[label=\textup{(\Alph*)},ref=\Alph*]
\item\label{Main theorem Jacobian}\label{main theorem Jacobian part}
The fibration $f$ is Jacobian and $\widetilde Z$ is one of the surfaces
in Table \ref{Table Jacobian classification characteristic not two}.

\item\label{Main theorem non-Jacobian}\label{main theorem nonJacobian part}
The fibration $f$ is non-Jacobian and $\widetilde Z$ is one of the
surfaces in Table \ref{Table non-Jacobian classification characteristic not two}.
\end{enumerate}
All cases listed in the two tables occur and can be realized as blow-ups of weak del Pezzo surfaces $\widetilde{X}$ of degree $1$ with $h^0(\widetilde X,T_{\widetilde X})\neq0$ of type $\Gamma$ . 
\end{Main Theoremx}

\vspace{-1cm}
\begin{table}[H]
\centering
\renewcommand{\arraystretch}{1.28}
\setlength{\tabcolsep}{4.5pt}
\begin{adjustbox}{center,max width=\textwidth}
\begin{tabular}{|c|c|c|c|c|c|c|}
\hline
\bf{N\textordmasculine}
& $\boldsymbol{\Gamma}$
& \begin{tabular}{c}\textbf{Fiber}\\[-1mm]\textbf{configuration}\end{tabular}
& $\boldsymbol{\Aut_{\widetilde Z}^0}$
& $\boldsymbol{h^0(\widetilde Z,T_{\widetilde Z})}$
& \textbf{Moduli}
& $\boldsymbol{p}$
\\
\hline\hline
$1_{\rm e}$
& \hyperref[Tab1A]{$1A$}
& ${\rm I}_0^*+{\rm I}_0^*$
& $\mathbb G_m$
& $1$
& $1$-dimensional
& $p\neq2$
\\
\hline
$2_{\rm e}$
& \hyperref[Tab1B]{$1B$}
& ${\rm IV}^*+{\rm IV}$
& $\mathbb G_m$
& $1$
& unique
& $p\neq2,3$
\\
\hline
$2_{\rm q}$
& \hyperref[Tab1B]{$1B$}
& ${\rm IV}^*+{\rm IV}$
& $\mathbb G_m$
& $1$
& unique
& $p=3$
\\
\hline
$3_{\rm e}$
& \hyperref[Tab1C]{$1C$}
& ${\rm III}^*+{\rm III}$
& $\mathbb G_m$
& $1$
& unique
& $p\neq2$
\\
\hline
$4_{\rm e}$
& \hyperref[Tab1D]{$1D$}
& ${\rm II}^*+{\rm II}$
& $\mathbb G_m$
& $1$
& unique
& $p\neq2,3$
\\
\hline
$5_{\rm e}$
& \hyperref[Tab1E]{$1E$}
& ${\rm I}_3^*+{\rm II}$
& $\mu_3$
& $1$
& unique
& $p=3$
\\
\hline
$6_{\rm e}$
& \hyperref[Tab1F]{$1F$}
& ${\rm III}^*+{\rm II}$
& $\mu_3$
& $1$
& unique
& $p=3$
\\
\hline
$7_{\rm e}$
& \hyperref[Tab1G]{$1G$}
& ${\rm I}_9+{\rm II}$
& $\mu_3$
& $1$
& unique
& $p=3$
\\
\hline
$8_{\rm e}$
& \hyperref[Tab1H]{$1H$}
& ${\rm II}^*$
& $\mathbb G_a$
& $1$
& unique
& $p=3$
\\
\hline
$8_{\rm q}$
& \hyperref[Tab1I]{$1I$}
& ${\rm II}^*$
& $\mathbb G_a\rtimes\mathbb G_m$
& $2$
& unique
& $p=3$
\\
\hline
\end{tabular}
\end{adjustbox}
\caption{Jacobian rational (quasi-)elliptic surfaces with global vector fields in
$\Char(k)=p \neq 2$.}
\label{Table Jacobian classification characteristic not two}
\label{Table main theorem Jacobian}
\end{table}
\vspace{2cm}
\pagebreak

\begin{table}[H]
\centering
\renewcommand{\arraystretch}{1.25}
\setlength{\tabcolsep}{3.2pt}
\begin{adjustbox}{center,max width=\textwidth}
\begin{tabular}{|c|c|c|c|c|c|c|c|c|}
\hline
\bf{N\textordmasculine}
&  $\boldsymbol{\Gamma}$
& \textbf{Index $\boldsymbol{m}$}
& \begin{tabular}{c}\textbf{Multiple}\\[-1mm]\textbf{fiber}\end{tabular}
& \begin{tabular}{c}\textbf{Reducible}\\[-1mm]\textbf{fibers}\end{tabular}
& $\boldsymbol{\Aut_{\widetilde Z}^0}$
& $\boldsymbol{h^0(\widetilde Z,T_{\widetilde Z})}$
& \textbf{Moduli}
& $\boldsymbol{p}$
\\
\hline\hline
$1$
& \begin{tabular}{c}
\hyperref[cor 1B p=3]{$1B$},
\hyperref[cor: 1E char3 nonJac revised]{$1E$},\\[-1mm]
\hyperref[cor 1F uniqueness(1) solo]{$1F(1)$},
\hyperref[cor 1G uniqueness(1) II]{$1G(1)$},\\[-1mm]
\hyperref[cor 1I uniqueness II]{$1I(2)$}
\end{tabular}
& $3$
& $3{\rm II}$
& ${\rm II}^*$
& $\mu_3$
& $1$
& unique
& $3$
\\
\hline
$2$
& \begin{tabular}{c}
\hyperref[cor: 1C non-Jac]{$1C$},
\hyperref[cor 1F uniquenes(2,3)(2)]{$1F(2)$}
\end{tabular}
& $3$
& $3{\rm III}^*$
& ${\rm III}^*+{\rm I}_2$
& $\mu_3$
& $1$
& unique
& $3$
\\
\hline
$3$
& \hyperref[cor: 1D non-Jac]{$1D$}
& $5$
& $5{\rm II}^*$
& ${\rm II}^*$
& $\mu_5$
& $1$
& unique
& $5$
\\
\hline
$4$
& \hyperref[cor 1F uniqueness(2,3)(3) 1dim]{$1F(3)$}
& $3$
& $3{\rm III}^*$
& ${\rm III}^*$
& $\mu_3$
& $1$
& $1$-dimensional
& $3$
\\
\hline
$5$
& \hyperref[cor 1G torsion I9]{$1G(2)$}
& \begin{tabular}{c}$m>1$\\[-1mm]$(m,3)=1$\end{tabular}
& $m{\rm I}_9$
& ${\rm I}_9$
& $\mu_3$
& $1$
& \begin{tabular}{c}
unique if $m=2$;\\[-1mm]
$\varphi(m)/2$ classes if $m>2$
\end{tabular}
& $3$
\\
\hline
$6_{\rm e}$
& \hyperref[cor: 1H char3 nonJac revised]{$1H$}
& $3$
& $3{\rm II}^*$
& ${\rm II}^*$
& $\mathbb G_a$
& $1$
& $1$-dimensional
& $3$
\\
\hline
$6_{\rm q}$
& \hyperref[cor 1I uniqueness IIstar]{$1I(1)$}
& $3$
& $3{\rm II}^*$
& ${\rm II}^*$
& $\mathbb G_a$
& $1$
& unique
& $3$
\\
\hline
\end{tabular}
\end{adjustbox}
\caption{Non-Jacobian rational (quasi-)elliptic surfaces
$\widetilde Z$ with global vector fields in $\Char(k)=p \neq 2$.}
\label{Table non-Jacobian classification characteristic not two}
\label{Table main theorem nonJacobian}
\label{Table: class jaco und non-jaco, char all}
\end{table}

\vspace{1cm}

The two above tables have slightly different meanings, depending on whether $f:\widetilde{Z} \to \mathbb{P}^1$ admits a section or not. In the Jacobian case,
there is a bijection between
isomorphism classes of weak del Pezzo surfaces of degree $1$ and
Jacobian rational (quasi-)elliptic surfaces that also identifies their connected components of the automorphism group schemes. Thus the type $\Gamma$ in
Table \ref{Table Jacobian classification characteristic not two} is
intrinsic. In the non-Jacobian case, by contrast, the column $\Gamma$
records a chosen contraction of a $(-1)$-curve and need not be intrinsic
to $\widetilde Z$.
Whenever such an alternative contraction is established in the
case-by-case analysis in Section \ref{section strategy proof main theorem}, the corresponding presentations are combined in
one row of Table \ref{Table non-Jacobian classification characteristic not two}.
The column ``reducible fibers'' lists all reducible
fibers, including the reduced multiple fiber whenever it is reducible.
The entries $6_{\rm e}$ and $6_{\rm q}$ have the same displayed fiber
types and the same connected automorphism scheme, but are not
isomorphic since the former surfaces are elliptic, whereas the latter surface
is quasi-elliptic. Finally, in {N\textordmasculine} $5$, $\varphi$ denotes Euler's
totient function; for fixed $m$, the isomorphism classes are
parametrized by primitive $m$-th roots of unity modulo inversion.

Several immediate consequences of the Main Theorem are worth recording.

\begin{Corollary}
\label{cor: nonJacobian characteristic restriction}
\label{cor: class jaco und non-jaco, char neq 2,3}
Let $\widetilde Z$ be a non-Jacobian rational (quasi-)elliptic surface
over $k$ of odd characteristic.
\begin{enumerate}[label=\textup{(\alph*)}]
\item If $p\notin\{3,5\}$, then $h^0(\widetilde Z,T_{\widetilde Z})=0.$
\item If $p=5$, then
$h^0(\widetilde Z,T_{\widetilde Z})\neq0$ if and only if
$\widetilde Z$ is the unique surface in {N\textordmasculine} $3$ of
Table \ref{Table non-Jacobian classification characteristic not two}.
\item If $p=3$, then
$h^0(\widetilde Z,T_{\widetilde Z})\neq0$ if and only if
$\widetilde Z$ occurs in one of {N\textordmasculine}s $1$, $2$, $4$, $5$, $6_{\rm e}$, or $6_{\rm q}$ of Table \ref{Table non-Jacobian classification characteristic not two}.
\end{enumerate}
\end{Corollary}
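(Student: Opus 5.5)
This corollary is a direct read-off from the Main Theorem, part (\ref{main theorem nonJacobian part}). Let $\widetilde Z$ be a non-Jacobian rational (quasi-)elliptic surface over $k$ with $p$ odd. Since $p\neq 2$, the Main Theorem applies. Because $f$ is non-Jacobian, alternative (\ref{main theorem Jacobian part}) is excluded. Hence $h^0(\widetilde Z,T_{\widetilde Z})\neq 0$ holds if and only if $\widetilde Z$ is one of the surfaces in Table \ref{Table non-Jacobian classification characteristic not two}, all of which occur. So for each characteristic we only need to know which rows of that table are realized.

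Inspecting the last column of Table \ref{Table non-Jacobian classification characteristic not two}, every row is listed with $p=3$, except N\textordmasculine~$3$, which has $p=5$. This gives the three claims.
\begin{enumerate}[label=\textup{(\alph*)}]
\item If $p\notin\{3,5\}$, no row of the table applies. Hence no non-Jacobian surface has global vector fields, i.e.\ $h^0(\widetilde Z,T_{\widetilde Z})=0$.
\item If $p=5$, the only row is N\textordmasculine~$3$. Its moduli entry is ``unique'', so it is a single isomorphism class, namely the surface realized from type $1D$ with a $5{\rm II}^*$ multiple fiber. By the final sentence of the Main Theorem this surface exists, which gives the equivalence.
\item If $p=3$, the rows are N\textordmasculine s $1$, $2$, $4$, $5$, $6_{\rm e}$ and $6_{\rm q}$, and all of them occur by the Main Theorem. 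This gives the equivalence.
\end{enumerate}

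The only point needing care is the word ``unique'' in (b). This relies on the moduli column of the table, which the Main Theorem asserts. No argument beyond invoking the Main Theorem is required.
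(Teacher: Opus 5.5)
Your proposal is correct and is the same as the paper's route: the paper records this corollary as an immediate consequence of part (B) of the Main Theorem, read off from the characteristic column of Table~\ref{Table non-Jacobian classification characteristic not two}, where every row has $p=3$ except the unique $1D$-surface N\textordmasculine~$3$ with $p=5$. Since the Main Theorem covers all $p\neq 2$, your argument for (a) also handles $p=0$, as the paper notes right after the statement.
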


In particular, if $p\notin\{2,3,5\}$, every rational
(quasi-)elliptic surface with a global vector field is Jacobian; its
connected automorphism scheme is $\mathbb G_m$ and its space of global
vector fields is one-dimensional. This applies, in particular, in
characteristic zero.

\begin{Corollary}
\label{cor: reduced automorphism schemes rational genus one}
Let $k$ be an algebraically closed field of characteristic $p\neq2$.
Then the following hold.
\begin{enumerate}[label=\textup{(\alph*)}]
\item All Jacobian rational (quasi-)elliptic surfaces over $k$ have
reduced automorphism schemes if and only if $p\neq3$. 
\item All non-Jacobian rational (quasi-)elliptic surfaces over $k$ have
reduced automorphism schemes if and only if $p\notin\{3,5\}$. 
\end{enumerate}
\end{Corollary}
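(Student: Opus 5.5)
This is a direct consequence of the Main Theorem, read off from the column $\Aut^0_{\widetilde Z}$ of the two tables. The key fact is that the automorphism scheme $\Aut_{\widetilde Z}$ is a group scheme locally of finite type (Matsumura--Oort). It is therefore reduced if and only if its identity component $\Aut^0_{\widetilde Z}$ is reduced. Since we work over an algebraically closed field, this holds if and only if $\dim \Aut^0_{\widetilde Z} = h^0(\widetilde Z,T_{\widetilde Z})$, because the tangent space at the identity is $H^0(\widetilde Z,T_{\widetilde Z})$. In particular, if $h^0(\widetilde Z,T_{\widetilde Z})=0$, then $\Aut^0_{\widetilde Z}$ is trivial and $\Aut_{\widetilde Z}$ is étale, hence reduced. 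So it suffices to examine the surfaces with $h^0(\widetilde Z,T_{\widetilde Z})\neq0$, and the Main Theorem lists all of them.

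For (a), consider first $p\neq 3$. Table \ref{Table Jacobian classification characteristic not two} then leaves only N\textordmasculine s $1_{\rm e}$, $2_{\rm e}$, $3_{\rm e}$ and $4_{\rm e}$. Each has $\Aut^0_{\widetilde Z}=\mathbb G_m$, which is smooth of dimension $1=h^0(\widetilde Z,T_{\widetilde Z})$, so all Jacobian surfaces have reduced automorphism schemes. For $p=3$, N\textordmasculine\ $5_{\rm e}$ (or $6_{\rm e}$, $7_{\rm e}$) exists and has $\Aut^0_{\widetilde Z}=\mu_3$. This group scheme is non-reduced in characteristic $3$: it has dimension $0$ but one-dimensional tangent space. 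This gives the converse.

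For (b), take $p\notin\{3,5\}$. By Corollary \ref{cor: nonJacobian characteristic restriction}(a), every non-Jacobian surface has $h^0(\widetilde Z,T_{\widetilde Z})=0$, so by the first paragraph its automorphism scheme is reduced. For $p=5$, N\textordmasculine\ $3$ of Table \ref{Table non-Jacobian classification characteristic not two} exists and has $\Aut^0_{\widetilde Z}=\mu_5$, which is non-reduced. For $p=3$, N\textordmasculine\ $1$, for instance, has $\Aut^0_{\widetilde Z}=\mu_3$, again non-reduced.

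The only point needing care is the existence claims in the two tables, which the Main Theorem provides: every listed case occurs. The argument also depends on the reducedness criterion for group schemes locally of finite type, which is standard.
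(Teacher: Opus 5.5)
Your proof is correct and follows the paper's route: the paper records this corollary as an immediate consequence of the Main Theorem, read off from the $\Aut^0_{\widetilde Z}$ column ($\mathbb G_m$ for $p\neq3$ in the Jacobian case, versus $\mu_3$, resp.\ $\mu_5$, occurring for $p=3$, resp.\ $p=5$). You also correctly supply the standard justification the paper leaves implicit: reducedness of $\Aut_{\widetilde Z}$ reduces to smoothness of $\Aut^0_{\widetilde Z}$, equivalently $\dim\Aut^0_{\widetilde Z}=h^0(\widetilde Z,T_{\widetilde Z})$, and surfaces without global vector fields have étale automorphism schemes.
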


\begin{Corollary}
\label{cor: tangent dimension bound rational genus one}
Let $\widetilde Z$ be a rational (quasi-)elliptic surface over an
algebraically closed field of characteristic $p\neq2$. Then $h^0(\widetilde Z,T_{\widetilde Z})\leq2$.
Equality holds if and only if $p=3$ and $\widetilde Z$ is the unique
Jacobian surface {N\textordmasculine} $8_{\rm q}$ in Table \ref{Table main theorem Jacobian}.
\end{Corollary}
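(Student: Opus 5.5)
This corollary follows by reading off the $h^0(\widetilde Z,T_{\widetilde Z})$ columns of the two tables in the Main Theorem, so the plan is a case check. Let $\widetilde Z$ be a rational (quasi-)elliptic surface over $k$ with $\Char(k)=p\neq 2$. If $h^0(\widetilde Z,T_{\widetilde Z})=0$, the inequality is trivial and equality fails, so there is nothing to prove. Otherwise, by the Main Theorem, exactly one of (\ref{Main theorem Jacobian}) or (\ref{Main theorem non-Jacobian}) holds. Hence $\widetilde Z$ is isomorphic to a surface in Table \ref{Table Jacobian classification characteristic not two} or Table \ref{Table non-Jacobian classification characteristic not two}, and $h^0(\widetilde Z,T_{\widetilde Z})$ equals the value in the corresponding row.

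I will then go through the rows. In Table \ref{Table non-Jacobian classification characteristic not two}, every entry of the $h^0$ column equals $1$, so non-Jacobian surfaces always satisfy the strict inequality. In Table \ref{Table Jacobian classification characteristic not two}, every row has $h^0=1$ except N\textordmasculine\ $8_{\rm q}$, where $h^0=2$. That row occurs only for $p=3$ and is marked unique in the moduli column. This proves $h^0\le 2$. It also shows that equality forces $p=3$ and $\widetilde Z\cong$ N\textordmasculine\ $8_{\rm q}$. Conversely, the Main Theorem states that every listed case occurs, so for $p=3$ the surface $8_{\rm q}$ exists and has $h^0=2$.

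As a consistency check, I will confirm that the tangent dimension of $\Aut^0_{\widetilde Z}$ agrees with the tables. The group schemes $\mathbb G_m$, $\mathbb G_a$ and $\mu_3$, $\mu_5$ (in characteristic $3$, $5$ respectively) all have one-dimensional Lie algebra. The group $\mathbb G_a\rtimes\mathbb G_m$ is smooth of dimension $2$. Since $H^0(\widetilde Z,T_{\widetilde Z})$ is the Lie algebra of $\Aut_{\widetilde Z}$, and hence of $\Aut^0_{\widetilde Z}$, these values match the $h^0$ column.

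The only real content is the Main Theorem itself, in particular the completeness of the classification and the computation $\Aut^0=\mathbb G_a\rtimes\mathbb G_m$ for type $1I$. With that assumed, I do not expect any obstacle beyond correctly transcribing the tables.
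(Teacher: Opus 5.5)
Your proposal is correct and follows the paper's route: the paper lists this corollary as an immediate consequence of the Main Theorem. It is read off from the $h^0(\widetilde Z,T_{\widetilde Z})$ columns of Tables \ref{Table main theorem Jacobian} and \ref{Table main theorem nonJacobian}, where every entry is $1$ except the unique characteristic-$3$ Jacobian entry $8_{\rm q}$, which has $h^0=2$, and its existence is guaranteed by the Main Theorem. Your Lie-algebra consistency check is correct but not needed, since the tables already record $h^0$.
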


The article is organized as follows.
Section \ref{section generalities rational genus one automorphisms}
collects the basic facts on rational genus-one fibrations, Halphen
pencils, weak del Pezzo surfaces of degree $1$, and automorphism
schemes. Section \ref{section consequences vector fields} establishes
the reductions used in the proofs of parts
\textup{(\ref{Main theorem Jacobian})} and
\textup{(\ref{Main theorem non-Jacobian})} of the Main Theorem:
every rational (quasi-)elliptic surface $\widetilde Z$ with global
vector fields, equivalently with
$\Aut_{\widetilde Z}^0\neq\{1\}$, can be realized as the blow-up of a
weak del Pezzo surface $\widetilde X$ of degree $1$ with global vector
fields (with center the unique base point of
$|-K_{\widetilde X}|$ in the Jacobian case, and with center a torsion point on the identity component of an
anti-canonical curve in the non-Jacobian case).

After proving part \textup{(\ref{Main theorem Jacobian})} of the Main
Theorem directly, we formulate in Section
\ref{section strategy proof main theorem} a strategy for the proof of
part \textup{(\ref{Main theorem non-Jacobian})}. It treats separately
each type of degree $1$ weak del Pezzo surface with global vector
fields and determines its \emph{admissible} blow-up centers, that is,
the points for which the resulting surface is rational
(quasi-)elliptic and still admits global vector fields. The principal
ingredients are Blanchard's Lemma and the description of the
automorphism scheme of a blow-up in terms of stabilizer subgroup
schemes; see Subsection
\ref{subsection automorphism schemes and blow-ups}.

The discussion attached to each case $1A$--$1I$ contains diagrams
comparing the relevant surfaces $\widetilde Z$, $\widetilde X$, and
$\widetilde Y$. These diagrams display the negative curves and the
reducible fibers, as well as the alternative weak del Pezzo
presentations used to identify repeated cases in the classification.

\begin{Remark}
We restrict throughout this article to characteristic different from
$2$. The geometric reduction to Halphen centers and
scheme-theoretic stabilizers remains valid in characteristic $2$, but
its effective application requires explicit Weierstra{\ss} equations
together with the subgroup schemes of $\Aut_X^0$ whose actions lift to
the minimal resolution. The characteristic $3$ analysis of the types
$1F$ and $1G$ already shows that determining the liftable action is a
substantial part of the argument. In characteristic $2$, non-taut
rational double points and non-liftable infinitesimal automorphisms
occur more extensively, and the connected automorphism schemes of the
relevant RDP del Pezzo surfaces were not determined in
\cite{RDPDelPezzoGlobalVectorFields}. The characteristic $2$ classification of genus-one fibrations will be carried out by the author in a sequel to this article.
\end{Remark}

\begin{Acknowledgements}
The author thanks Gebhard Martin for helpful comments on a first version of this article. The author's research is funded by the Swiss National Science Foundation Grant \emph{Del Pezzo surfaces over perfect fields (10003572)}.
\end{Acknowledgements}

\section{Generalities: rational (quasi-)elliptic surfaces and automorphisms}
\label{section generalities rational genus one automorphisms}
\label{section Halphen theory}
\label{section: automorphism schemes: from weak del Pezzo to rational (quasi)ell}

\subsection{Rational genus-one fibrations}
\label{subsection rational genus one fibrations generalities}
Let us recall some notions and general facts about rational (quasi-)elliptic surfaces (sometimes also called \emph{genus-one fibration} if one does not want to distinguish between the elliptic and the quasi-elliptic
cases), following for example \cite[Chapter 4]{CossecDolgachevLiedtke}.

\begin{Definition}
\label{def rational genus one surface}
A projective surface $\widetilde Z$ is called a \emph{rational
(quasi-)elliptic surface} if it is smooth and rational and admits a morphism
$f\colon\widetilde Z\to\mathbb P^1$ such that
\begin{itemize}
\item $f$ is surjective and
$f_*\mathcal O_{\widetilde Z}\cong\mathcal O_{\mathbb P^1}$,
\item the generic fiber is a regular, geometrically integral curve of
arithmetic genus $1$, and
\item no fiber contains a $(-1)$-curve.
\end{itemize}
We call $f$ \emph{elliptic} if its generic fiber is smooth, and
\emph{quasi-elliptic} otherwise. We call it \emph{Jacobian} if it admits a
section and \emph{non-Jacobian} otherwise.
\end{Definition}

If $f$ is quasi-elliptic, then its geometric generic fiber is a cuspidal
rational curve, and this can occur only in characteristics $2$ and $3$.

Now, let $f: \widetilde{Z} \to \mathbb{P}^1$ be a rational (quasi-)elliptic surface. 
 We denote the fiber of $f$ over $P \in \mathbb{P}^1$ by $F_P$.
 The greatest common divisor $m_P$ of the multiplicities of the components of $F_P$ is called the \emph{multiplicity} of the fiber $F_P$ and we set $\overline{F}_P \coloneqq \frac{1}{m_P}F_P$. The fibers with $m_P > 1$ are called \emph{multiple fibers}. With this notation, and since $f$ has no wild fibers by \cite[Proposition 5.6.1]{CossecDolgachev}, the canonical bundle formula (see for example \cite[Theorem 7.15.]{Badescu}) yields
 \begin{equation}\label{eqn: canbundle}
 \omega_{\widetilde{Z}} \hspace{2mm} \cong \hspace{2mm} f^* \mathcal{O}_{\mathbb{P}^1}(-1) \otimes \mathcal{O}_{\widetilde{Z}}\left(\sum_{P \in \mathbb{P}^1}(m_P - 1)\overline{F}_P \right).
 \end{equation}
 Since $\widetilde{Z}$ is rational, for any $n>0$ we have $h^0( \widetilde{Z}, \omega_{\widetilde{Z}}^{\otimes n})= p_n(\widetilde{Z}) = p_n(\mathbb{P}^2) =0$, hence, by Equation \ref{eqn: canbundle}, there is at most one multiple fiber and we denote its multiplicity by $m$. Hence, the class of a fiber of $f$ in the Picard group is $-K_{\widetilde{Z}}$ if there are no multiple fibers, and $-mK_{\widetilde{Z}}$ if there is a multiple fiber. Thus, by adjunction, all $(-2)$-curves on $\widetilde{Z}$ have to lie in fibers of $f$.

 For $l > 0$, an \emph{$l$-section} of $f$ is an irreducible curve $C$ on $\widetilde{Z}$ such that $C.F_P = l$, where $F_P$ is a general fiber of $f$. The \emph{index} of $f$ is the minimal $l$ for which there exists an $l$-section. By \cite[Chapter V, Proposition 5.6.1.(vi)]{CossecDolgachev}, the index of a rational (quasi-)elliptic surface is equal to the maximal multiplicity of its fibers. In particular, it is $1$ if and only if $f$ does not admit a multiple fiber. In this case, $f$ admits a section and is called \emph{Jacobian} (see Definition 3.4 in \cite{WhichRDPsOccurOnDelPezzoSurfaces}). If $f$ admits a multiple fiber, then the index of $f$ is equal to the multiplicity $m$ of this unique multiple fiber. Moreover, by adjunction, the $(-1)$-curves on $\widetilde{Z}$ are exactly the $m$-sections of $f$.

\subsection{Halphen pencils and weak del Pezzo surfaces of degree $1$}
\label{subsection Halphen pencils and degree one weak del Pezzo surfaces}

For a rational (quasi-)elliptic surface $f: \widetilde{Z} \to \mathbb{P}^1$, we used in \cite[Subsection 3.2]{WhichRDPsOccurOnDelPezzoSurfaces} that in the \emph{Jacobian} case, $\widetilde{Z}=\widetilde{Y}$ is a blow-up of a weak del Pezzo surface of degree $1$. This connection persists also in the \emph{non-Jacobian} case as we are going to recall in the following (we refer the reader to \cite[Chapter 7]{Badescu}, \cite[Chapter V]{CossecDolgachev}, \cite[Chapter 4]{CossecDolgachevLiedtke} \cite{Dolgachev-on-Halphen}, \cite[Section 2.]{CantatDolgachev}, or \cite{Halphen-original} for further details):

 Let $\widetilde{X}$ be the contraction of any $m$-section of $f$. Since $K_{\widetilde{Z}}^2 = 0$ and $- K_{\widetilde{Z}}$ is nef, the anti-canonical divisor $- K_{\widetilde{X}}$ of $\widetilde{X}$ is nef with $K_{\widetilde{X}}^2 = 1$, so $\widetilde{X}$ is a weak del Pezzo surface of degree $1$.
 Hence, we can choose a realization of ${\widetilde{X}}$ as a blow-up of $\mathbb{P}^2$ in $8$ points and obtain a birational morphism $\pi: \widetilde{Z} \to \widetilde{X} \to \mathbb{P}^2$ which realizes $\widetilde{Z}$ as the blow-up of $9$ (possibly infinitely near) points. This yields the classical description of $f$ as the resolution of the base points of a \emph{Halphen pencil of index $m$}, which is a pencil of curves of degree $3m$, all of which have multiplicity $m$ at the $9$ points prescribed by $\pi$. The image of the multiple fiber of $f$ under $\pi$ is a cubic through these $9$ points, taken with multiplicity $m$.

 If $\widetilde{X}$ is a weak del Pezzo surface of degree $1$, then every curve in $|-K_{\widetilde{X}}|$ is of arithmetic genus one and marked with the base point $\widetilde{Q}$ of $|-K_{\widetilde{X}}|$. Note that this base point is a smooth point on every member of $|-K_{\widetilde{X}}|$, since $K_{\widetilde{X}}^2 = 1$. Hence, for every curve $C \in |-K_{\widetilde{X}}|$, the smooth locus of the irreducible component of $C$ that contains this base point becomes a group scheme (with $\widetilde{Q}$ as neutral element). We call this group scheme the \emph{identity component} of $C$ and denote it by $C^0$.

 The description of the previous paragraph has the following important consequence, which is explained in \cite[Theorem 4.2]{DolgachevMartin}:
 \begin{Proposition} \label{prop: torsionpoint}
 Let $f: \widetilde{Z} \to \mathbb{P}^1$ be a rational (quasi-)elliptic surface of index $m$. Then, $\widetilde{Z}$ is the blow-up of a weak del Pezzo surface $\widetilde{X}$ in a point $\widetilde{P}$ that satisfies the following conditions:
 \begin{enumerate}
     \item If $m = 1$, then $\widetilde{P}=\widetilde{Q}$ is the base point of $|-K_{\widetilde{X}}|$.
     \item If $m > 1$, then $\widetilde{P}\neq \widetilde{Q}$ lies on a unique curve $C \in |-K_{\widetilde{X}}|$. More precisely, $\widetilde{P}$ is a point of exact order $m$ on $C^0$. 
 \end{enumerate}
 \end{Proposition}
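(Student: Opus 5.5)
Let $f\colon \widetilde Z\to\mathbb P^1$ have index $m$, and fix an $m$-section $E\subset\widetilde Z$, which exists by the facts on the index recalled in Subsection \ref{subsection rational genus one fibrations generalities}. By adjunction $E$ is a $(-1)$-curve. Let $\sigma\colon\widetilde Z\to\widetilde X$ be its contraction and set $\widetilde P=\sigma(E)$. As explained above, $\widetilde X$ is a weak del Pezzo surface of degree $1$. We have $K_{\widetilde Z}=\sigma^*K_{\widetilde X}+E$, and the fiber class is $-mK_{\widetilde Z}$. Hence $E.(-K_{\widetilde Z})=1$, while $-K_{\widetilde Z}=\sigma^*(-K_{\widetilde X})-E$.

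For $m=1$, the anticanonical pencil $|-K_{\widetilde Z}|$ is the fibration, so it has dimension $1$, and $h^0(-K_{\widetilde X})=2$. Every member of $|-K_{\widetilde Z}|$ pushes forward to a member of $|-K_{\widetilde X}|$ passing through $\widetilde P$, and these images form the whole pencil. Therefore $\widetilde P$ lies on all members of $|-K_{\widetilde X}|$, so $\widetilde P=\widetilde Q$.

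For $m>1$, the class $-K_{\widetilde Z}$ is still effective. Riemann--Roch gives $h^0\geq1$, and $h^0=1$ because otherwise it would be a pencil of fibers, forcing $m=1$. The unique member is $\overline F$, the reduced multiple fiber. Its image $C=\sigma_*\overline F$ lies in $|-K_{\widetilde X}|$ and satisfies $\sigma^*C=\overline F+E$, so $\widetilde P\in C$ with multiplicity one. Moreover $\widetilde P\neq\widetilde Q$: otherwise the strict transforms of all members of $|-K_{\widetilde X}|$ would lie in $|-K_{\widetilde Z}|$, which has dimension $0$. Since $C.C'=1$ for distinct anticanonical curves and their only common point is $\widetilde Q$, the curve $C$ is the unique member of $|-K_{\widetilde X}|$ through $\widetilde P$. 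The point $\widetilde P$ lies on $C$ with multiplicity one, so it is a smooth point of $C$ on a reduced component of multiplicity one. One still has to argue that this component is the one through $\widetilde Q$; I expect this to follow from the order computation below, since a point on another component would make $\mathcal O_{\overline F}(\overline F)$ non-torsion or trivially supported.

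For the order, $\mathcal O_{\overline F}(\overline F)$ is a torsion line bundle of exact order $m$; this is standard for a multiple fiber without wild fibers \cite[Chapter V]{CossecDolgachev}. Now $\overline F\cong C$ via $\sigma$, and
\[\mathcal O_{\widetilde Z}(\overline F)|_{\overline F}=\mathcal O_{\widetilde Z}(\sigma^*C-E)|_{\overline F}\cong\mathcal O_C(C)\otimes\mathcal O_C(-\widetilde P)=\mathcal O_C(\widetilde Q-\widetilde P).\]
The last equality holds because $\mathcal O_C(C)=\mathcal O_C(-K_{\widetilde X})$ has degree $1$ with section vanishing at $\widetilde Q$. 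Via the group law on $C^0$, with $\widetilde Q$ as neutral element and $\operatorname{Pic}^0(C)\cong C^0$ through $x\mapsto\mathcal O_C(x-\widetilde Q)$, the point $\widetilde P$ therefore has exact order $m$. Landing in $C^0$ instead of another component is exactly the delicate step. Here I would use that for curves of arithmetic genus one, $\operatorname{Pic}^0$ of a reducible fiber is identified with the smooth locus of the identity component, so a degree-zero bundle $\mathcal O_C(\widetilde Q-\widetilde P)$ that is multidegree-zero forces $\widetilde P$ to lie on the component of $\widetilde Q$. Checking the multidegree is the main obstacle. It follows from $E.\Theta=0$ for every fiber component $\Theta$ not of multiplicity one meeting $\widetilde Q$, which comes from $E.\overline F=1$ together with the fact that $E$ meets only one component.
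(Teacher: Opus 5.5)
Your overall route is sound and more self-contained than the paper, which simply refers to \cite[Theorem 4.2]{DolgachevMartin} and the classical Halphen description. You contract a $(-1)$-curve $E$, identify the unique member of $|-K_{\widetilde Z}|$ with $\overline F$, and transport the exact order $m$ of the tame normal bundle $\mathcal O_{\overline F}(\overline F)\cong\mathcal O_C(\widetilde Q-\widetilde P)$ to $C$.

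However, the step you flag yourself is not closed. You need $\widetilde P$ to lie on the component of $C$ through $\widetilde Q$. Your justification, ``$E.\overline F=1$ together with the fact that $E$ meets only one component'', only shows that $E$ meets exactly one component of $\overline F$, of multiplicity one. It does not say which one, and fibers of types $\mathrm{I}_n$, $\mathrm{III}$, $\mathrm{IV}$, $\mathrm{I}_n^*$, $\mathrm{IV}^*$, $\mathrm{III}^*$ have several multiplicity-one components. The claim ``$E.\Theta=0$ for every other component'' is therefore just a restatement of the goal. Without it you only know that $\mathcal O_C(\widetilde Q-\widetilde P)$ has order $m$ in $\operatorname{Pic}(C)$, not that $\widetilde P$ is a point of exact order $m$ on $C^0$.

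The multidegree is in fact no obstacle. For every component $\Theta$ of $\overline F$ one has $\overline F.\Theta=0$. So $\sigma^*C=\overline F+E$ and the projection formula give $E.\Theta=C.\sigma_*\Theta=(-K_{\widetilde X}).\sigma_*\Theta$. Since $-K_{\widetilde X}$ is nef with $(-K_{\widetilde X}).C=1$, exactly one component $\Gamma_0$ of $C$ has $(-K_{\widetilde X}).\Gamma_0=1$, and it has multiplicity one. All other components are $(-2)$-curves. Moreover $\widetilde Q\in\Gamma_0$, because $(-2)$-curves are disjoint from a general member of $|-K_{\widetilde X}|$. Hence $E$ meets only the strict transform of $\Gamma_0$, so $\widetilde P\in\Gamma_0$. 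Being a smooth point of $C$, it then lies in $C^0$.

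There are two equivalent ways to see this:
\begin{itemize}
\item A torsion line bundle has degree $0$ on every component. If $\widetilde P\notin\Gamma_0$, then $\mathcal O_C(\widetilde Q-\widetilde P)$ would have degrees $+1$ and $-1$ on two different components. This is what your first remark suggested.
\item If $\widetilde P$ lay on a $(-2)$-curve, its strict transform would be a vertical curve of self-intersection $-3$ on $\widetilde Z$, which is impossible.
\end{itemize}

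A smaller slip: for $m>1$ an $m$-section need not be a $(-1)$-curve. For example, $\sigma^*C'$ for a general $C'\in|-K_{\widetilde X}|$ is an irreducible $m$-section with self-intersection $1$. You should start from a $(-1)$-curve, which exists because $\widetilde Z$ is rational with $K_{\widetilde Z}^2=0$ and hence not minimal. Adjunction then shows it is an $m$-section.
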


Using this, we can extend the connection between weak del Pezzo surfaces and Jacobian rational genus-one fibrations used in \cite{WhichRDPsOccurOnDelPezzoSurfaces} with non-Jacobian fibrations $f : \widetilde{Z} \to \mathbb{P}^1$ of index $m$ and Halphen pencils: 

Let $f_1, f_2$ be equations of cubics in $\mathbb{P}^2$ such that $8$ of their (possibly infinitely near) $9$ points of intersection lie in almost general position (see  in \cite[Subsection 2.1]{WeakDelPezzoGlobalVectorFields}). Let
$h$ be the equation of a curve of degree $3m$ in $\mathbb{P}^2$ with multiplicity $m$ at the above $8$ points in almost general position in $\{f_1=0=f_2\}$ and with one more point of multiplicity $m$ at a point $P_9$ different from the $9$ base points of the cubic pencil spanned by $f_1$ and $f_2$. Then, there exists precisely one curve in this cubic pencil passing through $P_9$. Let $g$ be the cubic equation of this curve. By Proposition \ref{prop: torsionpoint}, the point $P_9 \in \{g = 0\}$ is a point of exact order $m$ on $\{g = 0\}$. Moreover, the Halphen pencil corresponding to $\widetilde{Z}$ is spanned by $g^m$ and $h$, and $\widetilde{Z}$ is obtained by blowing up $\mathbb{P}^2$ in the $8$ common base points of the cubic pencil and the Halphen pencil, and $P_9$.
Note that simply blowing up the $9$ base points of the cubic pencil $\langle f_1, f_2 \rangle$ yields a Jacobian rational (quasi-)elliptic surface $\widetilde{Y}$.
The situation is summarized in the following Diagram \ref{Diagram connection3 plus pencils}. 

\vspace{5mm}

\begin{center}
{\large{
\begin{equation}
\hspace{1.7cm}
\begin{gathered} \hspace{-8mm}
 \label{Diagram connection3 plus pencils}
{\Small{\xymatrix{
   & \widetilde{Z} \ar@[gray][ldddd]  \ar@<0pt>@{}[ldddd]_{\tiny{\begin{tabular}{c} \vspace{-10mm}\textcolor{gray}{fibration}\end{tabular}}}  \ar[rrdd] \ar@<-26pt>@{}[rrdd]^[@]{{\tiny{\begin{tabular}{c} \hspace{-2mm}blow up $m$-tors. point \end{tabular}}}}   \ar@<-39.5pt>@{}[rrdd]^[@]{{\tiny{\begin{tabular}{c} \hspace{-1mm}contract $m$-section \end{tabular}}}} & & & & \widetilde{Y} \ar@[gray][rdddd] \ar@<1pt>@{}[rdddd]^{\tiny{\begin{tabular}{c} \vspace{-10mm}\textcolor{gray}{fibration}\end{tabular}}} \ar[lldd]  \ar@<34pt>@{}[lldd]_{\rotatebox{36,6}{\tiny{\begin{tabular}{c}  \hspace{3mm}blow up base point resp. \end{tabular}}}}   \ar@<-22pt>@{}[lldd]^{\rotatebox{36,6}{\tiny{\begin{tabular}{c} 
   contract section \end{tabular}}}}
 \ar[rd] \ar@<-14pt>@{}[rd]^{\rotatebox{329}{\tiny{\begin{tabular}{c} 
   Weierstra{\ss}\\model \end{tabular}}}} & & & \\
   & & & & & & Y \ar@/^1.9pc/@[gray][ddd] \ar[lldd] \ar@<32pt>@{}[lldd]_{\rotatebox{39,5}{\tiny{\begin{tabular}{c}  blow up base point resp. \end{tabular}}}}   \ar@<-23pt>@{}[lldd]^{\rotatebox{39,5}{\tiny{\begin{tabular}{c} 
   contract section \end{tabular}}}} \\
   & & & \widetilde{X} \ar@/^1.2pc/@{-->}@[gray][ddrrr] \ar@/_1.2pc/@{-->}@[gray][ddlll] \ar[dd] \ar@<3pt>@{}[dd]_{\tiny{\begin{tabular}{c} blow up $8$ \\ base points  \end{tabular}}} \ar@<-2pt>@{}[dd]^{\tiny{\begin{tabular}{c} $\pi$  \end{tabular}}} \ar[rd]  \ar@<-31pt>@{}[rd]^{\rotatebox{330}{\tiny{\begin{tabular}{c} 
  \hspace{-2mm} anti-can. mod. \end{tabular}}}} & & & & & \\
   & &  & & X \ar@{-->}[rrd] \ar@{^{(}->}[rrr]^{\tiny{\begin{tabular}{c} 
   \hspace{1.5mm}sextic hypersurface \end{tabular}}} & & &  \mathbb{P}(1,1,2,3)  \ar@{->>}[ld] \ar@<26pt>@{}[ld]_{\rotatebox{29}{\tiny{\begin{tabular}{c}  projection \end{tabular}}}}\\
   \mathbb{P}^1 & &  & \mathbb{P}^2 \ar@{-->}@[gray][lll]^{\tiny{\begin{tabular}{c} 
   \textcolor{gray}{Halphen pencil} \end{tabular}}} \ar@{-->}@[gray][rrr]_{\tiny{\begin{tabular}{c} 
   \hspace{-2mm}\textcolor{gray}{cubic pencil} \end{tabular}}} & & &   \mathbb{P}^1 & & & \\
   \text{\SMALL{$[g^m(x): h(x)]$}} & & & \text{\SMALL{$x = [x_0:x_1:x_2]$}} \ar@{|->}@[gray][rrr] \ar@{|->}@[gray][lll] & & & \text{\SMALL{$[f_1(x): f_2(x)]$}}
} }}
\end{gathered}
\end{equation}
}}
\end{center}

\vspace{7mm}

\begin{Notation} \label{Notation all objects Ztilde Xtilde Ytilde, ... points}
The following notations will be fixed throughout the rest of this article (if not explicitly stated otherwise): 
\begin{itemize}
    \item $\widetilde{Z}$ a non-Jacobian rational (quasi-)elliptic surface of index $m>1$
    \item $\widetilde{Y}$ a Jacobian rational (quasi-)elliptic surface with Weierstra{\ss} model $Y$
    \item $\widetilde{X}$ a weak del Pezzo surface of degree $1$ with corresponding anti-canonical model $\widetilde{X} \to X \subseteq \mathbb{P}(1,1,2,3)$
    \item $\widetilde{Q}$ the base point of $|-K_{\widetilde{X}}|$ with image $Q$ in $X$
    \item $\widetilde{Y}$ \emph{arises from} $\widetilde{X}$ if $\widetilde{Y} \cong {\Bl}_{\widetilde{Q}}(\widetilde{X})$.
    \item $C$ a curve in $|-K_{\widetilde{X}}|$ with identity component $C^0 \ni \widetilde{Q}$ 
    \item $\widetilde{P} \in \widetilde{X}$ a point of exact order $m>1$ on $C^0$ for $C \in |-K_{\widetilde{X}}|$ through $\widetilde{P}$
    \item $P$ the image of $\widetilde{P}$ in $X$
    \item $\widetilde Z$ \emph{arises from} $\widetilde{X}$ if there exists a $\widetilde{P} \in \widetilde{X}$ such that $\widetilde{Z} \cong {\rm Bl}_{\widetilde{P}}(\widetilde{X})$.
\end{itemize}
\end{Notation}

\begin{Remark} \label{remark: identifikation Z von unterschiedlichen X}
Note that, on $\widetilde{Y}$, all $(-1)$-curves are translates of each other, hence $\widetilde{Y}$ uniquely determines $\widetilde{X}$. This is not the case for $\widetilde{Z}$, so that $\widetilde{Z}$ can arise from non-isomorphic $\widetilde{X}$'s. We will use this to our advantage later in the proof of the non-Jacobian part of the Main Theorem.
\end{Remark}

\subsection{Kodaira--N\'eron types and reducible fibers}
\label{subsection reducible fibers}
We have seen above that a rational (quasi-)elliptic surface has at most one
multiple fiber: none in the Jacobian case, and exactly one, of multiplicity
equal to the index $m>1$, in the non-Jacobian case. In either case, it is
natural to ask what the remaining fibers look like. Since the generic fiber is
regular, geometrically integral, of arithmetic genus $1$, every fiber is either
a smooth elliptic curve, an irreducible rational curve with a single node or
cusp, or reducible.

By the canonical bundle formula, $K_{\widetilde Z}$ is supported on fibers; in
particular, adjunction shows that every component of a reducible fiber is a
$(-2)$-curve, and conversely every connected configuration of $(-2)$-curves is
contained in a single fiber. The possible fiber types were classified by
Kodaira and N\'eron. For the reader's convenience, we summarize the dictionary
between fiber types, affine Dynkin diagrams, and the rational double points on
the Weierstra{\ss} model $Y$, equivalently on the RDP del Pezzo surface $X$
(compare \cite{WhichRDPsOccurOnDelPezzoSurfaces} and
\cite[Chapter V]{CossecDolgachev}), in Table \ref{Table KodairaNeron2nd and RDPs mit tilde} below.

\begin{table}[h]
\renewcommand{\arraystretch}{1.35}
    \centering
    $ \begin{array}{|c||c|c|c|c|c|c|c|c|c|}
    \hline
        \text{Kodaira--N\'eron type} &  {\rm I}_0 & {\rm I}_n  & {\rm II} & {\rm III} & {\rm IV}  & {\rm I}_n^* & {\rm IV}^* & {\rm III}^* & {\rm II}^* \\
        \hline 
         \begin{tabular}{c}
         dual graph
         \end{tabular}
         & \widetilde{A}_0 & \widetilde{A}_{n-1} & \widetilde{A}_0 & \widetilde{A}_1 & \widetilde{A}_2  & \widetilde{D}_{4+n} & \widetilde{E}_6 & \widetilde{E}_7 & \widetilde{E}_8 \\
         \hline 
\begin{tabular}{c}
corresponding \\
rational double point
\end{tabular}
       & \text{smooth} & A_{n-1} & \text{smooth} & A_1 & A_2  & D_{4+n} & E_6 & E_7 & E_8 \\
       \hline
    \end{array} $
    \caption{Kodaira--N\'eron types and dual graphs} 
    \label{Table KodairaNeron2nd and RDPs mit tilde}
\end{table}

\begin{Notation} \label{notation: is of type Kodaira-Neron}
    For $C\in|-K_{\widetilde X}|$, we say $C$ \emph{is of type} $\Sigma$ if its strict
transform on $\widetilde Y=\Bl_{\widetilde{Q}}(\widetilde X)$ is a fiber of
Kodaira--N\'eron type $\Sigma$.
\end{Notation}

Whether a fiber of a Jacobian fibration is singular, and of which type, can be determined using Tate's
algorithm \cite{TateAlgo} (see also \cite[Subsection 4.1]{WhichRDPsOccurOnDelPezzoSurfaces}).

The number of reducible fibers is constrained by the Picard number of a
rational (quasi-)elliptic surface. We record the following bound, which will be used repeatedly to show that a given configuration of $(-2)$-curves already
accounts for all reducible fibers.

\begin{Lemma} \label{lemma (-2)curves on Ztilde}
    Let $f: \widetilde{Z} \to \mathbb{P}^1$ be a not necessarily Jacobian rational (quasi-)elliptic fibration. Let $e_P$ be the number of irreducible components of a fiber $F_P$ over $P \in \mathbb{P}^1$. Then, 
    $$
    \sum_{P \in \mathbb{P}^1} (e_P-1) \leq 8.
    $$
\end{Lemma}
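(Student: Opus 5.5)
The plan is the usual Shioda–Tate type rank count in $\operatorname{Num}(\widetilde Z)$, with no section assumed. Since $\widetilde Z$ is rational and, as recalled above, the blow-up of $\mathbb P^2$ in $9$ (possibly infinitely near) points, $\operatorname{Num}(\widetilde Z)\otimes\mathbb Q$ has rank $\rho=10$ and the intersection form is non-degenerate of signature $(1,9)$. Let $F$ be the class of a fiber; by the canonical bundle formula \eqref{eqn: canbundle}, $F$ is $-K_{\widetilde Z}$ or $-mK_{\widetilde Z}$, so $F^2=0$ and $F\neq 0$. For each reducible fiber $F_P=\sum_i a_{P,i}C_{P,i}$, I discard one component $C_{P,i_0}$ (say one of multiplicity coprime to $m_P$, though any choice works) and retain the remaining $e_P-1$ components. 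This gives the set $S$ of all retained components over all $P$, together with $F$.

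The claim is that the classes in $S\cup\{F\}$ are linearly independent in $\operatorname{Num}(\widetilde Z)\otimes\mathbb Q$. First, by Zariski's lemma, the intersection form restricted to the span of the components of a single fiber is negative semi-definite with kernel spanned by $F_P$ (equivalently $\overline F_P$). So the retained components of one fiber span a negative definite lattice: any combination of them that is isotropic would be a multiple of $F_P$, which involves the discarded component with nonzero coefficient. Components of different fibers are orthogonal, so the span $V$ of $S$ is an orthogonal sum of negative definite pieces, hence negative definite of dimension $\sum_P(e_P-1)$. Finally, $F$ is orthogonal to every fiber component, so $F\perp V$. If $F\in V$, then $F^2=0$ would force $F=0$ by definiteness, which is impossible because $F\cdot H>0$ for an ample $H$. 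Therefore $\dim(V+\mathbb QF)=\sum_P(e_P-1)+1$.

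It remains to bound $\dim(V+\mathbb Q F)\le 9$ rather than $10$. To do this I use a class with positive self-intersection, say an ample $H$, or a $(-1)$-curve $E$ (an $m$-section) with $E\cdot F=m>0$. Since $V+\mathbb QF$ is orthogonal to $F$ and is contained in $F^{\perp}$, whereas $H\notin F^\perp$ because $H\cdot F>0$, we get $F^\perp\neq\operatorname{Num}\otimes\mathbb Q$. Non-degeneracy then gives $\dim F^\perp=9$. Hence $\sum_P(e_P-1)+1\le 9$, which is the asserted inequality.

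The only delicate point is the application of Zariski's lemma to multiple fibers, where the kernel is spanned by $\overline F_P$ rather than $F_P$. This does not affect the argument, since $\overline F_P$ still has full support, so the discarded component is still needed to produce any isotropic vector. Everything else is formal linear algebra and needs no Jacobian hypothesis.
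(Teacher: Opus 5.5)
Your proof is correct, but it takes a different route from the paper's.

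\textbf{The paper's route.} It first reduces to the Jacobian case via the Liu--Lorenzini--Raynaud comparison theorem. That theorem guarantees that passing to the Jacobian fibration does not change the number of components of any fiber. The paper then uses a section $\sigma$: in each fiber, the components not meeting $\sigma$ lie in the rank-$8$ lattice $\langle\sigma,F\rangle^{\perp}$.

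\textbf{Your route.} You stay on $\widetilde Z$ and work in $F^{\perp}\subset\operatorname{Num}(\widetilde Z)\otimes\mathbb Q$. This space has dimension $9$ because $\rho=10$, the form is non-degenerate, and $F\neq0$. The line $\mathbb QF$ itself then absorbs the extra dimension. Equivalently, you embed the negative definite span $V$ into the $8$-dimensional quotient $F^{\perp}/\mathbb QF$. This quotient is the section-free substitute for $\langle\sigma,F\rangle^{\perp}$; the two agree when a section exists, since $\sigma\cdot F=1$.

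\textbf{What each buys.} Your argument is more elementary and self-contained. It needs only Zariski's lemma, non-degeneracy and Hodge index, and $\rho(\widetilde Z)=10$. It treats multiple fibers uniformly and discards an arbitrary component, which is allowed because $F_P$ has full support. It also avoids comparing the fibers of $\widetilde Z$ with those of its Jacobian, and avoids the implicit use that this Jacobian is again a rational (quasi-)elliptic surface. The paper's version fits the Shioda--Tate framework it uses elsewhere, but for this bound it relies on a substantially deeper input.

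\textbf{Two cosmetic remarks.}
\begin{itemize}
\item Your argument applies verbatim to any finite set of reducible fibers. That is what shows there are only finitely many of them, so the sum is finite.
\item $\dim F^{\perp}=9$ follows from non-degeneracy as soon as $F\neq0$. The ample class $H$ is needed only to see $F\neq0$.
\end{itemize}
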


\proof
By \cite[Theorem 6.6.]{LiuLorenziniRaynaud-alt,LiuLorenziniRaynaud-Corrigendum}, it suffices to prove the statement if $f$ has a section $\sigma$. Then, $\langle \sigma, F_P \rangle^{\perp} \subseteq {\rm Pic}(\widetilde{Z})$ has rank $8$. For every fiber $F_P$, the irreducible components of $F_P$ that do not meet $\sigma$ span a negative definite lattice of rank $(e_P -1)$ in $\langle \sigma, F_P \rangle^{\perp}$. Hence $\sum_{P \in \mathbb{P}^1} (e_P-1) \leq {\rm rank }(\langle \sigma, F_P \rangle^{\perp}) =8$.
\qed

\subsection{Automorphism schemes, Blanchard's Lemma, and blow-ups}
\label{subsection automorphism schemes and blow-ups}

For a $k$-scheme $V$, we denote by $\Aut(V)$ its classical group of
automorphisms. If, in addition, $V$ is proper, this group is the group of
$k$-valued points of the automorphism scheme $\Aut_V$, which represents the
automorphism functor of $V$ (see \cite{MatsumuraOort}). We denote by
$\Aut_V^0$ its connected component of the identity. As is well known, the
tangent space of $\Aut_V$ at the identity is
naturally identified with $H^0(V,T_V)$, the space of global vector fields on
$V$. Hence classifying rational (quasi-)elliptic surfaces with global vector
fields is equivalent to classifying those with non-trivial connected component
$\Aut_V^0$ of the automorphism scheme; in each case we will fully determine
this identity component.

The main tool for our study of automorphism schemes is the following lemma of
Blanchard (see \cite[Theorem 7.2.1]{Brion1}).

\begin{Lemma}[Blanchard's Lemma]
\label{lemma Blanchard generalities}
Let $g\colon V\to W$ be a morphism of proper schemes such that
$g_*\mathcal O_V\cong\mathcal O_W$. Then the action of $\Aut_V^0$ descends to
$W$ and induces a homomorphism $g_*\colon\Aut_V^0\longrightarrow\Aut_W^0$.
If $g$ is birational, this homomorphism is a closed immersion.
\end{Lemma}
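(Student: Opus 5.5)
The plan is to construct the descended action first, check that it is a homomorphism, and then treat the birational case separately. Put $G=\Aut_V^0$ and let $a\colon G\times V\to V$ be the action. Consider $\varphi\coloneqq g\circ a\colon G\times V\to W$. The goal is a factorization $\varphi=b\circ(\mathrm{id}_G\times g)$ for a morphism $b\colon G\times W\to W$. The key structural input is that $\mathrm{id}_G\times g$ is proper with $(\mathrm{id}_G\times g)_*\mathcal O_{G\times V}\cong\mathcal O_{G\times W}$, which follows from $g_*\mathcal O_V\cong\mathcal O_W$ by flat base change along $G\to\Spec k$. For such a morphism, another morphism $\varphi$ factors through it as soon as $\varphi$ is constant on the fibers set-theoretically. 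The topological map is then determined, since $\mathrm{id}\times g$ is surjective and closed, hence a quotient map. The sheaf map is obtained by pushing forward $\varphi^{\#}$ and using $(\mathrm{id}\times g)_*\mathcal O=\mathcal O$ over preimages of affine opens of $W$.

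The main step is therefore a rigidity statement: for every $w\in W$ and every point $t\in G$, the set $\varphi(\{t\}\times g^{-1}(w))$ is a single point. By Zariski's connectedness theorem, the fibers $F=g^{-1}(w)$ are connected, because $g_*\mathcal O_V=\mathcal O_W$. Since only the underlying topological spaces matter here, I may replace $G$ by $G_{\mathrm{red}}$, which is a connected group variety; if $G$ is infinitesimal, the claim is trivial because $|G|$ is a point. I would then apply Mumford's rigidity lemma to $G_{\mathrm{red}}\times F\to W$, whose second factor is proper and connected and for which $\{e\}\times F$ maps to the point $w$. To avoid properness issues on $G$, I would use the form of the lemma in which connectedness of $G$ suffices, as in Brion's treatment. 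This shows that $\varphi(\{t\}\times F)$ is a point for every $t$, which is exactly what the factorization needs. I expect this step to be the main obstacle: rigidity has to be applied carefully, with non-proper $G$ and possibly non-reduced $G$ and $F$.

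Once $b$ exists, the group-action axioms for $b$ follow from those for $a$, because $\mathrm{id}\times g$ and $\mathrm{id}\times\mathrm{id}\times g$ are epimorphisms in the category of schemes; this again uses the $\mathcal O$-pushforward condition. Hence $b$ defines a homomorphism $G\to\Aut_W$, and since $G$ is connected it lands in $\Aut_W^0$. This gives $g_*$.

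For the birational case, let $K=\ker(g_*)$, a closed subgroup scheme of $G$. Choose a dense open $U\subseteq W$ over which $g$ is an isomorphism. Then $K$ acts trivially on $g^{-1}(U)$, so the closed subscheme of $K\times V$ on which $a$ agrees with the projection contains $K\times g^{-1}(U)$. For $V$ integral, which is the case of interest for the smooth surfaces in this paper, $K\times g^{-1}(U)$ is schematically dense in $K\times V$, so $K$ acts trivially on $V$. Since $G$ acts faithfully on $V$, it follows that $K$ is trivial. A homomorphism of group schemes of finite type with trivial kernel is a closed immersion, which completes the argument.
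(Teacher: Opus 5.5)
The paper gives no proof of this lemma and only cites Brion; your argument is correct and is essentially the standard proof behind that reference: rigidity on the connected fibers $g^{-1}(w)$, factorization through the proper morphism $\mathrm{id}_G\times g$ with $(\mathrm{id}_G\times g)_*\mathcal O=\mathcal O$, and, in the birational case, a kernel that acts trivially on a schematically dense open. Two small remarks: restricting to integral $V$ in the last step is appropriate rather than a weakness, since for non-reduced $V$ (e.g.\ a square-zero thickening along an exceptional curve) the closed-immersion claim can fail, and every surface the paper applies the lemma to is integral; and the fiber-contraction check is cleaner over closed points of $G\times W$, because for non-closed $t$ and $w$ the set $\varphi(\{t\}\times g^{-1}(w))$, read literally, need not be a single point.
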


If $g$ is the blow-up of a surface with at worst rational double points in a
closed point $P$, the image of $g_*$ can be described explicitly as a
stabilizer subgroup scheme (see \cite[Proposition 4.2]{RDPDelPezzoGlobalVectorFields} for a proof;
see also \cite[Lemma 2.11]{WeakDelPezzoGlobalVectorFields}, \cite{Neuman}, and
\cite{Martin}).

\begin{Lemma}
\label{lemma blowup stabilizer}
Let $W$ be a normal surface with at worst rational double points, and let
$g\colon V\to W$ be the blow-up of $W$ in a closed point $P$. Then
\[
\Aut_V^0\cong\bigl(\Stab_{\Aut_W}(P)\bigr)^0,
\]
where $\Stab_{\Aut_W^0}(P)$ denotes the stabilizer subgroup scheme of $P$.
\end{Lemma}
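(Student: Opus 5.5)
The statement to prove is Lemma \ref{lemma blowup stabilizer}: $\Aut_V^0\cong(\Stab_{\Aut_W}(P))^0$ for the blow-up $g\colon V\to W$ of a surface with at worst rational double points in a closed point $P$. The plan is to build homomorphisms in both directions between $\Aut_V^0$ and $(\Stab_{\Aut_W}(P))^0$ and check that they are mutually inverse as group schemes.

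\emph{From $\Aut_V^0$ to the stabilizer.} Since $W$ is normal and $g$ is birational and proper, Zariski's main theorem gives $g_*\mathcal O_V\cong\mathcal O_W$. Blanchard's Lemma \ref{lemma Blanchard generalities} then yields a closed immersion $g_*\colon\Aut_V^0\to\Aut_W^0$ such that $g$ is equivariant. I then show that the image fixes $P$ scheme-theoretically. The exceptional locus $E=g^{-1}(P)$ is the union of curves contracted by $g$. Equivariance means that for every $T$-point $\varphi$ of $\Aut_V^0$, the induced automorphism of $W_T$ maps the fiber over $P$ to itself. More intrinsically, $\mathcal I_P\mathcal O_V$ is the ideal of the exceptional Cartier divisor, so it is invertible. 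It is preserved by the connected group $\Aut_V^0$, because the Néron--Severi class of $E$ is discrete and a connected family of automorphisms cannot move an exceptional divisor. Hence $\mathcal I_P=g_*(\mathcal I_P\mathcal O_V)$ is preserved, so the image lies in $\Stab_{\Aut_W}(P)$, and since $\Aut_V^0$ is connected it lies in $(\Stab)^0$.

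\emph{From the stabilizer to $\Aut_V$.} For the converse I use the universal property of the blow-up, $V=\mathrm{Proj}\bigoplus_n\mathcal I_P^n$. A $T$-point of $\Stab_{\Aut_W}(P)$ is an automorphism $\psi$ of $W_T$ with $\psi^*\mathcal I_P\mathcal O_{W_T}=\mathcal I_P\mathcal O_{W_T}$. It therefore induces an automorphism of the Rees algebra, and so of $\mathrm{Proj}$. Since blowing up commutes with the flat base change $T\to\Spec k$, this gives an automorphism of $V_T$ compatible with $g$. The construction is functorial, so it defines a homomorphism $\Stab_{\Aut_W}(P)\to\Aut_V$, and restricting to identity components gives a map to $\Aut_V^0$.

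\emph{Comparing the two maps.} One composite is the identity on $\Stab^0$ by construction. For the other, an automorphism of $V_T$ is determined by its descent to $W_T$, because $g$ is an isomorphism off $E$ and $V_T$ is separated and reduced over the schematically dense open $V_T\setminus E_T$. This holds for arbitrary $T$ because $g$ is flat-base-change compatible and $\mathcal O_{V}\to j_*\mathcal O_{V\setminus E}$ is injective.

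\emph{Main obstacle.} The delicate step is the scheme-theoretic stabilizer claim in the first direction. It requires that infinitesimal (non-reduced) automorphisms in $\Aut_V^0$ preserve $\mathcal I_P$, not merely the point $P$ set-theoretically. I would handle it via the invertible ideal argument: for $\varphi\in\Aut_V^0(T)$, the ideal $\varphi^*(\mathcal I_E)$ defines an effective Cartier divisor in $V_T$ that is flat over $T$ and algebraically equivalent to $E$. Rigidity of $E$, namely $h^0(\mathcal O_E(E))=0$ because $E^2<0$ and the components are negative, forces $\varphi^*\mathcal I_E=\mathcal I_E$. Pushing forward then gives $\psi^*\mathcal I_P=\mathcal I_P$, using $g_*\mathcal I_E=\mathcal I_P$. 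That last identity holds since $P$ is an RDP or a smooth point, so that $W$ has rational singularities there and the blow-up ideal is integrally closed.
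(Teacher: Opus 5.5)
Your proposal is correct, and it is essentially the standard argument behind the references the paper cites instead of proving the lemma (\cite[Proposition 4.2]{RDPDelPezzoGlobalVectorFields}, \cite[Lemma 2.11]{WeakDelPezzoGlobalVectorFields}): Blanchard's Lemma together with scheme-theoretic rigidity of the exceptional Cartier divisor $E$ gives $\Aut_V^0\to(\Stab_{\Aut_W}(P))^0$, and functoriality of blowing up a stabilized center under flat base change gives the inverse. The one step worth tightening is $h^0(\mathcal O_E(E))=0$: over a $D_n$ or $E_n$ point, $E$ is a non-reduced double line on a singular $V$, so appealing to negativity of components needs care; it is cleaner to note that $\mathcal O_E(E)\cong\mathcal O_E(-1)$, where $E\cong\mathbb P^1$ for smooth $P$ and $E$ is a plane conic for an RDP, so the vanishing follows from $0\to\mathcal O_{\mathbb P^2}(-3)\to\mathcal O_{\mathbb P^2}(-1)\to\mathcal O_E(-1)\to 0$.
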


\section{Consequences for vector fields of rational (quasi-)elliptic surfaces}
\label{section consequences vector fields}

Applying these results to the description of Jacobian, respectively
non-Jacobian, rational (quasi-)elliptic surfaces as blow-ups of weak del Pezzo
surfaces $\widetilde{X}$ of degree $1$ already yields the strategy for the classification of such surfaces by
their connected automorphism schemes carried out in this article, since we have complete knowledge of $\Aut_{\widetilde{X}}^0$ by \cite{WeakDelPezzoGlobalVectorFields}.

\subsection{Strategies in the Jacobian and non-Jacobian case}
\label{subsection strategies Jacobian and non-Jacobian}

\begin{Corollary}[Jacobian case]
\label{cor: approach for classification jacobian}
There is a bijection of isomorphism classes
 $$
\begin{array}{r c l}
\left\{ 
\begin{tabular}{c}
Weak del Pezzo surfaces \\
of degree $1$
\end{tabular}
\right\} / \cong 
&  \longleftrightarrow 
& \left\{ 
\begin{tabular}{c}
Jacobian rational \\
(quasi-)elliptic surfaces
\end{tabular}
\right\} / \cong \vspace{3mm}
\\
 \widetilde{X}   & \longmapsto 
 & \hspace{-2mm}\begin{tabular}{l}
 \noindent Blow-up of $\widetilde{X}$ in the \\
 base point $\widetilde{Q}$ of $|-K_{\widetilde{X}}|$
 \end{tabular} \vspace{2mm}
 \\
\text{Contraction of a section}  & \reflectbox{$\longmapsto$} &  \widetilde{Y}
\end{array}
$$
which preserves the subsets of surfaces with global vector fields and identifies the identity components of the automorphism schemes of these surfaces.
\end{Corollary}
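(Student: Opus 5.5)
We check three things in turn: the two maps are well defined, they are mutually inverse on isomorphism classes, and the identity components of the automorphism schemes agree.

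\emph{Forward map.} Let $\widetilde X$ be a weak del Pezzo surface of degree $1$, with base point $\widetilde Q$ of $|-K_{\widetilde X}|$. Since $K_{\widetilde X}^2=1$, two general members of $|-K_{\widetilde X}|$ meet transversally in exactly $\widetilde Q$. Hence $\widetilde Y=\Bl_{\widetilde Q}(\widetilde X)$ carries a base-point-free pencil $|-K_{\widetilde Y}|$, giving $f\colon\widetilde Y\to\mathbb P^1$ with fibers the strict transforms of the anti-canonical curves. These are connected of arithmetic genus $1$, so $f_*\mathcal O_{\widetilde Y}\cong\mathcal O_{\mathbb P^1}$. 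The generic fiber is regular because $\widetilde Y$ is smooth and the general fiber is integral, its image being an integral member of $|-K_{\widetilde X}|$. No fiber contains a $(-1)$-curve: such a curve $E$ satisfies $-K_{\widetilde Y}\cdot E=1$, whereas fiber components have $K\cdot E=0$. The exceptional curve over $\widetilde Q$ is a section, so $f$ is Jacobian. The class of $\widetilde Y$ depends only on that of $\widetilde X$, because $\widetilde Q$ is intrinsic.

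\emph{Backward map and inverse property.} Given a Jacobian $\widetilde Y$, Section \ref{subsection rational genus one fibrations generalities} shows the fiber class is $-K_{\widetilde Y}$ and the $(-1)$-curves are exactly the sections. Contracting a section $\sigma$ gives a weak del Pezzo surface of degree $1$, as in Subsection \ref{subsection Halphen pencils and degree one weak del Pezzo surfaces}. Independence of the chosen section follows from Remark \ref{remark: identifikation Z von unterschiedlichen X}: translation by an element of the Mordell--Weil group is an automorphism of $\widetilde Y$ carrying the zero section to any other section, so the contractions are isomorphic. For the inverse property, the image of $\sigma$ is the base point of $|-K_{\widetilde X}|$, since fibers meet $\sigma$ once and map to anti-canonical curves all passing through that point. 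Then $\Bl_{\widetilde Q}$ recovers $\widetilde Y$. Conversely, the exceptional curve of $\Bl_{\widetilde Q}(\widetilde X)$ is a section whose contraction returns $\widetilde X$.

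\emph{Automorphism schemes.} Blanchard's Lemma \ref{lemma Blanchard generalities} applied to the birational contraction $g\colon\widetilde Y\to\widetilde X$ gives a closed immersion $\Aut^0_{\widetilde Y}\hookrightarrow\Aut^0_{\widetilde X}$. By Lemma \ref{lemma blowup stabilizer}, its image is $(\Stab_{\Aut_{\widetilde X}}(\widetilde Q))^0$. It remains to show that $\Aut^0_{\widetilde X}$ fixes $\widetilde Q$ scheme-theoretically, so that this stabilizer is everything. The point is that $\widetilde Q$ is the base scheme of $|-K_{\widetilde X}|$, and $\omega_{\widetilde X}$ is canonically linearized for any automorphism group scheme, so $\Aut_{\widetilde X}$ preserves the base locus. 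Thus $\Stab(\widetilde Q)=\Aut_{\widetilde X}$ and $\Aut^0_{\widetilde Y}\cong\Aut^0_{\widetilde X}$. Since $H^0(T)$ is the tangent space of $\Aut^0$ at the identity, the bijection preserves the property of having global vector fields.

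\emph{Main obstacle.} The delicate step is making the stabilizer argument hold for non-reduced group schemes such as $\mu_p$ and $\alpha_p$, not just on $k$-points. I would argue functorially: for any $k$-algebra $R$ and any $R$-automorphism $\phi$, the pullback $\phi^*$ preserves $\omega$ up to isomorphism, hence preserves the base ideal of $|-K|$, which is the ideal sheaf of the reduced point $\widetilde Q$. So $\phi$ maps the section $\widetilde Q\times\Spec R$ to itself.
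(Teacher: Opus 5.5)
Your proposal is correct and follows the same route as the paper: both maps are well defined because $\widetilde Q$ is intrinsic and Mordell--Weil translations carry any section to any other, and $\Aut^0_{\widetilde Y}\cong\Aut^0_{\widetilde X}$ follows from Blanchard's Lemma together with Lemma \ref{lemma blowup stabilizer}. You go beyond the paper's short proof in two places: you check that the two maps are mutually inverse, and you explain why the stabilizer of $\widetilde Q$ is all of $\Aut_{\widetilde X}$ (every $R$-point preserves the reduced base scheme of $|-K_{\widetilde X}|$), which the paper leaves implicit.
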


\begin{proof}
First, we observe that the map from the left-hand side to the right-hand side is well-defined, since every isomorphism between two weak del Pezzo surfaces of degree $1$ identifies the unique base points of the anti-canonical system and hence lifts to the blow-up.

Similarly, since any two sections of a Jacobian rational (quasi-)elliptic surface can be mapped to each other by a suitable translation, the isomorphism class of the weak del Pezzo surface resulting from the contraction of a section does not depend on the choice of a section. Hence the map from the right-hand side to the left-hand side is well-defined.
Finally, if $\widetilde{X}$ is the contraction of a Jacobian rational (quasi-)elliptic surface $\widetilde{Y}$, then $\Aut_{\widetilde{X}}^0 \cong \Aut_{\widetilde{Y}}^0$ by Lemmata \ref{lemma Blanchard generalities} and \ref{lemma blowup stabilizer}. 
\end{proof}

In the case of not necessarily Jacobian rational (quasi-)elliptic surfaces $\widetilde{Z}$, the  correspondence is more subtle, since contractions of two different $m$-sections might lead to non-isomorphic weak del Pezzo surfaces. Nevertheless, let us note the following corollary to Proposition \ref{prop: torsionpoint}, combined with Lemmata \ref{lemma Blanchard generalities} and \ref{lemma blowup stabilizer}: 

\begin{Corollary}[Non-Jacobian case] \label{cor: approach for classification non-jacobian}
Every rational (quasi-)elliptic surface $\widetilde{Z}$ of index $m > 1$ satisfying $h^0(\widetilde{Z},T_{\widetilde{Z}}) \neq 0$ is obtained by blowing up a weak del Pezzo surface $\widetilde{X}$ of degree $1$ with $h^0(\widetilde{X},T_{\widetilde{X}}) \neq 0$ in a point $\widetilde{P} \in \widetilde{X}$ such that the following conditions hold:
\begin{enumerate}
    \item $({\rm Stab}_{\Aut_{\widetilde{X}}^0}(\widetilde{P}))^0$ 
    is non-trivial.
    \item $\widetilde{P}$ is a point of exact order $m$ on the identity component $C^0$ of the unique curve $C \in |-K_{\widetilde{X}}|$ that contains $\widetilde{P}$.
\end{enumerate}
Moreover, we have $\Aut_{\widetilde{Z}}^0 \cong ({\rm Stab}_{\Aut_{\widetilde{X}}^0}(\widetilde{P}))^0.$
\end{Corollary}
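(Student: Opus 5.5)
The proof combines Proposition \ref{prop: torsionpoint} with Lemmata \ref{lemma Blanchard generalities} and \ref{lemma blowup stabilizer}. Let $\widetilde Z$ have index $m>1$ and $h^0(\widetilde Z,T_{\widetilde Z})\neq 0$. Choose any $m$-section $E$; it is a $(-1)$-curve by the end of Subsection \ref{subsection rational genus one fibrations generalities}, so we may contract it. Proposition \ref{prop: torsionpoint} then identifies the contraction $g\colon\widetilde Z\to\widetilde X$ with the blow-up of a weak del Pezzo surface $\widetilde X$ of degree $1$ in a point $\widetilde P\neq\widetilde Q$. This point lies on a unique $C\in|-K_{\widetilde X}|$: two distinct members of $|-K_{\widetilde X}|$ meet only in $\widetilde Q$, since $K_{\widetilde X}^2=1$. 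Moreover, $\widetilde P$ has exact order $m$ on $C^0$. This gives condition (2) directly.

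Next we compare automorphism schemes. The contraction $g$ is birational with $g_*\mathcal O_{\widetilde Z}\cong\mathcal O_{\widetilde X}$, since $\widetilde X$ is smooth, hence normal. By Blanchard's Lemma \ref{lemma Blanchard generalities}, $g_*$ gives a closed immersion $\Aut^0_{\widetilde Z}\hookrightarrow\Aut^0_{\widetilde X}$. Since $\widetilde X$ is smooth, Lemma \ref{lemma blowup stabilizer} applies with $W=\widetilde X$ and yields $\Aut^0_{\widetilde Z}\cong(\Stab_{\Aut_{\widetilde X}}(\widetilde P))^0$.

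We then replace $\Aut_{\widetilde X}$ by $\Aut^0_{\widetilde X}$. The identity component of $\Stab_{\Aut_{\widetilde X}}(\widetilde P)$ is a connected subgroup scheme containing the identity, so it lies in $\Aut^0_{\widetilde X}$. It therefore coincides with $(\Stab_{\Aut^0_{\widetilde X}}(\widetilde P))^0$, because $\Stab_{\Aut^0_{\widetilde X}}(\widetilde P)=\Stab_{\Aut_{\widetilde X}}(\widetilde P)\cap\Aut^0_{\widetilde X}$ is open and closed in the full stabilizer. This proves the final isomorphism.

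Finally, $h^0(\widetilde Z,T_{\widetilde Z})\neq0$ means that $\Aut^0_{\widetilde Z}$ is non-trivial, since its tangent space at the identity is $H^0(\widetilde Z,T_{\widetilde Z})$. A non-trivial connected group scheme has non-zero tangent space only if it is non-reduced or positive-dimensional; but here we only need the converse direction. Non-zero tangent space forces the group scheme to be non-trivial, which gives condition (1). The closed immersion into $\Aut^0_{\widetilde X}$ then induces an injection on tangent spaces, so $T_e\Aut^0_{\widetilde X}=H^0(\widetilde X,T_{\widetilde X})$ contains a non-zero vector, and hence $h^0(\widetilde X,T_{\widetilde X})\neq0$.

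The only delicate point is this identification of the stabilizer inside $\Aut_{\widetilde X}$ with the one inside $\Aut^0_{\widetilde X}$, and it is resolved by the connectedness argument above. Everything else is a direct combination of the cited results.
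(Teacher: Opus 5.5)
Your proposal is correct and follows the paper's route: the paper states this corollary as an immediate combination of Proposition~\ref{prop: torsionpoint} with Lemmata~\ref{lemma Blanchard generalities} and~\ref{lemma blowup stabilizer}, and you combine them in the same way. Your connectedness argument showing $(\Stab_{\Aut_{\widetilde X}}(\widetilde P))^0=(\Stab_{\Aut^0_{\widetilde X}}(\widetilde P))^0$ fills in a detail the paper leaves implicit, since its statement of Lemma~\ref{lemma blowup stabilizer} uses the two stabilizers interchangeably.
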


Thus, in order to classify rational (quasi-)elliptic surfaces with global vector fields, we have to calculate the stabilizers of the action of $\Aut_{\widetilde{X}}^0$, where ${\widetilde{X}}$ is a weak del Pezzo surface of degree $1$ with global vector fields.

\subsection{Proof of Main Theorem (\hyperref[Main theorem Jacobian]{A})}
\label{subsection proof Jacobian case}
By Corollary \ref{cor: approach for classification jacobian}, Jacobian rational
(quasi-)elliptic surfaces with global vector fields are in bijection with weak del
Pezzo surfaces of degree $1$ with global vector fields, and \linebreak corresponding
surfaces have isomorphic connected automorphism schemes. Hence, degree $1$ part
of the classification in
\cite[Table 6]{WeakDelPezzoGlobalVectorFields} consists, for $p\neq2$,
precisely of the types $1A$--$1D$ and the additional
characteristic $3$ types $1E$--$1I$, hence the cases in Table \ref{Table main theorem Jacobian}.

It remains only to translate their configurations of $(-2)$-curves into Kodaira--N\'eron fiber types according to Subsection \ref{subsection reducible fibers}. Since blowing up the base point $\widetilde{Q}$ adds the affine component to each
anti-canonical configuration, i.e.,
$2D_4, E_6+A_2, E_7+A_1, E_8, D_7, E_7, A_8$
produce, respectively,
$
2{\rm I}_0^*,
{\rm IV}^*+{\rm IV},
{\rm III}^*+{\rm III},
{\rm II}^*,
{\rm I}_3^*,
{\rm III}^*,
{\rm I}_9$ as reducible fibers.

\hfill $\blacksquare$

\section{Strategy and proof of Main Theorem (\hyperref[Main theorem non-Jacobian]{B})}
\label{section strategy proof main theorem}

It remains to prove part (\hyperref[Main theorem non-Jacobian]{B}) of the Main
Theorem. Corollary \ref{cor: approach for classification non-jacobian}
reduces this assertion to the weak del Pezzo surfaces of degree $1$ with global
vector fields. By
\cite[Table 6]{WeakDelPezzoGlobalVectorFields}, the relevant types for
$p\neq2$ are $1A$--$1D$, together with the types $1E$--$1I$ that occur
only in characteristic $3$. We will follow the notation of types in \cite{WeakDelPezzoGlobalVectorFields} throughout.

\begin{Notation}
    \label{notation admissible point}
    Let $\widetilde{P} \in \widetilde{X}$ be as in Notation \ref{Notation all objects Ztilde Xtilde Ytilde, ... points}, i.e. a point of exact order $m>1$ on the identity component $C^0$ of the unique $C \in |-K_{\widetilde{X}}|$ for $\widetilde{X}$ a weak del Pezzo surface of degree $1$. 
    We call $\widetilde{P}$ (resp. its image $P$ on the anti-canonical model $X$) \emph{admissible} if in addition $({\rm Stab}_{\Aut_{\widetilde{X}}^0}(\widetilde{P}))^0 \neq \{ {\rm id} \}$ (resp. $({\rm Stab}_{\Aut_{\widetilde{X}}^0}(P))^0 \neq \{ {\rm id} \}$).
\end{Notation}

\begin{Strategy}
\label{strategy of proof non-Jaco}
We follow Notation \ref{Notation all objects Ztilde Xtilde Ytilde, ... points} and \ref{notation admissible point}.
For each $\widetilde X$ with non-trivial $\Aut_{\widetilde{X}}^0$ of type $1A$--$1I$, we
proceed along the following steps.
\begin{enumerate}
\item \label{strategy equation and action} Determine a Weierstra{\ss} equation for $X\subseteq\mathbb P(1,1,2,3)$ and the action of the subgroup
scheme $\Aut_{\widetilde X}^0\subseteq\Aut_X^0$ on it.
\item \label{strategy admissible points mit stabi} Determine all admissible $\widetilde{P} \in \widetilde{X}$ by computing all admissible $P \in X$. Keep the precise forms of the group schemes $({\rm Stab}_{\Aut_{\widetilde{X}}^0}(P))^0$.
\item \label{strategy moduli} Divide the admissible centers $\widetilde{P}$ into orbits under the full automorphism group $\Aut(\widetilde{X})$ in order to determine the moduli of surfaces ${\rm Bl}_{\widetilde{P}}(\widetilde{X})$. 
\item \label{strategy reducible fibers and identification} Determine the reducible fibers on
$\widetilde Z={\rm Bl}_{\widetilde P}(\widetilde X)$ using the
Kodaira--N\'eron classification, the rank bound in Lemma
\ref{lemma (-2)curves on Ztilde}, smoothness of the fixed locus of linearly reductive group scheme actions \cite[Proposition A.8.10(2)]{ConradGabberPrasad}, and, where useful, the contraction of a
different $(-1)$-curve. If fiber types and $\Aut_{\widetilde{Z}}^0$ coincide, check whether contracting a different $(-1)$-curve
identifies $\widetilde{Z}$ with a ${\rm Bl}_{\widetilde{P}'}(\widetilde{X}')$ for $\widetilde{P}' \in \widetilde{X}'$ admissible on another weak del Pezzo surface of degree $1$ of types $1A$--$1I$. 
\end{enumerate}
\end{Strategy}

 \begin{proof}[Proof of validity of Strategy~\ref{strategy of proof non-Jaco}]
Let $\widetilde Z$ be a non-Jacobian rational (quasi-)elliptic surface with
$h^0(\widetilde Z,T_{\widetilde Z})\neq0$, equivalently with
$\Aut_{\widetilde Z}^0\neq\{1\}$. By Corollary~\ref{cor: approach for classification non-jacobian},
$\widetilde Z={\rm Bl}_{\widetilde P}(\widetilde X)$ for a weak del Pezzo surface $\widetilde X$
of degree $1$ with $\Aut_{\widetilde X}^0\neq\{1\}$, where $\widetilde P\in C^0$ has exact
order $m>1$ and $\Aut_{\widetilde Z}^0\cong(\Stab_{\Aut_{\widetilde X}^0}(\widetilde P))^0$.
By \cite[Table 6]{WeakDelPezzoGlobalVectorFields}, the possibilities for $\widetilde X$ in
odd characteristic are exactly the types \hyperref[Tab1A]{$1A$}--\hyperref[Tab1D]{$1D$},
together with \hyperref[Tab1E]{$1E$}--\hyperref[Tab1I]{$1I$} in characteristic $3$. Hence
the strategy runs over every candidate.

Let $\pi\colon\widetilde X\longrightarrow X$ be the anti-canonical contraction and
$P=\pi(\widetilde P)$. Since $\widetilde P$ has exact order $m>1$ it differs from the neutral
element $\widetilde Q$, and as $\widetilde P\in C^0$ it lies neither on a $(-2)$-curve
contracted by $\pi$ nor at a singular point of $C$. Thus $P$ is neither the image $Q$ of the
anti-canonical base point nor a singular point of $X$, so $\pi$ is an isomorphism in a
neighborhood of $\widetilde P$. Being $\Aut_{\widetilde X}^0$-equivariant, $\pi$ identifies
the stabilizers,
\[
\Aut_{\widetilde Z}^0
\cong
\left(\Stab_{\Aut_{\widetilde X}^0}(\widetilde P)\right)^0
\cong
\left(\Stab_{\Aut_{\widetilde X}^0}(P)\right)^0,
\]
where in the last term $\Aut_{\widetilde X}^0$ is viewed as the liftable subgroup scheme of
$\Aut_X^0$. So the explicit equations and liftable actions of Step
(\ref{strategy equation and action}) let us compute all admissible centers and their
connected stabilizers directly on $X$; conversely, the Halphen construction recalled after
Proposition~\ref{prop: torsionpoint} attaches to each such torsion point its non-Jacobian
rational (quasi-)elliptic surface, and the displayed isomorphism decides whether it carries
global vector fields.

It remains to account for repetitions. For fixed $\widetilde X$, points in the same
$\Aut(\widetilde X)$-orbit give isomorphic blow-ups, and an isomorphism of two blow-ups
fixing the distinguished exceptional curve descends to the pointed weak del Pezzo surfaces,
so it is detected in Step~\ref{strategy admissible points mit stabi} and the subsequent orbit
calculation. If instead the exceptional curve is sent to a different $(-1)$-curve, contracting
the latter yields another weak del Pezzo surface of degree $1$, again with non-trivial
connected automorphism scheme by Blanchard's Lemma and hence again of one of the types
$1A$--$1I$. These are precisely the alternative presentations checked in the final step of the
strategy. Hence the strategy produces all non-Jacobian rational (quasi-)elliptic surfaces with
global vector fields and accounts for all repetitions among them.
\end{proof}

\noindent
\textbf{Structure of the proof:}
The proof follows the above strategy and employs Notations \ref{Notation all objects Ztilde Xtilde Ytilde, ... points} and \ref{notation admissible point}.
\begin{itemize}
\item
    Step (\ref{strategy equation and action}) will be done in Subsection \ref{subsection equations and liftable actions} (see Proposition \ref{prop: char3 equations and liftable actions}).
    \item Each of the nine types will be studied separately in Subsections \ref{subsection 1A}, \ref{subsection 1B}, \ref{subsection 1C}, \ref{subsection 1D}, \ref{subsection 1E char3 revised}, \ref{subsection 1F char3 revised}, \ref{subsection 1G char3 revised}, \ref{subsection 1H char3 revised}, and \ref{subsection 1I char3 revised}.
\item
    Step (\ref{strategy admissible points mit stabi}) resp. Step (\ref{strategy moduli}) is carried out in the first Proposition and following Corollary in each of those subsections. 
    \item Step (\ref{strategy reducible fibers and identification}) is then discussed and summarized in a last Corollary in each of those subsections.
    \item In all configuration figures for the types $1A$--$1I$, thick curves denote
$(-2)$-curves and thin curves denote $(-1)$-curves. The anti-canonical base
point $\widetilde{Q}$, $Q$, the zero-section, and their images are gray. The admissible point
$\widetilde P$, its exceptional curve, and their images are red. 
 Unless stated otherwise, a five-term diagram of figures is arranged as follows:
$$
\xymatrix{
  \widetilde{Z} \ar[r]
  & \widetilde{X} \ar[d]
  & \widetilde{Y} \ar[l] \ar[d] 
  \\
  & X
  & Y \ar[l]
} 
$$
On $\widetilde X$, the figures display all
negative curves; on $\widetilde Y$, they display all fiber components and the
relevant visible sections. On $\widetilde Z$, they are complete for the
$(-2)$-curves, equivalently for the reducible fibers, but not necessarily for
all $(-1)$-curves. Intersection multiplicities $1$ and $2$ are read from the
drawing, while larger intersection multiplicities are written next to the
point.
\end{itemize}

\subsection{Equations and liftable actions on anti-canonical models}
\label{subsection equations and liftable actions}
\label{subsection char3 equations and actions}

For cases \hyperref[Tab1A]{$1A$}--\hyperref[Tab1D]{$1D$} one has
$\Aut_{\widetilde X}^0\cong\mathbb G_m $, which is smooth; hence, since smooth group scheme actions lift to the minimal resolution, the faithful
$\mathbb G_m$-actions on respective $X$ recorded in the below Table \ref{Table four families} automatically lift to $\widetilde X$
and coincide with the respective $\Aut_{\widetilde X}^0$-action by comparison with
\cite[Table 6]{WeakDelPezzoGlobalVectorFields} and Blanchard's Lemma. The
equations are the characteristic-free simplifications of those in
\cite{ExtremalChar0,ExtremalCharpII,QuasiEllipticChar3,QuasiEllipticChar2, WhichRDPsOccurOnDelPezzoSurfaces}.
Those equations, actions and configurations of negative curves are summarized in the following Table \ref{Table four families}.

\begin{table}[H] \renewcommand{\arraystretch}{1.35}
\begin{adjustbox}{center}
 $
 \begin{array}{|c|c|c|c|c|}
 \hline
 \text{Case}
& \begin{tabular}{c}
$(-2)$-curves \\
on $\widetilde{X}$
\end{tabular}
& \begin{tabular}{c}
Negative curves on $\widetilde{X}$
\end{tabular}
& \begin{array}{c}
\text{Action of } \Aut_{\widetilde{X}}^0 \text{ on} \\
\text{the Weierstra{{\ss}} equation of $X$}
\end{array}
& \Char (k)
\\
\hline \hline
{1A} \label{Tab1A}
& 2D_4 
&  \begin{array}{c}
\addstackgap[7pt]{{\includegraphics[width=0.29\textwidth]{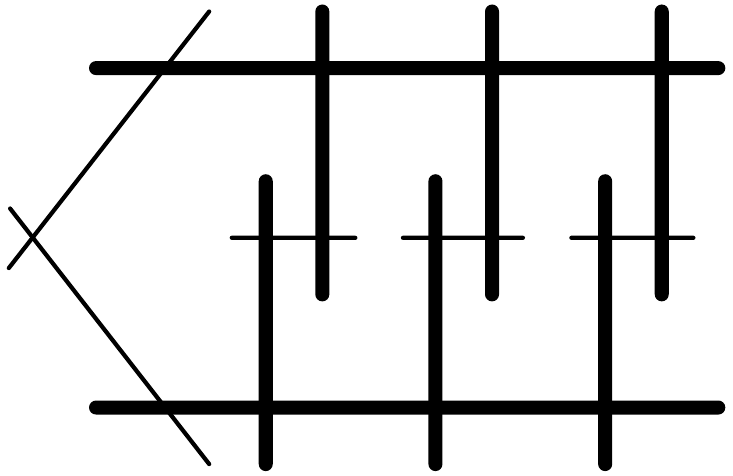}}} 
\end{array} 
& \begin{array}{c}
\mathbb{G}_m: 
{[\lambda s:  \lambda^{-1} t: x:y]} \\
\text{acting on}\\
y^2 = x^3 + a st x^2 + s^2t^2x \\
\text{ where } a \in k \text{ and } a^2 \neq 4 
\end{array}
& \text{any}
\\
\hline
{1B} \label{Tab1B}
& E_6 + A_2 
&  \begin{array}{c}
\addstackgap[7pt]{{\includegraphics[width=0.22\textwidth]{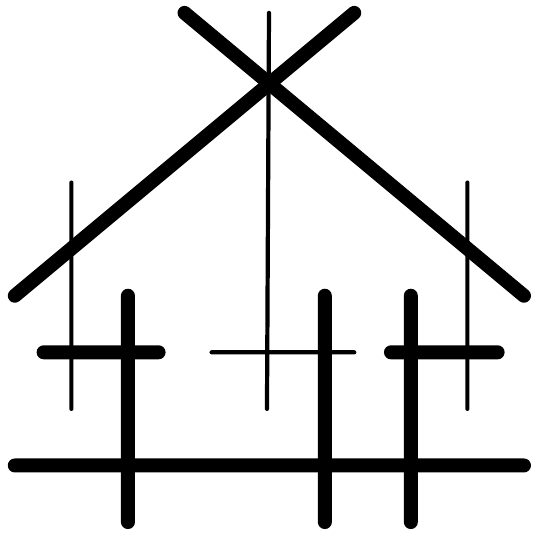}}}
\end{array} 
& \begin{array}{c}
\mathbb{G}_m: 
{[\lambda^2 s:  \lambda^{-1} t: x:y]} \\
\text{acting on}\\
y^2 +st^2y = x^3  
\end{array}
& \text{any}
\\
\hline
{1C} \label{Tab1C}
& E_7 + A_1
& \begin{array}{c}
\addstackgap[7pt]{{\includegraphics[width=0.22\textwidth]{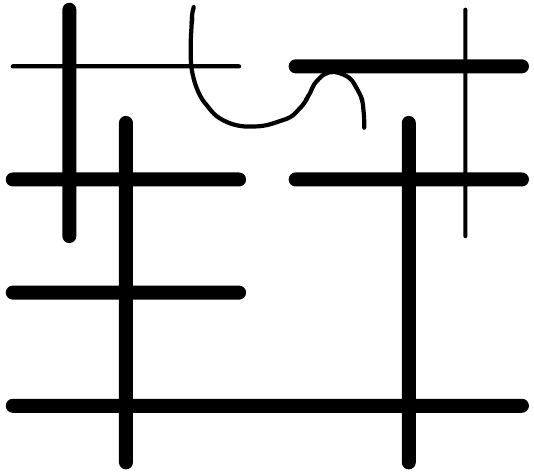}}}
\end{array} 
& \begin{array}{c}
\mathbb{G}_m: 
{[\lambda^3 s:  \lambda^{-1} t: x:y]} \\
\text{acting on}\\
y^2 = x^3 + st^3x 
\end{array}
& \neq 2
\\
\hline
{1D} \label{Tab1D}
& E_8
& \begin{array}{c}
\addstackgap[7pt]{{\includegraphics[width=0.36\textwidth]{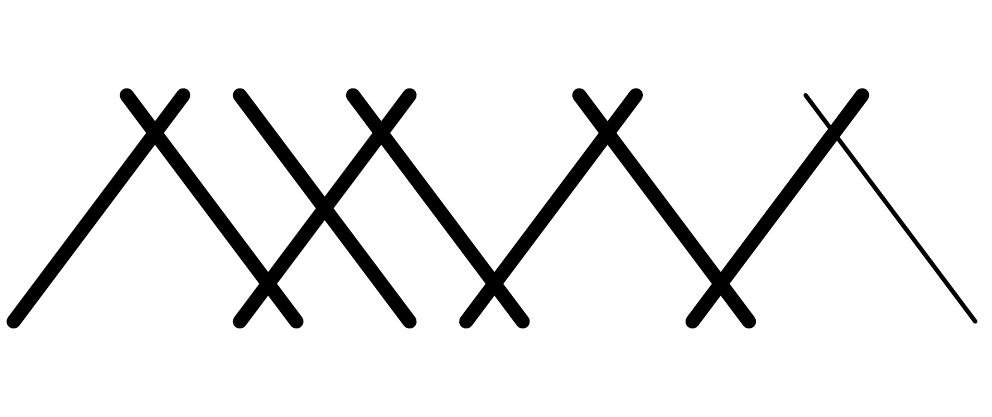}}}
\end{array} 
& \begin{array}{c}
\mathbb{G}_m: 
{[\lambda^5 s:  \lambda^{-1} t: x:y]} \\
\text{acting on} \\
y^2= x^3 + st^5 
\end{array}
& \neq 2,3
\\
\hline
\end{array}
$
\caption{Equations for $X$ and $\Aut_{\widetilde{X}}^0$-actions for cases $1A$--$1D$} \label{Table four families}
\end{adjustbox}
\end{table}

In characteristic $3$, the connected automorphism scheme $\Aut_X^0$ can be
non-reduced and the closed immersion
$\Aut_{\widetilde X}^0\hookrightarrow\Aut_X^0$ induced by the minimal
resolution may be strict, as studied in
\cite{RDPDelPezzoGlobalVectorFields}. Hence determining $\Aut_X^0$ (as done in \cite{RDPDelPezzoGlobalVectorFields}) is not
enough: one must isolate the subgroup scheme whose action lifts to
$\widetilde X$. The next proposition records the equations and liftable actions
for the additional cases \hyperref[Tab1E]{$1E$}--\hyperref[Tab1I]{$1I$}.

\begin{Proposition} \label{prop: char3 equations and liftable actions}
Let $\widetilde X$ be a weak del Pezzo surface of degree $1$ with global vector of type \hyperref[Tab1E]{$1E$}, \hyperref[Tab1F]{$1F$}, \hyperref[Tab1G]{$1G$}, \hyperref[Tab1H]{$1H$}, or \hyperref[Tab1I]{$1I$}. Then the configuration of negative curves on $\widetilde X$, the Weierstra{\ss} equation for its anti-canonical model $X$, and the action of $\Aut_{\widetilde X}^0$ on $X$ are as given in Table \ref{Table char3 equations and liftable actions}.
\end{Proposition}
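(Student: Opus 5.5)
For each of the five characteristic-$3$ types, I will take the input from \cite{WeakDelPezzoGlobalVectorFields} and \cite{RDPDelPezzoGlobalVectorFields} and verify it rather than recompute from scratch. From \cite[Table 6]{WeakDelPezzoGlobalVectorFields} I take the configuration of $(-2)$-curves on $\widetilde X$: $D_7$ for $1E$, $E_7$ for $1F$, $A_8$ for $1G$, and $E_8$ for $1H$ and $1I$. The same table gives the abstract group scheme $\Aut_{\widetilde X}^0$, namely $\mu_3$, $\mu_3$, $\mu_3$, $\mathbb G_a$ and $\mathbb G_a\rtimes\mathbb G_m$. The negative curves on $\widetilde X$ then follow from this configuration together with the $(-1)$-curves. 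I will obtain the latter by blowing up $\widetilde Q$ and reading off the fibers of $\widetilde Y$: the $(-1)$-curves on $\widetilde X$ are the images of the sections of $\widetilde Y$ other than the zero-section, and the Mordell--Weil group is read off from the extremal (quasi-)elliptic classification in characteristic $3$ \cite{EllipticChar3,QuasiEllipticChar3}.

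Next I fix Weierstra{\ss} equations in $\mathbb P(1,1,2,3)$. I will use Tate's algorithm \cite{TateAlgo} to normalize each equation so that the $\widetilde{\Sigma}$-fiber sits over $t=0$ and the second singular fiber (type II, if present) over $s=0$. For $1E$, $1F$ and $1G$ the result is a one-parameter-free equation in which a coordinate change $s\mapsto s+\varepsilon t$ or $x\mapsto x+\varepsilon t^2$ is only infinitesimally a symmetry in characteristic $3$. For $1H$ I expect $y^2=x^3+x^2t^2+st^5$ or similar, and for $1I$ the quasi-elliptic $y^2=x^3+st^5$. For each equation I then write down $\Aut_X^0$ as computed in \cite{RDPDelPezzoGlobalVectorFields}, say as a subgroup scheme of the automorphism group of weighted projective space preserving the equation. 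Typical candidates are
$$[s:t:x:y]\mapsto[\lambda^{a}s+\epsilon t:\lambda^{b}t:x+\epsilon' t^2:y]$$
with $\epsilon^3=0$ or similar infinitesimal conditions.

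The liftable subgroup is the crux. By Blanchard's Lemma, $\Aut_{\widetilde X}^0\hookrightarrow\Aut_X^0$ is a closed immersion, and I know its abstract isomorphism type. So it suffices to exhibit a subgroup scheme $G\subseteq\Aut_X^0$ of the right type that lifts to $\widetilde X$, and to show that any strictly larger subgroup fails to lift; the target size then forces equality. To prove liftability I will check that $G$ preserves the singular point and every ideal blown up in the successive resolution. Equivalently, I check that the corresponding vector field on the local ring of the RDP preserves the maximal ideal and, inductively, the maximal ideals at the singular points of each partial blow-up. For tame RDPs (where taut-ness holds) this reduces to fixing the singular point. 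For non-taut RDPs in characteristic $3$ ($E_6^1$, $E_7^1$, $E_8^1$, $E_8^2$) I must check via explicit local charts. For the $\mu_3$ cases, linear reductivity helps: the fixed locus is smooth \cite[Proposition A.8.10(2)]{ConradGabberPrasad}, and diagonalizable actions lift to the minimal resolution via the equivariant blow-up. So for these cases I only need to locate a $\mu_3\subseteq\Aut_X^0$ with the correct weights fixing all singular points.

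The main obstacle is $1H$ versus $1I$, where $\Aut_{\widetilde X}^0$ is unipotent and $\mathbb G_a$ need not lift just because it fixes the singular point. There I will compute the vector field $D=\partial_s$ (or its twisted analogue) and verify chart by chart along the $E_8$ resolution that it stays regular on the exceptional divisors. If the full $\mathbb G_a$ of translations fails, I will try the subgroup obtained by coupling the $s$-translation with an $x$- or $y$-shift of degree compatible with the equation.
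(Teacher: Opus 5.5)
Your skeleton is the paper's: configurations and the abstract type of $\Aut_{\widetilde X}^0$ come from \cite[Table 6]{WeakDelPezzoGlobalVectorFields}, and equations and $\Aut_X^0$ come from Tate's algorithm and \cite[Table 10]{RDPDelPezzoGlobalVectorFields}. The image of Blanchard's closed immersion is then identified by its abstract type. The gap is the step certifying liftability in the $\mu_3$ cases.

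It is not true that a diagonalizable group scheme fixing the singular point lifts ``via the equivariant blow-up''. It lifts to the first blow-up, but nothing forces the new singular points on the exceptional curve to be fixed scheme-theoretically. For instance, in characteristic $3$, let $\mu_3$ act by $x\mapsto\lambda x$ on the $E_6^0$-point $z^2+(x+y)^3+y^4=0$. This action fixes the origin. In the chart $x=yx_1$, $z=yz_1$ the strict transform is $z_1^2+y(x_1+1)^3+y^2=0$, and its $A_5$-point $x_1=-1$ is moved by $x_1\mapsto\lambda x_1$. So this action does not lift.

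Tautness is also the wrong dividing line. Your list of non-taut types omits the $E_n^0$, and the singular point in case $1F$ is of type $E_7^0$, which your case distinction sends to the ``automatic'' branch without justification. So as written you have not shown that the $\mu_3$ you locate in $1F$ lifts.

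The missing idea removes the need for any lifting check in $1F$ and $1G$:
\begin{enumerate}
\item $\Aut_{\widetilde X}^0$ preserves the exceptional $(-2)$-curves and the contraction is equivariant, so its image lies in $\Stab_{\Aut_X^0}(P)^0$.
\item The $\alpha_3$- resp. $\alpha_9$-factor of $\Aut_X^0$ moves $P=[1:0:0:0]$ to $[1:\epsilon:0:0]$ resp. $[1:\epsilon:\epsilon^3:0]$. Hence this stabilizer is $\mu_3$.
\item Since $\Aut_{\widetilde X}^0\cong\mu_3$, equality follows.
\end{enumerate}
A genuine lifting statement is needed only for $1E$, and there it comes from the equivariance of $D_7$ in characteristic $3$ in the sense of \cite{RDPDelPezzoGlobalVectorFields}.

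Some secondary points. Your concern in the unipotent cases is misplaced: actions of smooth connected group schemes such as $\mathbb G_a$ and $\mathbb G_a\rtimes\mathbb G_m$ always lift to the minimal resolution, so $1H$ and $1I$ need no chart computation. Your tentative $1H$ equation $y^2=x^3+t^2x^2+st^5$ has discriminant $-st^{11}$, i.e. fibers ${\rm II}^*+{\rm I}_1$. So it is not of type $1H$, whose Jacobian surface has ${\rm II}^*$ as its only singular fiber (e.g. $y^2=x^3+s^4x+s^3t^3$).

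Finally, the $(-1)$-curves on $\widetilde X$ are not the images of all non-zero sections. They are the images of the sections disjoint from the zero-section, together with the images of the components of reducible fibers meeting the zero-section (such as the curve $B$ in $1F$--$1I$). Moreover, $1E$ and $1F$ have Mordell--Weil rank $1$, so the extremal classification does not cover them. Take the configurations directly from \cite[Table 6]{WeakDelPezzoGlobalVectorFields}, as the paper does.
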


\begin{table}[H] \renewcommand{\arraystretch}{1.35}
\begin{adjustbox}{center}
$
\begin{array}{|c|c|c|c|}
\hline
\text{Case}
& \begin{tabular}{c}
$(-2)$-curves \\
on $\widetilde{X}$
\end{tabular}
& \begin{tabular}{c}
Negative curves on $\widetilde{X}$
\end{tabular}
& \begin{array}{c}
\text{Action of } \Aut_{\widetilde{X}}^0 \text{ on} \\
\text{the Weierstra{{\ss}} equation of $X$}
\end{array}
\\
\hline \hline
{1E} \label{Tab1E}
& D_7
& \begin{array}{c}
\addstackgap[7pt]{{\includegraphics[width=0.31\textwidth]{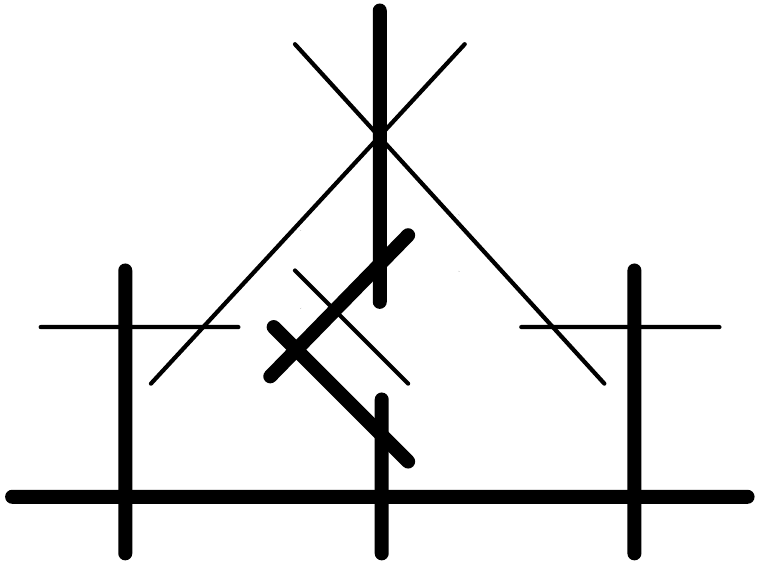}}}
\end{array}
& \begin{array}{c}
\mu_3:
 [s:\lambda t:\lambda x:y]\\
\text{acting on}\\
y^2=x^3+stx^2+t^6
\end{array}
\\
\hline
{1F} \label{Tab1F}
& E_7
& \begin{array}{c}
\addstackgap[7pt]{{\includegraphics[width=0.30\textwidth]{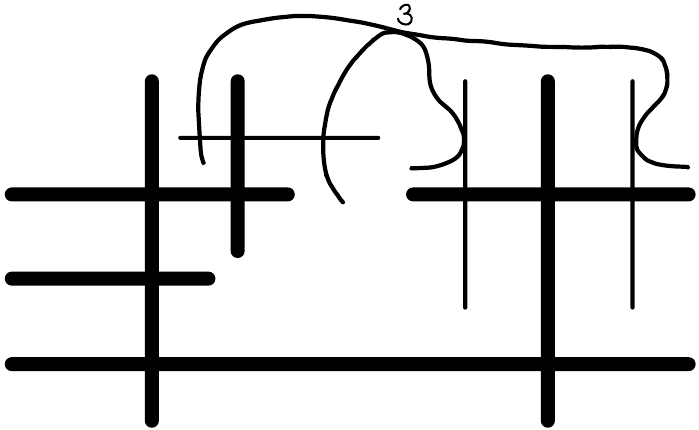}}}
\end{array}
& \begin{array}{c}
\mu_3:
 [s:\lambda t:x:y]\\
\text{acting on}\\
y^2=x^3+st^3x+t^6
\end{array}
\\
\hline
{1G} \label{Tab1G}
& A_8
& \begin{array}{c}
\addstackgap[7pt]{{\includegraphics[width=0.23\textwidth]{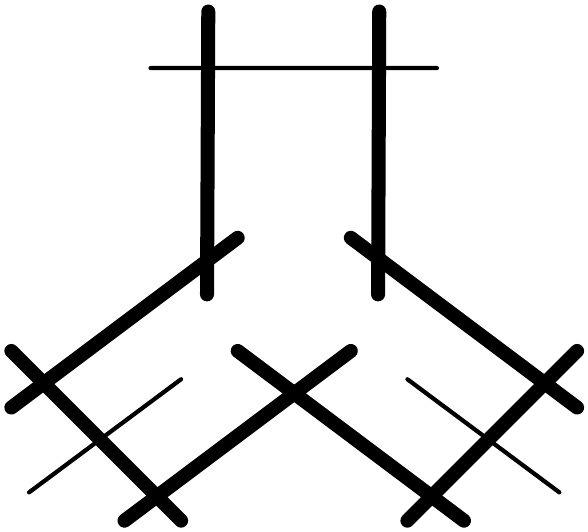}}}
\end{array}
& \begin{array}{c}
\mu_3:
 [s:\lambda t:x:y]\\
\text{acting on}\\
y^2=x^3+s^2x^2+st^3x+t^6
\end{array}
\\
\hline
{1H} \label{Tab1H}
& E_8
& \begin{array}{c}
\addstackgap[7pt]{{\includegraphics[width=0.36\textwidth]{1D1H1I1S1T=aaaaccaa.pdf}}}
\end{array}
& \begin{array}{c}
\mathbb G_a:
[s:t-(a^3+a)s:x+a^3s^2:y]\\
\text{acting on}\\
y^2=x^3+s^4x+s^3t^3
\end{array}
\\
\hline
{1I} \label{Tab1I}
& E_8
& \begin{array}{c}
\addstackgap[7pt]{{\includegraphics[width=0.36\textwidth]{1D1H1I1S1T=aaaaccaa.pdf}}}
\end{array}
& \begin{array}{c}
\mathbb G_a:
 [s:t-a^3s:x+as^2:y]\\
\mathbb G_m:
[\lambda s:\lambda^{-5}t:x:y]\\
\text{acting on}\\
y^2=x^3+s^5t
\end{array}
\\
\hline
\end{array}
$
\caption{Equations for $X$ and $\Aut_{\widetilde{X}}^0$-actions for cases $1E$--$1I$ ($\Char(k)=p=3$)} \label{Table char3 equations and liftable actions}
\end{adjustbox}
\end{table}

\begin{proof}
Recall that these cases only exist in characteristic $3$. The first three columns of Table \ref{Table char3 equations and liftable actions} follow from the classification of weak del Pezzo surfaces with global vector fields in \cite[Table 6]{WeakDelPezzoGlobalVectorFields}. Let us now focus on identifying the correct Weierstra{\ss} equations for the anti-canonical models $X\subset \bbP(1,1,2,3)$ and the induced $\Aut_{\widetilde X}^0$-actions on $X$.

By Blanchard's Lemma, applied to the minimal resolution $\widetilde X\to X$, there is a closed immersion $\Aut_{\widetilde X}^0\hookrightarrow \Aut_X^0$. This immersion may be strict, as studied in \cite{RDPDelPezzoGlobalVectorFields}. Thus, when working on $X$, we have to single out the subgroup scheme of $\Aut_X^0$ that actually comes from the minimal resolution $\widetilde X$. We also note that, by the classification in \cite[Table 6]{WeakDelPezzoGlobalVectorFields}, each of the weak del Pezzo surfaces of types \hyperref[Tab1E]{$1E$}--\hyperref[Tab1I]{$1I$} is unique up to isomorphism, and so is its anti-canonical model. Therefore, once we find a Weierstra{\ss} equation with the prescribed singularity type, together with a connected group scheme action that lifts to the minimal resolution and agrees with the group scheme in \cite[Table 6]{WeakDelPezzoGlobalVectorFields}, this identifies the corresponding case.

In the cases \hyperref[Tab1F]{$1F$}, \hyperref[Tab1G]{$1G$}, \hyperref[Tab1H]{$1H$}, and \hyperref[Tab1I]{$1I$}, candidates for the equations of $X$ are contained in \cite[Table 10]{RDPDelPezzoGlobalVectorFields}, together with descriptions of $\Aut_X^0$.
 In cases \hyperref[Tab1H]{$1H$} resp. \hyperref[Tab1I]{$1I$}, Table \ref{Table char3 equations and liftable actions} shows actions by $\mathbb G_a$ resp. $\mathbb G_a\rtimes \mathbb G_m$, which are in particular smooth group schemes and hence these actions lift to the minimal resolution $\widetilde X$ and a comparison with \cite[Table 6]{WeakDelPezzoGlobalVectorFields} shows that these are the desired $\Aut_{\widetilde X}^0$-actions on $X$. \footnote{Note that in case \hyperref[Tab1H]{$1H$}, a typo in the $\mathbb{G}_a$-action in the last line of Table 10 in \cite{RDPDelPezzoGlobalVectorFields} was corrected.}
Let us now identify the liftable subgroup schemes in cases \hyperref[Tab1F]{$1F$} and \hyperref[Tab1G]{$1G$}: 

In case \hyperref[Tab1F]{$1F$}, \cite[Table 10]{RDPDelPezzoGlobalVectorFields} gives $\Aut_X^0=\alpha_3\rtimes \mu_3$, acting by $[s:t:x:y]\mapsto [s:\epsilon s+\lambda t:x:y]$ with $\epsilon^3=0$ and $\lambda^3=1$. The $E_7^0$-singularity is at $P=[1:0:0:0]$, and we see that the $\alpha_3$-action does not fix $P$. Therefore every liftable subgroup scheme is contained in $\Stab_{\Aut_X^0}(P)^0=\mu_3$. Since \cite[Table 6]{WeakDelPezzoGlobalVectorFields} gives $\Aut_{\widetilde X}^0\cong \mu_3$ for type \hyperref[Tab1F]{$1F$}, the closed immersion $\Aut_{\widetilde X}^0\hookrightarrow \Stab_{\Aut_X^0}(P)^0=\mu_3$ is an isomorphism. This gives the action displayed in Table \ref{Table char3 equations and liftable actions}.

The same argument applies in case \hyperref[Tab1G]{$1G$}. Here, $\Aut_X^0=\alpha_9\rtimes \mu_3$ acts by $[s:t:x:y]\mapsto [s:\epsilon s+\lambda t:x+\epsilon^3s^2:y]$ where $\epsilon^9=0$, $\lambda^3=1$, and the $A_8$-singularity at $P=[1:0:0:0]$. We see that the stabilizer subgroup scheme of $P$ is $\mu_3$, and hence the closed immersion $\Aut_{\widetilde X}^0\hookrightarrow \Stab_{\Aut_X^0}(P)^0=\mu_3$ is an isomorphism by comparison with \cite[Table 6]{WeakDelPezzoGlobalVectorFields}. Thus the liftable action is precisely the $\mu_3$-action shown in Table \ref{Table char3 equations and liftable actions}.

It remains to treat case \hyperref[Tab1E]{$1E$}: This case does not appear in \cite[Table 10]{RDPDelPezzoGlobalVectorFields}, because here no additional vector fields occur on the anti-canonical model which fail to lift to the minimal resolution. The equation $y^2=x^3+stx^2+t^6$ is obtained by applying Tate's algorithm \cite{TateAlgo} (see also \cite[Subsection 4.1]{WhichRDPsOccurOnDelPezzoSurfaces}) to a Weierstra{\ss} equation whose associated Jacobian surface has a fiber of type $I_3^*$; this corresponds to a $D_7$-singularity on the anti-canonical model $X$, by the usual correspondence between Kodaira fiber types and rational double points recalled in Table \ref{Table KodairaNeron2nd and RDPs mit tilde}. For the above candidate equation, the discriminant is $-s^3t^9$, the $I_3^*$-fiber lies over $t=0$ and, also by Tate's algorithm we see that the fiber over $s=0$ is irreducible of type $II$. Hence the $X$ given by the above equation has only one singularity at $[1:0:0:0]$, which is of type $D_7$. The displayed $\mu_3$-action $[s:t:x:y]\mapsto [s:\lambda t:\lambda x:y]$, where $\lambda^3=1$, preserves the equation and fixes the $D_7$-singularity. Since $D_7$ is equivariant in characteristic $3$, this action lifts to $\widetilde X$ (compare \cite[Table 10]{RDPDelPezzoGlobalVectorFields}). Finally, by the uniqueness of the weak del Pezzo surface of type \hyperref[Tab1E]{$1E$}, this is the required action of $\Aut_{\widetilde X}^0$ on $X$, and finishes the proof of all entries of Table \ref{Table char3 equations and liftable actions}.
\end{proof}

\begin{Remark}\label{remark characteristic two deferred}
The geometric reduction and the stabilizer strategy work verbatim in
characteristic $2$. What is missing there (for the remaining characteristic $2$ types $1J$--$1T$) is a uniform list of convenient
Weierstra{\ss} equations for the anti-canonical models together with the
subgroup schemes of $\Aut_X^0$ that lift to the minimal resolutions (in
\cite{RDPDelPezzoGlobalVectorFields} these were determined only in odd
characteristic). The characteristic $2$ case will be treated in an upcoming article.
\end{Remark}

The following nine subsections will conclude the proof of Main Theorem (\hyperref[Main theorem non-Jacobian]{B}), following the structure as described above and in Strategy \ref{strategy of proof non-Jaco}.

\pagebreak

\subsection{Case \hyperref[Tab1A]{$1A$}} \label{subsection 1A}

These surfaces occur for arbitrary $\Char(k)=p \geq 0$.

\begin{Proposition} \label{prop: 1A}
Let $\widetilde{X}$ be of type \hyperref[Tab1A]{$1A$}. 
\begin{enumerate}
    \item[(0)] If $p \neq 2$, then there are no admissible $\widetilde{P} \in \widetilde{X}$.
    \item\label{prop case 1A p=2 II} If $p = 2$, then $\widetilde{P}$ is admissible if and only if $C$ is of type ${\rm II}$ 
    {and $\widetilde{P}$ lies on a $(-1)$-curve}.
    Moreover, then $({\rm Stab}_{\Aut_{\widetilde{X}}^0}(\widetilde{P}))^0 \cong \mu_2$ and $m=2$.
\end{enumerate}
\end{Proposition}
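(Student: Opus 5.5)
We work on the anti-canonical model $X\colon y^2=x^3+ast\,x^2+s^2t^2x$ with the $\mathbb G_m$-action $[\lambda s:\lambda^{-1}t:x:y]$ from Table~\ref{Table four families}. By the proof of validity of Strategy~\ref{strategy of proof non-Jaco}, admissibility of $\widetilde P$ is equivalent to admissibility of $P=\pi(\widetilde P)$. So we must determine the points $P\in X$, $P\neq Q$, lying in the smooth locus of some $C\in|-K_{\widetilde X}|$, at which $(\Stab_{\mathbb G_m}(P))^0$ is non-trivial.

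The connected non-trivial subgroup schemes of $\mathbb G_m$ are $\mathbb G_m$ itself and, when $p>0$, the $\mu_{p^r}$. Since $\mu_{p^r}\supseteq\mu_p$, a point is admissible if and only if it is fixed by $\mu_p$ (for $p=0$, by $\mathbb G_m$).

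First, compute the fixed loci. Write $P=[s_0:t_0:x_0:y_0]$ in $\mathbb P(1,1,2,3)$. The point $\lambda\cdot P$ equals $P$ if and only if there is $\mu$ with $\lambda s_0=\mu s_0$, $\lambda^{-1}t_0=\mu t_0$, $x_0=\mu^2x_0$ and $y_0=\mu^3y_0$.

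If $s_0t_0\neq0$, we need $\lambda=\mu=\lambda^{-1}$, hence $\lambda^2=1$. So the connected stabilizer is trivial unless $p=2$, where it is $\mu_2$.

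If $t_0=0$ and $s_0\neq0$, the equation gives $y_0^2=x_0^3$. Taking $\mu=\lambda$ forces $\lambda^2x_0=x_0$ and $\lambda^3y_0=y_0$. On $\mu_p$ with $p$ odd this forces $x_0=y_0=0$, so $P=[1:0:0:0]$. The case $s_0=0$ is symmetric and gives $P=[0:1:0:0]$.

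These two points are precisely the $D_4$-singularities of $X$, so they are not admissible. A remaining check is whether $Q=[s:t:0:1]$-type points, or points with $s_0=t_0=0$, occur: such a point is $Q$ itself, which is excluded.

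Hence, for $p\neq2$ (including $p=0$), no $P$ has a non-trivial connected stabilizer, which proves (0).

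For $p=2$, every point with $s_0t_0\neq0$ has stabilizer containing $\mu_2$. It remains to impose the torsion condition: $P\in C^0$ must have exact order $m>1$ and $\widetilde Z$ must be a genus-one surface with no $(-1)$-curve in a fiber. The curves $C\in|-K_{\widetilde X}|$ are the fibers $t=cs$, and the $\mu_2$-action acts on this pencil by $c\mapsto\lambda^{-2}c$, which is trivial on $\mu_2$.

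The plan is to use that $\mu_2$ acts on each $C$ fixing $Q$. A fixed point of a linearly reductive action on $C^0$, combined with the group structure, should force $P$ to be $2$-torsion, giving $m=2$ and stabilizer exactly $\mu_2$. I would then use Tate's algorithm on $y^2=x^3+ac\,s^2x^2+c^2s^4x$ to show that the condition that the resulting blow-up is genus-one with a global vector field singles out $C$ of type ${\rm II}$, with $P$ on a $(-1)$-curve.

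This $p=2$ torsion and fiber-type analysis is the main obstacle; I expect it to require explicit computation of the $2$-torsion on the cuspidal or additive fiber. However, only part (0) is needed for the odd-characteristic classification, and that part is the elementary fixed-point computation above.
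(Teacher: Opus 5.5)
Your part (0) is correct and follows the paper's route. You compute the stabilizer chart by chart. For $s_0t_0\neq0$ the stabilizer lies in $\mu_2$, which is \'etale for $p\neq2$. Over $st=0$ only the two $D_4$-singularities survive. Your preliminary reduction to $\mu_p$-fixed points is a harmless shortcut. Part (1), however, is not proved, and the one concrete claim you make there is false.

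In the case $s_0t_0\neq0$ you only used the $s$- and $t$-coordinates. With $\mu=\lambda$ and $\lambda^2=1$, two conditions remain. The first, $x_0=\lambda^2x_0$, is automatic. The second, $y_0=\lambda^3y_0=\lambda y_0$, forces $\lambda=1$ whenever $y_0\neq0$. So for $p=2$ a point with $y_0\neq0$ has trivial stabilizer. The points with non-trivial connected stabilizer are exactly $P=[1:t:x:0]$ with $t\neq0$ and $x(x^2+atx+t^2)=0$. These are the three points where the member over $[1:t]$ meets the curves $x=0$, $x=bst$ and $x=(a+b)st$, where $b^2+ab+1=0$. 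Their strict transforms are the $(-1)$-curves of $\widetilde X$ not contained in anti-canonical members. This is precisely where the condition that $\widetilde P$ lie on a $(-1)$-curve comes from; your claim would instead make every point of these members admissible.

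You also did not treat the loci $s_0=0$ and $t_0=0$ for $p=2$, since your argument there assumed $p$ odd. The conditions $\lambda^2x=x$ and $\lambda^3y=y$, together with $y^2=x^3$ on these fibers, show in every characteristic that only the $D_4$-points have non-trivial stabilizer there.

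The remaining assertions need neither Tate's algorithm nor a linear-reductivity argument. Your plan of ``singling out'' type ${\rm II}$ by a condition on the blow-up inverts the logic: in characteristic $2$ the equation $y^2=x^3+astx^2+s^2t^2x$ is the Weierstra{\ss} equation of a quasi-elliptic fibration. Hence every member over $[1:t]$ with $t\neq0$ is irreducible and cuspidal, with cusp at $x=st$ and $y^2=as^3t^3\neq0$ (note $a\neq0$ because $a^2\neq4=0$). So every such member is of type ${\rm II}$ with $C^0\cong\mathbb G_a$. The three points above have $y=0$, so they avoid the cusp, and they differ from the neutral element. Every non-zero point of $\mathbb G_a$ in characteristic $2$ has exact order $2$. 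Corollary~\ref{cor: approach for classification non-jacobian} then gives $m=2$ and connected stabilizer $\mu_2$.
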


\begin{proof}
$X$ is given by
$y^2= x^3 + astx^2 + s^2t^2x$ with $a \in k$ and $a^2 \neq 4$
 and $\Aut_{\widetilde{X}}^0 \cong \mathbb{G}_m$ acts as $[s:t:x:y] \mapsto [\lambda s:\lambda^{-1}t:x:y]$ (see Table \ref{Table four families}). We note that the two $D_4$-singularities are at $[1:0:0:0]$ and $[0:1:0:0]$. For finding admissible $P \in X$ (according to Strategy \ref{strategy admissible points mit stabi}), we distinguish the following cases:
\begin{enumerate}[leftmargin=0.8cm]

\item[(a)] If $s=0$, we can assume $t=1$. For the action $[0:1:x:y] \mapsto [0:1: \lambda^2 x : \lambda^3y]$ to fix $P$, we must have either $x=y=0$, in which case $P$ would be a $D_4$-singularity, or $x,y \neq 0$ and $\lambda =1$, in which case $\Stab_{\mathbb{G}_m}(P)$ is trivial.

\item[(b)] Thus, we can assume $s=1$. Exploiting the symmetry between $s$ and $t$, we can assume $t \neq 0$ by (a).

\begin{enumerate}
\item[(1)] \label{1A cases s=1, t neq 0, char 2}
For the action $[1:t:x:y] \mapsto [1: \lambda^{-2}t: \lambda^{-2}x: \lambda^{-3}y]$ to fix $P$, we immediately see that $\lambda^2=1$ must hold. 
If furthermore $y$ was non-zero, this would imply $\lambda= \lambda^3 =1$. Thus, we can assume $y=0$. 
Then, $(\Stab_{\mathbb{G}_m}(P))^0$ is non-trivial if and only if $p=2$ and 
$$
P=[1:t:x:0] \hspace{2mm} \text{with}  \hspace{2mm} t \neq 0 \hspace{2mm}\text{and}\hspace{2mm} x^3+atx^2+t^2x=0.
$$
We note that 
$ x^3+atx^2+t^2x = x (x+bt)(x+(a+b)t)$ for $b \in k$ a solution of $z^2+az+1=0$, and thus $$P \in \{ [1:t:0:0], [1:t:bt:0], [1:t:(a+b)t]\}.$$
For such points $P$, we have $(\Stab_{\mathbb{G}_m}(P))^0 \cong \mu_2$. 
Moreover, from the location of the singular points of $X$, we see that ${\widetilde{P}}$ lies in an irreducible fiber $C$ of $\widetilde{X} \to X \dashrightarrow \mathbb{P}^1$.
Since $p=2$, our equation for $X$ is the Weierstra{\ss} equation of a quasi-elliptic fibration \cite[Theorem 5.2.(d)]{QuasiEllipticChar2}, hence $C$ is of type ${\rm II}$, $C^0 \cong \mathbb{G}_a$ and $m=2$ by Corollary \ref{cor: approach for classification non-jacobian}.
{Finally, note that the equations $x = 0,x+bt = 0,$ and $x+(a+b)t = 0$ are exactly the equations of the $(-1)$-curves on $\widetilde{X}$ that are not contained in members of $|-K_{\widetilde{X}}|$.} 
\end{enumerate}
\end{enumerate}
\vspace{-5mm} \end{proof}

\begin{Corollary} \label{cor: 1A non-Jac}
Let $\widetilde{Z}$ be arising from an $\widetilde{X}$ of type \hyperref[Tab1A]{$1A$} and assume that $h^0(\widetilde{Z},T_{\widetilde{Z}}) \neq 0$. Then, 
$p=2$, such $\widetilde{Z}$ form a $1$-dimensional family, each of them has one multiple fiber $2 {\rm II}$, and $\Aut_{\widetilde{Z}}^0 \cong \mu_2$. 
\end{Corollary}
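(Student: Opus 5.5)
The corollary is a direct consequence of Proposition \ref{prop: 1A} together with Corollary \ref{cor: approach for classification non-jacobian} and Strategy \ref{strategy of proof non-Jaco}. Since $\widetilde{Z}$ arises from $\widetilde{X}$ of type $1A$ and $h^0(\widetilde{Z},T_{\widetilde{Z}})\neq 0$, we can write $\widetilde{Z}\cong \Bl_{\widetilde{P}}(\widetilde{X})$ with $\widetilde{P}$ admissible. Here I read ``arising from'' in the sense of Notation \ref{Notation all objects Ztilde Xtilde Ytilde, ... points}, so $m>1$; the Jacobian blow-up is of type $1_{\rm e}$ and is treated separately. Proposition \ref{prop: 1A}(0) excludes $p\neq 2$, so $p=2$. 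By part (1), $C$ is of type ${\rm II}$, $m=2$, and $\Aut_{\widetilde{Z}}^0\cong(\Stab_{\Aut_{\widetilde X}^0}(\widetilde P))^0\cong\mu_2$.

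For the fiber statement, the unique multiple fiber of $\widetilde{Z}$ is $2\overline{F}$, where $\overline{F}$ is the strict transform of the anticanonical curve $C$ through $\widetilde{P}$. This follows from the Halphen description after Proposition \ref{prop: torsionpoint}: the image of the multiple fiber is the cubic through the nine points, taken with multiplicity $m$. The curve $C$ is irreducible of type ${\rm II}$ and $\widetilde{P}$ is a smooth point of $C$. Hence the strict transform is an irreducible cuspidal curve, isomorphic to $C$, and the multiple fiber is $2{\rm II}$.

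For the dimension count, the surfaces of type $1A$ form a $1$-parameter family in $a$ with $a^2\neq 4$; this is row $1_{\rm e}$ of Table \ref{Table main theorem Jacobian}, valid for every $p$. For each $a$, the proof of Proposition \ref{prop: 1A} gives the admissible points $[1:t:x:0]$ with $t\neq0$ and $x\in\{0,bt,(a+b)t\}$. The $\mathbb{G}_m$-action rescales $t$, so modulo $\Aut^0_{\widetilde X}$ there are only finitely many admissible centers for each $a$. The family of $\widetilde{Z}$ is therefore dominated by the $a$-line with finite fibers, provided non-isomorphic $\widetilde{X}$ of type $1A$ do not all give the same $\widetilde{Z}$.

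That proviso is the main obstacle. I would argue that, because $\widetilde{Z}$ has only finitely many $(-1)$-curves that are sections of the relevant kind modulo automorphisms, the map $a\mapsto\widetilde{Z}$ has finite fibers. An alternative is to use the $j$-type invariant of the reducible fibers of $\widetilde{Z}$: the $2{\rm I}_0^*$ configuration persists on $\widetilde Z$ away from $\widetilde P$. Its cross-ratio-type modulus, given by the $4$ points on the central curve of each ${\rm I}_0^*$, depends nontrivially on $a$ and is intrinsic to $\widetilde{Z}$. This would give a genuinely $1$-dimensional family.
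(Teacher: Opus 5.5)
\textbf{Verdict: there is a genuine gap in the lower bound of the dimension count.} The proposal matches the paper up to that step, and your attempted fix for it does not work as written.

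\textbf{What matches the paper.} You deduce $p=2$, $m=2$ and $\Aut_{\widetilde Z}^0\cong\mu_2$ from Proposition~\ref{prop: 1A} and Corollary~\ref{cor: approach for classification non-jacobian}. You also show, via the Halphen pencil, that the multiple fiber is twice the strict transform of the cuspidal curve $C$. The paper does the same; it simply says these follow by combining the two results. For the moduli, the paper shows that for each fixed $\widetilde X$ the admissible centers form a single $\Aut(\widetilde X)$-orbit: $\mathbb G_m$ moves the fibers over $[1:t]$, and the involutions $x\mapsto x+bst$, $x\mapsto x+(a+b)st$ permute the three points on a fiber. It then appeals to the $1$-dimensionality of type $1A$. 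Your finiteness modulo $\mathbb G_m$ serves the same purpose for the upper bound.

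You are right that one also needs $a\mapsto\widetilde Z$ to have finite fibers. The paper leaves this implicit when it says "it suffices" to prove uniqueness of $\widetilde P$ for fixed $\widetilde X$. But neither of your arguments for it works as written.

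\textbf{The cross-ratio argument fails.} It rests on a false premise. $\widetilde P$ lies on one of the three $(-1)$-curves $\{x=0\}$, $\{x=bst\}$, $\{x=(a+b)st\}$, each of which meets both $D_4$-configurations. On $\widetilde Z$ the strict transform of this curve is a $(-2)$-curve that joins the two $D_4$'s into a single ${\rm I}_4^*$ fiber (Corollary~\ref{cor: 1A (-2)configs}). So $\widetilde Z$ has no ${\rm I}_0^*$ fiber at all. The central curve of each $D_4$ meets only three fiber components, so the fiber configuration carries no cross-ratio modulus. Any fourth point would have to come from a horizontal curve whose canonicity you would first have to prove.

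\textbf{How to complete your first alternative.} The idea that $\widetilde Z$ has few $(-1)$-curves is the right one, but you only assert it. It can be proved by the lattice argument the paper uses for type $1H$ in Corollary~\ref{cor: 1H char3 nonJac revised}.
\begin{enumerate}
\item The nine components $v_i$ of the ${\rm I}_4^*$ fiber attain the bound of Lemma~\ref{lemma (-2)curves on Ztilde}. Hence they span a finite-index sublattice of $K_{\widetilde Z}^\perp$.
\item If two $(-1)$-curves $D,D'$ satisfy $D.v_i=D'.v_i$ for all $i$, then $D-D'=nK_{\widetilde Z}$. Now $-1=D^2=-1-2n$ forces $D=D'$.
\item We have $D.v_i\geq 0$, and $\sum_i m_i\,D.v_i=D.F=2$, where $m_i$ are the multiplicities in the fiber $F\sim-2K_{\widetilde Z}$. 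So $\widetilde Z$ carries only finitely many $(-1)$-curves.
\item Hence there are only finitely many contractions $\widetilde Z\to\widetilde X'$. So only finitely many isomorphism classes of type $1A$ surfaces yield the same $\widetilde Z$.
\end{enumerate}
With this, your count gives a genuinely $1$-dimensional family.
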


\begin{proof}
Everything except the number of moduli follows by combining Corollary \ref{cor: approach for classification non-jacobian} with Proposition \ref{prop: 1A}.

     To see that these surfaces form a $1$-dimensional family, note that weak del Pezzo surfaces of type \hyperref[Tab1A]{$1A$} form a $1$-dimensional family, so it suffices to show that for every fixed $\widetilde{X}$ of type \hyperref[Tab1A]{$1A$}, the choice of $\widetilde{P}$ is unique up to automorphisms of the surface. For this, firstly, we observe that all the curves $C \in |-K_{\widetilde{X}}|$ of type ${\rm II}$ are conjugate under $\Aut(\widetilde{X})$, and, secondly, that in every such fiber $C$, the three points $\widetilde{P}$, whose blow-up yields $\widetilde{Z}$ are permuted by an $S_3$-action on $X$. Both follow from our description of the $\mathbb{G}_m$-action on $X$ in Table \ref{Table four families} and the proof of Proposition \ref{prop: 1A}(\ref{prop case 1A p=2 II}): First, $\mathbb{G}_m$ sends a fiber $\{[1:t:x:y] \hspace{1mm}| \hspace{1mm}y^2=x^3 + atx^2 + t^2x \}$ over $[1:t], t \neq 0$, to the fiber over $[1: \lambda^{-2}t]$, hence all such fibers are conjugate under $\mathbb{G}_m$. Second, for fixed $t \neq 0$, the involutions $x \mapsto x+bst$ (resp. $x \mapsto x+(a+b)st$) of $X$ interchange $[1:t:0:0]$ and $[1:t:bt:0]$ (resp. $[1:t: (a+b)t:0]$).
\end{proof}

\begin{Discussion} \label{Discussion: 1A geometry and curve configs}
Note that, in the explicit description of the possibly blown up points $P \in X$ in the proof of Proposition \ref{prop: 1A}(\ref{prop case 1A p=2 II}) and their identification via automorphisms of $\widetilde{X}$ in Corollary \ref{cor: 1A non-Jac}, we see the structure of the Mordell--Weil group of the Jacobian rational quasi-elliptic fibration $\widetilde{Y} \to \mathbb{P}^1$ associated to $\widetilde{X}$: By \cite{OguisoShioda} the Mordell--Weil group is ${\rm MW}(\widetilde{Y} \to \mathbb{P}^1) \cong (\mathbb{Z}/2 \mathbb{Z})^2$, and, since $Y \to X$ is the contraction of the zero-section, the three sections different from the zero-section are visible in the equation of $X$; namely as $X \cap \{x=0\} $, $X \cap \{x =bst\}$ and $X \cap \{x= (a+b)st\}$. The involutions $x \mapsto x+bst$ and $x \mapsto x+(a+b)st$ generate ${\rm Aut}((\mathbb{Z}/2 \mathbb{Z})^2) =S_3$ and permute these sections resp. the three possibilities for $P$ on each fiber over $[1:t], t\neq 0$. 
The strict transforms of these sections in $\widetilde{X}$ are the three $(-1)$-curves intersecting only $(-2)$-curves.
Thus, $\widetilde{Z}$ contains a configuration of nine $(-2)$-curves of Kodaira--N\'eron type ${\rm I}_4^*$. By Lemma \ref{lemma (-2)curves on Ztilde}, $\widetilde{Z}$ cannot contain any further $(-2)$-curves. This situation is summarized in Figure \ref{figure 1A configs jaco, non-jaco} and Corollary \ref{cor: 1A (-2)configs}.

\begin{table}[H]
\begin{adjustbox}{center}
$
\begin{array}{ccccc}
 \begin{array}{c} \addstackgap[2pt]{\includegraphics[width=0.22\textwidth]{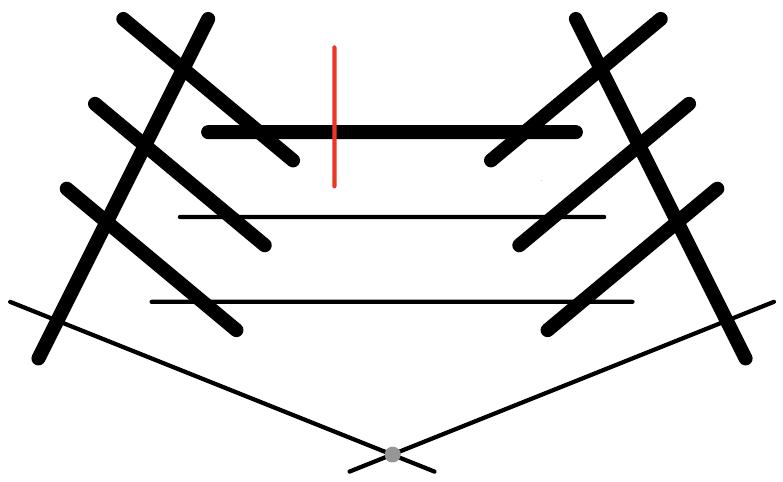} }\end{array}
 & \rightarrow
  & \begin{array}{c}\addstackgap[2pt]{\includegraphics[width=0.22\textwidth]{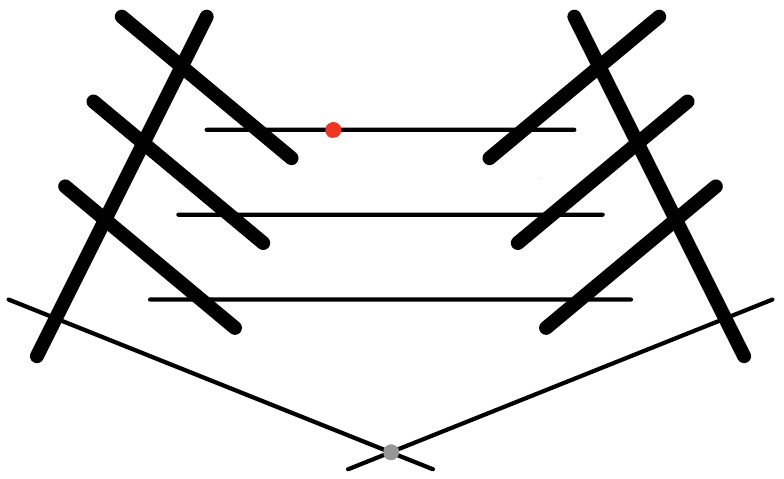} }\end{array}
  & \leftarrow
  & \begin{array}{c}\addstackgap[2pt]{\includegraphics[width=0.22\textwidth]{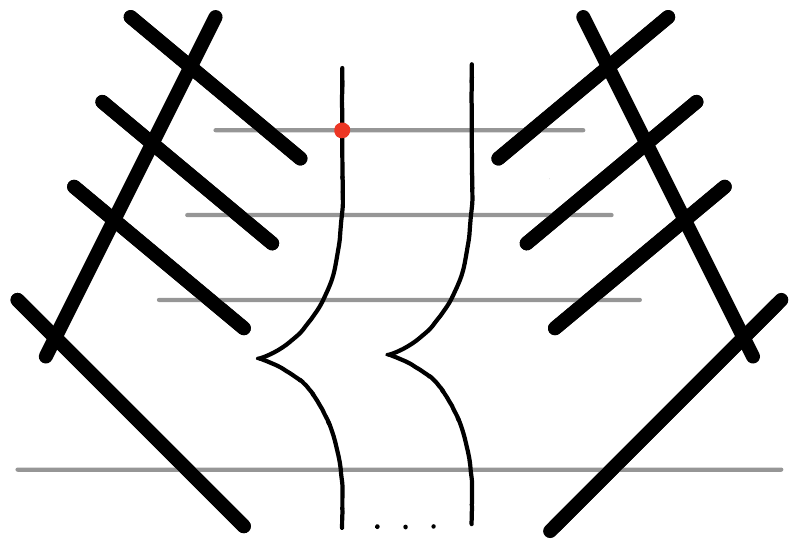} }\end{array}
  \\
  &
  & \downarrow
  &
  & \downarrow
  \\
  &
  & \begin{array}{c}\addstackgap[2pt]{\includegraphics[width=0.22\textwidth]{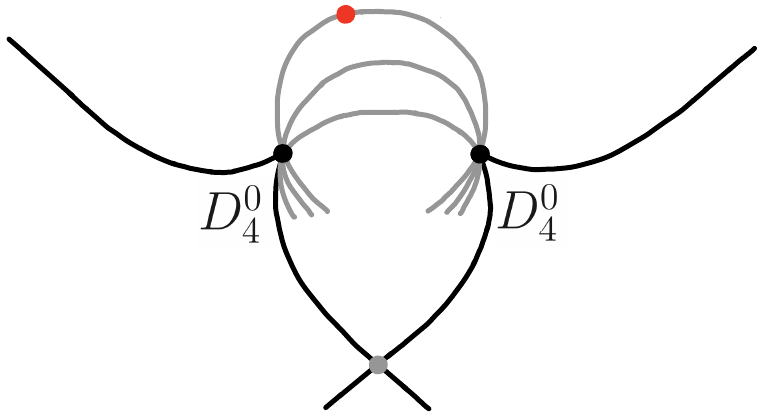}} \end{array}
  & \leftarrow
  & \begin{array}{c}\addstackgap[2pt]{ \includegraphics[width=0.22\textwidth]{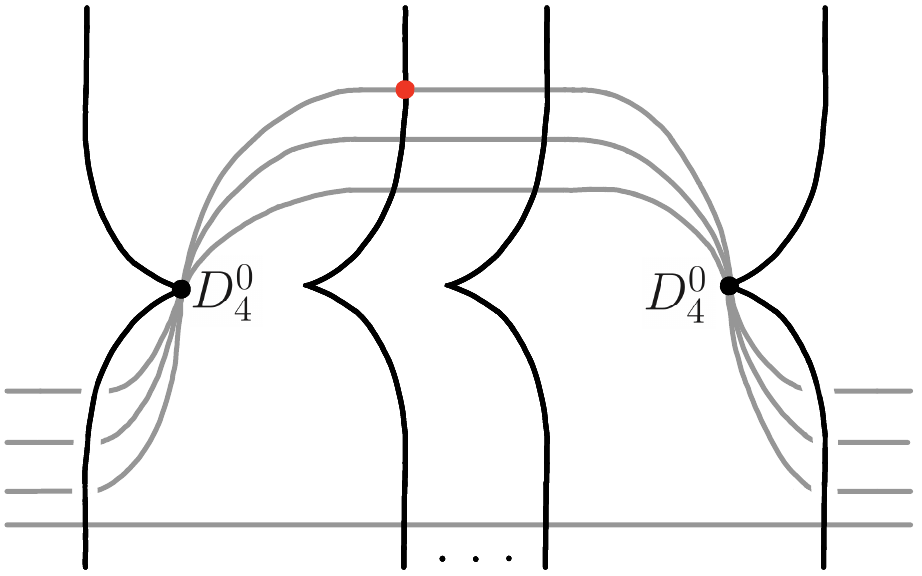}}  \end{array}
\end{array}
$
\captionof{figure}{Non-Jacobian and Jacobian fibrations with global vector fields originating from $\widetilde{X}$ of type \hyperref[Tab1A]{$1A$} ($p=2$)} \label{figure 1A configs jaco, non-jaco}
\end{adjustbox}
\end{table}

\end{Discussion}

\begin{Corollary} \label{cor: 1A (-2)configs}
    Each of the $\widetilde{Z}$ of Corollary \ref{cor: 1A non-Jac} contains nine $(-2)$-curves with dual graph of type $\widetilde{D}_8$ forming configuration ${\rm I}_4^*$.
\end{Corollary}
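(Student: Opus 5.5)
The plan is to exhibit nine $(-2)$-curves on $\widetilde Z$ whose dual graph is $\widetilde D_8$, and then to use the rank bound of Lemma \ref{lemma (-2)curves on Ztilde} to exclude any further $(-2)$-curves.

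First, recall the $2D_4$-configuration of $(-2)$-curves on $\widetilde X$ of type $1A$. These are the resolutions of the two $D_4$-points $[1:0:0:0]$ and $[0:1:0:0]$. By Proposition \ref{prop: 1A}(\ref{prop case 1A p=2 II}), the admissible center $\widetilde P$ lies on one of the three $(-1)$-curves $E$ that are strict transforms of the non-zero sections $x=0$, $x=bst$, $x=(a+b)st$ (Discussion \ref{Discussion: 1A geometry and curve configs}). Each such $E$ passes through neither $D_4$-point as a base-point curve. Instead, it meets exactly one non-central component in each $D_4$-tree. I will check this via the Mordell--Weil picture: a $2$-torsion section of $\widetilde Y$ meets a non-identity simple component of each ${\rm I}_0^*$ fiber. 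Since the section is disjoint from the zero-section, contracting the zero-section keeps these intersections.

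Next, blow up $\widetilde P\in E$, with $\widetilde P$ off the $(-2)$-curves. The strict transform $E'$ then has $E'^2=-2$, and it still meets one end component of each $D_4$. The resulting nine curves are the two $D_4$-trees (four curves each) and $E'$. Their dual graph is two $D_4$ stars joined through end vertices by a chain of length one. This is exactly $\widetilde D_8$: the two central vertices of the $D_4$'s together with the intermediate end vertices and $E'$ form the chain of length $5$, and each end carries two extra leaves. So the nine curves form an ${\rm I}_4^*$ configuration. Being connected, it lies in a single fiber, necessarily the fiber that contains $E'$.

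Finally, Lemma \ref{lemma (-2)curves on Ztilde} bounds $\sum_P(e_P-1)$ by $8$, and this configuration already contributes $8$. Hence there are no other reducible fibers and no further $(-2)$-curves, since every $(-2)$-curve lies in a fiber. The fiber containing the configuration is then exactly the ${\rm I}_4^*$ (as $\widetilde D_8$ is already a full affine diagram, no component can be added).

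The main obstacle is verifying the intersection pattern of $E$ with the two $D_4$-trees. I would check this either from the equations $x=0$ etc. near the singular points, or from the standard fact that a $2$-torsion section of an ${\rm I}_0^*+{\rm I}_0^*$ fibration meets a far component in each fiber, while avoiding the identity component.
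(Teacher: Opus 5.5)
Your proposal is correct and is essentially the paper's argument. The strict transform of the section-curve through $\widetilde P$ becomes a $(-2)$-curve joining one leaf of each $D_4$-tree, which yields $\widetilde D_8$, and Lemma \ref{lemma (-2)curves on Ztilde} then excludes any further $(-2)$-curves. Your height-pairing argument, that a non-zero $2$-torsion section meets a non-identity leaf of both ${\rm I}_0^*$-fibers, makes explicit what the paper reads off from the Mordell--Weil group and the configuration on $\widetilde X$. One correction: your sentence that these curves pass through ``neither $D_4$-point'' is wrong. On $X$ they do pass through both $D_4$-points (e.g.\ $\{x=y=0\}$ contains $[1:0:0:0]$ and $[0:1:0:0]$), and that is exactly why their strict transforms meet the exceptional trees.
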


\subsection{Case \hyperref[Tab1B]{$1B$}} \label{subsection 1B}

Again, these surfaces occur for arbitrary $\Char(k)=p \geq 0$.

\begin{Proposition} \label{prop: 1B}
Let $\widetilde{X}$ be of type \hyperref[Tab1B]{$1B$}.
\begin{enumerate}
    \item[(0)] If $p \neq 2,3$, then there are no admissible $\widetilde{P} \in \widetilde{X}$.
    \item\label{prop case 1B p=2 IV*} If $p = 2$, then $\widetilde{P}$ is admissible if and only if $C$ is of type ${\rm IV}^*$. 
    Moreover, then $({\rm Stab}_{\Aut_{\widetilde{X}}^0}(\widetilde{P}))^0 \cong \mu_2$ and $m=2$.
    \item\label{prop case 1B p=3 II} If $p = 3$, then $\widetilde{P}$ is admissible if and only if $C$ is of type ${\rm II}$ 
    {and $\widetilde{P}$ lies on a $(-1)$-curve}. 
    Moreover, then $({\rm Stab}_{\Aut_{\widetilde{X}}^0}(\widetilde{P}))^0 \cong \mu_3$ and $m=3$.
\end{enumerate}
\end{Proposition}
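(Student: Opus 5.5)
The plan is to run Step (\ref{strategy admissible points mit stabi}) of Strategy \ref{strategy of proof non-Jaco} directly on the anti-canonical model. By Table \ref{Table four families}, $X\subseteq\mathbb P(1,1,2,3)$ is given by $y^2+st^2y=x^3$, and $\Aut_{\widetilde X}^0\cong\mathbb G_m$ acts by $[s:t:x:y]\mapsto[\lambda^2 s:\lambda^{-1}t:x:y]$. The strategy has three parts:
\begin{enumerate}
\item locate the singular points of $X$ and the reducible members of the pencil;
\item compute $\Stab_{\mathbb G_m}(P)$ for every other point $P$, by rescaling with the weighted $\mathbb G_m$ of $\mathbb P(1,1,2,3)$ so that one of $s,t$ is normalized;
\item check the torsion condition of Corollary \ref{cor: approach for classification non-jacobian}.
\end{enumerate}
The only source of a non-trivial connected stabilizer is a finite subgroup $\mu_n\subseteq\mathbb G_m$ with $p\mid n$. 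So in each case we only need to read off which roots of unity fix $P$.

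For the singular locus, the discriminant of $y^2+a_3y=x^3$ with $a_3=st^2$ is a constant times $s^4t^8$. Tate's algorithm then places the ${\rm IV}^*$ member, i.e.\ the $E_6$-point, at $[1:0:0:0]$ over $t=0$, and the ${\rm IV}$ member, i.e.\ the $A_2$-point, at $[0:1:0:0]$ over $s=0$. We then split into three cases.
\begin{enumerate}
\item[(a)] $s=0$, $t=1$. The action becomes $[0:1:\lambda^2x:\lambda^3y]$. On $y^2=x^3$ any point other than the $A_2$-point has $x,y\neq0$, which forces $\lambda=1$. So these points contribute nothing.
\item[(b)] $t=0$, $s=1$. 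Rescaling by $\lambda^{-2}$ gives $[1:0:\lambda^{-4}x:\lambda^{-6}y]$. Away from the $E_6$-point we have $x,y\neq0$, so the stabilizer is $\mu_2$. Its connected component is non-trivial if and only if $p=2$, and this is the ${\rm IV}^*$ case.
\item[(c)] $s=1$, $t\neq0$. The action is $[1:\lambda^{-3}t:\lambda^{-4}x:\lambda^{-6}y]$. This forces $\lambda^3=1$, and $x\neq0$ would additionally force $\lambda=1$. So $x=0$, hence $y\in\{0,-t^2\}$, and the stabilizer is $\mu_3$. It is infinitesimal, i.e.\ has non-trivial connected component, exactly when $p=3$.
\end{enumerate}
This already gives (0): for $p\neq2,3$ none of these stabilizers has a non-trivial connected component.

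It remains to identify the torsion orders and fiber types.

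For $p=2$, the curve $C=\{t=0\}$ is of type ${\rm IV}^*$. Its identity component is $C^0\cong\mathbb G_a$, since the fiber is additive. Every non-neutral point of $\mathbb G_a$ in characteristic $2$ has exact order $2$, so $m=2$. In this case the stabilizer $\mu_2$ is automatically connected.

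For $p=3$, completing the square gives $(y-st^2)^2=x^3+s^2t^4$, which is a purely inseparable cuspidal cubic in $x$. So $X$ is the Weierstra{\ss} model of a quasi-elliptic fibration (compare \cite{QuasiEllipticChar3}). Every fiber over $t\neq0$ is therefore of type ${\rm II}$ with $C^0\cong\mathbb G_a$, and non-neutral points have exact order $3$, so $m=3$.

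I also have to check that the points $[1:t:0:0]$ and $[1:t:0:-t^2]$ are not the cusp. The cusp sits at $x=-(s^2t^4)^{1/3}\neq0$, so they are not, and they lie on $C^0$. Finally, $\{x=0\}$ cuts out the two non-zero $3$-torsion sections of $\widetilde Y$, where ${\rm MW}\cong\mathbb Z/3$. Their strict transforms are precisely the $(-1)$-curves on $\widetilde X$ not contained in members of $|-K_{\widetilde X}|$, which gives the clause ``$\widetilde P$ lies on a $(-1)$-curve''.

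I expect the main obstacle to be this last identification: the quasi-ellipticity in characteristic $3$, and matching $\{x=0\}$ with the $(-1)$-curves through the Mordell--Weil torsion of $\widetilde Y$. The stabilizer computations themselves are routine. For the $(-1)$-curve identification I would argue as in Discussion \ref{Discussion: 1A geometry and curve configs}, using \cite{OguisoShioda} for ${\rm MW}(\widetilde Y)$.
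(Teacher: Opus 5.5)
Your proposal is correct and follows the paper's proof essentially step by step. It uses the same three charts ($s=0$; $s=1,t=0$; $s=1,t\neq0$), the same stabilizers $\mu_2$ and $\mu_3$, quasi-ellipticity in characteristic $3$ to get type ${\rm II}$ with $C^0\cong\mathbb G_a$, and the identification of the two components $x=0,\ y\in\{0,-st^2\}$ with the torsion sections giving the $(-1)$-curves. The one cosmetic point: in characteristic $3$ the discriminant $-27s^4t^8$ vanishes identically, so there the singular points $[1:0:0:0]$ and $[0:1:0:0]$ should be located directly by the Jacobian criterion (or read off from the table of equations) rather than via Tate's algorithm.
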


\begin{proof}
$X$ is given by $y^2 + st^2y = x^3$
 with $\Aut_{\widetilde{X}}^0 \cong \mathbb{G}_m$ acting as $[s:t:x:y] \mapsto [\lambda^2s:\lambda^{-1}t:x:y]$ (see Table \ref{Table four families}). We note that the $E_6$-singularity is at $[1:0:0:0]$, whereas the $A_2$-singularity is at $[0:1:0:0]$. To find admissible $P \in X$ (according to Strategy \ref{strategy admissible points mit stabi}), we distinguish the following cases:
\begin{enumerate}[leftmargin=0.8cm]

\item[(a)] If $s=0$, we can assume $t=1$. For the action $[0:1:x:y] \mapsto [0:1: \lambda^2 x : \lambda^3y]$ to fix $P$, we must either have $x=y=0$, in which case $P$ would be the $A_2$-singularity, or $x,y \neq 0$ and $\lambda =1$, in which case $\Stab_{\mathbb{G}_m}(P)$ is trivial.

\item[(b)] \label{1B cases s=1} Thus, we can assume $s=1$.

\begin{enumerate}

\item[(1)] \label{1B cases s=1 ,t=0, char 2} 
If $t=0$, then for the action $[1:0:x:y] \mapsto [1: 0: \lambda^{-4}x: \lambda^{-6}y]$ to fix $P$, we must either have $x=y=0$, in which case $P$ would be the $E_6$-singularity, or $x,y \neq 0$ and $\lambda^2=1$. Thus, $(\Stab_{\mathbb{G}_m}(P))^0$ is non-trivial if and only if $p=2$ and 
$$
P=[1:0:x:y] \hspace{2mm} \text{with}  \hspace{2mm} (x,y) \neq (0,0) \hspace{2mm}\text{and}\hspace{2mm} y^2=x^3.
$$
In this case, $(\Stab_{\mathbb{G}_m}(P))^0 \cong \mu_2$. 
Moreover, since $P$ and the $E_6$-singularity lie on the same fiber of the projection $\mathbb{P}(1,1,2,3) \supseteq X \dashrightarrow \mathbb{P}^1$ onto $s$ and $t$, $\widetilde{P}$ lies on the identity component of a curve $C \in |-K_{\widetilde{X}}|$ of type ${\rm IV}^*$. Since $P$ lies on the cuspidal curve $X \cap \{t =0\}$, we have $C^0 \cong \mathbb{G}_a$ as group schemes and thus $m =2$ by Corollary \ref{cor: approach for classification non-jacobian}.

\item[(2)] \label{1B cases s=1, t neq 0, char 3}
If $t \neq 0$, then for the action $[1:t:x:y] \mapsto [1: \lambda^{-3}t: \lambda^{-4}x: \lambda^{-6}y]$ to fix $P$, we immediately see that $\lambda^3=1$ must hold. 
If furthermore $x$ was non-zero, this would imply $\lambda= \lambda^4 =1$. Thus, we can assume $x=0$. \\
Then, $(\Stab_{\mathbb{G}_m}(P))^0$ is non-trivial if and only if $p=3$ and 
$$
P=[1:t:0:y] \hspace{2mm} \text{with}  \hspace{2mm} t \neq 0 \hspace{2mm}\text{and}\hspace{2mm} y \in \{0, -t^2\}.
$$
For such points, $(\Stab_{\mathbb{G}_m}(P))^0 \cong \mu_3$. 
Moreover, from the location of the singular points of $X$, we see that ${\widetilde{P}}$ lies in an irreducible fiber $C$ of $\widetilde{X} \to X \dashrightarrow \mathbb{P}^1$. Since $p=3$, our equation for $X$ is the Weierstra{\ss} equation of a quasi-elliptic fibration \cite[Theorem 3.3(2)]{QuasiEllipticChar3}, hence $C$ is of type ${\rm II}$, $C^0 \cong \mathbb{G}_a$ and $m=3$ by Corollary \ref{cor: approach for classification non-jacobian}. 
{Finally, note that the equations $y = 0$ and $y = -t^2$ are exactly the equations of the $(-1)$-curves on $\widetilde{X}$ that are not contained in members of $|-K_{\widetilde{X}}|$.} 
\end{enumerate}
\end{enumerate}
\vspace{-5mm}\end{proof}

\begin{Corollary} \label{cor: 1B non-Jac}
Let $\widetilde{Z}$ be arising from an $\widetilde{X}$ of type \hyperref[Tab1B]{$1B$} and assume that $h^0(\widetilde{Z},T_{\widetilde{Z}}) \neq 0$. Then, 
\begin{enumerate}
\item\label{cor 1B p=2} \underline{either} $p=2$, $\widetilde{Z}$ is unique up to isomorphism, has one multiple fiber $2 {\rm IV}^*$,
and $\Aut_{\widetilde{Z}}^0 \cong \mu_2$, 
\item\label{cor 1B p=3} \underline{or} $p=3$, $\widetilde{Z}$ is unique up to isomorphism, has one multiple fiber $3 {\rm II}$,
and $\Aut_{\widetilde{Z}}^0 \cong \mu_3$.
\end{enumerate} 
\end{Corollary}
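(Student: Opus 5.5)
The plan mirrors the proof of Corollary \ref{cor: 1A non-Jac}. Combining Corollary \ref{cor: approach for classification non-jacobian} with Proposition \ref{prop: 1B} settles everything except uniqueness. Proposition \ref{prop: 1B}(0) excludes $p\neq 2,3$. For $p=2$, part (\ref{prop case 1B p=2 IV*}) gives stabilizer $\mu_2$ and $m=2$. The multiple fiber is the strict transform of $C$ of type ${\rm IV}^*$, so it is $2{\rm IV}^*$. For $p=3$, part (\ref{prop case 1B p=3 II}) gives $\mu_3$ and $m=3$, and the multiple fiber is $3{\rm II}$. In both cases $\Aut^0_{\widetilde Z}\cong(\Stab_{\Aut^0_{\widetilde X}}(\widetilde P))^0$. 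Since $\widetilde X$ of type $1B$ is unique up to isomorphism, uniqueness of $\widetilde Z$ reduces to showing that all admissible $\widetilde P$ lie in one $\Aut(\widetilde X)$-orbit. Points in one orbit give isomorphic blow-ups.

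For $p=2$, the admissible points are $P=[1:0:x:y]$ with $y^2=x^3$ and $(x,y)\neq(0,0)$. These form the cuspidal curve minus its cusp. Under $\mathbb G_m$ such a point goes to $[1:0:\lambda^{-4}x:\lambda^{-6}y]$. Parametrize it as $x=u^2$, $y=u^3$ with $u\neq0$; then $\lambda$ acts by $u\mapsto\lambda^{-2}u$. Since $k$ is algebraically closed, every nonzero $u$ is reached, so the orbit is transitive.

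For $p=3$, the admissible points are $[1:t:0:0]$ and $[1:t:0:-t^2]$ with $t\neq0$. The $\mathbb G_m$-action sends the fiber over $[1:t]$ to the fiber over $[1:\lambda^{-3}t]$. It preserves each of the two $(-1)$-curves $y=0$ and $y=-t^2$, since $y=-st^2$ is weighted homogeneous. So all points on one curve form a single $\mathbb G_m$-orbit. It remains to swap the two curves. Substituting $y\mapsto -y-st^2$ preserves $y^2+st^2y=x^3$: it is the inversion on the Mordell--Weil group $\mathbb Z/3$ of $\widetilde Y$, or the hyperelliptic involution in $y$. This exchanges $y=0$ with $y=-st^2$, so all admissible points are conjugate and $\widetilde Z$ is unique.

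The main obstacle I expect is the degenerate fiber $t=0$ in characteristic $2$. There I must confirm that the whole punctured cuspidal curve is a single orbit and not a union of several $\mu_2$-related orbits. The parametrization $u\mapsto\lambda^{-2}u$ handles this, because squaring is surjective on $k^\times$ even though $p=2$. Existence is already guaranteed by Proposition \ref{prop: 1B}, since the points found there are admissible.
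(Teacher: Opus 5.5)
Your proposal is correct and follows essentially the same route as the paper. Both arguments obtain everything except uniqueness from Corollary~\ref{cor: approach for classification non-jacobian} and Proposition~\ref{prop: 1B}, and then show that all admissible centers are conjugate under $\Aut(\widetilde X)$: via the $\mathbb{G}_m$-action on the ${\rm IV}^*$-member for $p=2$, and for $p=3$ via the $\mathbb{G}_m$-action on the type ${\rm II}$ members combined with the involution $y\mapsto -y-st^2$. Your parametrization $u\mapsto\lambda^{-2}u$ just makes explicit the transitivity that the paper asserts without detail.
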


\begin{proof}
Everything except the uniqueness follows by combining Corollary \ref{cor: approach for classification non-jacobian} with Proposition \ref{prop: 1B}. 
\begin{enumerate}
    \item If $p=2$, for the uniqueness of $\widetilde{Z}$ in (\ref{cor 1B p=2}), it suffices to observe that all points $\widetilde{P} \in C^0$, where $C \in |-K_{\widetilde{X}}|$ is \emph{the} curve of type ${\rm IV}^*$ are conjugate under $\Aut(\widetilde{X})$. This follows from our description of the $\mathbb{G}_m$-action on $X$ in Table \ref{Table four families} and the proof of Proposition \ref{prop: 1B}(\ref{prop case 1B p=2 IV*}): $\mathbb{G}_m$ sends a point of the form $[1:0:x:y]$, where $x,y \neq 0$ and $y^2=x^3$, to $[1:0:\lambda^{-4}x: \lambda^{-6}y]$, so all such points are in the same $\mathbb{G}_m$-orbit.
    \item If $p=3$, for the uniqueness of $\widetilde{Z}$ in (\ref{cor 1B p=3}), firstly, we observe that all the curves $C \in |-K_{\widetilde{X}}|$ of type ${\rm II}$ are conjugate under $\Aut(\widetilde{X})$, and, secondly, that in every such fiber $C$, the two points $\widetilde{P}$, whose blow-up yields $\widetilde{Z}$ are interchanged simultaneously by an automorphism of $X$. Both follow from our description of the $\mathbb{G}_m$-action on $X$ in Table \ref{Table four families} and the proof of Proposition \ref{prop: 1B}(\ref{prop case 1B p=3 II}): First, $\mathbb{G}_m$ sends a fiber $\{[1:t:x:y]  \hspace{1mm}| \hspace{1mm} y^2+ t^2y= x^3 \}$ over $[1:t], t \neq 0$, to the fiber over $[1: \lambda^{-3}t]$, hence all such fibers are conjugate under $\mathbb{G}_m$. Second, for fixed $t \neq 0$, the two points $[1:t:0:0]$ and $[1:t:0:-t^2]$ are interchanged by the involution of $X$ given by $y \mapsto -y -st^2$.
\end{enumerate}
\end{proof}

\begin{Discussion} \label{Discussion: 1B geometry and curve configs}
In both of the above cases, we again see the Mordell--Weil group of the Jacobian rational (quasi-)elliptic fibration $\widetilde{Y} \to \mathbb{P}^1$ associated to $\widetilde{X}$: By \cite{OguisoShioda} ${\rm MW}(\widetilde{Y} \to \mathbb{P}^1) \cong \mathbb{Z}/3 \mathbb{Z}$ and the two sections that are visible in the equation for $X$ are given by $X \cap \{y=0\} $ and $X \cap \{y= -st^2\}$. These sections are interchanged by the automorphism $y \mapsto -y-st^2$. By \cite{ExtremalCharpII} and \cite{QuasiEllipticChar3}, $\widetilde{Y}$ is elliptic with singular fibers ${\rm IV}^*$ and ${\rm IV}$ if $p=2$, and quasi-elliptic with reducible fibers ${\rm IV}^*$ and ${\rm IV}$ if $p=3$.

    To determine the number and configuration of $(-2)$-curves on $\widetilde{Z}$ we treat cases (\ref{cor 1B p=2}) and (\ref{cor 1B p=3}) of Corollary \ref{cor: 1B non-Jac} separately.
    
    \begin{enumerate}
        \item By the proof of Proposition \ref{prop: 1B}(\ref{prop case 1B p=2 IV*}), we know that $\widetilde{P} \in \widetilde{X}$ lies on a $(-1)$-curve intersecting the $E_6$- but not on a $(-1)$-curve intersecting the $A_2$-configuration of $(-2)$-curves. Hence, $\widetilde{Z}$ contains configuration ${\rm IV}^*$.

\begin{table}[H]
\begin{adjustbox}{center}
$
\begin{array}{ccc}
 \begin{array}{c} 
 {\includegraphics[width=0.22\textwidth]{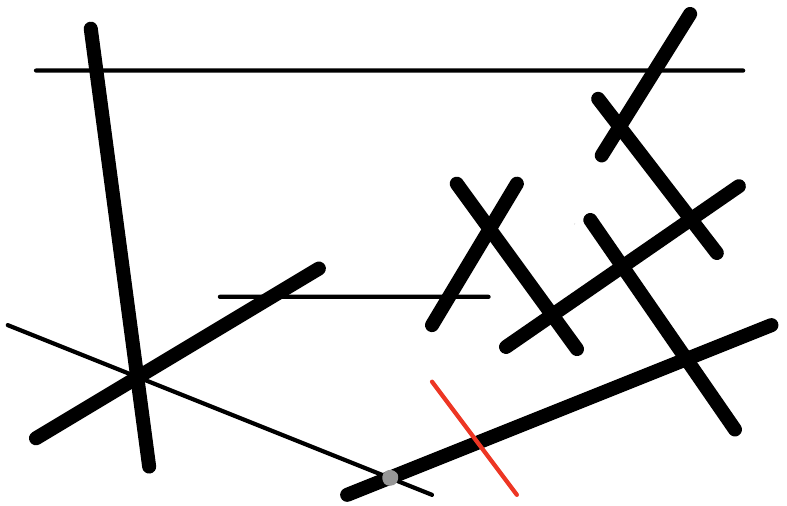} }\end{array}
 & \rightarrow
  & \begin{array}{c}
  {\includegraphics[width=0.22\textwidth]{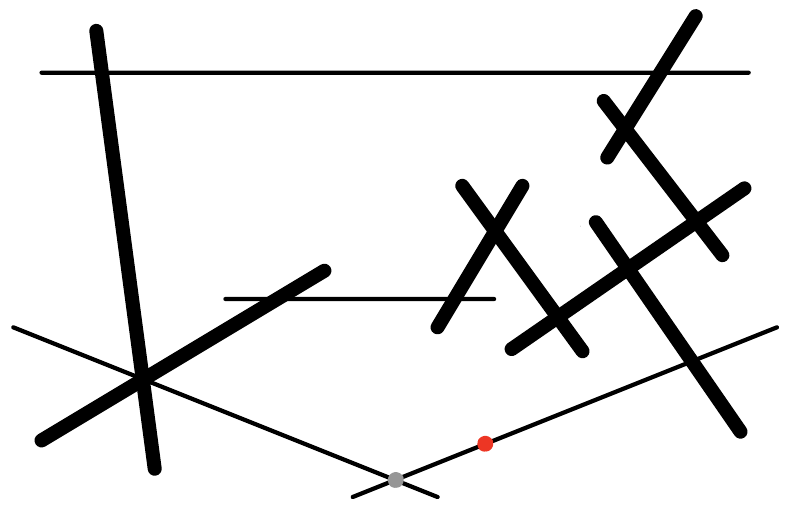} }\end{array}
\end{array}
$
\captionof{figure}{$\widetilde{Z} \to \widetilde{X}$ with incomplete $(-2)$-curve configuration on $\widetilde{Z}$} \label{figure 1B, p=2 Ztilde to Xtilde, incomplete}
\end{adjustbox}
\end{table}

        From the Kodaira--N\'eron classification of fiber types and Lemma \ref{lemma (-2)curves on Ztilde} we see that the other two obvious $(-2)$-curves on $\widetilde{Z}$ (in the left of the picture for $\widetilde{Z}$ in Figure \ref{figure 1B, p=2 Ztilde to Xtilde, incomplete}) have to constitute a dual graph $\widetilde{A}_2$ together with another $(-2)$-curve, that we were not yet able to see as a negative curve on $\widetilde{X}$. We will now determine the precise configuration -- either ${\rm IV}$ or ${\rm I_3}$ -- of the two ``known'' $(-2)$-curves with the ``new'' $(-2)$-curve: Taking into account that, by Proposition \ref{prop: 1B}(\ref{prop case 1B p=2 IV*}), every $(-1)$-curve on $\widetilde{Z}$ is a $2$-section, we obtain that the $(-1)$-curve in Figure \ref{figure 1B, p=2 Ztilde to Xtilde, incomplete} that intersects both the ``known'' $(-2)$-curves in one point cannot intersect the ``new'' $(-2)$-curve. Hence, configuration ${\rm IV}$ is not possible.

        We are therefore left with the four possibilities for the intersection behavior of ${\rm I_3}$ as in Figure \ref{Figure 1B p=2, 4 Möglichkeiten für configs}, where the ``known'' $(-2)$-curves are still drawn in black and the ``new'' $(-2)$-curve is drawn in purple. Note that we also assigned other colors to some of the remaining negative curves in order to be able to better refer to them in the argument. We remark that, in order to not overload these drawings, we did not yet include the intersection behavior of the red exceptional curve on $\widetilde{Z}$ with other curves.

\begin{table}[H]
\begin{adjustbox}{center}
$
 \begin{array}{c} 
 \includegraphics[width=0.90\textwidth]{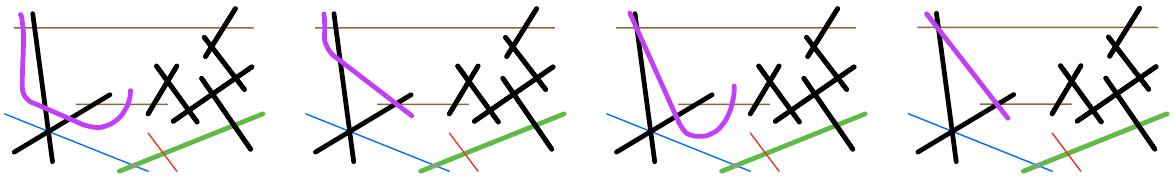} \end{array}
$
\captionof{figure}{Four possibilities for configurations of $(-2)$-curves on $\widetilde{Z}$} \label{Figure 1B p=2, 4 Möglichkeiten für configs}
\end{adjustbox}
\end{table}

When contracting the blue $(-1)$-curve in Figure \ref{Figure 1B p=2, 4 Möglichkeiten für configs}, we obtain a realization of $\widetilde{Z}$ as blow-up of another weak del Pezzo surface with global vector fields containing an $E_6$-configuration of $(-2)$-curves and at least five $(-1)$-curves. So, by the classification in \cite{WeakDelPezzoGlobalVectorFields}, this weak del Pezzo surface is either of type $1K$ or $1J$. Since the blue $(-1)$-curve does not intersect the purple $(-2)$-curve, its image under the contraction is still a $(-2)$-curve that does not intersect the \linebreak $E_6$-configuration of $(-2)$-curves on the contraction. So, $\widetilde{Z}$ is a blow-up of the weak del Pezzo surface $\widetilde{X}_{1K}$ of type $1K$. We can identify some of the curves in Figure \ref{Figure 1B p=2, 4 Möglichkeiten für configs} with curves on $\widetilde{X}_{1K}$ according to the color they are given below and learn about their intersection behavior.

\begin{table}[H]
\begin{adjustbox}{center}
$
\begin{array}{ccc}
 \begin{array}{c} 
 {\includegraphics[width=0.22\textwidth]{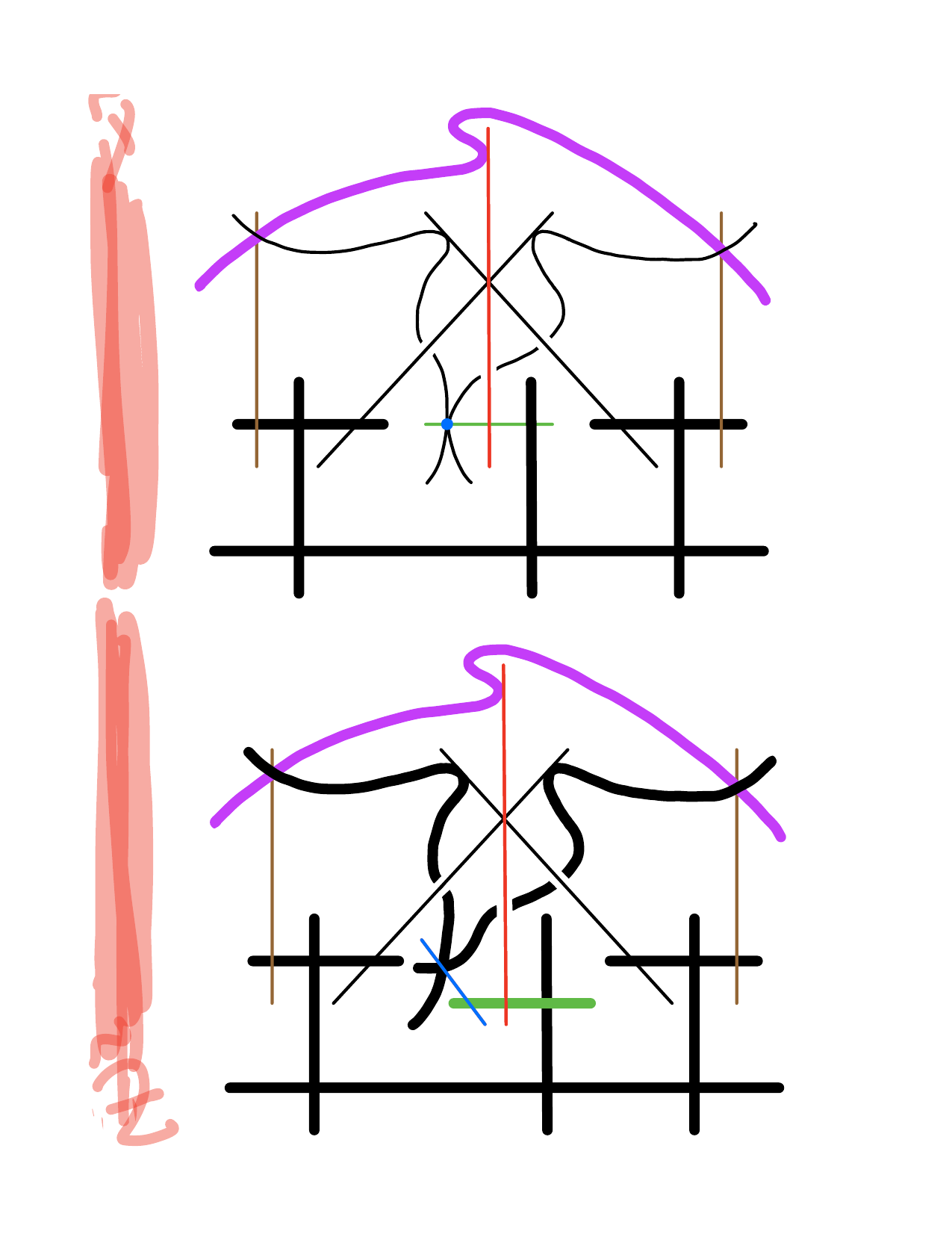} }\end{array}
 & \rightarrow
  & \begin{array}{c}
  {\includegraphics[width=0.22\textwidth]{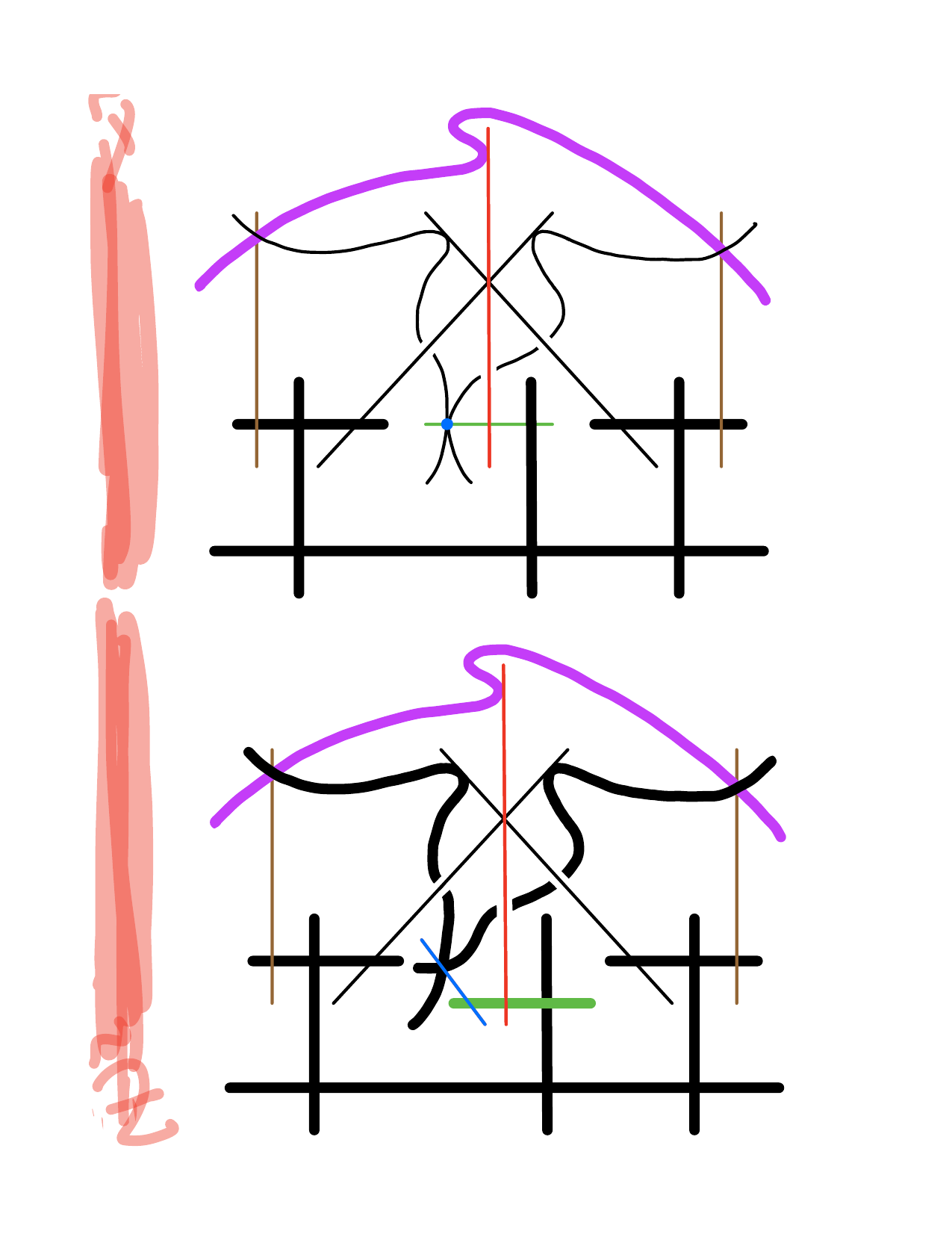} }\end{array}
\end{array}
$
\captionof{figure}{$\widetilde{Z}$ as a blow-up of a weak del Pezzo surface of type $1K$ ($p=2$)} \label{figure 1B, p=2 VON 1K}
\end{adjustbox}
\end{table}

From a comparison with Figure \ref{figure 1B, p=2 VON 1K} we see that the fourth configuration in Figure \ref{Figure 1B p=2, 4 Möglichkeiten für configs} is correct. Moreover, there are (at least) three more $(-1)$-curves on $\widetilde{Z}$ than visible in Figure \ref{figure 1B, p=2 Ztilde to Xtilde, incomplete}, and the red exceptional curve intersects the purple $(-2)$-curve in one point with multiplicity $2$.
The results of this discussion are summarized in the following Figure \ref{figure 1B, p=2 configs jaco, non-jaco} and Corollary \ref{cor: 1B (-2)configs}.

\begin{table}[H]
\begin{adjustbox}{center}
$
\begin{array}{ccccc}
 \begin{array}{c} \addstackgap[2pt]{\includegraphics[width=0.22\textwidth]{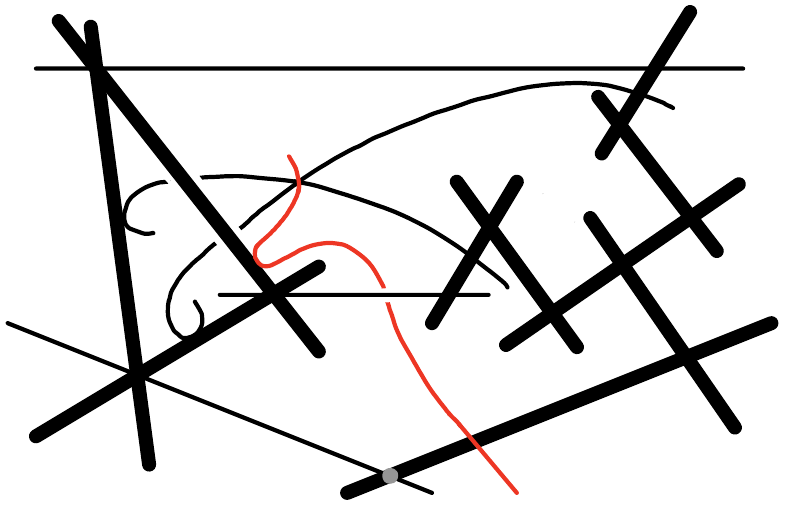} }\end{array}
 & \rightarrow
  & \begin{array}{c}\addstackgap[2pt]{\includegraphics[width=0.22\textwidth]{1B-2-Xtilde.pdf} }\end{array}
  & \leftarrow
  & \begin{array}{c}\addstackgap[2pt]{\includegraphics[width=0.22\textwidth]{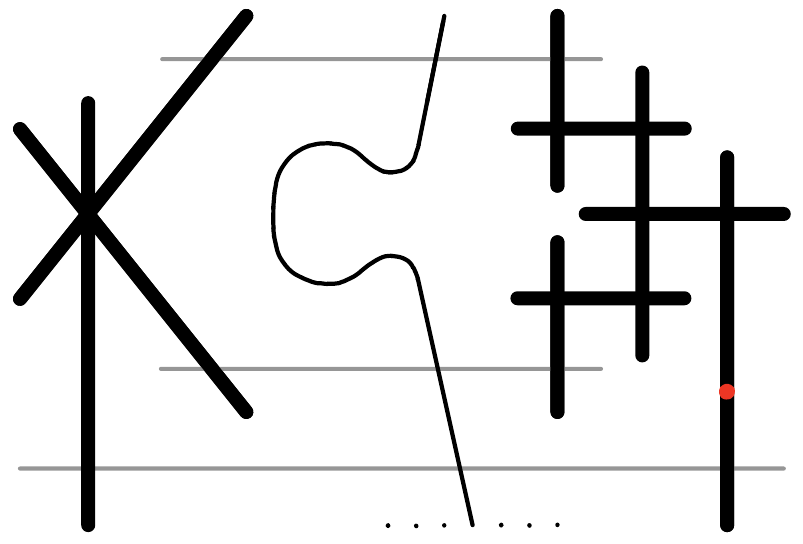} }\end{array}
  \\
  &
  & \downarrow
  &
  & \downarrow
  \\
  &
  & \begin{array}{c}\addstackgap[2pt]{\includegraphics[width=0.22\textwidth]{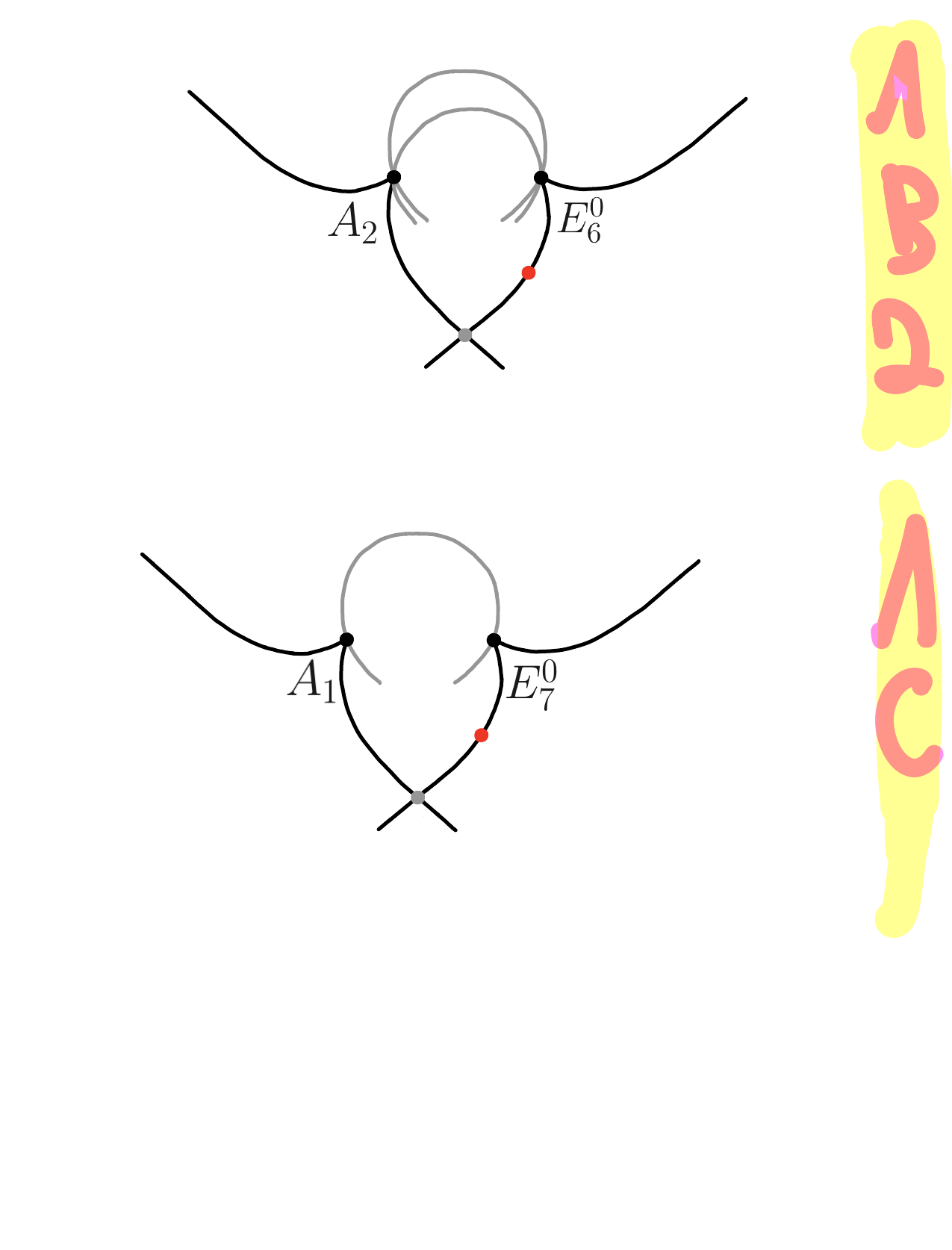}} \end{array}
  & \leftarrow
  & \begin{array}{c}\addstackgap[2pt]{ \includegraphics[width=0.22\textwidth]{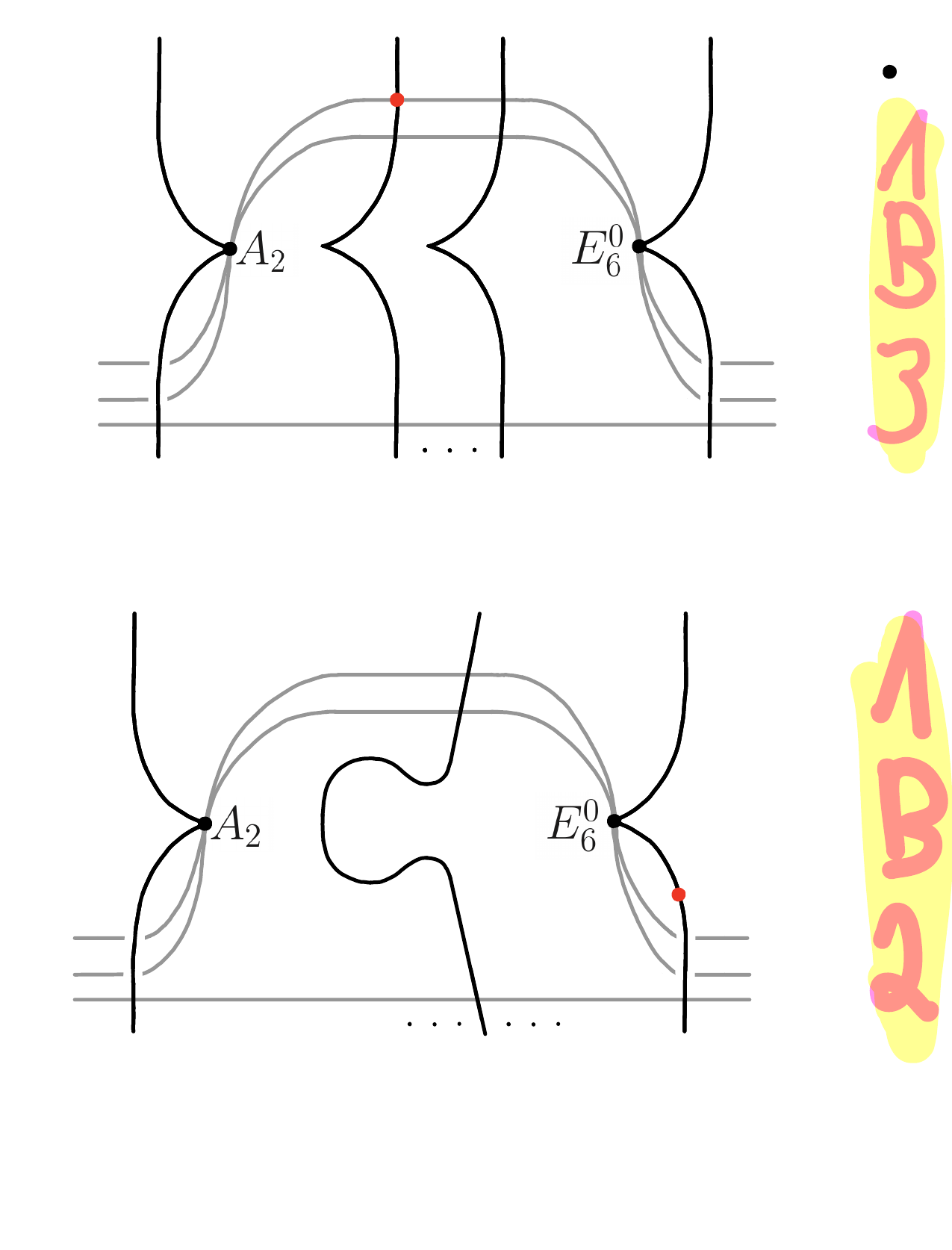}}  \end{array}
\end{array}
$
\captionof{figure}{Non-Jacobian and Jacobian fibrations with global vector fields originating from $\widetilde{X}$ of type \hyperref[Tab1B]{$1B$} ($p=2$)} \label{figure 1B, p=2 configs jaco, non-jaco}
\end{adjustbox}
\end{table}

    \item
   By the proof of Proposition \ref{prop: 1B}(\ref{prop case 1B p=3 II}), $\widetilde{P}$ lies on a $(-1)$-curve connecting the $E_6$- and the $A_2$-configuration of $(-2)$-curves on $\widetilde{X}$. Thus, $\widetilde{Z}$ contains $(-2)$-curves forming a configuration of type ${\rm II}^*$ and, by Lemma \ref{lemma (-2)curves on Ztilde} no further ones. As in the previous cases, the situation is illustrated and summarized below in Figure \ref{figure 1B, p=3 configs jaco, non-jaco} and Corollary \ref{cor: 1B (-2)configs}.

    \begin{table}[H]
\begin{adjustbox}{center}
$
\begin{array}{ccccc}
 \begin{array}{c} \addstackgap[2pt]{\includegraphics[width=0.22\textwidth]{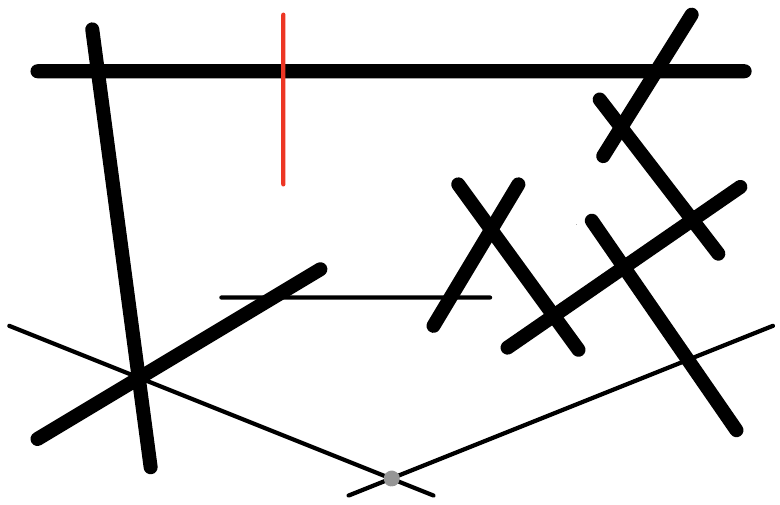} }\end{array}
 & \rightarrow
  & \begin{array}{c}\addstackgap[2pt]{\includegraphics[width=0.22\textwidth]{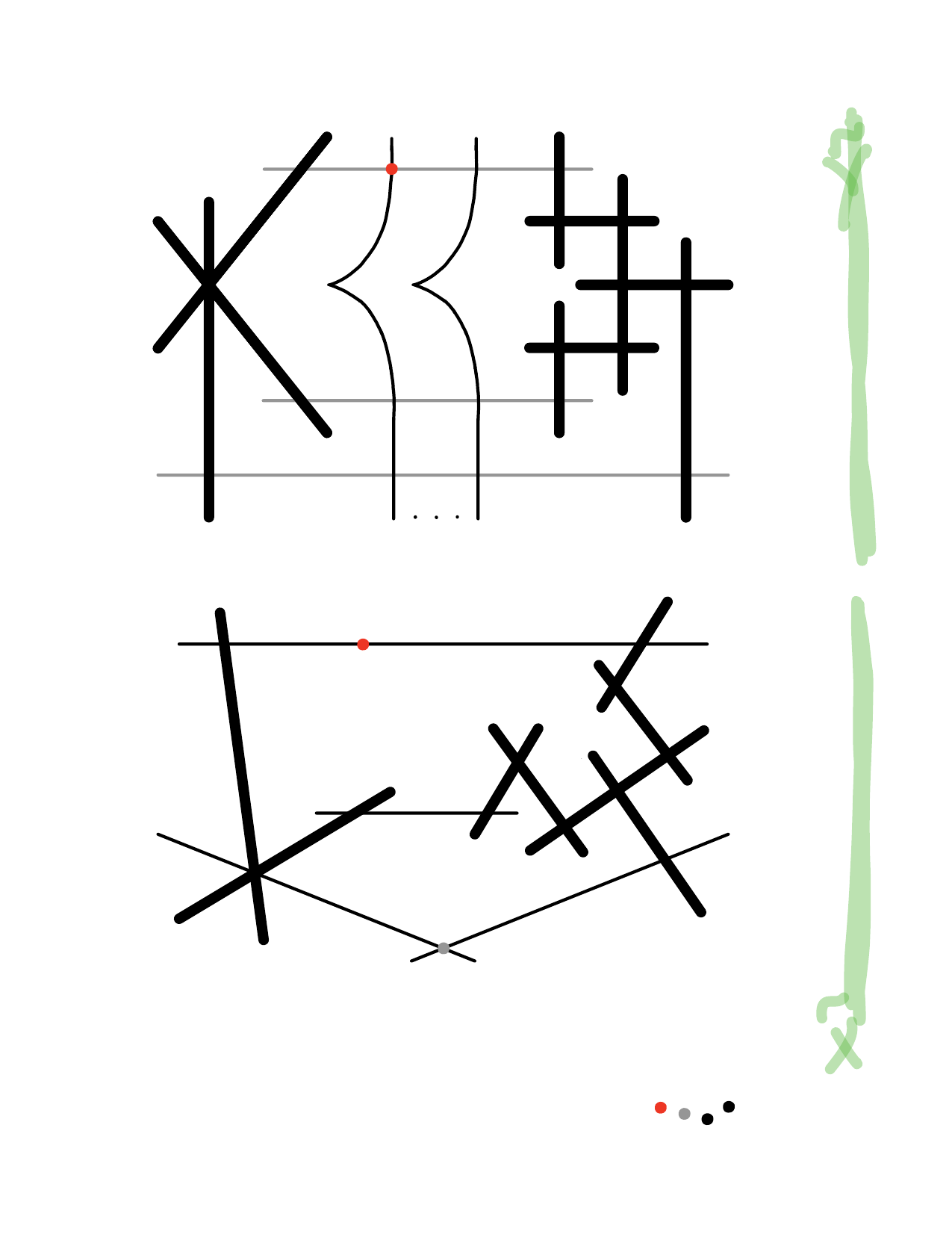} }\end{array}
  & \leftarrow
  & \begin{array}{c}\addstackgap[2pt]{\includegraphics[width=0.22\textwidth]{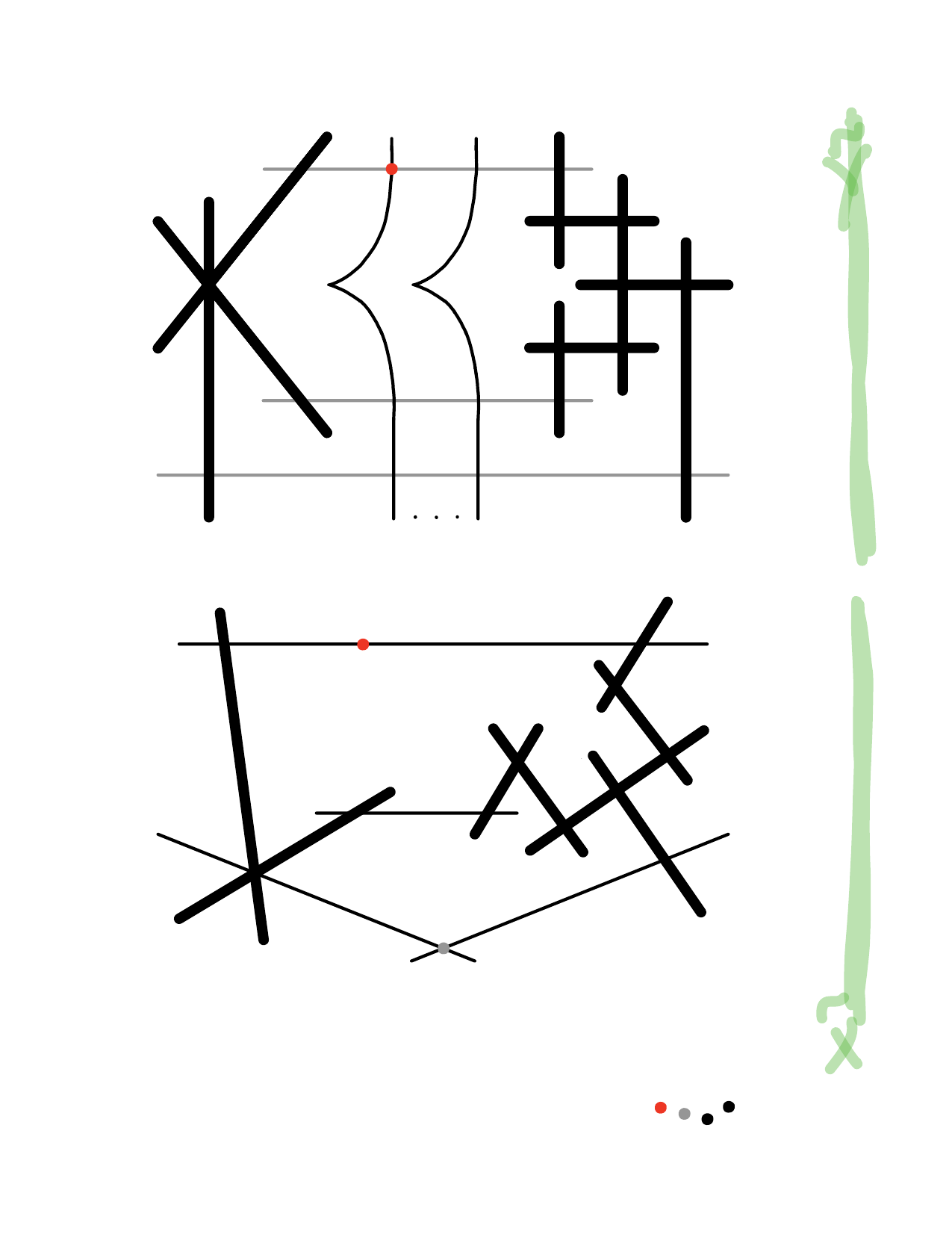} }\end{array}
  \\
  &
  & \downarrow
  &
  & \downarrow
  \\
  &
  & \begin{array}{c}\addstackgap[2pt]{\includegraphics[width=0.22\textwidth]{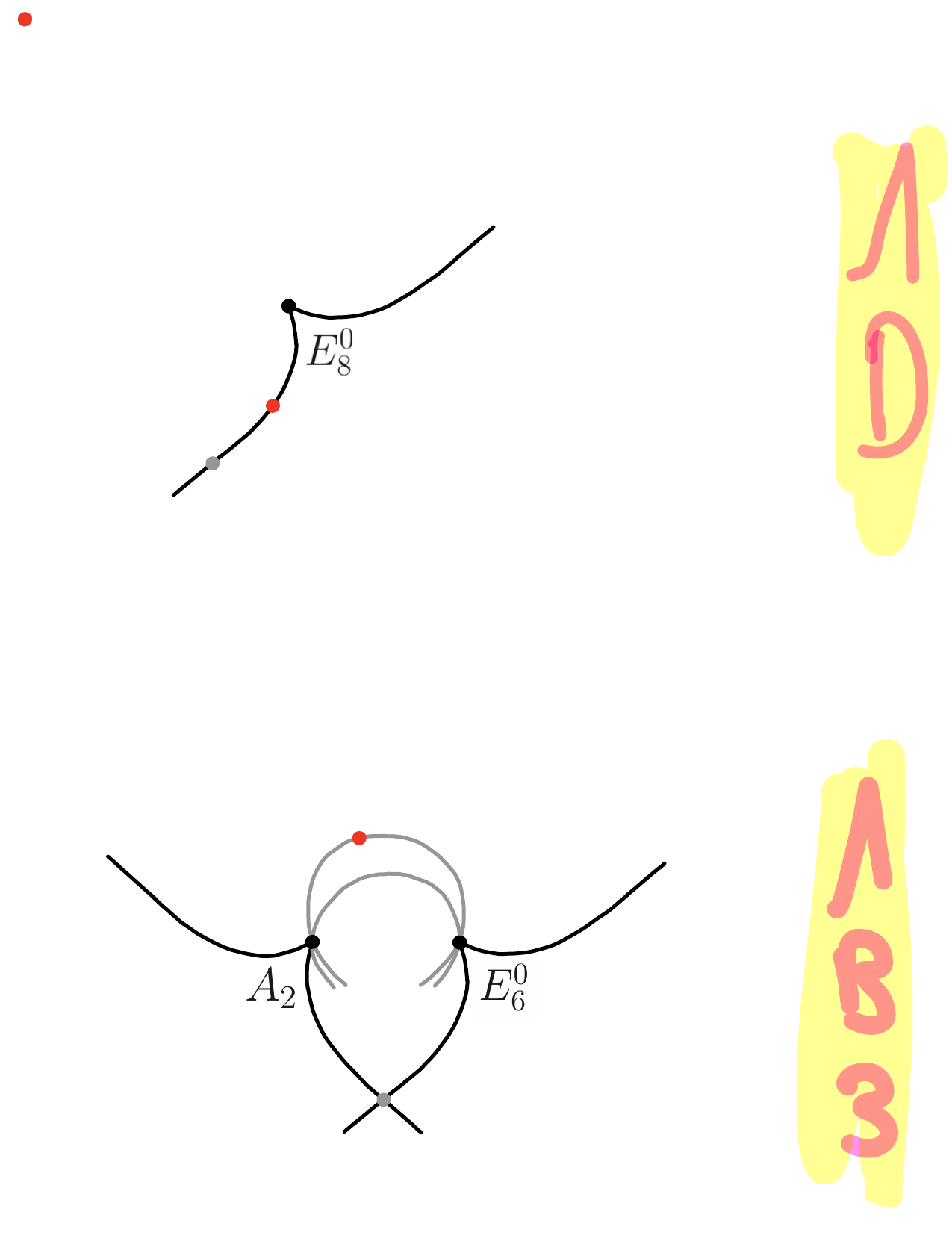}} \end{array}
  & \leftarrow
  & \begin{array}{c}\addstackgap[2pt]{ \includegraphics[width=0.22\textwidth]{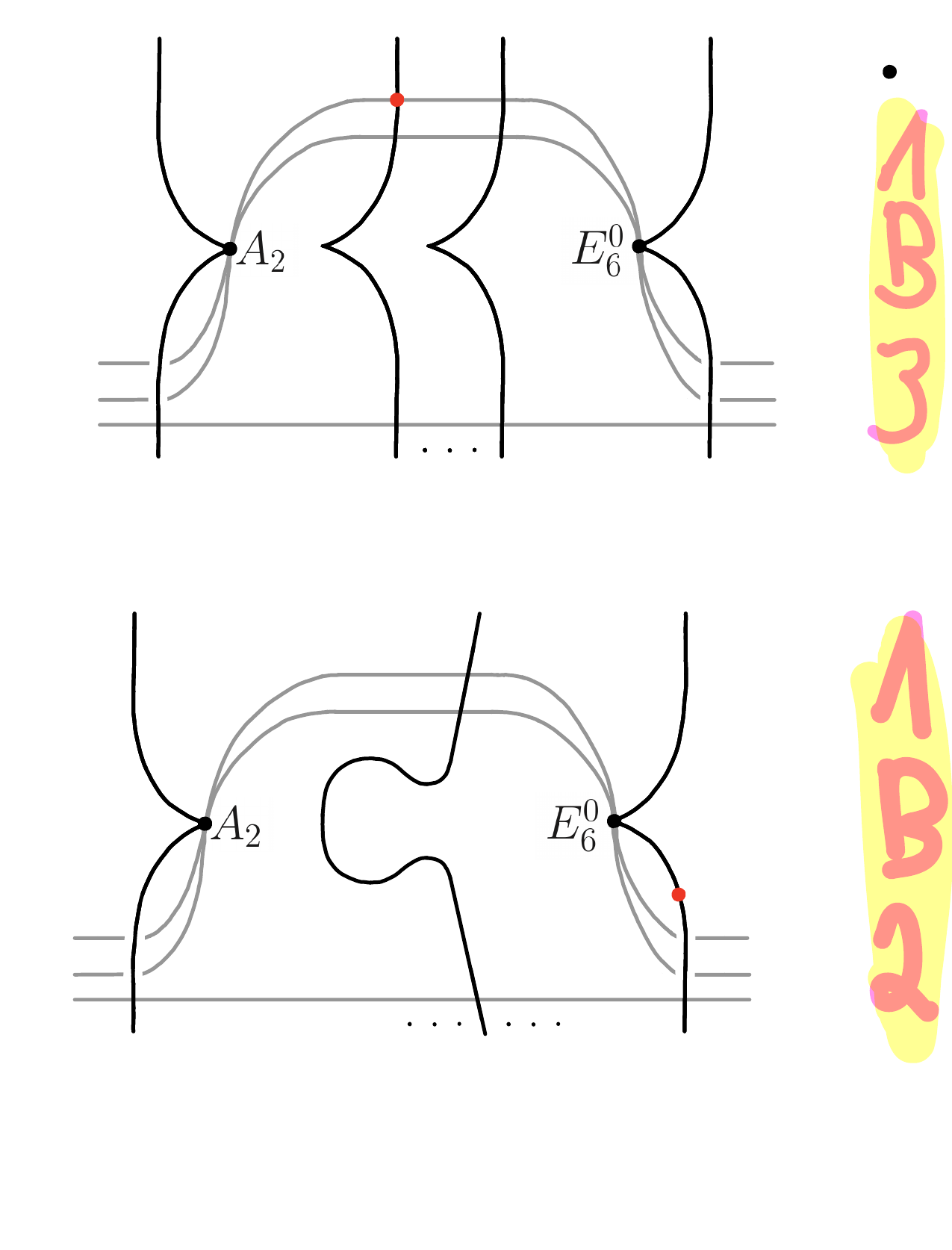}}  \end{array}
\end{array}
$
\captionof{figure}{Non-Jacobian and Jacobian fibrations with global vector fields originating from $\widetilde{X}$ of type \hyperref[Tab1B]{$1B$} ($p=3$)} \label{figure 1B, p=3 configs jaco, non-jaco}
\end{adjustbox}
\end{table}
        
    \end{enumerate}
\end{Discussion}

\begin{Corollary} \label{cor: 1B (-2)configs}
    Let $\widetilde{Z}$ be as in Corollary \ref{cor: 1B non-Jac}. Then, the following hold.
    \begin{enumerate}
    \item If $p=2$, $\widetilde{Z}$ contains ten $(-2)$-curves with dual graph of type $\widetilde{E}_6 + \widetilde{A}_2$ forming configurations ${\rm IV}^*$ and ${\rm I}_3$. Moreover, ${\rm IV}^*$ is the unique multiple fiber and of multiplicity $m=2$.
    \item If $p=3$, $\widetilde{Z}$ contains nine $(-2)$-curves with dual graph of type $\widetilde{E}_8$ forming configuration ${\rm II}^*$.
    \end{enumerate}
\end{Corollary}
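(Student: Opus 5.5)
The statement to prove is Corollary \ref{cor: 1B (-2)configs}. My plan is to read it off from Discussion \ref{Discussion: 1B geometry and curve configs}, treating the two characteristics separately. In each case I first record the $(-2)$-curves of $\widetilde{Z}$ that are visible as strict transforms of curves on $\widetilde{X}$. Then I close the configuration using the Kodaira--N\'eron classification, and finally I show completeness with Lemma \ref{lemma (-2)curves on Ztilde}.

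For $p=3$, Proposition \ref{prop: 1B}(\ref{prop case 1B p=3 II}) puts $\widetilde P$ on a $(-1)$-curve $E$ of $\widetilde X$ that meets both the $E_6$- and the $A_2$-configuration. After blowing up $\widetilde P$, the strict transform of $E$ becomes a $(-2)$-curve. It joins the eight $(-2)$-curves of $E_6+A_2$ into a connected configuration of nine curves. I will check from the intersection pattern in the figure that this configuration has dual graph $\widetilde E_8$: $E$ meets the end of a long arm of $E_6$ and one end of $A_2$. A connected configuration of $(-2)$-curves lies in a single fiber, so this fiber is of type ${\rm II}^*$. Lemma \ref{lemma (-2)curves on Ztilde} gives $\sum(e_P-1)\le 8$, and this fiber already contributes $8$. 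Hence there are no further $(-2)$-curves.

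For $p=2$, $\widetilde P$ lies on the identity component of the ${\rm IV}^*$-curve $C$, and $m=2$. The strict transform of $C$ is the reduced multiple fiber. So the $E_6$-curves together with the transform of the $(-1)$-curve through $\widetilde P$ that meets $E_6$ form ${\rm IV}^*$, of multiplicity $2$. This fiber contributes $6$ to the sum in Lemma \ref{lemma (-2)curves on Ztilde}.

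The two $A_2$-curves are disjoint from this fiber, so they lie in a second fiber. By the Kodaira--N\'eron list and the bound, that fiber must be $\widetilde A_2$-type, i.e. ${\rm IV}$ or ${\rm I}_3$, with exactly one new $(-2)$-curve.

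\begin{itemize}
\item I exclude ${\rm IV}$ as in the Discussion. Every $(-1)$-curve on $\widetilde Z$ is a $2$-section, and the visible $(-1)$-curve meets both known curves once. It would therefore have intersection $\geq 3$ with a fiber of type ${\rm IV}$ through the common point, a contradiction. This leaves ${\rm I}_3$.
\item The main obstacle is fixing how the new curve meets the known curves, i.e. which of the four possibilities holds. Here I contract the blue $(-1)$-curve. This realizes $\widetilde Z$ as a blow-up of a weak del Pezzo surface of type $1K$, by the classification in \cite{WeakDelPezzoGlobalVectorFields}, and its known negative-curve configuration pins down the fourth picture.
\end{itemize}

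The total count is then $7+3=10$ curves with dual graph $\widetilde E_6+\widetilde A_2$. This gives sum $6+2=8$ in Lemma \ref{lemma (-2)curves on Ztilde}, so no further $(-2)$-curves exist, and ${\rm IV}^*$ is the unique multiple fiber, with $m=2$.
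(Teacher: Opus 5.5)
Your proposal is correct and is essentially the paper's own argument from Discussion \ref{Discussion: 1B geometry and curve configs}. For $p=3$, the chain $E_6$--$E$--$A_2$ gives $\widetilde E_8$ and the rank bound closes it off. For $p=2$, you get ${\rm IV}^*$ from the $(-1)$-curve through $\widetilde P$, force a third curve in the $A_2$-fiber by Lemma \ref{lemma (-2)curves on Ztilde}, and exclude ${\rm IV}$ via the $2$-section through the common point of the two known curves, exactly as the paper does. One remark: the contraction to type $1K$ is not needed for the corollary as stated, since it only fixes how the new $(-2)$-curve meets the $(-1)$-curves (the figure), not the fiber types or the count.
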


\subsection{Case \hyperref[Tab1C]{$1C$}} \label{subsection 1C}

This family exists only if $\Char(k)=p \neq 2$.

\begin{Proposition} \label{prop: 1C}
Let $\widetilde{X}$ be of type \hyperref[Tab1C]{$1C$}. 
\begin{enumerate}
    \item[(0)] If $p \neq 3$, then there are no admissible $\widetilde{P} \in \widetilde{X}$.
    \item[(1)] If $p = 3$, then $\widetilde{P}$ is admissible if and only if $C$ is of type ${\rm III}^*$. Moreover, then $({\rm Stab}_{\Aut_{\widetilde{X}}^0}(\widetilde{P}))^0 \cong \mu_3$ and $m=3$.
\end{enumerate}
\end{Proposition}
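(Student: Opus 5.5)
The plan follows the pattern of the proofs of Propositions \ref{prop: 1A} and \ref{prop: 1B}. By Table \ref{Table four families}, $X$ is given by $y^2=x^3+st^3x$, and $\Aut_{\widetilde X}^0\cong\mathbb G_m$ acts by $[s:t:x:y]\mapsto[\lambda^3s:\lambda^{-1}t:x:y]$.

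First I locate the singularities. At $[1:0:0:0]$ the local equation is $y^2=x^3+t^3x$, which is the $E_7$-singularity. At $[0:1:0:0]$ the local equation is $y^2=x^3+sx$; with $u=x^2+s$ this becomes $y^2=xu$, the $A_1$-singularity. Throughout I use that the connected component of a stabilizer $\mu_n\subseteq\mathbb G_m$ is $\mu_{p^v}$, where $p^v$ is the $p$-part of $n$. So it is non-trivial if and only if $p\mid n$. I then compute the stabilizer of an arbitrary point $P\neq Q$, in the same three cases as before.

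\begin{enumerate}[leftmargin=0.8cm]
\item[(a)] Case $s=0$, normalized to $t=1$. Rescaling with weights $(1,1,2,3)$, the action reads $[0:1:x:y]\mapsto[0:1:\lambda^2x:\lambda^3y]$. Either $x=y=0$, and $P$ is the $A_1$-singularity, or $x,y\neq0$, which forces $\lambda^2=\lambda^3=1$. In the latter case the stabilizer is trivial.
\item[(b)] Case $s=1$, $t=0$. The action is $[1:0:x:y]\mapsto[1:0:\lambda^{-6}x:\lambda^{-9}y]$. Excluding the $E_7$-point, the curve $y^2=x^3$ forces $x,y\neq0$. Then $\lambda^6=\lambda^9=1$, i.e.\ $\lambda^3=1$, so the stabilizer is $\mu_3$. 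Its connected component is non-trivial exactly when $p=3$, and then it equals $\mu_3$.

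In this case $P$ lies on the fiber through the $E_7$-singularity, so $C$ is of type ${\rm III}^*$. Moreover $C^0$ is the smooth locus of the cuspidal curve $X\cap\{t=0\}$, hence $C^0\cong\mathbb G_a$. In characteristic $3$ every point $\widetilde P\neq\widetilde Q$ of $C^0$ therefore has exact order $3$, which gives $m=3$ by Corollary \ref{cor: approach for classification non-jacobian}. Conversely, every $\widetilde P\in C^0\setminus\{\widetilde Q\}$ maps to a point with $x,y\neq0$, so all of them are admissible. This proves the ``if and only if''.
\item[(c)] Case $s=1$, $t\neq0$. The action is $[1:t:x:y]\mapsto[1:\lambda^{-4}t:\lambda^{-6}x:\lambda^{-9}y]$.
\begin{itemize}
\item Fixing $t$ forces $\lambda^4=1$.
\item Then $\lambda^6=\lambda^2$, so $x\neq0$ forces $\lambda^2=1$.
\item Likewise $\lambda^9=\lambda$, so $y\neq0$ forces $\lambda=1$.
\end{itemize}
Hence a non-trivial stabilizer requires $y=0$, so $x=0$ or $x^2=-t^3$, with stabilizer $\mu_4$ or $\mu_2$ respectively. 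Both have trivial connected component since $p\neq2$.
\end{enumerate}

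Combining the three cases: for $p\neq3$ no admissible point exists, which is (0). For $p=3$ exactly the points of case (b) are admissible, with stabilizer $\mu_3$ and $m=3$, which is (1).

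The main point needing care is case (b). One must check that $P$ indeed lies on $C^0$ and not on the $(-2)$-configuration, and that the group structure on the cuspidal $C^0$ is $\mathbb G_a$, so that the order is exactly $p=3$. The stabilizer bookkeeping in (c) is routine.
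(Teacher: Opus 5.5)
Your proposal is correct and follows the paper's proof essentially step for step: the same equation $y^2=x^3+st^3x$ with the same $\mathbb G_m$-action, the same case split ($s=0$; $s=1,t=0$; $s=1,t\neq0$), and the same identification $C^0\cong\mathbb G_a$ on the cuspidal member over $t=0$, which yields the stabilizer $\mu_3$ and $m=3$. The only differences are cosmetic. In the chart $t\neq0$ you compute the exact stabilizers ($\mu_4$, $\mu_2$ or trivial), whereas the paper simply notes that $\lambda^4=1$ already forces a trivial connected stabilizer for $p\neq2$. You also make the converse direction and the singularity types explicit.
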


\begin{proof}
$X$ is given by $y^2 = x^3 + st^3x$
 with $\Aut_{\widetilde{X}}^0 \cong \mathbb{G}_m$ acting as $[s:t:x:y] \mapsto [\lambda^3s:\lambda^{-1}t:x:y]$ (see Table \ref{Table four families}). We note that the $E_7$-singularity is at $[1:0:0:0]$, whereas the $A_1$-singularity is at $[0:1:0:0]$. 
 To find admissible points $P$ (according to Strategy \ref{strategy of proof non-Jaco}), we distinguish the following cases:
\begin{enumerate}[leftmargin=0.8cm]
\item[(a)] If $s=0$, we can assume $t=1$. For the action $[0:1:x:y] \mapsto [0:1:\lambda^2 x : \lambda^3 y]$ to fix $P$, we must either have $x=y=0$, in which case $P$ would be the $A_1$-singularity, or $x,y \neq 0$ and $\lambda=1$, in which case ${\rm Stab}_{\mathbb{G}_m}(P)$ is trivial.
\item[(b)] Thus, we can assume $s=1$. If $t \neq 0$, then for the action $[1:t:x:y] \mapsto [1:\lambda^{-4}t: \lambda^{-6}x: \lambda^{-9}y]$ to fix $P$, we see that $\lambda^4=1$ must hold. Since $p \neq 2$, this implies $(\Stab_{\mathbb{G}_m}(P))^0=  \{{\rm id}\}$.  
\begin{enumerate}
\item So, we can assume $t=0$ and see that for the above action to fix $P$ we get either $x=y=0$, in which case $P$ would be the $E_7$-singularity, or $x, y \neq 0$ and $\lambda^3=1$. Thus $(\Stab_{\mathbb{G}_m}(P))^0$ is non-trivial if and only if $p=3$ and 
$$P =[1:0:x:y] \hspace{2mm} \text{ with } \hspace{2mm} (x,y) \neq (0,0) \hspace{2mm} \text{and} \hspace{2mm} y^2= x^3. 
$$
In this case, $(\Stab_{\mathbb{G}_m}(P))^0 \cong \mu_3$. Moreover, since $P$ and the $E_7$-singularity lie on the same fiber of the projection $\mathbb{P}(1,1,2,3) \supseteq X \dashrightarrow \mathbb{P}^1$ onto $s$ and $t$, $\widetilde{P}$ lies on the identity component of a curve $C \in |-K_{\widetilde{X}}|$ of type ${\rm III}^*$. 
Since $P$ lies on the cuspidal curve $X \cap \{t=0\}$, we have $C^0 \cong \mathbb{G}_a$ and thus $m=3$ by Corollary \ref{cor: approach for classification non-jacobian}.  
\end{enumerate}
\end{enumerate}
\vspace{-5mm}\end{proof}

\begin{Corollary} \label{cor: 1C non-Jac}
Let $\widetilde{Z}$ be arising from an $\widetilde{X}$ of type \hyperref[Tab1C]{$1C$} and assume that $h^0(\widetilde{Z},T_{\widetilde{Z}}) \neq 0$. Then, $p = 3$, $\widetilde{Z}$ is unique up to isomorphism, has one multiple fiber $3 {\rm III}^*$,
and $\Aut_{\widetilde{Z}}^0 \cong \mu_3$. 
\end{Corollary}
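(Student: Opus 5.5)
The plan follows the pattern of Corollaries \ref{cor: 1A non-Jac} and \ref{cor: 1B non-Jac}. First, combine Corollary \ref{cor: approach for classification non-jacobian} with Proposition \ref{prop: 1C}. A surface $\widetilde{Z}$ arising from $\widetilde{X}$ of type \hyperref[Tab1C]{$1C$} with $h^0(\widetilde{Z},T_{\widetilde{Z}})\neq 0$ is $\Bl_{\widetilde{P}}(\widetilde{X})$ with $\widetilde{P}$ admissible. By Proposition \ref{prop: 1C} this forces $p=3$, and $\widetilde{P}$ lies on $C^0\cong\mathbb{G}_a$ for the unique curve $C$ of type ${\rm III}^*$, with $m=3$. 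It also gives $\Aut_{\widetilde{Z}}^0\cong(\Stab_{\Aut_{\widetilde{X}}^0}(\widetilde{P}))^0\cong\mu_3$.

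The multiple fiber is the strict transform of $C$ taken with multiplicity $m=3$ (the image of the multiple fiber is the cubic through the nine points, cf. Subsection \ref{subsection Halphen pencils and degree one weak del Pezzo surfaces}). I need to check that its reduced structure has Kodaira--N\'eron type ${\rm III}^*$. Since $\widetilde{P}\in C^0$ lies off all $(-2)$-curves and $\widetilde{P}\neq\widetilde{Q}$, blowing up $\widetilde{P}$ turns the identity component of $C$, a $(-1)$-curve through $\widetilde{P}$ meeting the $E_7$-configuration, into a $(-2)$-curve. This completes the $E_7$-configuration to $\widetilde{E}_7$, so $\overline{F}$ is of type ${\rm III}^*$. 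This matches the Jacobian picture, where blowing up $\widetilde{Q}$ instead gives the ${\rm III}^*$ fiber.

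For uniqueness, $\widetilde{X}$ of type $1C$ is unique, so it suffices to show that all admissible points are conjugate under $\Aut(\widetilde{X})$. By the proof of Proposition \ref{prop: 1C}, the admissible $P$ are exactly the points $[1:0:x:y]$ with $y^2=x^3$ and $(x,y)\neq(0,0)$. The $\mathbb{G}_m$-action from Table \ref{Table four families} sends such a point to $[1:0:\lambda^{-6}x:\lambda^{-9}y]$. Parametrizing the smooth points of the cuspidal curve as $(x,y)=(u^2,u^3)$ with $u\neq0$, this is $u\mapsto\lambda^{-3}u$, which is transitive on $k^*$ because $k$ is algebraically closed. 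Hence all admissible points form one $\mathbb{G}_m(k)$-orbit, and all resulting $\widetilde{Z}$ are isomorphic.

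The main point to check is this last transitivity. In characteristic $3$ the map $\lambda\mapsto\lambda^{-3}$ is bijective on $k^*$ (the kernel $\mu_3$ is infinitesimal), so no finite orbit splitting occurs. I would also verify that the point stays within the ${\rm III}^*$ fiber rather than moving to other anticanonical curves. That holds because $t=0$ is preserved by the action.
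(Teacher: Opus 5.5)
Your proposal is correct and takes essentially the same route as the paper. Both derive everything except uniqueness from Corollary~\ref{cor: approach for classification non-jacobian} and Proposition~\ref{prop: 1C}, and both get uniqueness from the transitivity of the $\mathbb{G}_m$-action $[1:0:x:y]\mapsto[1:0:\lambda^{-6}x:\lambda^{-9}y]$ on the admissible points of $C^0$. Your parametrization $u\mapsto\lambda^{-3}u$ and your check that the strict transform of $C$ completes the $E_7$-configuration to a reduced ${\rm III}^*$ fiber spell out details the paper leaves implicit.
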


\begin{proof}
Everything except the uniqueness follows by combining Corollary \ref{cor: approach for classification non-jacobian} with Proposition \ref{prop: 1C}. To show that $\widetilde{Z}$ is unique up to isomorphism, it suffices to observe that all points $\widetilde{P} \in C^0$ where $C \in |-K_{\widetilde{X}}|$ is the curve of type ${\rm III}^*$ are conjugate under $\Aut(\widetilde{X})$. This follows from our description of the $\mathbb{G}_m$-action on the Weierstra{\ss} model in Table \ref{Table four families}: The points on $C^0$ which do not lie on the zero section are of the form $[1:0:x:y]$ with $x,y \neq 0, y^2 = x^3$, and $\mathbb{G}_m$ sends such a point to $[1:0:\lambda^{-6}x: \lambda^{-9}y]$, so all such points are in the same $\mathbb{G}_m$-orbit.
\end{proof}

\begin{Discussion} \label{Discussion: 1C geometry and curve configs}
    
In the configuration of curves on $\widetilde{X}$, the base point has to be the intersection of the two intersecting $(-1)$-curves. Blowing it up yields $\widetilde{Y}$, which is elliptic with singular fibers ${\rm III}^*$ and ${\rm III}$ by \cite{ExtremalCharpII} and has Mordell--Weil group $\mathbb{Z}/2\mathbb{Z}$ by \cite{OguisoShioda}. Thus, the $(-1)$-curve on $\widetilde{X}$ which does not intersect another $(-1)$-curve corresponds to the non-zero section on $\widetilde{Y}$. 

We have seen in the proof of Proposition \ref{prop: 1C} that $\widetilde{P} \in \widetilde{X}$ lies on the $(-1)$-curve that contains the base point and intersects the $E_7$-configuration of $(-2)$-curves. Thus, $\widetilde{Z}$ contains configuration ${\rm III}^*$. 

\begin{table}[H]
\begin{adjustbox}{center}
$
\begin{array}{ccc}
 \begin{array}{c} 
 {\includegraphics[width=0.22\textwidth]{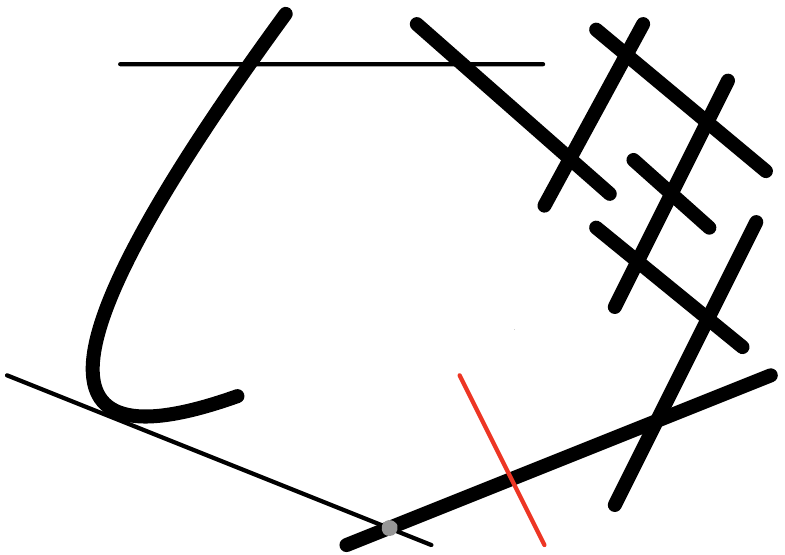} }\end{array}
 & \rightarrow
  & \begin{array}{c}
  {\includegraphics[width=0.22\textwidth]{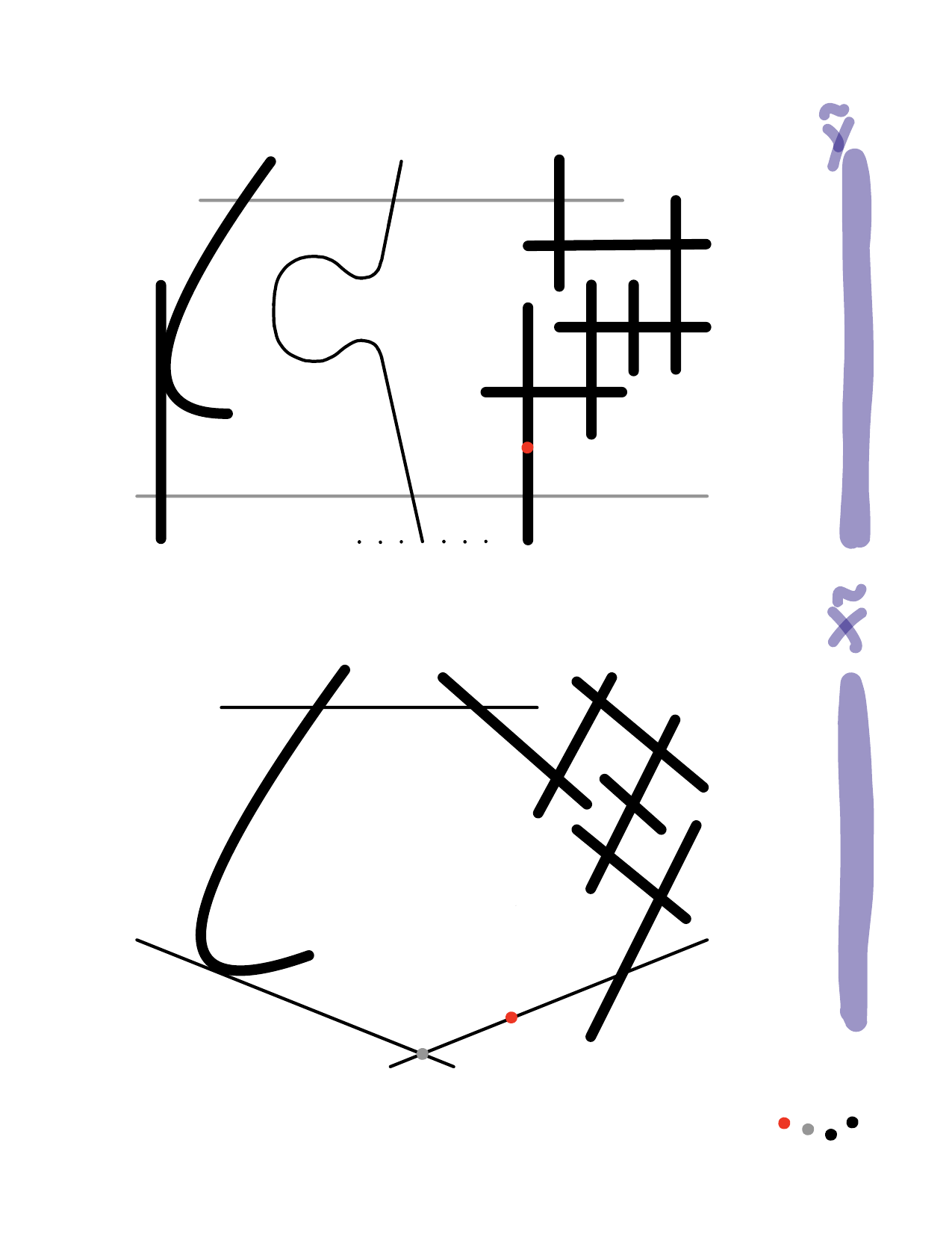} }\end{array}
\end{array}
$
\captionof{figure}{$\widetilde{Z} \to \widetilde{X}$ with incomplete $(-2)$-curve configuration on $\widetilde{Z}$} \label{figure 1C, Ztilde to Xtilde, incomplete}
\end{adjustbox}
\end{table}

Aiming for the entire configuration of $(-2)$-curves on $\widetilde{Z}$, we recall from the classification of reducible fibers of $\widetilde{Z}$ and Lemma \ref{lemma (-2)curves on Ztilde} that the single $(-2)$-curve on $\widetilde{Z}$ has to constitute a dual graph $\widetilde{A}_1$ together with another $(-2)$-curve, that we were not yet able to see as a negative curve on $\widetilde{X}$. We will now determine the precise configuration -- either ${\rm I_2}$ or ${\rm III}$ -- of the ``known'' $(-2)$-curve with the ``new'' one on $\widetilde{Z}$: Taking into account that every $(-1)$-curve on $\widetilde{Z}$ is a $3$-section, we have the following ten possibilities for their intersection behavior, where the first two rows in Figure \ref{figure 1C, 10 Möglichkeiten für (-2)-Kurve} show the ${\rm I_2}$-cases and the third row shows the two ${\rm III}$-cases. Here, the ``known'' resp. ``new'' $(-2)$-curves are drawn in black resp. purple. Note that we also assigned other colors to some of the remaining negative curves to be able to better refer to them in the argument. We remark that, in order to not overload these drawings, we did not yet include the intersection behavior of the red exceptional curve on $\widetilde{Z}$ with other curves.

\begin{table}[H]
\begin{adjustbox}{center}
$
 \begin{array}{c} 
 \includegraphics[width=0.88\textwidth]{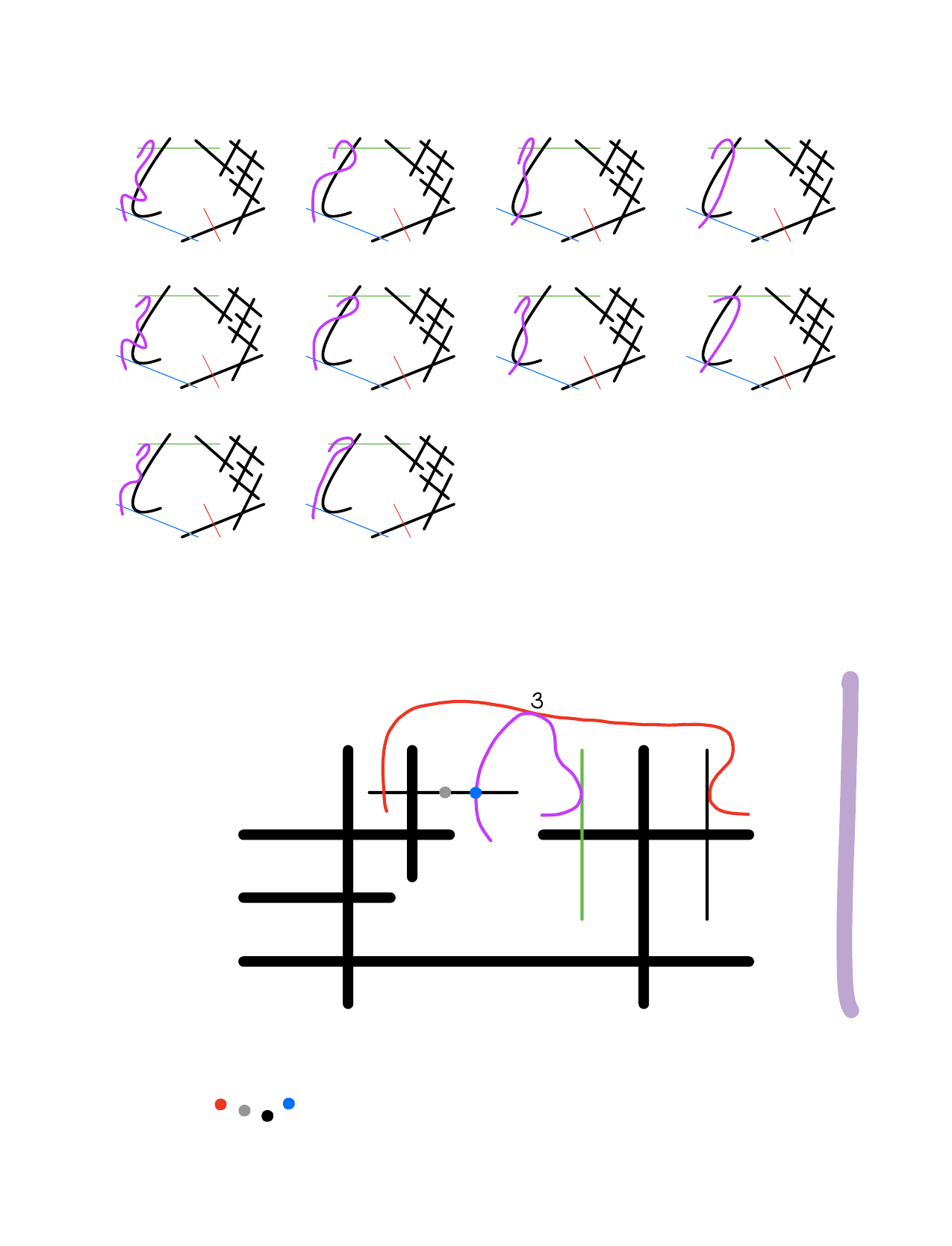} \end{array}
$
\captionof{figure}{Ten possibilities for configurations of $(-2)$-curves on $\widetilde{Z}$} \label{figure 1C, 10 Möglichkeiten für (-2)-Kurve}
\end{adjustbox}
\end{table}

When contracting the blue $(-1)$-curve in Figure \ref{figure 1C, 10 Möglichkeiten für (-2)-Kurve}, this yields a realization of $\widetilde{Z}$ as a blow-up of another weak del Pezzo surface with global vector fields and an $E_7$-configuration of $(-2)$-curves and two $(-1)$-curves with intersection number $2$. By the classification of \cite{WeakDelPezzoGlobalVectorFields}, this must be $\widetilde{X}_{1F}$ of type \hyperref[Tab1F]{$1F$}. 
The blown-up point $\widetilde{P}_{1F} \in \widetilde{X}_{1F}$ has to be one of the intersection points of the horizontal $(-1)$-curve with the curved ones in Case \hyperref[Tab1F]{$1F$} in Table \ref{Table char3 equations and liftable actions}. By symmetry of the configuration on $\widetilde{X}_{1F}$, we can choose $\widetilde{P}_{1F}$ to be the blue point in Figure \ref{figure 1C, Ztilde und Xtilde von 1F}.
 Although, when contracting the blue $(-1)$-curve, the ``known'' $(-2)$-curve becomes a curve of non-negative self-intersection and hence is no longer visible in the curve configuration below, we can identify some images of the curves in Figure \ref{figure 1C, 10 Möglichkeiten für (-2)-Kurve} with curves on $\widetilde{X}_{1F}$ according to the color they are given below and learn about their intersection behavior.

\begin{table}[H]
\begin{adjustbox}{center}
$
\begin{array}{ccc}
 \begin{array}{c} 
 {\includegraphics[width=0.22\textwidth]{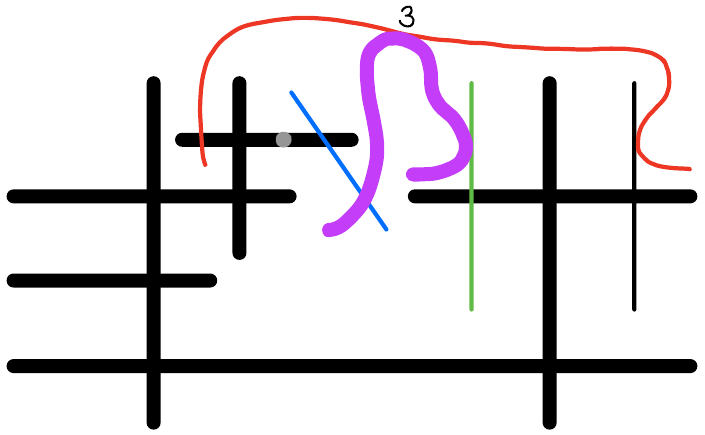} }\end{array}
 & \rightarrow
  & \begin{array}{c}
  {\includegraphics[width=0.22\textwidth]{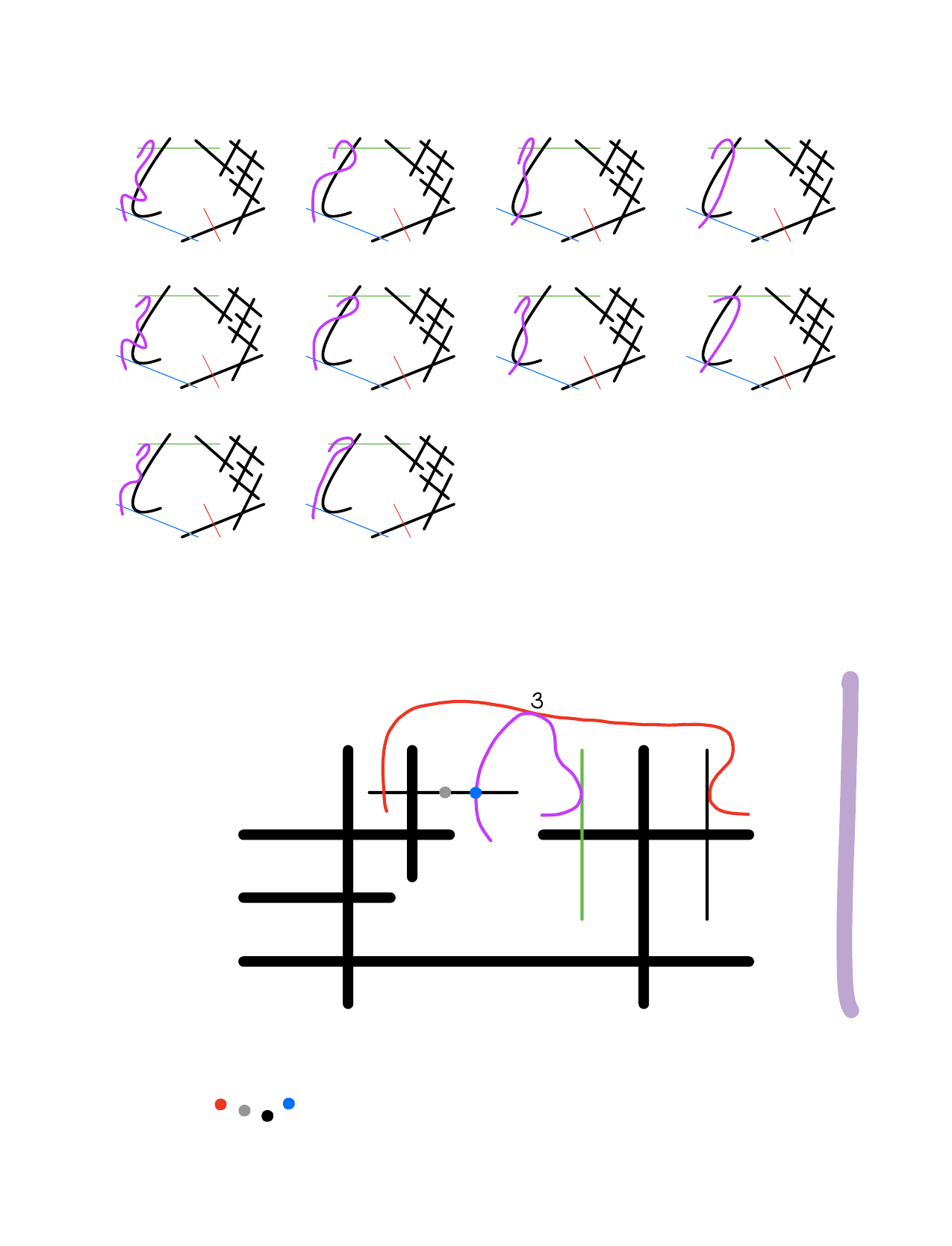} }\end{array}
\end{array}
$
\captionof{figure}{$\widetilde{Z}$ as a blow-up of a weak del Pezzo surface $\widetilde{X}_{1F}$ of type \hyperref[Tab1F]{$1F$}} \label{figure 1C, Ztilde und Xtilde von 1F}
\end{adjustbox}
\end{table}

 Thus, in Figure \ref{figure 1C, 10 Möglichkeiten für (-2)-Kurve} the purple $(-2)$-curve and the green $(-1)$-curve intersect in one point with multiplicity two, which rules out the first four possibilities in Figure \ref{figure 1C, 10 Möglichkeiten für (-2)-Kurve}. 
 {Moreover, we see that the red $(-1)$-curve meets the purple $(-2)$-curve in one point with multiplicity $3$.} To find the true configuration among the remaining six possibilities we need stronger techniques using the $\mu_3$-action on $\widetilde{Z}$:

 Since $\mu_3$ is linearly reductive, its fixed locus on $\widetilde{Z}$ is smooth by \cite[Proposition A.8.10(2)]{ConradGabberPrasad}. 
Moreover, $\mu_3$ preserves every negative curve and hence transverse intersections of negative curves are fixed points. This excludes the first configuration in the second row of Figure \ref{figure 1C, 10 Möglichkeiten für (-2)-Kurve}. Indeed, since there are at least $3$ fixed points on the purple curve and on the black {``known''} $(-2)$-curve, and since $\mu_3$ has at most $2$ fixed points on $\mathbb{P}^1$ (see for example \cite[Lemma 2.34(i)]{Martin}), both these curves have to be fixed pointwise. This contradicts the smoothness of the fixed locus $\widetilde{Z}^{\mu_3}$.

To exclude the sixth, seventh and tenth configuration of Figure \ref{figure 1C, 10 Möglichkeiten für (-2)-Kurve}, we refine the above argument and carry it out for the sixth configuration (the other two use the analogous argument for differently colored curves): The purple curve contains $3$ fixed points, hence is fixed pointwise. Let $Q$ be the point on the green $(-1)$-curve $C$, where the purple $(-2)$-curve touches $C$. Since $\mu_3$ fixes their intersection $C_1= \Spec k[x]/(x^2)$, the non-reduced $C_1$ is contained in the fixed locus $C^{\mu_3}$, which is smooth by \cite[Proposition A.8.10(2)]{ConradGabberPrasad} applied to $C$. Thus, $\mu_3$ has to act trivially on $C$. The purple curve and $C$ being contained in $\widetilde{Z}^{\mu_3}$ yields a contradiction to smoothness of $\widetilde{Z}^{\mu_3}$ (again applying \cite[Proposition A.8.10(2)]{ConradGabberPrasad} to $\widetilde{Z}$).

To exclude the ninth configuration in Figure \ref{figure 1C, 10 Möglichkeiten für (-2)-Kurve}, we cannot immediately tell that the green, blue, purple or black curve is fixed pointwise since there are not enough transverse intersections. To overcome this, let us have a closer look at a point where two of these curves meet: $\mu_3$ acts on the first order neighborhood $C_1\coloneqq k[x]/(x^2)$ of such a point. In the proof of Proposition 5.8 in \cite{RDPDelPezzoGlobalVectorFields}, we saw $\Aut_{C_1} \cong \mathbb{G}_m$ acting as $x \mapsto a_1 x$ if $p \neq 2$. Thus, the closed point of $C_1$ is fixed by $\mathbb{G}_m$ and thus also by $\mu_3$ since $C_1^{\mu_3} \supseteq C_1^{\mathbb{G}_m}$. Therefore, the green, blue, purple and black curve are fixed pointwise, which contradicts smoothness of $\widetilde{Z}^{\mu_3}$.

So, we showed that $\widetilde{Z}$ contains configurations ${\rm III}^*$ and ${\rm I}_2$ as depicted in the eighth configuration of Figure \ref{figure 1C, 10 Möglichkeiten für (-2)-Kurve}. Note that from the comparison with Figure \ref{figure 1C, Ztilde und Xtilde von 1F} we see that there is an additional $(-1)$-curve on $\widetilde{Z}$ intersecting the red $(-1)$-curve in a point with multiplicity $2$ and the ``known'' $(-2)$-curve in a point of multiplicity $3$ (the latter follows from the smooth fixed loci argument that we used above). We summarize the results of the previous discussion in Figure \ref{figure 1C, configs jaco, non-jaco} and Corollary \ref{cor: 1C (-2)configs} below.

 \begin{table}[H]
\begin{adjustbox}{center}
$
\begin{array}{ccccc}
 \begin{array}{c} \addstackgap[2pt]{\includegraphics[width=0.22\textwidth]{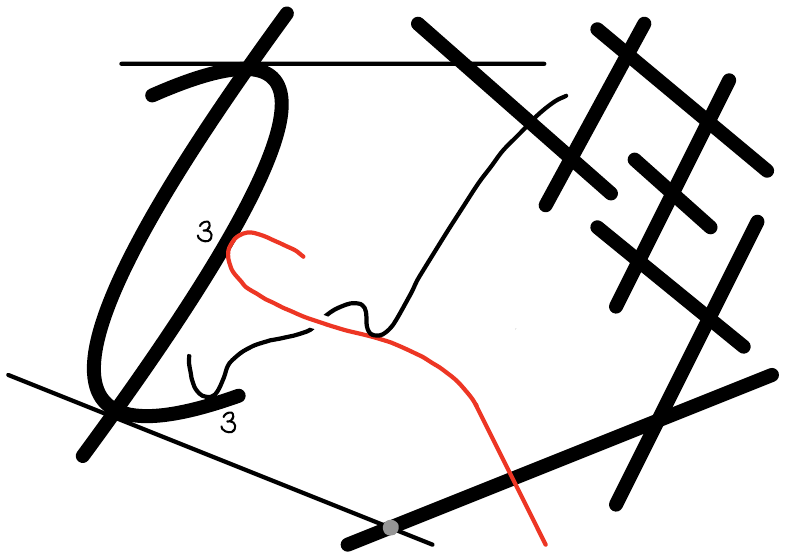} }\end{array}
 & \rightarrow
  & \begin{array}{c}\addstackgap[2pt]{\includegraphics[width=0.22\textwidth]{1C-Xtilde.pdf} }\end{array}
  & \leftarrow
  & \begin{array}{c}\addstackgap[2pt]{\includegraphics[width=0.22\textwidth]{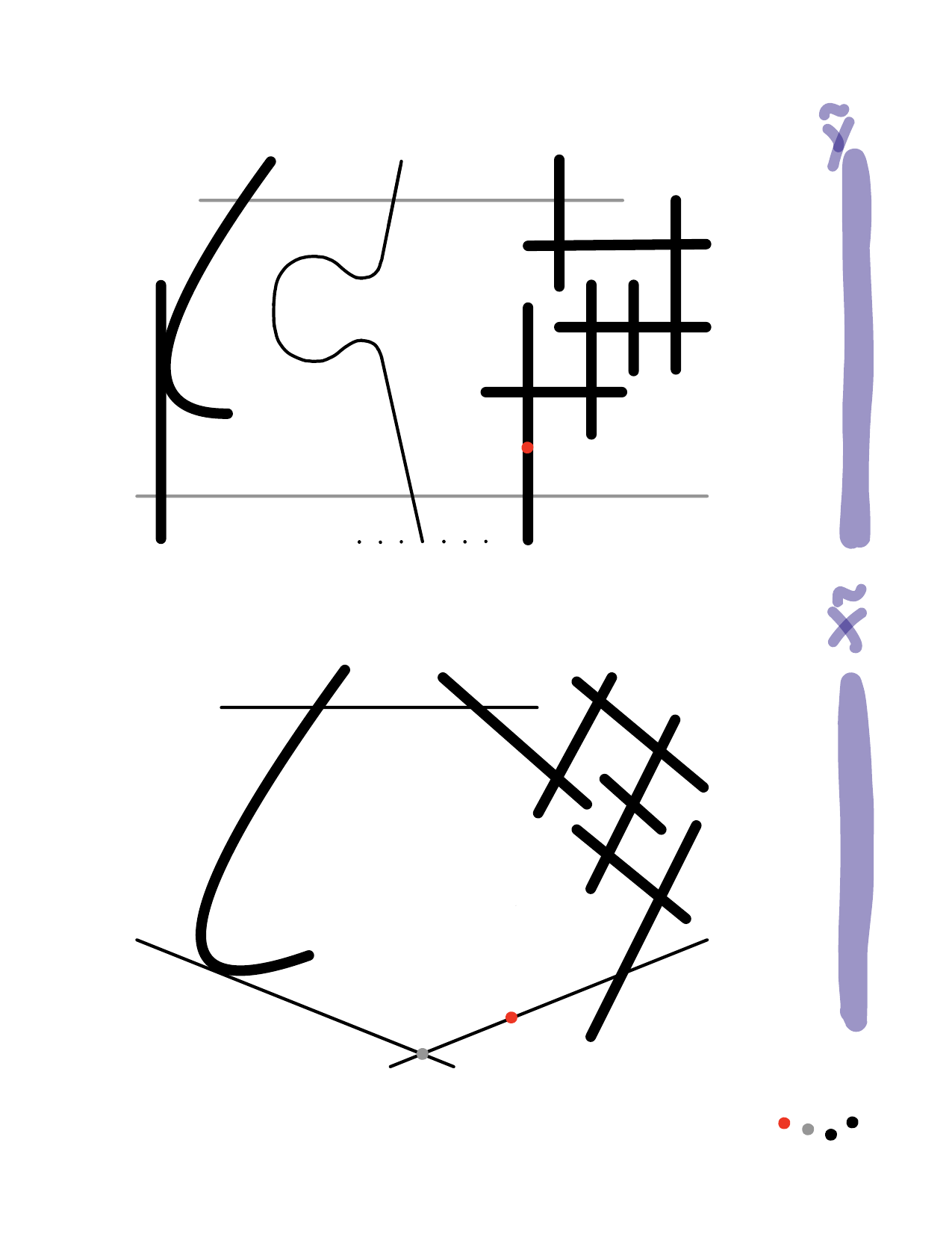} }\end{array}
  \\
  &
  & \downarrow
  &
  & \downarrow
  \\
  &
  & \begin{array}{c}\addstackgap[2pt]{\includegraphics[width=0.22\textwidth]{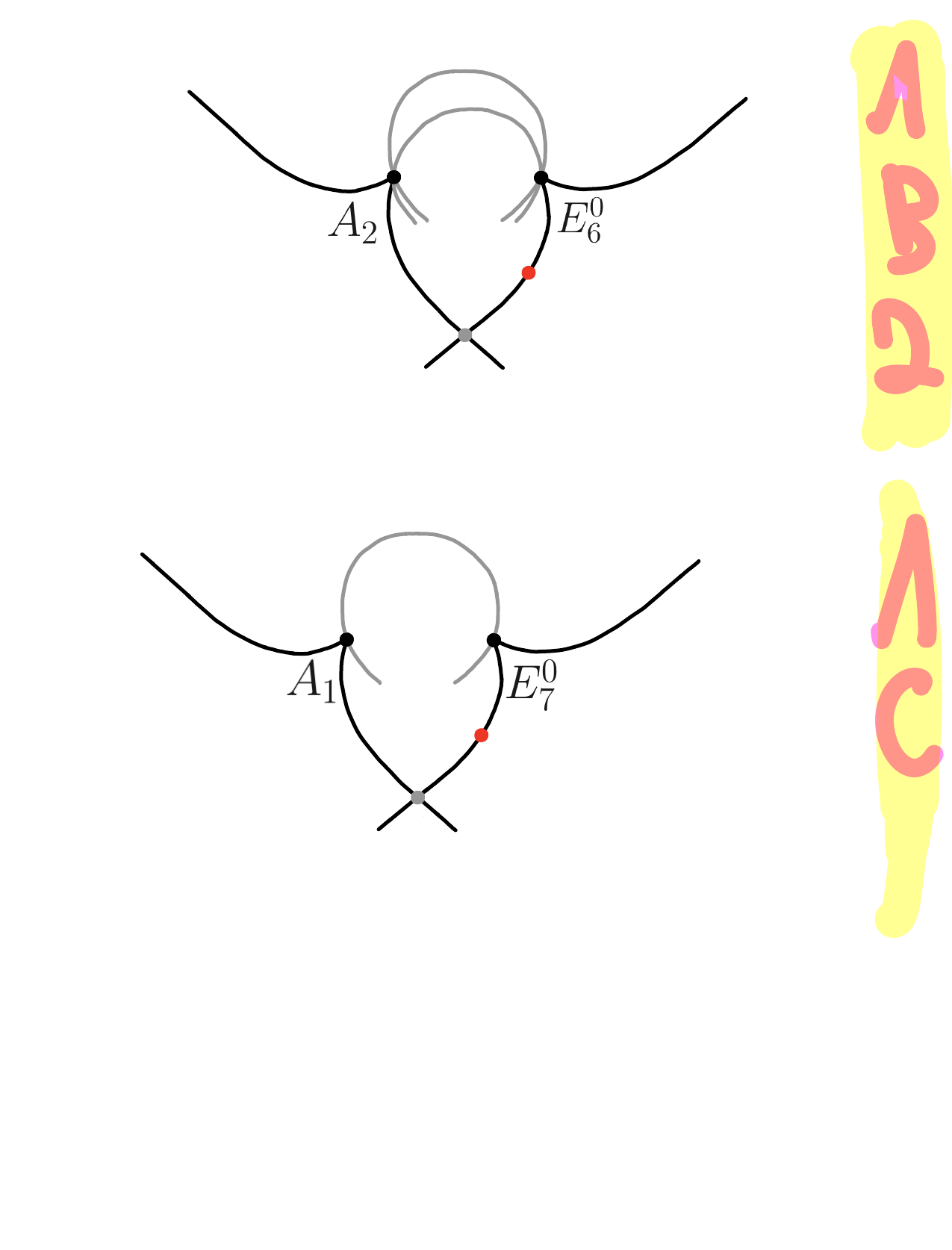}} \end{array}
  & \leftarrow
  & \begin{array}{c}\addstackgap[2pt]{ \includegraphics[width=0.22\textwidth]{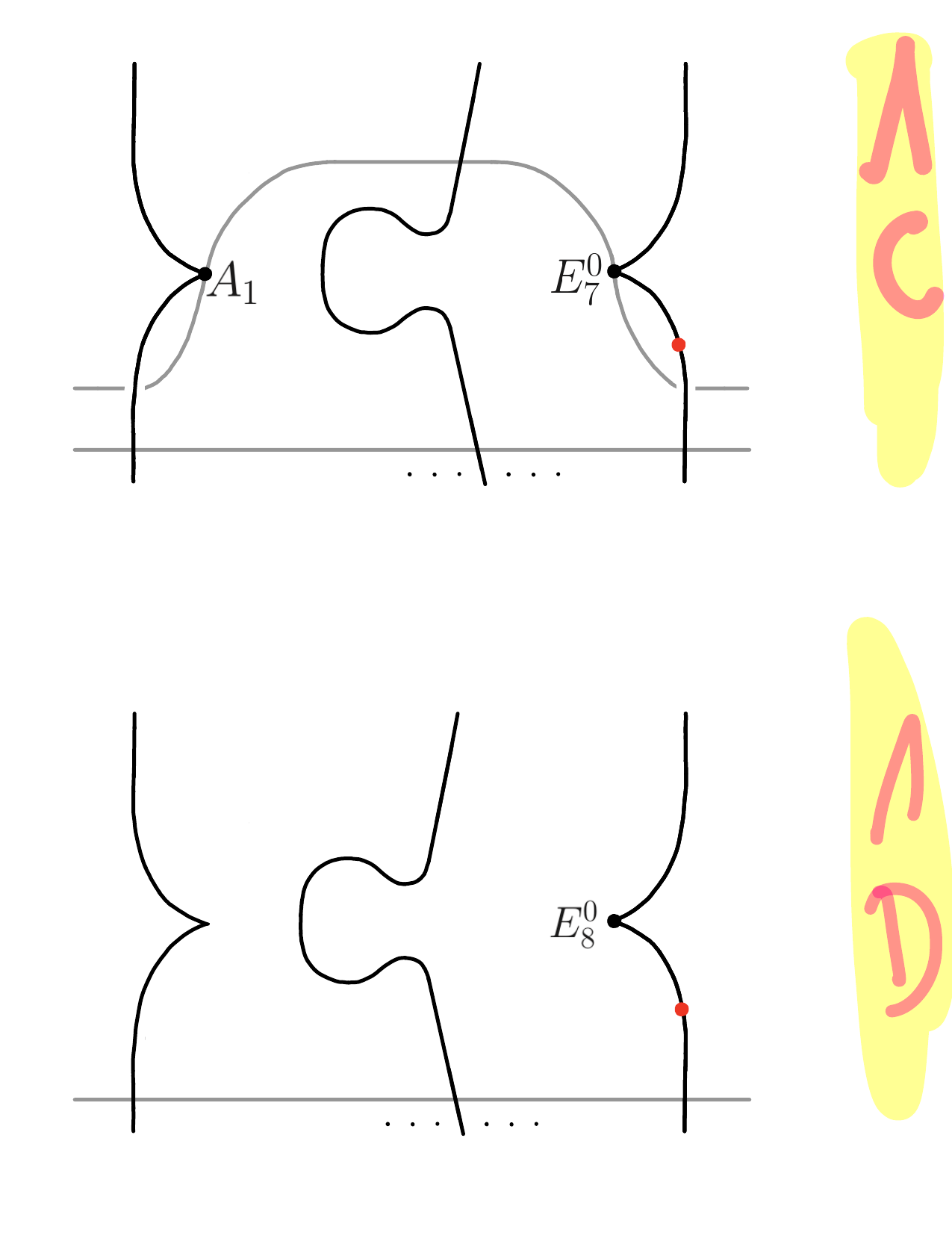}}  \end{array}
\end{array}
$
\captionof{figure}{Non-Jacobian and Jacobian fibrations with global vector fields originating from $\widetilde{X}$ of type \hyperref[Tab1C]{$1C$} ($p=3$)} \label{figure 1C, configs jaco, non-jaco}
\end{adjustbox}
\end{table}
\end{Discussion}

\begin{Corollary} \label{cor: 1C (-2)configs}
The $\widetilde{Z}$ of Corollary \ref{cor: 1C non-Jac} contains nine $(-2)$-curves with dual graph of type $\widetilde{E}_7 + \widetilde{A}_1$ forming configurations ${\rm III}^*$ and ${\rm I_2}$. Moreover, ${\rm III}^*$ is the unique multiple fiber and of multiplicity $m=3$.
\end{Corollary}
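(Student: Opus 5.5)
The plan is to assemble Corollary \ref{cor: 1C (-2)configs} from the results already proved for type \hyperref[Tab1C]{$1C$}, in three steps: locate the $\widetilde{E}_7$ configuration, complete it using the rank bound, and identify the multiple fiber.

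\textbf{Step 1: the ${\rm III}^*$ configuration.} By the proof of Proposition \ref{prop: 1C}, $\widetilde P$ lies on $C^0$, where $C$ is the anti-canonical curve of type ${\rm III}^*$. Equivalently, $\widetilde P$ lies on the $(-1)$-curve through $\widetilde Q$ that meets the $E_7$-configuration, and $\widetilde P$ is not on any $(-2)$-curve. After blowing up $\widetilde P$, the strict transform of this $(-1)$-curve therefore becomes a $(-2)$-curve. Together with the seven curves of the $E_7$-configuration it forms an $\widetilde{E}_7$ graph, i.e.\ a configuration of type ${\rm III}^*$.

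\textbf{Step 2: completing the configuration.} The remaining $A_1$-curve on $\widetilde X$ stays a $(-2)$-curve on $\widetilde Z$. Every $(-2)$-curve lies in a fiber, and it cannot lie alone in a fiber. So by the Kodaira--N\'eron list it completes to a fiber of type ${\rm I}_2$ or ${\rm III}$, which requires one further $(-2)$-curve. Lemma \ref{lemma (-2)curves on Ztilde} gives $\sum(e_P-1)\le 8$. Here ${\rm III}^*$ contributes $7$ and the $\widetilde A_1$ fiber contributes $1$, so there are exactly nine $(-2)$-curves and no other reducible fibers.

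To decide between ${\rm I}_2$ and ${\rm III}$, I follow Discussion \ref{Discussion: 1C geometry and curve configs}:
\begin{itemize}
\item Using that every $(-1)$-curve is a $3$-section, list the ten a priori possible intersection patterns.
\item Contract the blue $(-1)$-curve to realize $\widetilde Z$ as a blow-up of $\widetilde X_{1F}$. This excludes four of the patterns.
\item Use smoothness of the fixed locus of the linearly reductive group $\mu_3=\Aut^0_{\widetilde Z}$ \cite[Proposition A.8.10(2)]{ConradGabberPrasad}. Transverse intersections of negative curves are $\mu_3$-fixed, and $\mu_3$ has at most two fixed points on $\mathbb P^1$ unless it acts trivially there. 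Together with the analysis of first-order neighbourhoods $\Spec k[x]/(x^2)$, this excludes all remaining patterns except the eighth, which has type ${\rm I}_2$.
\end{itemize}
I expect this exclusion, especially the ninth pattern with too few transverse intersections, to be the main obstacle. It is already handled in that Discussion.

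\textbf{Step 3: the multiple fiber.} Corollary \ref{cor: 1C non-Jac} gives $m=3$ and a unique multiple fiber, namely the one coming from $C$. By Proposition \ref{prop: torsionpoint} and the Halphen description, the multiple fiber is $3$ times the strict transform of $C$, which is the ${\rm III}^*$ configuration. Since at most one fiber is multiple, the ${\rm I}_2$ fiber is not multiple.
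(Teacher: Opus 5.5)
Your plan is essentially the paper's own argument, which is given in Discussion~\ref{Discussion: 1C geometry and curve configs}. It runs through the same steps: $\widetilde P$ lies on the $(-1)$-component of the ${\rm III}^*$-member; the rank bound forces a second reducible fiber of type ${\rm I}_2$ or ${\rm III}$ through the $A_1$-curve; the realization over $\widetilde X_{1F}$ removes four of the ten patterns; and smoothness of the $\mu_3$-fixed locus removes all but the eighth.

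One correction concerns your count. $\widetilde E_7+\widetilde A_1$ has $8+2=10$ vertices, so your line ``${\rm III}^*$ contributes $7$ and the $\widetilde A_1$ fiber contributes $1$, so exactly nine'' confuses $\sum_P(e_P-1)=8$ with the number of components. There are ten $(-2)$-curves. The ``nine'' in the statement is a slip: the second case of Corollary~\ref{cor: 1F (-2)configs char3} describes the same surface with ten.
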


\subsection{Case \hyperref[Tab1D]{$1D$}} \label{subsection 1D}

This family exists only if $\Char(k)=p \neq 2,3$.

\begin{Proposition} \label{prop: 1D}
Let $\widetilde{X}$ be of type \hyperref[Tab1D]{$1D$}.
\begin{enumerate}
    \item[(0)] If $p \neq 5$, then there are no admissible $\widetilde{P} \in \widetilde{X}$.
    \item\label{prop 1D case p=5 II*} If $p = 5$, then $\widetilde{P}$ is admissible if and only if $C$ is of type ${\rm II}^*$. Moreover, then $({\rm Stab}_{\Aut_{\widetilde{X}}^0}(\widetilde{P}))^0 \cong \mu_5$ and $m=5$.
\end{enumerate}
\end{Proposition}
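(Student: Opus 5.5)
We follow exactly the pattern of the proofs of Propositions \ref{prop: 1B} and \ref{prop: 1C}. By Table \ref{Table four families}, $X$ is given by $y^2=x^3+st^5$ with $\Aut_{\widetilde X}^0\cong\mathbb G_m$ acting as $[s:t:x:y]\mapsto[\lambda^5 s:\lambda^{-1}t:x:y]$, and $p\neq2,3$. The $E_8$-singularity sits at $[1:0:0:0]$. Since the weights on $\mathbb P(1,1,2,3)$ are $(1,1,2,3)$, we can rescale to normalize one coordinate, and we distinguish cases according to whether $s$ or $t$ vanishes, computing $(\Stab_{\mathbb G_m}(P))^0$ in each.

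\emph{Case $s=0$.} We set $t=1$; the action becomes $[0:1:x:y]\mapsto[0:1:\lambda^2x:\lambda^3y]$. On $X$ we have $y^2=x^3$ there. Either $x=y=0$, which gives $P=[0:1:0:0]$; or $x,y\neq0$, forcing $\lambda=\lambda^3/\lambda^2=1$, so the stabilizer is trivial. The point $[0:1:0:0]$ is a smooth point of $X$: it is the base point $Q$ (the curve $s=0$ ... in fact $Q$ corresponds to $x=y=0$ on the $(s,t)$-line). More precisely, $Q=[0:0:1:1]$ in the usual convention, so we have to check directly. The point $[0:1:0:0]$ is a cusp of the fiber over $[0:1]$, hence not on $C^0$; in either case it is not admissible.

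\emph{Case $s=1$, $t\neq0$.} Normalizing $s=1$, the action is $[1:t:x:y]\mapsto[1:\lambda^{-6}t:\lambda^{-10}x:\lambda^{-15}y]$. Fixing $P$ forces $\lambda^6=1$. If $x\neq0$ we also need $\lambda^{10}=1$, hence $\lambda^2=1$. If $y\neq0$ we also need $\lambda^{15}=1$, hence $\lambda^3=1$. Since $y^2=x^3+t^5$ with $t\neq0$, $x$ and $y$ cannot both vanish. So the stabilizer is contained in $\mu_2$ or $\mu_3$, which are étale for $p\neq2,3$. Hence $(\Stab)^0$ is trivial.

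\emph{Case $s=1$, $t=0$.} The action is $[1:0:x:y]\mapsto[1:0:\lambda^{-10}x:\lambda^{-15}y]$ on the cuspidal curve $y^2=x^3$. Either $P$ is the $E_8$-singularity $x=y=0$, or $x,y\neq0$ and $\lambda^{\gcd(10,15)}=\lambda^5=1$. Thus $(\Stab_{\mathbb G_m}(P))^0\neq\{1\}$ if and only if $p=5$, and then it equals $\mu_5$. Such $P$ lie on the fiber over $[1:0]$ through the $E_8$-point, so $\widetilde P$ lies on the identity component of the curve $C\in|-K_{\widetilde X}|$ of type ${\rm II}^*$ (compare Subsection \ref{subsection proof Jacobian case}). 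Since $P$ is on the cuspidal curve $X\cap\{t=0\}$, $C^0\cong\mathbb G_a$. By Corollary \ref{cor: approach for classification non-jacobian}, $\widetilde P$ has exact order $m$ in $\mathbb G_a$ over characteristic $5$, so $m=5$. Conversely, every point of $C^0\setminus\{\widetilde Q\}$ has order $5$, so all of them are admissible.

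\emph{Main obstacle.} The only delicate point is bookkeeping: checking that the weight computation in the case $t\neq0$ genuinely yields only étale stabilizers. One must also verify that the case $s=0$ contributes nothing, i.e., correctly identify which points are $Q$, singular, or off $C^0$. Everything else is a direct transcription of the argument for type \hyperref[Tab1C]{$1C$}.
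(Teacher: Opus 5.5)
Your proposal is correct and follows the paper's proof essentially step for step: the same three charts, the same weight computations $\lambda^{6}=1$ and $\lambda^{5}=1$, and the same identification of the type ${\rm II}^*$ member with $C^0\cong\mathbb G_a$, which forces $m=5$. Your further splitting of the case $t\neq0$ into $\mu_2$ and $\mu_3$ is a harmless refinement of the paper's step that $\mu_6$ is \'etale when $p\neq2,3$. In the case $s=0$, your passing claim that $[0:1:0:0]$ is the base point $Q$ is wrong ($Q=[0:0:1:1]$), but you correct it yourself and reach the paper's conclusion that this point is the cusp of the type ${\rm II}$ member.
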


\begin{proof}
$X$ is given by $y^2 = x^3 + st^5$ with $\Aut_{\widetilde{X}}^0 \cong \mathbb{G}_m$ acting as $[s:t:x:y] \mapsto [\lambda^5s:\lambda^{-1}t:x:y]$ (see Table \ref{Table four families}). We note that the $E_8$-singularity is at $[1:0:0:0]$.
 To find admisible $P \in X$ (according to Strategy \ref{strategy of proof non-Jaco}), we distinguish the following cases:
\begin{enumerate}[leftmargin=0.8cm]
\item[(a)] If $s=0$, we can assume $t=1$. For the action $[0:1:x:y] \mapsto [0:1:\lambda^2 x : \lambda^3 y]$ to fix $P$, we have two possibilities: either $x, y \neq 0$ and $\lambda =1$, or $x=y=0$. But the point $[0:1:0:0]$ corresponds to the singular point of a $C \in |-K_{\widetilde{X}}|$ of type ${\rm II}$, hence does not lie in the smooth $C^0$.
\item[(b)] Thus, we can assume $s=1$. If $t \neq 0$, then for the action $[1:t:x:y] \mapsto$ $[1:\lambda^{-6}t: \lambda^{-10}x: \lambda^{-15}y]$ to fix $P$, we see that $\lambda^6=1$ must hold. Since $p \neq 2,3$, this implies $(\Stab_{\mathbb{G}_m}(P))^0=  \{{\rm id}\}$.  
\begin{enumerate}
\item[(1)] So, we can assume $t=0$ and see that for the above action to fix $P$ we get either $x=y=0$, in which case $P$ would be the $E_8$-singularity, or $x, y \neq 0$ and $\lambda^5=1$. Thus $(\Stab_{\mathbb{G}_m}(P))^0$ is non-trivial if and only if $p=5$ and 
$$P =[1:0:x:y] \hspace{2mm} \text{ with } \hspace{2mm} (x,y) \neq (0,0) \hspace{2mm} \text{and} \hspace{2mm} y^2= x^3. 
$$
In this case, $(\Stab_{\mathbb{G}_m}(P))^0 \cong \mu_5$. Moreover, since $P$ and the $E_8$-singularity lie on the same fiber of the projection $\mathbb{P}(1,1,2,3) \supseteq X \dashrightarrow \mathbb{P}^1$ onto $s$ and $t$, $\widetilde{P}$ lies on the identity component of a curve $C \in |-K_{\widetilde{X}}|$ of type ${\rm II}^*$. 
Since $P$ lies on the cuspidal curve $X \cap \{t=0\}$, we have $C^0 \cong \mathbb{G}_a$ and thus $m=5$ by Corollary \ref{cor: approach for classification non-jacobian}.  
\end{enumerate}
\end{enumerate}
\vspace{-5mm}\end{proof}

\begin{Corollary} \label{cor: 1D non-Jac}
Let $\widetilde{Z}$ be arising from an $\widetilde{X}$ of type \hyperref[Tab1D]{$1D$} and assume that $h^0(\widetilde{Z},T_{\widetilde{Z}}) \neq 0$. Then, $p = 5$, $\widetilde{Z}$ is unique up to isomorphism, has one multiple fiber $5 {\rm II}^*$, and $\Aut_{\widetilde{Z}}^0 \cong \mu_5$. 
\end{Corollary}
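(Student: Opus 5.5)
The plan mirrors the proofs of Corollaries \ref{cor: 1C non-Jac} and \ref{cor: 1B non-Jac}. Corollary \ref{cor: approach for classification non-jacobian} gives $\widetilde{Z} = {\rm Bl}_{\widetilde{P}}(\widetilde{X})$ with $\widetilde{P}$ admissible and $\Aut_{\widetilde{Z}}^0 \cong (\Stab_{\Aut_{\widetilde{X}}^0}(\widetilde{P}))^0$. Proposition \ref{prop: 1D} then forces $p=5$, the curve $C$ to be of type ${\rm II}^*$, $m=5$, and $\Aut_{\widetilde{Z}}^0 \cong \mu_5$.

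For the multiple fiber, I will use Subsection \ref{subsection Halphen pencils and degree one weak del Pezzo surfaces}: the image of the multiple fiber is $C$ taken with multiplicity $m$. On $\widetilde{Z}$, its support is the strict transform of $C$, which is the cuspidal curve $X\cap\{t=0\}$ together with the eight $(-2)$-curves of the $E_8$-configuration. Since $\widetilde{P}$ lies on the identity component, off the $(-2)$-curves, blowing it up leaves a configuration of nine curves with dual graph $\widetilde{E}_8$, i.e. of type ${\rm II}^*$. Hence the multiple fiber is $5{\rm II}^*$.

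Uniqueness is the only step needing an argument. The singular fibers are irreducible away from the $E_8$-point, so the curve $C$ of type ${\rm II}^*$ is the unique member of $|-K_{\widetilde{X}}|$ over $t=0$, and $\widetilde{X}$ of type $1D$ is itself unique. It therefore suffices to show that all points of $C^0\setminus\{\widetilde{Q}\}$ lie in one $\Aut(\widetilde{X})$-orbit. By the proof of Proposition \ref{prop: 1D}, these points are exactly $[1:0:x:y]$ with $x,y\neq 0$ and $y^2=x^3$, i.e. $(x,y)=(u^2,u^3)$ with $u\neq 0$. The $\mathbb{G}_m$-action of Table \ref{Table four families} sends such a point to $[1:0:\lambda^{-10}x:\lambda^{-15}y]$, i.e. it replaces $u$ by $\lambda^{-5}u$. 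As $\lambda\mapsto\lambda^{-5}$ is surjective on $k^*$ because $k$ is algebraically closed, the action is transitive on these points.

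All such $\widetilde{P}$ therefore give isomorphic blow-ups, and $\widetilde{Z}$ is unique up to isomorphism. I expect no real obstacle; the only points to check are the transitivity computation and that exact order $5$ is automatic, since every nonzero point of $C^0\cong\mathbb{G}_a$ has order $p=5$.
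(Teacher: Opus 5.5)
Your proposal is correct and follows essentially the same route as the paper: reduce via Corollary \ref{cor: approach for classification non-jacobian} and Proposition \ref{prop: 1D}, then get uniqueness from the transitivity of the $\mathbb{G}_m$-action $u\mapsto\lambda^{-5}u$ on the points $[1:0:u^2:u^3]$, $u\neq 0$, of $C^0$. Your explicit check that the support of the multiple fiber forms a ${\rm II}^*$-configuration is the argument the paper gives separately in Discussion \ref{Discussion: 1D geometry and curve configs}.
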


\begin{proof}
Everything except the uniqueness follows by combining Corollary \ref{cor: approach for classification non-jacobian} with Proposition \ref{prop: 1D}. To show that $\widetilde{Z}$ is unique up to isomorphism, it suffices to observe that all points $\widetilde{P} \in C^0$ where $C \in |-K_{\widetilde{X}}|$ is the curve of type ${\rm II}^*$ are conjugate under $\Aut(\widetilde{X})$. This follows from our description of the $\mathbb{G}_m$-action on the Weierstra{\ss} model in Table \ref{Table four families}: The points on $C^0$ which do not lie on the zero section are of the form $[1:0:x:y]$ with $x,y \neq 0, y^2 = x^3$, and $\mathbb{G}_m$ sends such a point to $[1:0:\lambda^{-10}x: \lambda^{-15}y]$, so all such points are in the same $\mathbb{G}_m$-orbit.
\end{proof}

\begin{Discussion} \label{Discussion: 1D geometry and curve configs}
By \cite[Theorem 4.1.]{ExtremalCharpII} and \cite{ExtremalChar0}, $\widetilde{Y}$ contains singular fibers ${\rm II}^*$ and ${\rm II}$, the \linebreak Mordell--Weil group of $\widetilde{Y}$ is trivial by \cite{OguisoShioda}, and thus there are no other $(-1)$-curves on $\widetilde{Y}$ besides the zero-section. By the computation in the proof of Proposition \ref{prop: 1D}(\ref{prop 1D case p=5 II*}) $\widetilde{P}$ lies on this $(-1)$-curve as well. So, $\widetilde{Z}$ contains configuration ${\rm II}^*$ and by Lemma \ref{lemma (-2)curves on Ztilde} these are all $(-2)$-curves on $\widetilde{Z}$. The situation is summarized in the following Figure \ref{figure 1D, configs jaco, non-jaco} and Corollary \ref{cor: 1D (-2)configs}.

\begin{table}[H]
\begin{adjustbox}{center}
$
\begin{array}{ccccc}
 \begin{array}{c} \addstackgap[2pt]{\includegraphics[width=0.22\textwidth]{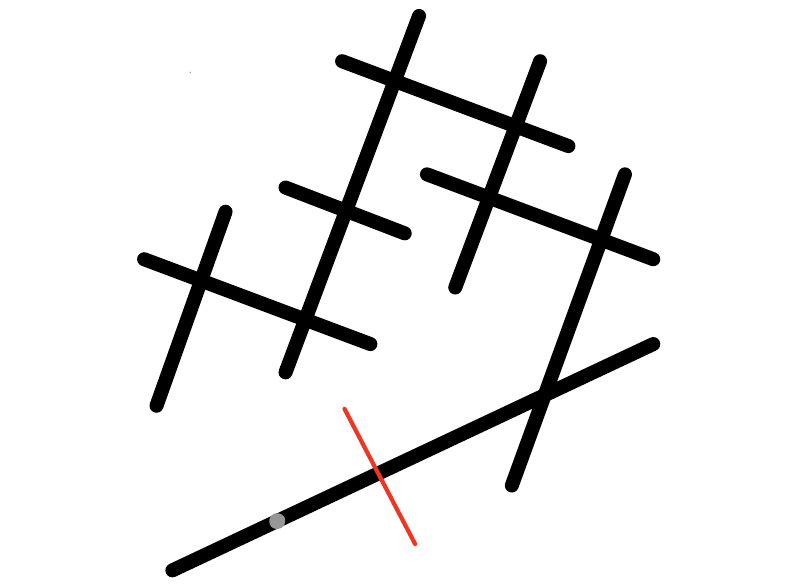} }\end{array}
 & \rightarrow
  & \begin{array}{c}\addstackgap[2pt]{\includegraphics[width=0.22\textwidth]{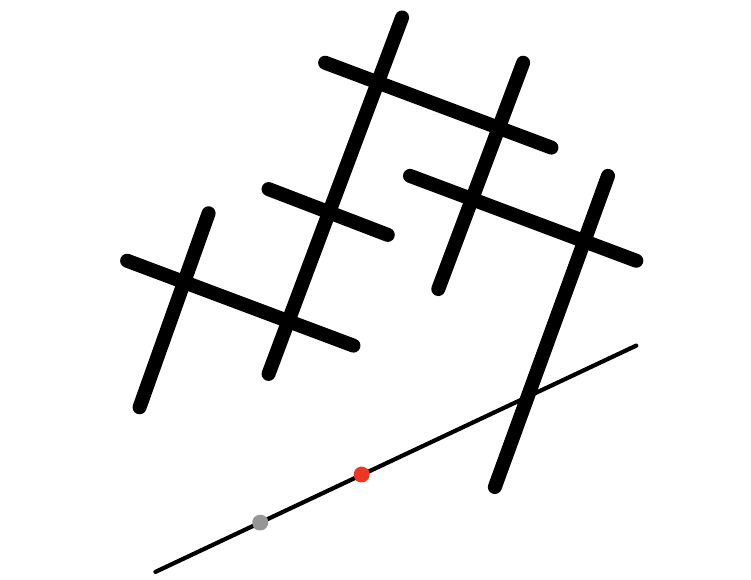} }\end{array}
  & \leftarrow
  & \begin{array}{c}\addstackgap[2pt]{\includegraphics[width=0.22\textwidth]{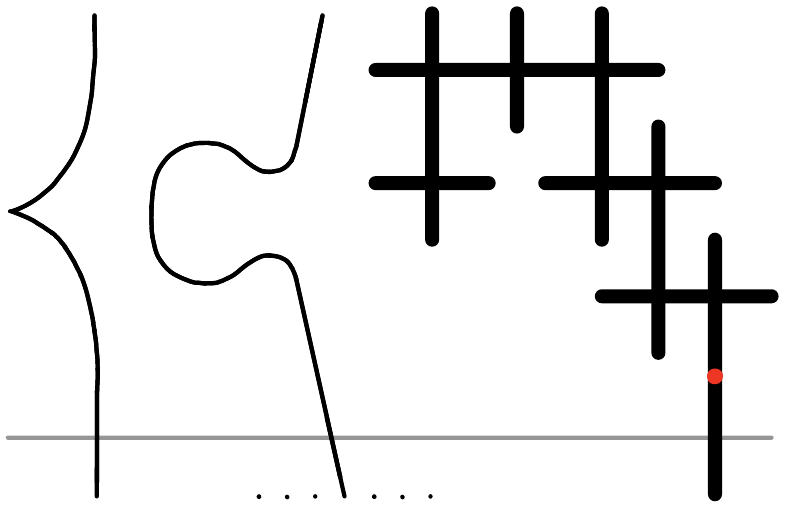} }\end{array}
  \\
  &
  & \downarrow
  &
  & \downarrow
  \\
  &
  & \begin{array}{c}\addstackgap[2pt]{\includegraphics[width=0.22\textwidth]{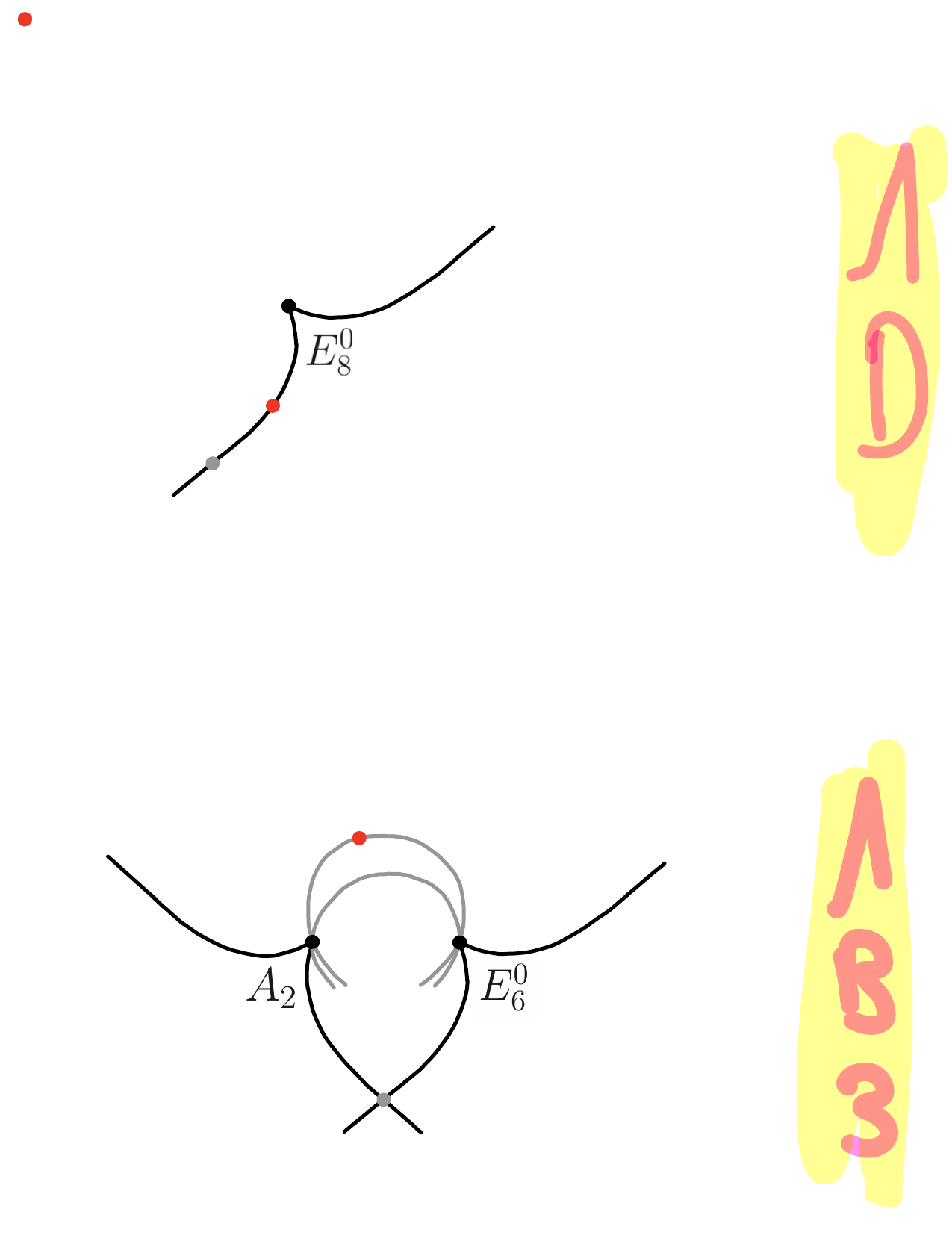}} \end{array}
  & \leftarrow
  & \begin{array}{c}\addstackgap[2pt]{ \includegraphics[width=0.22\textwidth]{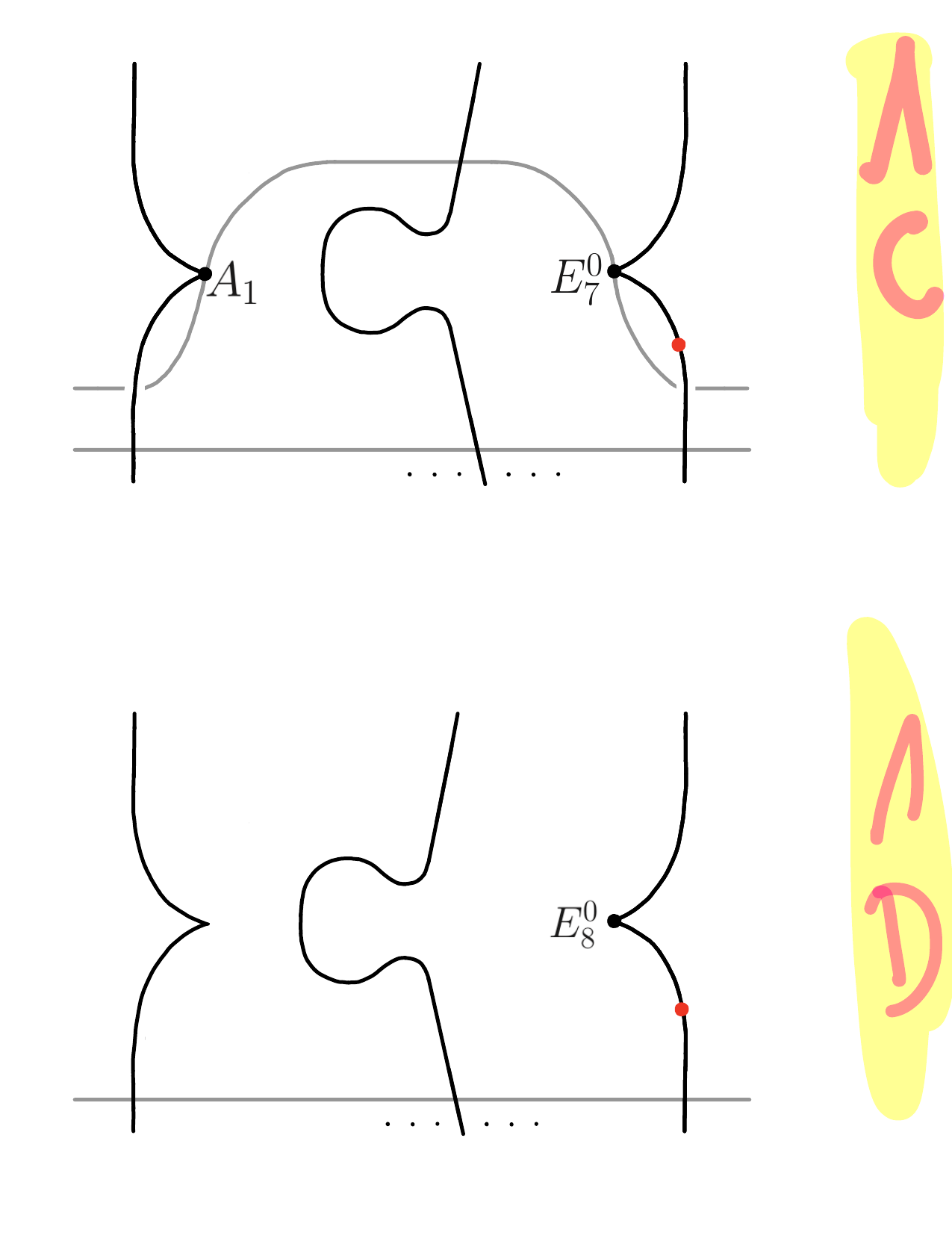}}  \end{array}
\end{array}
$
\captionof{figure}{Non-Jacobian and Jacobian fibrations with global vector fields originating from $\widetilde{X}$ of type \hyperref[Tab1D]{$1D$} ($p=5$)} \label{figure 1D, configs jaco, non-jaco}
\end{adjustbox}
\end{table}

\end{Discussion}

\begin{Corollary} \label{cor: 1D (-2)configs}
The $\widetilde{Z}$ of Corollary \ref{cor: 1D non-Jac} contains nine $(-2)$-curves with dual graph of type $\widetilde{E}_8$ forming configuration ${\rm II}^*$. Moreover, ${\rm II}^*$ is the unique multiple fiber and of multiplicity $m=5$.
\end{Corollary}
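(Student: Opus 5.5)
The plan is to deduce the statement from the preceding Discussion together with Corollary \ref{cor: 1D non-Jac}, Proposition \ref{prop: 1D} and Lemma \ref{lemma (-2)curves on Ztilde}. Three things need to be shown: that $\widetilde{Z}$ contains nine $(-2)$-curves in a configuration of type $\widetilde{E}_8$, that there are no further $(-2)$-curves, and that this configuration supports the multiple fiber of multiplicity $5$.

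\textbf{Step 1: locating $\widetilde{P}$.} Since $\widetilde{X}$ is of type $1D$, it carries an $E_8$-configuration of $(-2)$-curves. The Jacobian surface $\widetilde{Y}=\Bl_{\widetilde{Q}}(\widetilde{X})$ has trivial Mordell--Weil group by \cite{OguisoShioda}. Hence the only $(-1)$-curve on $\widetilde{X}$ is the image of the zero section. This curve passes through $\widetilde{Q}$ and meets the end component of the long arm of $E_8$, namely the component of multiplicity one in $\widetilde{E}_8$. By Proposition \ref{prop: 1D}, the point $\widetilde{P}$ lies on $C^0$, where $C$ is of type ${\rm II}^*$. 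In the Weierstra{\ss} model, $C^0$ is $X\cap\{t=0\}$ minus the singular point. Its strict transform on $\widetilde{X}$ is the unique non-$(-2)$ component of $C$, which is exactly the $(-1)$-curve $E$ just described.

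\textbf{Step 2: the $\widetilde{E}_8$-configuration on $\widetilde{Z}$.} Blowing up $\widetilde{P}\in E$ turns the strict transform of $E$ into a $(-2)$-curve. This curve still meets the same end component of $E_8$ transversally, so together with the eight $(-2)$-curves it forms a $\widetilde{E}_8$ dual graph. The divisor $\overline{F}$ given by the $\widetilde{E}_8$ combination with its standard multiplicities equals the strict transform of $C$. One checks that $\overline{F}\sim -K_{\widetilde{Z}}$: the divisor $C\in|-K_{\widetilde{X}}|$ pulls back to $\overline{F}$ plus the exceptional curve, and the exceptional curve is cancelled by $K_{\widetilde{Z}}=\pi^*K_{\widetilde{X}}+E_{\widetilde{P}}$. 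Since the fiber class is $-5K_{\widetilde{Z}}$ (Corollary \ref{cor: 1D non-Jac} gives $m=5$), the divisor $\overline{F}$ is the reduced multiple fiber and $5\overline{F}$ is a fiber of type $5{\rm II}^*$.

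\textbf{Step 3: completeness and the main point of care.} The new configuration has nine components, so $e_P-1=8$ for this fiber. Lemma \ref{lemma (-2)curves on Ztilde} then forbids any other reducible fiber. Since every $(-2)$-curve lies in a fiber, there are exactly nine $(-2)$-curves. The main point needing care is Step 1: we must justify that the strict transform of $C$ on $\widetilde{X}$ really is the zero-section $(-1)$-curve, and not some other curve meeting the $E_8$-configuration. This follows from the uniqueness of the $(-1)$-curve (trivial Mordell--Weil group), together with the fact that $C$ contains $\widetilde{Q}$ and has exactly one non-exceptional component over the $E_8$-point.
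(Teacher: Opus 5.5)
Your proposal follows the paper's own argument in the discussion of case $1D$: $\widetilde P$ lies on the unique $(-1)$-curve of $\widetilde X$, blowing it up completes $E_8$ to ${\rm II}^*$, and Lemma~\ref{lemma (-2)curves on Ztilde} excludes further $(-2)$-curves. Your check that the strict transform of $C$ is linearly equivalent to $-K_{\widetilde Z}$, and hence is the reduced multiple fiber, spells out something the paper leaves implicit.

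There is one wrong identification in Step 1 that you should fix. The zero section of $\widetilde Y$ is the exceptional curve over $\widetilde Q$, so it is contracted to that point and has no curve image on $\widetilde X$. The $(-1)$-curve on $\widetilde X$ is instead the image of the multiplicity-one component of the ${\rm II}^*$-fiber, namely the component meeting the zero section. That is why it passes through $\widetilde Q$.

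The correct use of trivial Mordell--Weil is as follows. A $(-1)$-curve of $\widetilde X$ missing $\widetilde Q$ would remain a $(-1)$-curve on $\widetilde Y$, i.e.\ a non-zero section, which is impossible. So every $(-1)$-curve passes through $\widetilde Q$, and its strict transform is the identity component of a reducible fiber. Since ${\rm II}^*$ is the only reducible fiber, there is exactly one such curve.

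Likewise, the end component of the long arm of $E_8$ has multiplicity $2$ in $\widetilde E_8$. The multiplicity-one component is the strict transform of the $(-1)$-curve itself.

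You do not actually need the Mordell--Weil input. By the definition of ``$C$ is of type ${\rm II}^*$'', the component of $C$ through $\widetilde Q$ becomes the component of the ${\rm II}^*$-fiber of $\widetilde Y$ meeting the zero section. This gives exactly the incidence with $E_8$ that Step 2 uses. With this correction the argument is complete.
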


\subsection{Case \hyperref[Tab1E]{$1E$}} \label{subsection 1E char3 revised}
This case exists only if $\Char(k)=p=3$.

\begin{Proposition} \label{prop: 1E char3 fixed points revised}
Let $\widetilde X$ of type \hyperref[Tab1E]{$1E$}. Then, $\widetilde P$ is admissible if and only if $C$ is of type ${\rm II}$ and $\widetilde P$ is one of the two intersections of pairs of $(-1)$-curves in $\widetilde X$. Moreover, then $({\rm Stab}_{\Aut_{\widetilde X}^0}(\widetilde P))^0 \cong \mu_3$ and $m=3$.
\end{Proposition}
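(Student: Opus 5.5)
The plan is to work entirely on the anti-canonical model $X\subseteq\mathbb P(1,1,2,3)$, as justified in the proof of validity of Strategy~\ref{strategy of proof non-Jaco}. By Proposition~\ref{prop: char3 equations and liftable actions}, $X$ is given by $y^2=x^3+stx^2+t^6$, and $\Aut_{\widetilde X}^0\cong\mu_3$ acts by $[s:t:x:y]\mapsto[s:\lambda t:\lambda x:y]$. The $D_7$-singularity is at $[1:0:0:0]$. Since $\mu_3$ is infinitesimal, its stabilizer at a point $P$ is either trivial or all of $\mu_3$. So $(\Stab(P))^0\neq\{1\}$ holds exactly when $P$ is fixed scheme-theoretically, i.e.\ when the coordinate expression of $P$ is invariant under the $\lambda$-scaling as an identity of $\lambda$-points with $\lambda^3=1$. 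As in the cases $1A$--$1D$, I will split into the cases $s=0$ and $s=1$.

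\emph{Case $s=0$.} Normalize $t=1$ and rescale by the weighted $\mathbb G_m$ of $\mathbb P(1,1,2,3)$ with $\lambda^{-1}$. This gives $[0:1:x:y]\mapsto[0:1:\lambda^{-1}x:\lambda^{-3}y]=[0:1:\lambda^{-1}x:y]$, because $\lambda^3=1$. Hence $P$ is fixed if and only if $x=0$. The equation $y^2=x^3+t^6$ then gives exactly the two points $P_\pm=[0:1:0:\pm1]$, each with stabilizer $\mu_3$.

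\emph{Case $s=1$.} Here $[1:t:x:y]\mapsto[1:\lambda t:\lambda x:y]$ is fixed only if $t=x=0$. The equation then forces $y=0$, so $P$ is the $D_7$-point, which is excluded. This shows that $P_\pm$ are the only admissible candidates.

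Next I verify that $P_\pm$ really are torsion points of the right kind. By the discriminant computation in the proof of Proposition~\ref{prop: char3 equations and liftable actions}, the fiber over $s=0$ is irreducible of type ${\rm II}$. This fiber is $y^2=x^3+t^6$, a cuspidal curve with cusp at $x^3=-t^6$, $y=0$. Thus $P_\pm$ are smooth points on it and lie in $C^0\cong\mathbb G_a$. Moreover $P_\pm\neq Q=[0:0:1:1]$. Since every nonzero point of $\mathbb G_a$ has order $p=3$, we get $m=3$ by Corollary~\ref{cor: approach for classification non-jacobian}.

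The remaining step, which I expect to be the main obstacle, is the geometric identification of $P_\pm$ with the two intersection points of pairs of $(-1)$-curves on $\widetilde X$. First, the curves $\{x=0,\ y=\pm t^3\}$ on $X$ are sections of the Weierstra{\ss} fibration not passing through $Q$. Their strict transforms are therefore $(-1)$-curves on $\widetilde X$, and $P_\pm$ lies on exactly one of them. I would then locate the second $(-1)$-curve through each $P_\pm$ in one of two ways. One way is to compare with the configuration in Table~\ref{Table char3 equations and liftable actions}: $\mu_3$ preserves every negative curve, so any transverse intersection of two $(-1)$-curves is a $\mu_3$-fixed point. The computation above shows that the only fixed points outside the $D_7$-locus and $\widetilde Q$ are $P_\pm$, so the two intersection points of pairs of $(-1)$-curves must be $P_\pm$. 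The other way, if a direct check is preferred, is to write down the second section explicitly from the Mordell--Weil lattice of $\widetilde Y$: it has rank $1$, since $8-7=1$, and I would look for a section of the form $x=a s t+b t^2$ passing through $P_\pm$. Either way, the converse direction, that any $\widetilde P$ of the stated form is admissible, follows because such a point is one of $P_\pm$, and those have been shown to be admissible.
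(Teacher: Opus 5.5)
Your proof is correct. Its computational core is the paper's proof: the same two charts, the same fixed points $P_\pm=[0:1:0:\pm1]$ off the $D_7$-point and $Q$, and the same use of the cuspidal curve $X\cap\{s=0\}$ with $C^0\cong\mathbb G_a$ to get $m=3$. One small correction: the dichotomy ``stabilizer trivial or all of $\mu_3$'' holds because $\mu_3$ has prime order, not merely because it is infinitesimal ($\mu_9$ is infinitesimal but contains $\mu_3$).

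The two arguments differ only in how they place $P_\pm$ on the configuration of negative curves. The paper does this explicitly. It notes that $\{x+st=0\}$ splits into the two curves $T_\pm=\{x=-st,\ y=\pm t^3\}$, and checks that $P_\pm=S_\pm\cap T_\pm$.

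Your second route, if carried out, gives exactly these curves. Passing through $P_\pm$ forces $b=0$. Then $x^3+stx^2+t^6=(a^3+a^2)s^3t^3+t^6$ must be the square of a cubic form, which forces $a\in\{0,-1\}$.

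Your first route, via invariance of negative curves and a pigeonhole count of $\mu_3$-fixed points, avoids finding $T_\pm$. The price is that it must take from the configuration figure that the two intersection points are transverse and lie off the $D_7$-configuration. That restriction is essential, not cosmetic.

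To see why, note that in the Mordell--Weil group one has $T_-=2S_+$ and $T_+=-2S_+$. Both meet the near component of the ${\rm I}_3^*$-fiber. The height pairing gives
\[
\langle T_+,T_-\rangle=-1=1-T_+\cdot T_--1,
\]
so $T_+\cdot T_-=1$. Thus $T_+$ and $T_-$ also meet, transversally, at a $\mu_3$-fixed point lying on a $(-2)$-curve, and that point is not an admissible center.

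So the explicit route is self-contained and also names the two $(-1)$-curves through each $P_\pm$. The fixed-point route is valid, but only because you excluded the $D_7$-locus.
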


\begin{proof}
By Proposition \ref{prop: char3 equations and liftable actions}, $X$ is given by
$y^2=x^3+stx^2+t^6$, and the action of $\Aut_{\widetilde X}^0\cong \mu_3$ on $X$ is given by
$[s:t:x:y]\mapsto [s:\lambda t:\lambda x:y]$. To find admissible $P \in X$ (according to Strategy \ref{strategy of proof non-Jaco}), we distinguish the following cases:
\begin{enumerate}[leftmargin=0.8cm]
 \item[(a)]
If $s\neq 0$, we may set $s=1$. Then the action is $[1:t:x:y]\mapsto [1:\lambda t:\lambda x:y]$. Thus a point in this chart has non-trivial connected stabilizer if and only if $t=x=0$. But then the equation gives $y=0$, so we obtain the point $[1:0:0:0]$, which is the $D_7$-singularity. Hence this point cannot be the image of $\widetilde P$.

\item[(b)]
It remains to consider the chart $s=0$. Away from the base point, we may set $t=1$. After rescaling, the action becomes $[0:1:x:y]\mapsto [0:1:\lambda^{-1}x:y]$. Hence the connected stabilizer is non-trivial precisely if $x=0$. The equation of $X$ then gives $y^2=1$, so the only possible points are $P_+=[0:1:0:1]$ and $P_-=[0:1:0:-1]$. For both of these points, the connected stabilizer $({\rm Stab}_{\mu_3}(P_\pm))^0$ is all of $\mu_3$.

The anti-canonical curve containing these points is $C=X\cap\{s=0\}$. It is the cuspidal curve $y^2=x^3+1=(x+1)^3$ with cusp at $[0:1:-1:0]$, whereas the neutral element of $C^0$ is the point where the strict transform of $C$ meets the exceptional curve over the base point of $|-K_{\widetilde X}|$. Thus $P_+$ and $P_-$ lie in the smooth locus of $C$ and are distinct from the neutral element. Therefore $C^0\cong \mathbb G_a$, and since $\Char(k)=3$, both points have exact \mbox{order $3$.}
\end{enumerate}

It remains to identify the position of these points on the configuration of negative curves. The $(-1)$-curves on $\widetilde{X}$ are visible on the anti-canonical model. Indeed, the divisor $\{x=0\}$ on $X$ splits as the union of the two horizontal curves $S_\pm=\{x=0,\ y=\pm t^3\}$, and the divisor $\{x+st=0\}$ splits into the two horizontal curves $T_\pm=\{x=-st,\ y=\pm t^3\}$. Together with $B=\{t=0,\ y^2=x^3\}$, the non-contracted component of the fiber over $t=0$, their strict transforms are the five $(-1)$-curves on the weak del Pezzo surface of type \hyperref[Tab1E]{$1E$}. Moreover, $P_+=S_+\cap T_+$ and $P_-=S_-\cap T_-$. Hence the two possible points $\widetilde P$ are precisely the intersection points of the corresponding pairs of $(-1)$-curves on $\widetilde X$, which finishes the proof.
\end{proof}

 \begin{Corollary} \label{cor: 1E char3 nonJac revised}
Let $\widetilde Z$ be arising from an $\widetilde X$ of type \hyperref[Tab1E]{$1E$} and assume that $h^0(\widetilde Z,T_{\widetilde Z})\neq 0$. Then $\widetilde Z$ is unique up to isomorphism, has one multiple fiber $3{\rm II}$, and $\Aut^0_{\widetilde Z}\cong \mu_3$.
\end{Corollary}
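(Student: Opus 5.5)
The plan mirrors the earlier cases $1B$–$1D$: combine Corollary \ref{cor: approach for classification non-jacobian} with Proposition \ref{prop: 1E char3 fixed points revised}, and then prove uniqueness separately.

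By Corollary \ref{cor: approach for classification non-jacobian}, any $\widetilde Z$ arising from $\widetilde X$ of type $1E$ with $h^0(\widetilde Z,T_{\widetilde Z})\neq 0$ is $\Bl_{\widetilde P}(\widetilde X)$ for an admissible $\widetilde P$. Moreover $\Aut_{\widetilde Z}^0\cong(\Stab_{\Aut_{\widetilde X}^0}(\widetilde P))^0$. Proposition \ref{prop: 1E char3 fixed points revised} says that $\widetilde P$ is one of the two points $P_\pm$, that $C$ is of type ${\rm II}$, that $m=3$, and that the stabilizer is $\mu_3$. So $\Aut^0_{\widetilde Z}\cong\mu_3$. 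The multiple fiber is $3$ times the strict transform of $C$, and this strict transform is still of type ${\rm II}$, since the primitive fiber through $\widetilde P$ is the transform of the anti-canonical curve $C$. This gives the fiber $3{\rm II}$. The weak del Pezzo surface of type $1E$ is unique by \cite[Table 6]{WeakDelPezzoGlobalVectorFields}, so the only remaining issue is whether the two possible centers give isomorphic blow-ups.

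For uniqueness I would exhibit an automorphism of $X$ that swaps $P_+=[0:1:0:1]$ and $P_-=[0:1:0:-1]$. The natural candidate is the involution $y\mapsto -y$ of $X\subseteq\mathbb P(1,1,2,3)$: it preserves the equation $y^2=x^3+stx^2+t^6$ and interchanges $P_\pm$. An alternative is $[s:t:x:y]\mapsto[s:-t:-x:y]$, which sends $t^6\mapsto t^6$ and $stx^2\mapsto stx^2$ and fixes $P_\pm$, so it is useless; the $y$-sign involution (the inverse map on the Jacobian fibration) is the right one. This involution fixes the unique singular point $[1:0:0:0]$. Any automorphism of an RDP surface lifts uniquely to its minimal resolution, so it induces an automorphism of $\widetilde X$ exchanging the two admissible points. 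Hence $\Bl_{\widetilde P_+}(\widetilde X)\cong\Bl_{\widetilde P_-}(\widetilde X)$.

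The main obstacle is only bookkeeping: checking that the involution genuinely lifts and swaps the preimages $\widetilde P_\pm$. This follows because $P_\pm$ lie in the smooth locus of $X$, where $\widetilde X\to X$ is an isomorphism. One should also note that $S_\pm$ and $T_\pm$ are interchanged accordingly, which is consistent with the configuration.
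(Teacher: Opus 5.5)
Your argument is correct and is essentially the paper's. Everything except uniqueness follows from Corollary~\ref{cor: approach for classification non-jacobian} and Proposition~\ref{prop: 1E char3 fixed points revised}, and uniqueness comes from the involution $y\mapsto -y$, which preserves $y^2=x^3+stx^2+t^6$, swaps $P_\pm$, and lifts to $\widetilde X$. Your discarded aside is miscomputed, though this is harmless: $[s:t:x:y]\mapsto[s:-t:-x:y]$ sends $x^3+stx^2$ to $-(x^3+stx^2)$, so it is not an automorphism of $X$, and it would swap rather than fix $P_\pm$, since $[0:-1:0:1]=[0:1:0:-1]$ in $\mathbb P(1,1,2,3)$.
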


\begin{proof}
Everything except the uniqueness follows by combining Corollary \ref{cor: approach for classification non-jacobian} with Proposition \ref{prop: 1E char3 fixed points revised}. To show that $\widetilde Z$ is unique up to isomorphism, it suffices to observe that the two possible points $\widetilde P$ are conjugate under $\Aut(\widetilde X)$. Indeed, in the proof of Proposition \ref{prop: 1E char3 fixed points revised} we saw that on the anti-canonical model $X$ they correspond to the two points $[0:1:0:1]$ and $[0:1:0:-1]$ on the curve $C=X\cap\{s=0\}$. The involution
$[s:t:x:y]\mapsto [s:t:x:-y]$
preserves the equation $y^2=x^3+stx^2+t^6$ and interchanges these two points. Since it preserves the unique singular point of $X$, it lifts to an automorphism of the minimal resolution $\widetilde X$. Hence the blow-ups of $\widetilde X$ in the two possible points $\widetilde P$ are isomorphic.
\end{proof}

\begin{Discussion} \label{discussion: 1E curves char3 revised}
The Jacobian surface $\widetilde Y$ associated with $\widetilde X$ has singular fibers ${\rm I}_3^*$ and ${\rm II}$. Indeed, the discriminant of $y^2=x^3+stx^2+t^6$ is $-s^3t^9$, hence $\widetilde{Y}$ is elliptic and there are two singular fibers. Tate's algorithm gives a fiber of type ${\rm I}_3^*$ over $t=0$, and the fiber over $s=0$ is an irreducible cuspidal fiber of type ${\rm II}$. Thus the reducible fiber contributes the root lattice $D_7$. By \cite{OguisoShioda}, the Mordell--Weil group of the Jacobian surface has rank $1$ and no torsion. In particular, the four sections visible on the anti-canonical model as $S_\pm$ and $T_\pm$ are only four among infinitely many sections of $\widetilde Y$.

For the non-Jacobian surface $\widetilde Z$ of Corollary \ref{cor: 1E char3 nonJac revised}, we blow up one of the two points $P_\pm=S_\pm\cap T_\pm$ and by the uniqueness statement in Corollary \ref{cor: 1E char3 nonJac revised}, we may choose either of them. Then the strict transforms of the two corresponding $(-1)$-curves become $(-2)$-curves on $\widetilde Z$. Together with the old $D_7$-configuration, they form a fiber of type ${\rm II}^*$. The strict transform of the cuspidal curve $X \cap \{s=0\}$ is the reduced multiple fiber, so the multiple fiber is $3{\rm II}$. The rank bound of Lemma \ref{lemma (-2)curves on Ztilde} is attained because of the ${\rm II}^*$-fiber, hence there are no further reducible fibers.

The situation is summarized in the following Figure \ref{figure 1E configs jaco non-jaco} and Corollary \ref{cor: 1E (-2)configs char3}.
Using the known multiplicities of the components in the ${\rm II}^*$-fiber, we observe that on $\widetilde Z$ the red exceptional curve is a $3$-section, hence intersects the singular fibers correctly. Similarly, the other thin black curves are $3$-sections.

\smallskip \noindent
 \emph{This $\widetilde{Z}$ is not ``new'':} 
 If in the picture for $\widetilde Z$ in Figure \ref{figure 1E configs jaco non-jaco}, we contract the thin black $3$-section that does not meet any other visible section, we get another weak del Pezzo surface of degree $1$. The configuration of $(-2)$-curves on this surface is of type $E_6+A_2$, and the surface still has global vector fields. By the classification of weak del Pezzo surfaces with global vector fields in \cite[Table 6]{WeakDelPezzoGlobalVectorFields}, this is the unique surface of type \hyperref[Tab1B]{$1B$}. Thus $\widetilde Z$ is also obtained from type \hyperref[Tab1B]{$1B$}. By Corollary \ref{cor: 1B non-Jac}, the latter construction gives a unique non-Jacobian surface in characteristic $3$ and we can identify those.

\begin{table}[H]
\begin{adjustbox}{center}
$
\begin{array}{ccccc}
 \begin{array}{c}
 \addstackgap[2pt]{\includegraphics[width=0.18\textwidth]{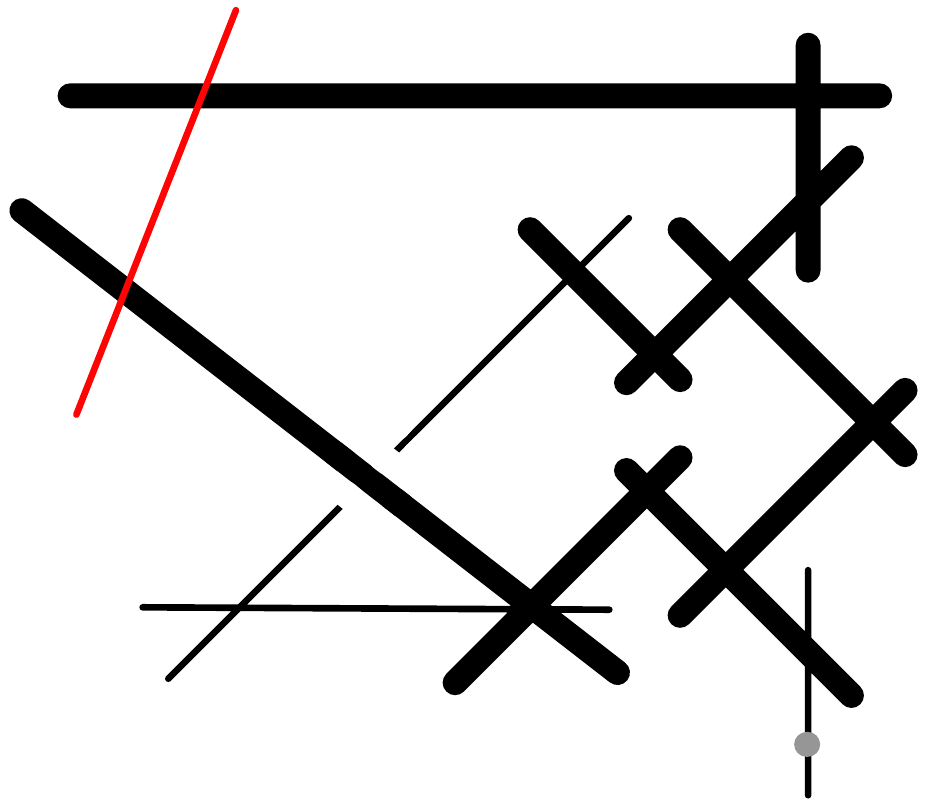}}
 \end{array}
 & \rightarrow
 & \begin{array}{c}
 \addstackgap[2pt]{\includegraphics[width=0.18\textwidth]{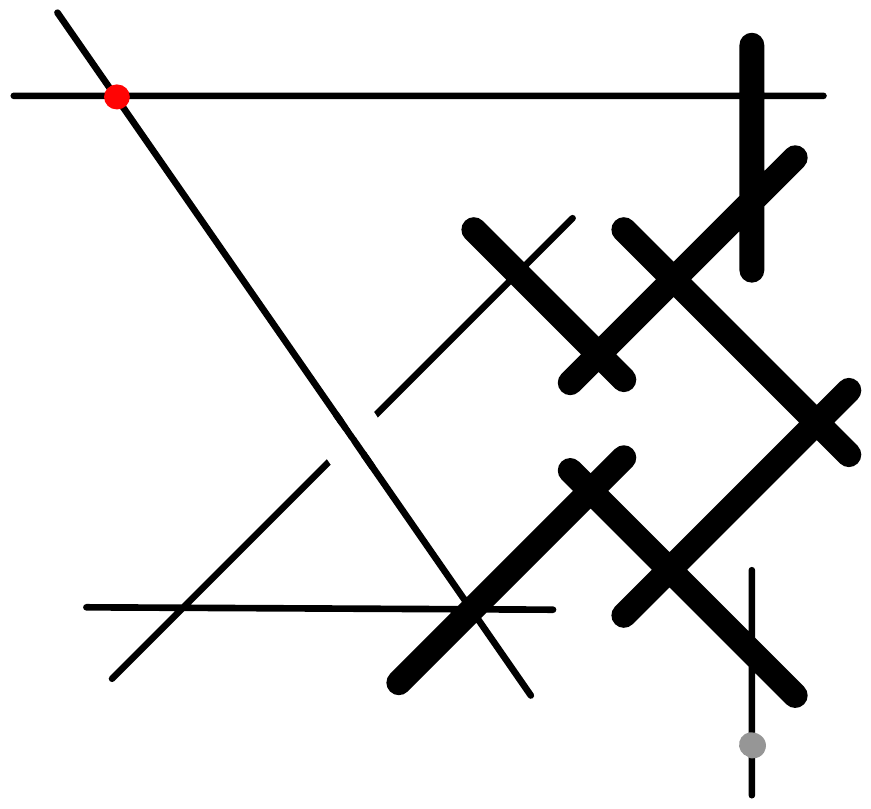}}
 \end{array}
 & \leftarrow
 & \begin{array}{c}
 \addstackgap[2pt]{\includegraphics[width=0.19\textwidth]{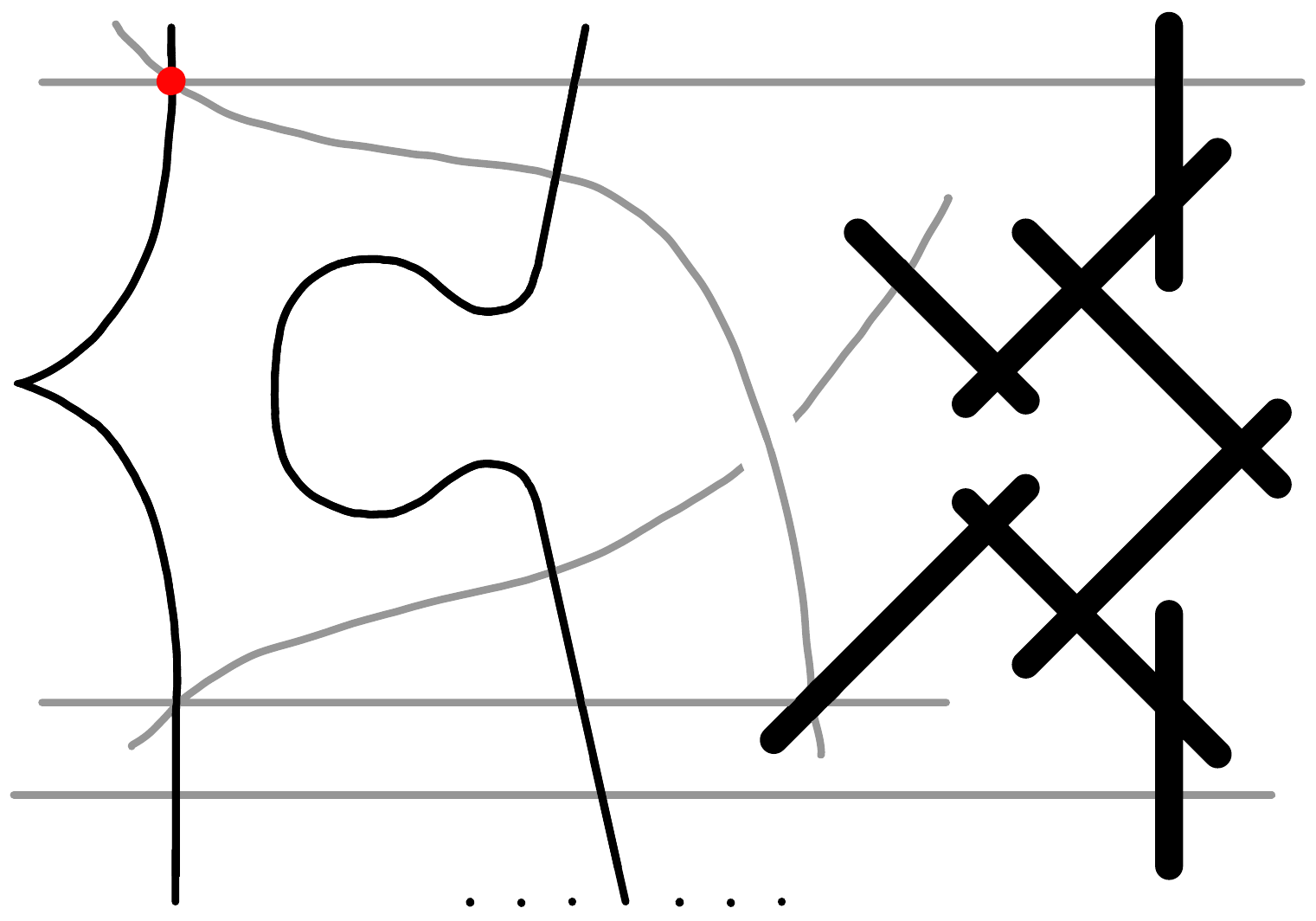}}
 \end{array}
 \\
 &
 & \downarrow
 &
 & \downarrow
 \\
 &
 & \begin{array}{c}
 \addstackgap[2pt]{\includegraphics[width=0.17\textwidth]{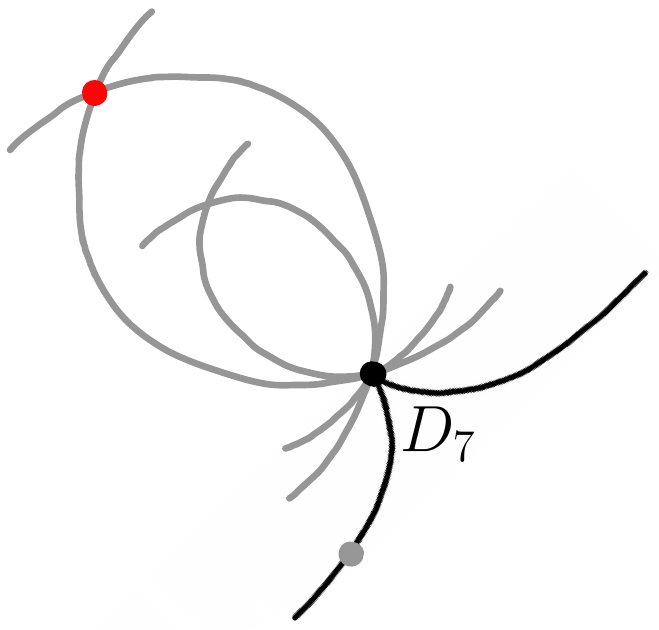}}
 \end{array}
 & \leftarrow
 & \begin{array}{c}
 \addstackgap[2pt]{\includegraphics[width=0.18\textwidth]{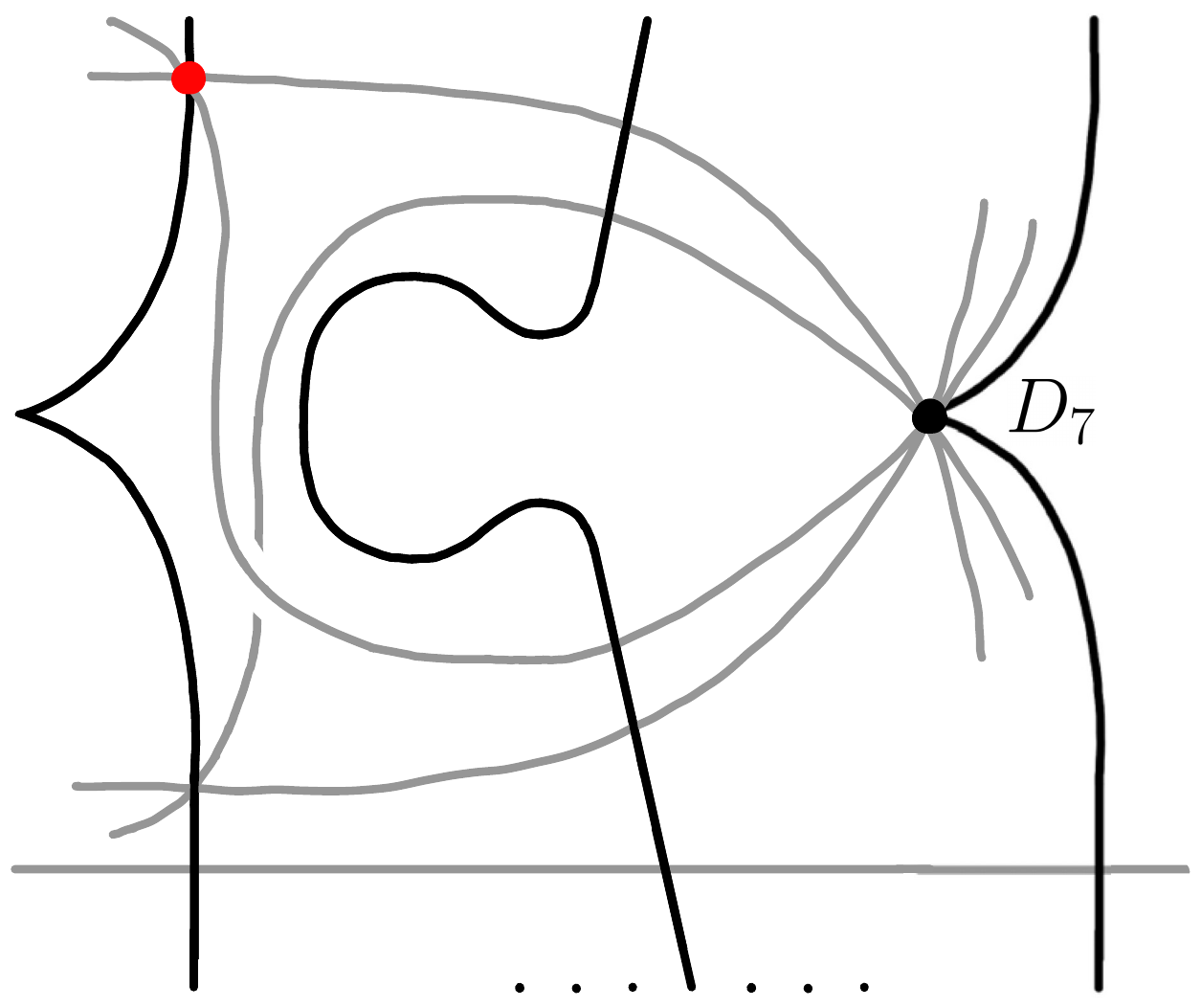}}
 \end{array}
\end{array}
$
\captionof{figure}{Non-Jacobian and Jacobian fibrations with global vector fields originating from $\widetilde X$ of type \hyperref[Tab1E]{$1E$}}
\label{figure 1E configs jaco non-jaco}
\end{adjustbox}
\end{table}

\end{Discussion}

\begin{Corollary} \label{cor: 1E (-2)configs char3}
The $\widetilde Z$ of Corollary \ref{cor: 1E char3 nonJac revised} contains nine $(-2)$-curves with dual graph of type $\widetilde E_8$ forming a reducible fiber of type ${\rm II}^*$. Moreover, $\widetilde Z$ is isomorphic to the unique surface in Corollaries \ref{cor: 1B non-Jac} and \ref{cor: 1B (-2)configs} in characteristic $3$, obtained from a weak del Pezzo surface of type \hyperref[Tab1B]{$1B$}.
\end{Corollary}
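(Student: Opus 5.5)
The proof has two parts: the configuration of $(-2)$-curves on $\widetilde Z$, and its identification with the characteristic $3$ surface from type \hyperref[Tab1B]{$1B$}.

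\textbf{The $\widetilde E_8$-configuration.} By Proposition \ref{prop: 1E char3 fixed points revised}, $\widetilde P=\widetilde P_\pm$ is the transverse intersection point of the strict transforms of $S_\pm$ and $T_\pm$. Blowing it up drops both self-intersections from $-1$ to $-2$ and separates the two curves. On $\widetilde X$, each of $S_\pm$ and $T_\pm$ meets the $D_7$-configuration exactly once. This can be checked on the model: $S_\pm$ and $T_\pm$ meet $\{t=0\}$ only at the singular point $[1:0:0:0]$, and they are sections of $\widetilde Y\to\mathbb P^1$ meeting the ${\rm I}_3^*$-fiber in a simple component away from the zero section. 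I would determine which end components they meet from their local equations $x=0$ and $x=-st$ near the $D_7$-point. The aim is to show that the $D_7$-chain, extended at one end by the strict transform of $S_\pm$ and branched at the other by $T_\pm$, or vice versa, gives the dual graph $\widetilde E_8$. This gives nine $(-2)$-curves in an $\widetilde E_8$-configuration, hence a fiber of type ${\rm II}^*$. By Lemma \ref{lemma (-2)curves on Ztilde}, with $e_P-1=8$ already attained, there are no further $(-2)$-curves. The multiple fiber is the strict transform of the cuspidal curve $C=X\cap\{s=0\}$, which is irreducible, of type ${\rm II}$, and of multiplicity $m=3$ by Proposition \ref{prop: 1E char3 fixed points revised}.

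\textbf{The identification with type \hyperref[Tab1B]{$1B$}.} I would find a second $(-1)$-curve $E'\subset\widetilde Z$ (a $3$-section) disjoint from the ${\rm II}^*$-branch that has to be removed. The natural candidates are the strict transform of $B$ or one of $S_\mp$, $T_\mp$. Contracting $E'$ gives a weak del Pezzo surface $\widetilde X'$ of degree $1$. By Blanchard's Lemma \ref{lemma Blanchard generalities}, $\Aut^0_{\widetilde X'}\supseteq\mu_3\neq\{1\}$. If $E'$ meets the ${\rm II}^*$-fiber exactly in a component of multiplicity $3$, namely the branch point's neighbour of the $E_6$-part, then the remaining $(-2)$-curves on $\widetilde X'$ form $E_6+A_2$. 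By \cite[Table 6]{WeakDelPezzoGlobalVectorFields} the only degree $1$ type with global vector fields and $E_6+A_2$ is \hyperref[Tab1B]{$1B$}. The $(-2)$-curves on $\widetilde X'$ all lie in the anti-canonical pencil, so $E'$ must be chosen so that the fiber components it meets are exactly those omitted from $E_6+A_2$; intersection numbers then give the $(-2)$-configuration on $\widetilde X'$. Thus $\widetilde Z\cong{\rm Bl}_{\widetilde P'}(\widetilde X')$ with $\widetilde P'$ admissible, and by the uniqueness in Corollary \ref{cor: 1B non-Jac}(\ref{cor 1B p=3}) this is the unique $1B$ surface in characteristic $3$.

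\textbf{Main obstacle.} The hard step is locating $E'$ and its intersections with the ${\rm II}^*$-components, since $\widetilde Z$ has more $(-1)$-curves than are visible on $X$. I would first take $E'$ to be the strict transform of $B$. It meets the $D_7$-configuration in a far component and is untouched by the blow-up, since $\widetilde P\notin B$. I would then compute $E'\cdot F=3$ against the component multiplicities of ${\rm II}^*$ to see which components are removed. If $B$ does not work, I would try $S_\mp$, using $\widetilde Z\cdot$ intersection data with the exceptional curve and the $\mu_3$-fixed locus as in Discussion \ref{Discussion: 1C geometry and curve configs}.
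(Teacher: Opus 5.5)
Your route is the paper's. You blow up $S_+\cap T_+$, read off a ${\rm II}^*$-fiber from the $D_7$ plus the two new $(-2)$-curves, use the rank bound, and then contract a second $3$-section to reach $E_6+A_2$, i.e. type $1B$, concluding by uniqueness in Corollary~\ref{cor: 1B non-Jac}. Your first candidate $B$ is exactly the ``thin black $3$-section that does not meet any other visible section'' that the paper contracts.

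However, your component bookkeeping is wrong exactly where it matters, because the choice of multiplicity-$3$ component is what separates $1B$ from $1G$. Label the ${\rm I}_3^*$-fiber of $\widetilde Y$ by simple components $\Theta_0,\dots,\Theta_3$ and double components $\Theta_4-\Theta_5-\Theta_6-\Theta_7$, with $\Theta_0,\Theta_1$ on $\Theta_4$, with $\Theta_2,\Theta_3$ on $\Theta_7$, and with $B=\Theta_0$.
\begin{itemize}
\item $B$ does not meet a ``far'' component. It meets $\Theta_4$, which on $\widetilde Z$ is the multiplicity-$3$ component of the long arm, three steps from the trivalent vertex $\Theta_7$. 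Removing it leaves $E_6+A_2$.
\item The multiplicity-$3$ \emph{neighbour} of the trivalent vertex is $\Theta_3$. Removing it leaves $A_8$, and that is what contracting $S_\mp$ does; this is how the paper identifies the surface with the one from $1G$.
\item Contracting $T_\mp$ leaves $E_7$.
\end{itemize}
So your phrase ``branch point's neighbour'' names the wrong component, and your fallback candidates would not give $1B$.

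The incidence check you postpone in the first part is also genuinely needed. If $S_+$ and $T_+$ both met $\Theta_1$, you would get $\widetilde D_8$, i.e. ${\rm I}_4^*$, instead of ${\rm II}^*$. It can be settled quickly:
\begin{itemize}
\item None of $S_\pm,T_\pm$ meets $\Theta_0$, since they pass through the $D_7$-point.
\item In the chart $t=1$, the tangent to $y^2=x^3+sx^2+1$ at $(0,1)$ has slope $sx/y=0$ (characteristic $3$). The line $y=1$ meets the curve residually at $x=-s$, so $2S_+=-T_+=T_-$.
\item In the component group $\mathbb Z/4\mathbb Z$ of ${\rm I}_3^*$, this forces $S_\pm$ onto $\Theta_2,\Theta_3$ and $T_\pm$ onto $\Theta_1$.
\end{itemize}
Hence $S_+$ and $T_+$ do not meet over $t=0$, and $\widetilde P_+$ is their only (transverse) intersection. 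After the blow-up you get $T_+'-\Theta_1-\Theta_4-\Theta_5-\Theta_6-\Theta_7-\Theta_2-S_+'$ with $\Theta_3$ attached to $\Theta_7$. This is $\widetilde E_8$ with multiplicities $1,2,3,4,5,6,4,2$ and $3$: one new curve lengthens the long arm and the other lengthens a short arm, rather than ``branching''.

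With this in place, $B\cdot F=3\,(B\cdot\Theta_4)=3$ confirms that contracting $B$ leaves exactly $E_6+A_2$. The rest of your argument then goes through as in the paper, which reads these incidences off its figures.
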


\subsection{Case \hyperref[Tab1F]{$1F$}} \label{subsection 1F char3 revised}
This case exists only if $\Char(k)=p=3$.

\begin{Proposition} \label{prop: 1F char3 fixed points revised}
Let $\widetilde X$ be of type \hyperref[Tab1F]{$1F$}. Then, $\widetilde{P}$ is admissible if and only if one of the following holds.
\begin{enumerate}
\item\label{prop 1F s0 revised} $C$ is of type ${\rm II}$ and $\widetilde P$ is one of the two intersection points of multiplicity $2$ of pairs of $(-1)$-curves on $\widetilde X$.
\item\label{prop 1F t0 special revised} $C$ is of type ${\rm III}^*$ and $\widetilde P$ lies on the unique $(-1)$-curve contained in this anti-canonical member and is one of the two intersection points with another $(-1)$-curve.
\item\label{prop 1F t0 general revised} $C$ is of type ${\rm III}^*$ and $\widetilde P$ lies on the unique $(-1)$-curve contained in this anti-canonical member and on no other $(-1)$-curve.
\end{enumerate}
Moreover, then $({\rm Stab}_{\Aut_{\widetilde X}^0}(\widetilde P))^0\cong \mu_3$ and $m=3$.
\end{Proposition}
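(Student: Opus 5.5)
By Proposition \ref{prop: char3 equations and liftable actions}, $X$ is given by $y^2=x^3+st^3x+t^6$, and $\Aut_{\widetilde X}^0\cong\mu_3$ acts by $[s:t:x:y]\mapsto[s:\lambda t:x:y]$. The $E_7$-singularity sits at $[1:0:0:0]$. I will follow Strategy \ref{strategy of proof non-Jaco} and split into the charts $s\neq 0$ and $s=0$, as in the previous subsections.

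\emph{Chart $s=1$.} The action is $[1:t:x:y]\mapsto[1:\lambda t:x:y]$. Its connected stabilizer is non-trivial exactly when $t=0$, and then it is all of $\mu_3$. This is the fiber $X\cap\{t=0\}$, namely the cuspidal curve $y^2=x^3$ through the $E_7$-point. Every point of it other than the singularity (and $Q$) is fixed by $\mu_3$. Since it lies on the same fiber as the $E_7$-singularity, $C$ is of type ${\rm III}^*$ and $C^0\cong\mathbb G_a$. Every nonzero point therefore has exact order $3=p$, so $m=3$. These points make up the whole smooth locus of the non-contracted component. On $\widetilde X$ that component is the unique $(-1)$-curve inside this anti-canonical member. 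Next I will locate the $(-1)$-curves meeting it, using explicit sections visible on $X$. I expect, in analogy with $1E$, that $\{x=\text{const}\cdot t^2\}$ or $\{y=\pm t^3\}$-type loci split into two horizontal curves, and I must check which of them pass through points of $\{t=0\}$. This separates case (\ref{prop 1F t0 special revised}), the two intersection points, from case (\ref{prop 1F t0 general revised}), all other points.

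\emph{Chart $s=0$, $t=1$.} After rescaling by weights, $\lambda$ acts on $[0:1:x:y]$ as $x\mapsto\lambda^{-2}x=\lambda x$ and $y\mapsto\lambda^{-3}y=y$. The connected stabilizer is therefore non-trivial iff $x=0$, which forces $y^2=1$, giving $P_\pm=[0:1:0:\pm1]$ with stabilizer $\mu_3$. The curve $C=X\cap\{s=0\}$ is $y^2=x^3+1=(x+1)^3$. It is cuspidal at $x=-1$, so $C$ is of type ${\rm II}$. The points $P_\pm$ are smooth and differ from the neutral element, hence have exact order $3$. To match these with case (\ref{prop 1F s0 revised}), I will exhibit the horizontal $(-1)$-curves through $P_\pm$. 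The candidates are $\{x=0,\ y=\pm t^3\}$, which lie on $X$ since $x=0$ gives $y^2=t^6$. The second curve through $P_\pm$ should be tangent to the first there, for instance a conic-type section $x=cs^2$ or a curve $\{y=\pm(t^3+\dots)\}$. I will check that the intersection multiplicity is $2$, consistent with the configuration in Table \ref{Table char3 equations and liftable actions}.

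Finally, the $E_7$-point is excluded, the base point $Q=[0:0:1:1]$ is excluded as the neutral element, and the singular points of fibers are excluded because they do not lie in $C^0$. Combining all this with Corollary \ref{cor: approach for classification non-jacobian} gives the stabilizer $\mu_3$ and $m=3$ in every case.

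The main obstacle is the identification of the $(-1)$-curves on $\widetilde X$ in terms of explicit curves on $X$. This is needed for the distinction between (\ref{prop 1F t0 special revised}) and (\ref{prop 1F t0 general revised}) and for the multiplicity-$2$ statement in (\ref{prop 1F s0 revised}). My first attempt will be to factor $X\cap\{x=\alpha s^2+\beta t^2\}$ for suitable constants, to find all sections of height consistent with the $E_7$ lattice. I will then cross-check their intersection pattern against the picture of type $1F$.
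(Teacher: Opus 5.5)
Your stabilizer analysis in both charts is the paper's own argument. For $s=1$ the connected stabilizer is non-trivial exactly on $t=0$, which gives the non-zero points of $B^0\cong\mathbb G_a$ on the ${\rm III}^*$-member. For $s=0$ it forces $x=0$, which gives $P_\pm=[0:1:0:\pm1]$ on the cuspidal ${\rm II}$-member. In every case the connected stabilizer is $\mu_3$ and $m=3$.

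\textbf{The gap.} What is missing is the step that carries the actual content of (1)--(3): locating the $(-1)$-curves of $\widetilde X$ on $X$. You only announce a search, and the first loci you name do not help. For $c\neq0$ the locus $\{x=ct^2\}$ does not split. The curves $S_\pm=\{x=0,\ y=\pm t^3\}$ meet $\{t=0\}$ only at the $E_7$-point, so they cannot produce the two special points on $B$ needed to separate (2) from (3).

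\textbf{The missing computation.} The key input is the characteristic-$3$ identity $s^6+s^3t^3+t^6=(t^3-s^3)^2$. It makes $X\cap\{x=s^2\}$ split into $R_\pm=\{x=s^2,\ y=\pm(t^3-s^3)\}$. With this in hand:
\begin{itemize}
\item $S_\pm\cap R_\pm$ is cut out by $s^2=0$. Since $(s,x)$ are local coordinates at $P_\pm$ (because $\partial_y=2y\neq0$ there), $P_\pm=S_\pm\cap R_\pm$ with intersection multiplicity $2$. This is the assertion in (1).
\item $R_\pm$ meet $B=\{t=0,\ y^2=x^3\}$ transversally at $Q_\pm=[1:0:1:\mp1]$. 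These are the points in (2).
\item Every other point of $B^0$ lies on no other $(-1)$-curve among these five, since $S_\pm$ meet $t=0$ only at the $E_7$-point. These are the points in (3).
\end{itemize}

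\textbf{Completeness.} To get the ``if and only if'' you also need that $B,S_\pm,R_\pm$ are all the $(-1)$-curves. The paper reads this off the known configuration of type $1F$, which has exactly five $(-1)$-curves. Your height idea also works. The $(-1)$-curves other than $B$ are exactly the sections of $\widetilde Y$ disjoint from the zero-section. Such sections have height $2-{\rm contr}_{E_7}\le 2$. Since ${\rm MW}\cong\mathbb Z$ with a generator $P_0$ of height $1/2$, this leaves exactly $\pm P_0$ and $\pm 2P_0$, that is, $S_\pm$ and $R_\pm$.
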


\begin{proof}
By Proposition $X$ is given by
$y^2=x^3+st^3x+t^6$, and the action of $\Aut_{\widetilde X}^0\cong \mu_3$ on $X$ is given by
$[s:t:x:y]\mapsto [s:\lambda t:x:y]$. To find admissible $P \in X$ (according to Strategy \ref{strategy of proof non-Jaco}), we distinguish the following cases:

\begin{enumerate}[leftmargin=0.8cm]
\item[(a)]
If $s\neq 0$, we may set $s=1$. Then the action is $[1:t:x:y]\mapsto [1:\lambda t:x:y]$. Thus a point in this chart has non-trivial connected stabilizer if and only if $t=0$. The point $[1:0:0:0]$ is the $E_7^0$-singularity and hence cannot be the image of $\widetilde P$. The remaining points on this fiber are the points $[1:0:x:y]$ with $x,y\neq 0$ and $y^2=x^3$. They lie on the smooth locus of the non-contracted component of the anti-canonical member over $t=0$. On $\widetilde X$, this component is the unique $(-1)$-curve contained in this member, and after blowing up the base point of $|-K_{\widetilde X}|$ it becomes the identity component of the fiber of type ${\rm III}^*$. Its identity component is isomorphic to $\mathbb G_a$, so every non-zero point has exact order $3$. The connected stabilizer of each such point is all of $\mu_3$.

\item[(b)]
It remains to consider the chart $s=0$. Away from the base point, we may set $t=1$. After rescaling, the action becomes $[0:1:x:y]\mapsto [0:1:\lambda x:y]$. Hence the connected stabilizer is non-trivial precisely if $x=0$. The equation of $X$ then gives $y^2=1$, so the only possible points are $P_+=[0:1:0:1]$ and $P_-=[0:1:0:-1]$. For both of these points, the connected stabilizer $({\rm Stab}_{\mu_3}(P_\pm))^0$ is all of $\mu_3$.

The anti-canonical curve containing these points is $C=X\cap\{s=0\}$. It is the cuspidal curve $y^2=x^3+1=(x+1)^3$. Its cusp is $[0:1:-1:0]$, whereas the neutral element of $C^0$ is the point where the strict transform of $C$ meets the exceptional curve over the base point of $|-K_{\widetilde X}|$. Thus $P_+$ and $P_-$ lie in the smooth locus of $C$ and are distinct from the neutral element. Therefore $C^0\cong \mathbb G_a$, and since $\Char(k)=3$, both points have exact \mbox{order $3$.}
\end{enumerate}

It remains to identify the position of these points on the configuration of negative curves. The $(-1)$-curves on $\widetilde X$ are visible on the anti-canonical model. Indeed, the divisor $\{x=0\}$ on $X$ splits as the union of the two horizontal curves $S_\pm=\{x=0,\ y=\pm t^3\}$, and the divisor $\{x-s^2=0\}$ splits into the two horizontal curves
$R_+=\{x=s^2,\ y=t^3-s^3\}$ and $R_-=\{x=s^2,\ y=s^3-t^3\}$. Together with $B=\{t=0,\ y^2=x^3\}$, the non-contracted component of the fiber over $t=0$, their strict transforms are the five $(-1)$-curves on the weak del Pezzo surface of type \hyperref[Tab1F]{$1F$}.

Moreover, $P_+=S_+\cap R_+$ and $P_-=S_-\cap R_-$, these intersections have multiplicity $2$, and hence the two points on the curve of type ${\rm II}$ are exactly the two points described in (\ref{prop 1F s0 revised}).
On the other hand, the two special points on the $(-1)$-curve $B$ are
$Q_+=B\cap R_+=[1:0:1:-1]$ and $Q_-=B\cap R_-=[1:0:1:1]$. Thus $Q_+$ and $Q_-$ are exactly the points described in (\ref{prop 1F t0 special revised}). Finally, by the above list of the five $(-1)$-curves, every other point of $B^0$ lies on no other $(-1)$-curve, giving precisely the points described in (\ref{prop 1F t0 general revised}).
\end{proof}

\begin{Corollary} \label{cor: 1F char3 nonJac revised}
Let $\widetilde Z$ arsing from an $\widetilde X$ of type \hyperref[Tab1F]{$1F$} and assume that $h^0(\widetilde Z,T_{\widetilde Z})\neq 0$. Then $\Aut^0_{\widetilde Z}\cong \mu_3$, and the following hold.
\begin{enumerate}
\item[(1)] \label{cor 1F uniqueness(1) solo} If $\widetilde Z$ has one multiple fiber $3{\rm II}$, then $\widetilde Z$ is unique up to isomorphism.
\item[(2,3)] \label{cor 1F uniqueness(2,3) 1 dim} Otherwise, the corresponding surfaces form a $1$-dimensional family, and each of them has one multiple fiber $3{\rm III}^*$. More precisely:
\begin{enumerate}
\item[(2)]\label{cor 1F uniquenes(2,3)(2)} If, in the construction of $\widetilde Z$, the point $\widetilde P$ lies on two $(-1)$-curves on $\widetilde X$, then all surfaces obtained in this way are isomorphic.
\item[(3)]\label{cor 1F uniqueness(2,3)(3) 1dim} If, in the construction of $\widetilde Z$, the point $\widetilde P$ lies on no other $(-1)$-curve, then the surfaces obtained in this way form a $1$-dimensional family.
\end{enumerate}
\end{enumerate}
\end{Corollary}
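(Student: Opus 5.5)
Combining Corollary \ref{cor: approach for classification non-jacobian} with Proposition \ref{prop: 1F char3 fixed points revised} gives $\Aut^0_{\widetilde Z}\cong\mu_3$ and $m=3$ in all cases, with the multiple fiber equal to the strict transform of $C$. In case (\ref{prop 1F s0 revised}) this is $3{\rm II}$, and in cases (\ref{prop 1F t0 special revised}) and (\ref{prop 1F t0 general revised}) it is $3{\rm III}^*$. The cases of the Corollary therefore correspond exactly to those of the Proposition, and what remains is to count moduli by computing $\Aut(\widetilde X)$-orbits of admissible centers on the anti-canonical model $X\colon y^2=x^3+st^3x+t^6$. An automorphism of $X$ preserving the unique $E_7$-singularity lifts to $\widetilde X$, so it suffices to work with automorphisms of $X$.

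For (1), I would exhibit an automorphism of $X$ exchanging $P_+=[0:1:0:1]$ and $P_-=[0:1:0:-1]$. The involution $y\mapsto -y$ does this: it preserves the equation, fixes $[1:0:0:0]$, and so lifts to $\widetilde X$. This gives uniqueness, exactly as in Corollary \ref{cor: 1E char3 nonJac revised}.

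For (2), the same involution $y\mapsto -y$ exchanges $Q_+=[1:0:1:-1]$ and $Q_-=[1:0:1:1]$, and it also swaps $R_+$ and $R_-$. Hence the two centers give isomorphic blow-ups.

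For (3), the admissible centers are the points $[1:0:x:y]$ with $y^2=x^3$, $x\neq0$, excluding $Q_\pm$. Parametrizing $B^0\cong\mathbb G_a$ by $x=u^2$, $y=u^3$, these are $u\in k^\ast\setminus\{\pm1\}$. I would determine the stabilizer of $B$ (equivalently of $\{t=0\}$) in $\Aut(X)$. The candidates are weighted-homogeneous substitutions $s\mapsto as$, $t\mapsto bt+cs$, $x\mapsto \alpha x+\beta s^2+\dots$, $y\mapsto\cdots$ preserving the equation. Preserving $t^6$, $st^3x$ and $x^3$ forces, roughly, $\alpha^3=\gamma^2$ for the $y$-scaling $\gamma$, $a b^3\alpha=\alpha^3$ and $b^6=\gamma^2$, together with the $\mu_3$- and $\alpha_3$-type translations in $t$, which act trivially on the fiber $t=0$. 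From these relations the induced action on $B^0$ is $u\mapsto \pm u$ at most, so it is a finite group. The orbits of admissible $u$ are then finite, and the isomorphism classes form a $1$-dimensional family. Statements (2) and (3) together give the ``otherwise'' assertion (2,3).

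The main obstacle is (3). First, one must show that $\Aut(\widetilde X)$ acts on $B$ with finite orbits, which requires controlling the possibly larger non-reduced or non-linear automorphisms of $X$. I would handle this by noting that $\Aut(\widetilde X)$ is an extension of a finite group by $\Aut^0_{\widetilde X}=\mu_3$, which has no non-trivial $k$-points; hence $\Aut(\widetilde X)$ itself is finite, and the orbits are automatically finite.

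Second, one must rule out isomorphisms $\widetilde Z\cong\widetilde Z'$ that send the exceptional curve to a different $(-1)$-curve. These could a priori collapse the family. I would argue by counting: any such identification yields another presentation $\Bl_{\widetilde P'}(\widetilde X')$ with $\widetilde X'$ among types $1A$--$1I$ in characteristic $3$ containing an $E_7$-configuration, i.e. $1C$ or $1F$. Type $1C$ yields only one surface by Corollary \ref{cor: 1C non-Jac}, and $1F$ contributes only finitely many alternative $(-1)$-curves per surface. So each fiber of the map from $u$ to isomorphism classes is finite, and the family is $1$-dimensional.
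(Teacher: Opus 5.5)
Your arguments for (1), (2) and the core of (3) match the paper. The involution $[s:t:x:y]\mapsto[s:t:x:-y]$ fixes the $E_7$-point, lifts to $\widetilde X$, and swaps $P_\pm$ resp.\ $Q_\pm$. For (3), the paper only notes that the centers form the open set $B^0\setminus\{0,Q_+,Q_-\}$ of a curve and that $\Aut(\widetilde X)$ is finite. Your derivation of finiteness from $\Aut^0_{\widetilde X}(k)=\mu_3(k)=\{1\}$ plus finiteness of the component group is a fine substitute for the paper's citation.

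The paper does not discuss isomorphisms $\widetilde Z_u\cong\widetilde Z_{u'}$ that move the exceptional curve, so your extra step goes beyond it. As written, however, it rests on a false claim. In case (3) the only reducible fiber is the ${\rm III}^*$, so the Jacobian of $\widetilde Z$ (same fiber types) has Mordell--Weil rank $8-7=1$ by Shioda--Tate. Translation by a non-torsion section is therefore an automorphism of $\widetilde Z$ of infinite order. An automorphism preserving $E$ descends injectively into the finite group $\Aut(\widetilde X)$, so the orbit of $E$ under these translations is infinite. Hence $\widetilde Z$ carries infinitely many $(-1)$-curves, each contracting to a copy of $\widetilde X$, and ``only finitely many alternative $(-1)$-curves per surface'' fails. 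The appeal to type $1C$ is also beside the point: the image of the exceptional curve under such an isomorphism contracts to $\widetilde X$ itself.

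What your counting argument actually needs is that the $(-1)$-curves on $\widetilde Z_{u'}$ form finitely many $\Aut(\widetilde Z_{u'})$-orbits. Curves in one orbit yield isomorphic pointed surfaces $(\widetilde X,\widetilde P)$, so, $\Aut(\widetilde X)$ being finite, each orbit accounts for only finitely many $u$. This finiteness holds, for two reasons. Write $Z_\eta$ for the generic fiber and $J_\eta$ for its Jacobian over $K=k(\mathbb P^1)$.
\begin{itemize}
\item A $(-1)$-curve $D$ satisfies $D\cdot\overline F=-K_{\widetilde Z}\cdot D=1$, so it meets exactly one multiplicity-one component of the ${\rm III}^*$-fiber, once. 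Consequently $D$ is determined up to finitely many choices by its class in $\Pic^3(Z_\eta)$: the vertical difference between two such curves is fixed modulo $\overline F$ by its intersections with the fiber components, and then fixed completely by $D^2=-1$.
\item Translation by $a\in J_\eta(K)$ shifts classes in $\Pic^3(Z_\eta)$ by the element of $\Pic^0(Z_\eta)\subseteq J_\eta(K)$ corresponding to $\pm 3a$, and $J_\eta(K)/3J_\eta(K)$ is finite.
\end{itemize}
With this replacement your proof of (3) is complete, and it is more careful than the paper's, which does not address this point at all.
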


\begin{proof}
 Everything except the moduli statements follows by combining Corollary \ref{cor: approach for classification non-jacobian} with Proposition \ref{prop: 1F char3 fixed points revised}. We treat the three cases of Proposition \ref{prop: 1F char3 fixed points revised} separately and use the explicit Weierstra{\ss} equation for $X$, as well as the images in $X$ of the possible points $\widetilde P$, as described in the proof of that proposition.

\begin{enumerate}
\item[(\ref{prop 1F s0 revised},\ref{prop 1F t0 special revised})] If the image of such a $\widetilde{P}$ in $X$ is $P_\pm=[0:1:0:\pm 1]$ resp. $Q_\pm=B\cap R_\pm=[1:0:1:\mp 1]$, the involution 
$[s:t:x:y]\mapsto [s:t:x:-y]$ preserves the equation $y^2=x^3+st^3x+t^6$, interchanges $P_+$ and $P_-$ resp. $Q_+$ and $Q_-$, and preserves the unique singular point of $X$. Hence it lifts to an automorphism of $\widetilde X$, and the two possible blow-ups in (\ref{prop 1F s0 revised}) resp. (\ref{prop 1F t0 special revised}) are isomorphic.
\item[(\ref{prop 1F t0 general revised})]
Otherwise, we are in case (\ref{prop 1F t0 general revised}), and the possible points $\widetilde{P}$ form an open subset $U = B^0 \setminus \{0, Q_+, Q_-\}$ of the $(-1)$-curve $\widetilde{B} \subseteq \widetilde{X}$, hence a $1$-dimensional family.  Since $\widetilde{X}$ has finite automorphism group by \cite{WeakDelPezzoGlobalVectorFields}, the resulting blow-ups form a $1$-dimensional family as well.
\end{enumerate}
\end{proof}

\begin{Discussion} \label{discussion: 1F curves char3 revised}
The Jacobian surface $\widetilde Y$ associated with $\widetilde X$ has singular fibers ${\rm III}^*$ and ${\rm II}$. Indeed, the discriminant of $y^2=x^3+st^3x+t^6$ is $-s^3t^9$, hence $\widetilde Y$ is elliptic and there are two singular fibers. Tate's algorithm gives a fiber of type ${\rm III}^*$ over $t=0$, and the fiber over $s=0$ is an irreducible cuspidal fiber of type ${\rm II}$. Thus the reducible fiber contributes the root lattice $E_7$. By \cite{OguisoShioda}, the Mordell--Weil group of the Jacobian surface has rank $1$ and no torsion. In particular, the four sections visible on the anti-canonical model as $S_\pm$ and $R_\pm$ are only four among infinitely many sections of $\widetilde Y$.

To determine the number and configuration of $(-2)$-curves on the non-Jacobian surface $\widetilde Z$, we treat the cases (\hyperref[cor 1F uniqueness(1) solo]{1}), (\hyperref[cor 1F uniquenes(2,3)(2)]{2}), and (\hyperref[cor 1F uniqueness(2,3)(3) 1dim]{3}) of Corollary \ref{cor: 1F char3 nonJac revised} separately.

\begin{enumerate}
\item[(1)]\label{discussion 1F multiple II} By the proof of Proposition \ref{prop: 1F char3 fixed points revised}(\ref{prop 1F s0 revised}), the point $\widetilde P$ is one of the two points on $\widetilde X$ where two $(-1)$-curves meet with intersection multiplicity $2$. By Corollary \ref{cor: 1F char3 nonJac revised}(\hyperref[cor 1F uniqueness(1) solo]{1}), we may choose either of them and take the $\widetilde{P}$ corresponding to $P_+=S_+\cap R_+=[0:1:0:1]$ on $X$.

After blowing up this point, the strict transforms of $S_+$ and $R_+$ become $(-2)$-curves. Since $S_+$ and $R_+$ meet with multiplicity $2$ at $\widetilde P$, their strict transforms still meet, now transversely, on $\widetilde Z$. Together with the old $E_7$-configuration, they form a fiber of type ${\rm II}^*$. Moreover, the two curves $S_+$ and $R_+$ meet the type ${\rm II}$ fiber over $s=0$, transversely on $\widetilde X$. Therefore, the blow-up separates their strict transforms from the strict transform of this cuspidal curve, which becomes the reduced multiple fiber. Hence the multiple fiber is $3{\rm II}$. The rank bound of Lemma \ref{lemma (-2)curves on Ztilde} is attained by the ${\rm II}^*$-fiber, so there are no further reducible fibers.

The situation is summarized in the following Figure \ref{figure 1F multiple II configs jaco non-jaco} and Corollary \ref{cor: 1F (-2)configs char3}(\ref{cor 1F configs multiple II}). Using the known multiplicities of the components of Kodaira--N{\'e}ron fibers, we observe that on $\widetilde Z$ all thin smooth curves in the picture are $3$-sections.
\end{enumerate}

\begin{enumerate}
    \item[ ] \emph{This $\widetilde Z$ is not ``new'':} If in the picture for $\widetilde Z$ in Figure \ref{figure 1F multiple II configs jaco non-jaco}, we contract the thin black curve that meets only one $(-2)$-curve transversely, we get another weak del Pezzo surface of degree $1$. The configuration of $(-2)$-curves on this surface is of type $E_6+A_2$, and the surface still has global vector fields. Hence, as in case \hyperref[subsection 1E char3 revised]{$1E$}, by the classification of weak del Pezzo surfaces with global vector fields in \cite[Table 6]{WeakDelPezzoGlobalVectorFields}, this is the unique surface of type \hyperref[Tab1B]{$1B$}. Since by Corollary \ref{cor: 1B non-Jac} this construction gives a unique non-Jacobian surface in characteristic $3$, we can identify this $\widetilde{Z}$ with the one from type \hyperref[Tab1B]{$1B$} in characteristic $3$ (as we did in case \hyperref[Tab1E]{$1E$} above). 
\end{enumerate}

\begin{table}[H]
\begin{adjustbox}{center}
$
\begin{array}{ccccc}
 \begin{array}{c} \addstackgap[2pt]{\includegraphics[width=0.22\textwidth]{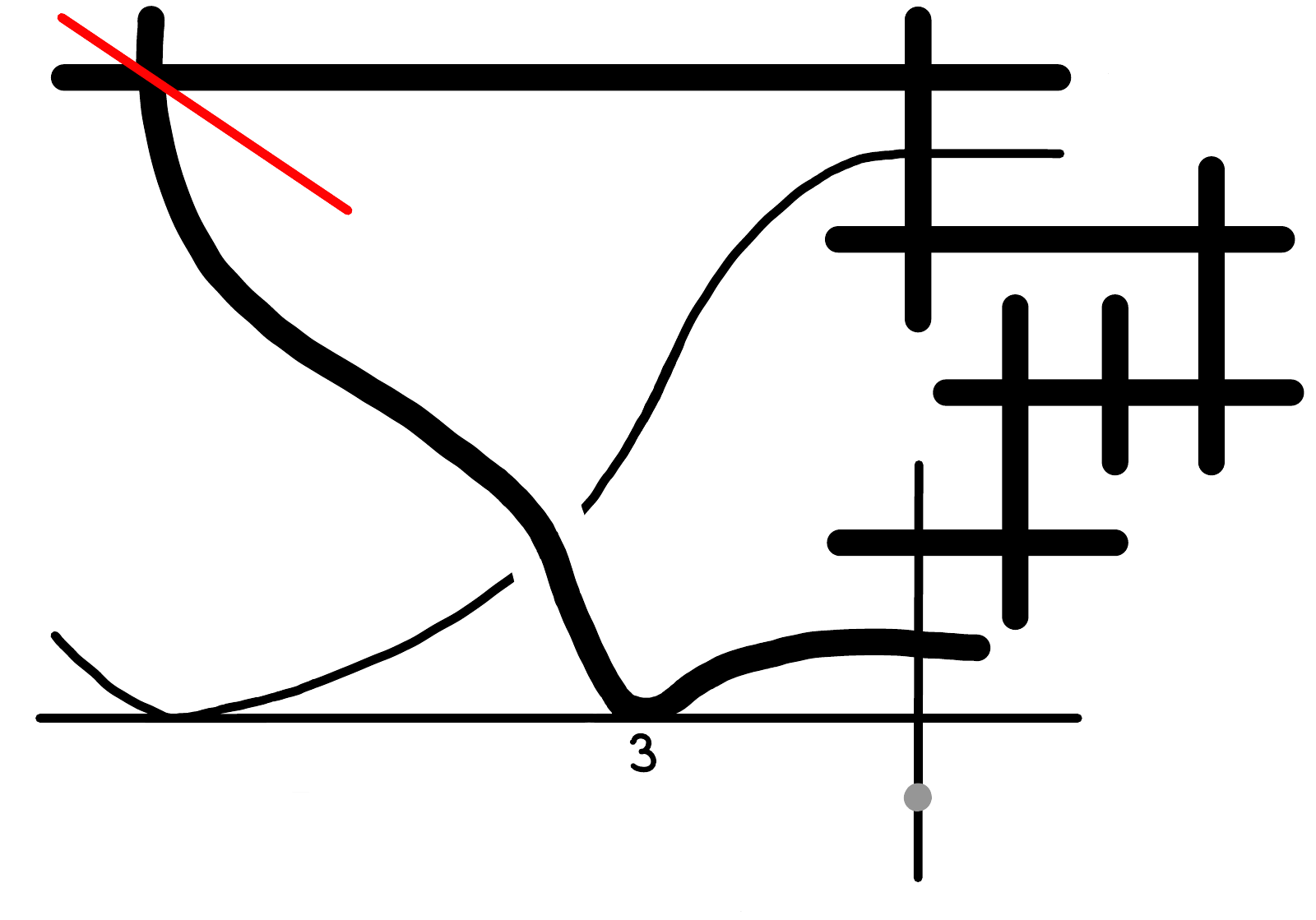} }\end{array}
 & \rightarrow
  & \begin{array}{c}\addstackgap[2pt]{\includegraphics[width=0.22\textwidth]{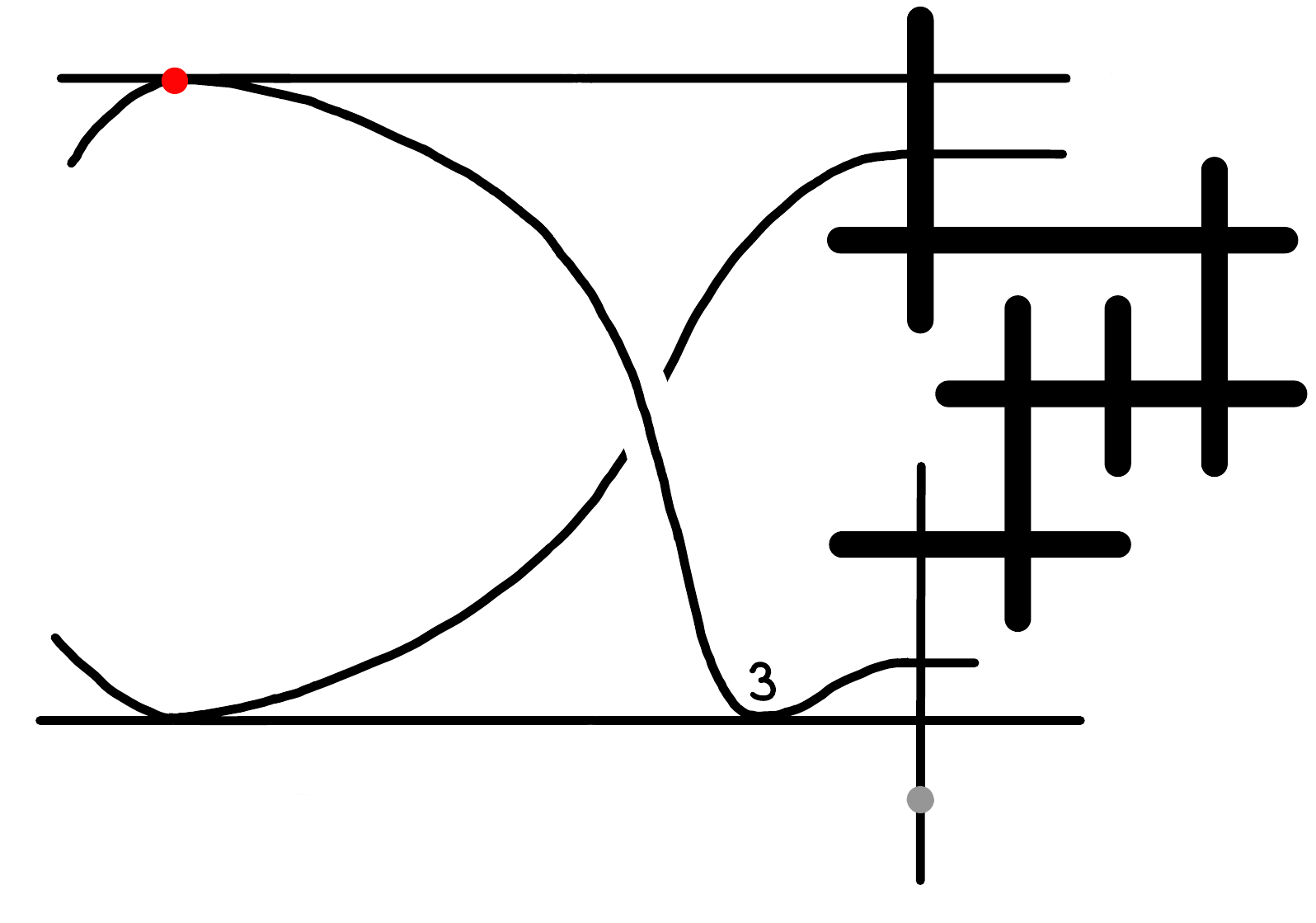} }\end{array}
  & \leftarrow
  & \begin{array}{c}\addstackgap[2pt]{\includegraphics[width=0.21\textwidth]{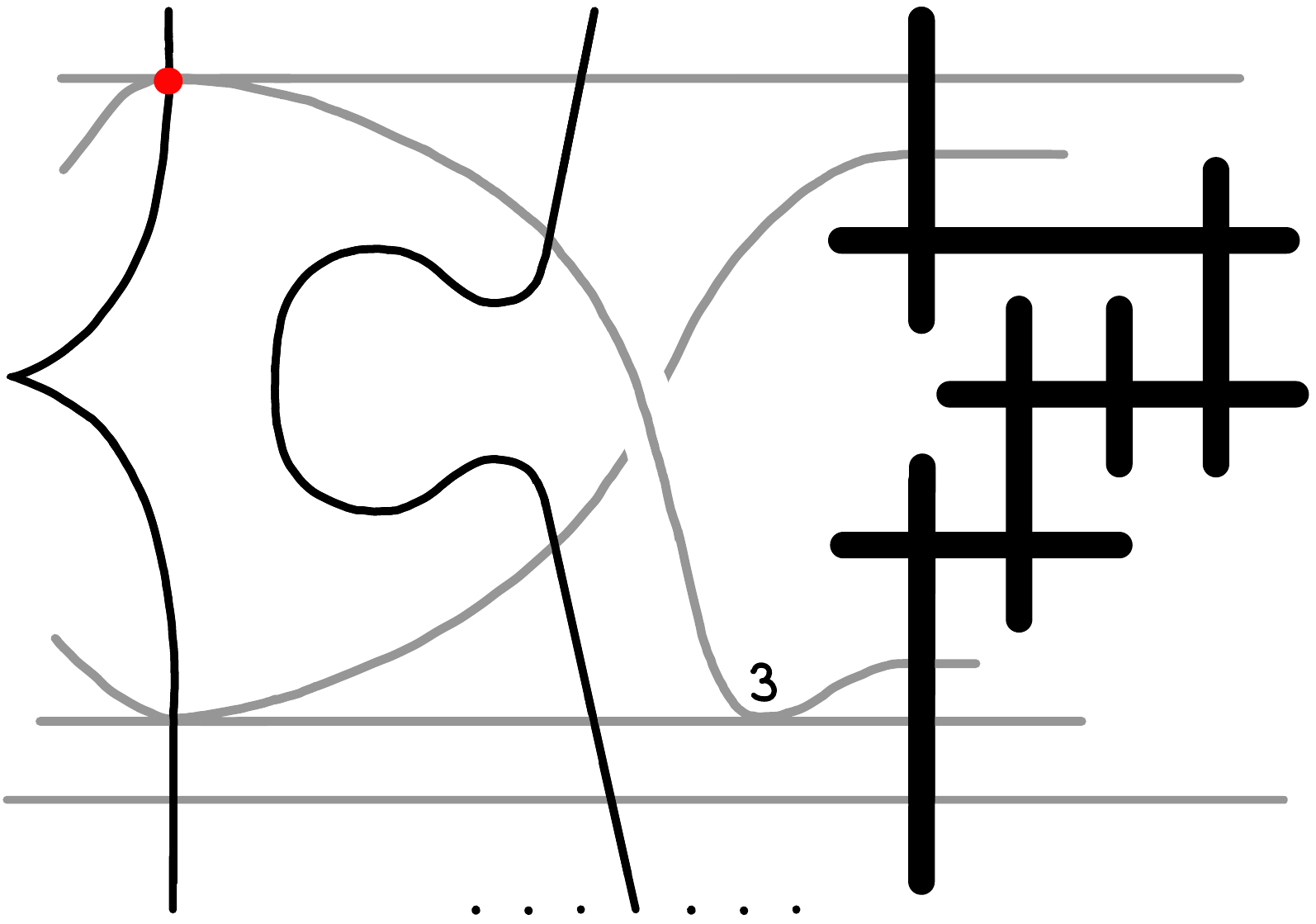} }\end{array}
  \\
  &
  & \downarrow
  &
  & \downarrow
  \\
  &
  & \begin{array}{c}\addstackgap[2pt]{\includegraphics[width=0.18\textwidth]{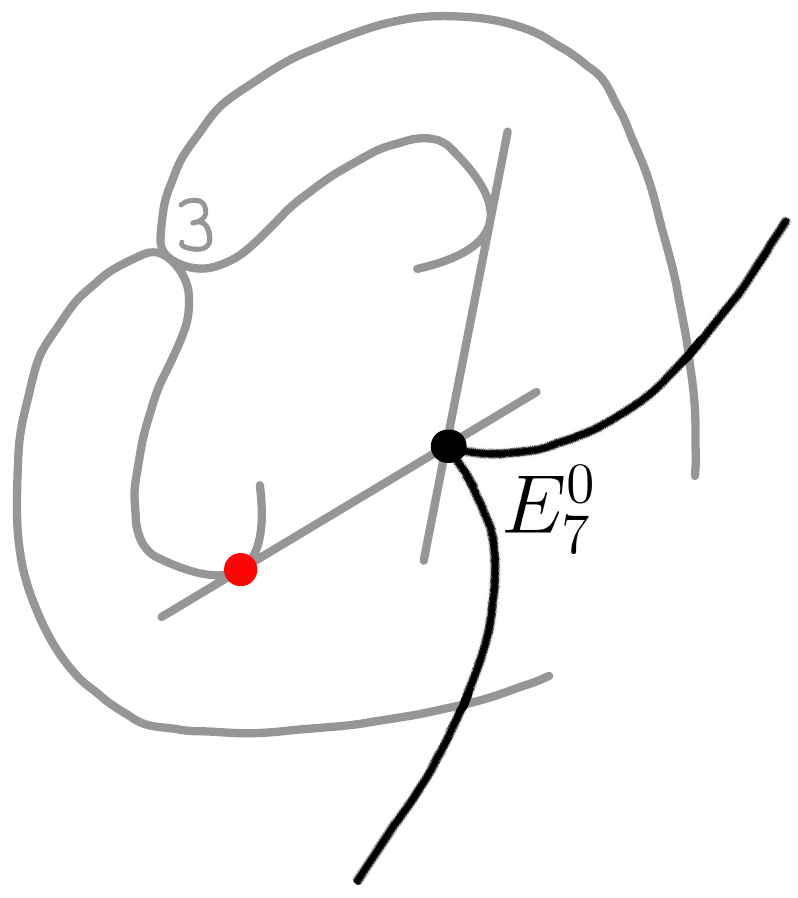}} \end{array}
  & \leftarrow
  & \begin{array}{c}\addstackgap[2pt]{ \includegraphics[width=0.20\textwidth]{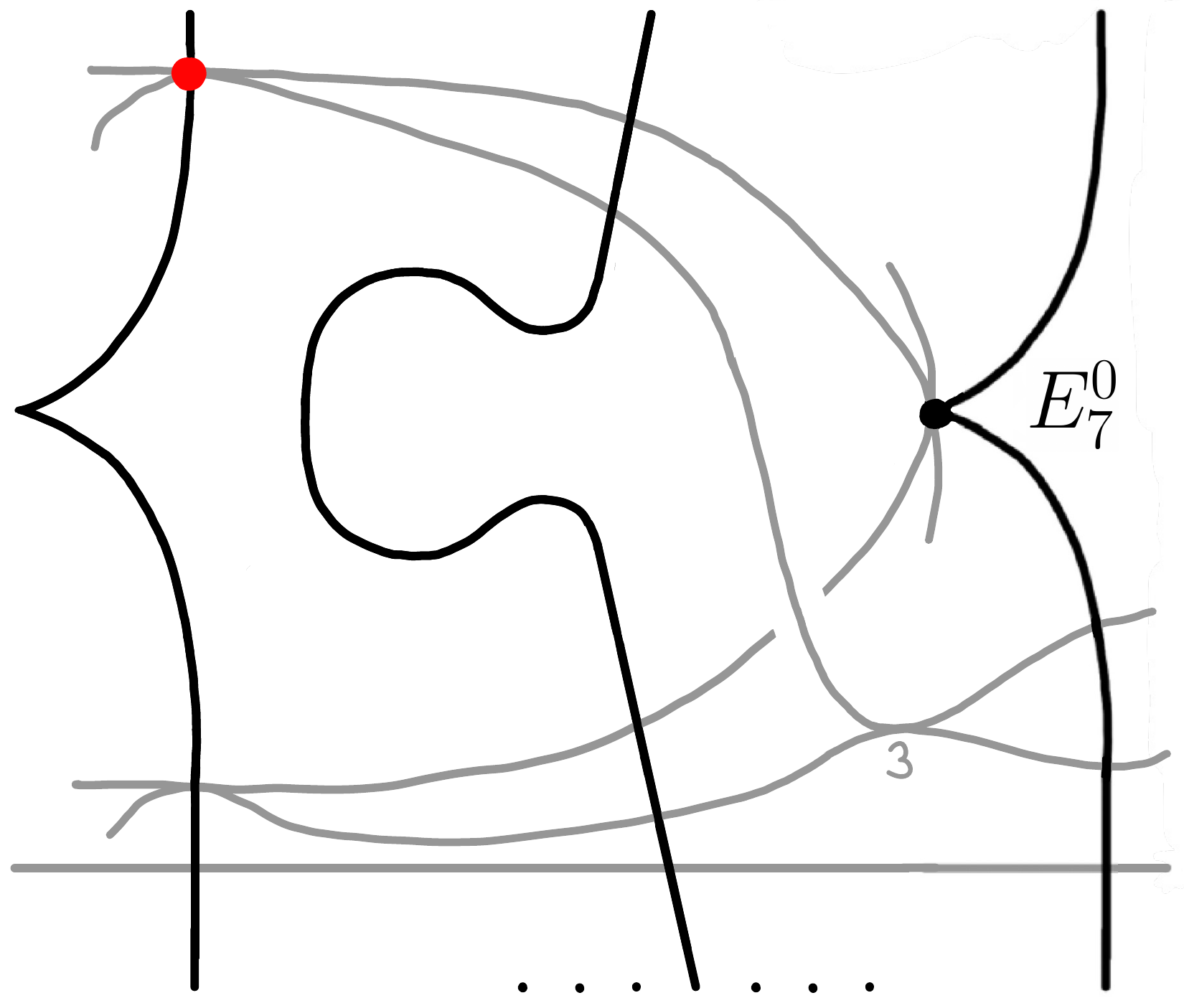}}  \end{array}
\end{array}
$
\captionof{figure}{Non-Jacobian and Jacobian fibrations with global vector fields originating from $\widetilde X$ of type \hyperref[Tab1F]{$1F$}, where the center lies on the type ${\rm II}$ curve}
\label{figure 1F multiple II configs jaco non-jaco}
\end{adjustbox}
\end{table}

\begin{enumerate}
\item[(2)]\label{discussion 1F special IIIstar} By the proof of Proposition \ref{prop: 1F char3 fixed points revised}(\ref{prop 1F t0 special revised}), the point $\widetilde P$ is one of the two points on $\widetilde X$ that lie on a transverse intersection of two $(-1)$-curves in the configuration picture of Table \ref{Table char3 equations and liftable actions}. By Corollary \ref{cor: 1F char3 nonJac revised}(\hyperref[cor 1F uniquenes(2,3)(2)]{2}), we may choose either of them and take the $\widetilde P$ corresponding to $Q_+=B\cap R_+=[1:0:1:-1]$ on $X$.

After blowing up this point, the $(-1)$-curve on $\widetilde X$ through $\widetilde P$ that does not meet a $(-2)$-curve becomes itself a $(-2)$-curve on $\widetilde Z$. It is separated by the exceptional divisor from the ${\rm III}^*$-configuration of $(-2)$-curves on $\widetilde Z$ to which the other $(-1)$-curve through $\widetilde P$ contributes. By the classification of fiber types of rational (quasi-)elliptic fibrations, this newly produced $(-2)$-curve on $\widetilde Z$ that does not meet the ${\rm III}^*$ multiple fiber has to occur in a configuration with other $(-2)$-components on $\widetilde Z$ that were not yet visible as negative curves on $\widetilde X$. Hence, by the rank bound of Lemma \ref{lemma (-2)curves on Ztilde}, the only possibilities for this other reducible fiber are ${\rm I}_2$ and ${\rm III}$. We will determine that it has to be of type ${\rm I}_2$, intersecting the already visible curves as indicated in the figure below, by realizing our $\widetilde Z$ as the blow-up of another weak del Pezzo surface of degree $1$.

The situation is summarized in the following Figure \ref{figure 1F special IIIstar configs jaco non-jaco} and Corollary \ref{cor: 1F (-2)configs char3}(\ref{cor 1F configs special IIIstar}). Using the known multiplicities of the components of Kodaira--N{\'e}ron fibers, we observe that on $\widetilde Z$ all thin smooth curves in the picture are $3$-sections.
\end{enumerate}

\begin{enumerate}
\item[] \emph{This $\widetilde Z$ is not ``new'':}
If in the above figure, we look at $\widetilde X$ and the position of the blown-up red point $\widetilde P$, we see that if we contract on the resulting $\widetilde Z$ the green $(-1)$-curve, we get another weak del Pezzo surface of degree $1$ that has a configuration of $(-2)$-curves of type $E_7+A_1$ and still has global vector fields by Blanchard's Lemma. Hence, by the classification of weak del Pezzo surfaces with global vector fields in \cite[Table 6]{WeakDelPezzoGlobalVectorFields}, this is the unique surface of type \hyperref[Tab1C]{$1C$}. Thus $\widetilde Z$ is also obtained from case \hyperref[Tab1C]{$1C$}. By Corollary \ref{cor: 1C non-Jac}, the latter construction gives a unique non-Jacobian surface in characteristic $3$, which by Corollary \ref{cor: 1C (-2)configs} and Discussion \ref{Discussion: 1C geometry and curve configs} has ${\rm I}_2$ as its other reducible fiber. Hence the above configuration of reducible fibers for $\widetilde Z$ in Figure \ref{figure 1F special IIIstar configs jaco non-jaco} is correct.
\end{enumerate}

\begin{table}[H]
\begin{adjustbox}{center}
$
\begin{array}{ccccc}
 \begin{array}{c} \addstackgap[2pt]{\includegraphics[width=0.22\textwidth]{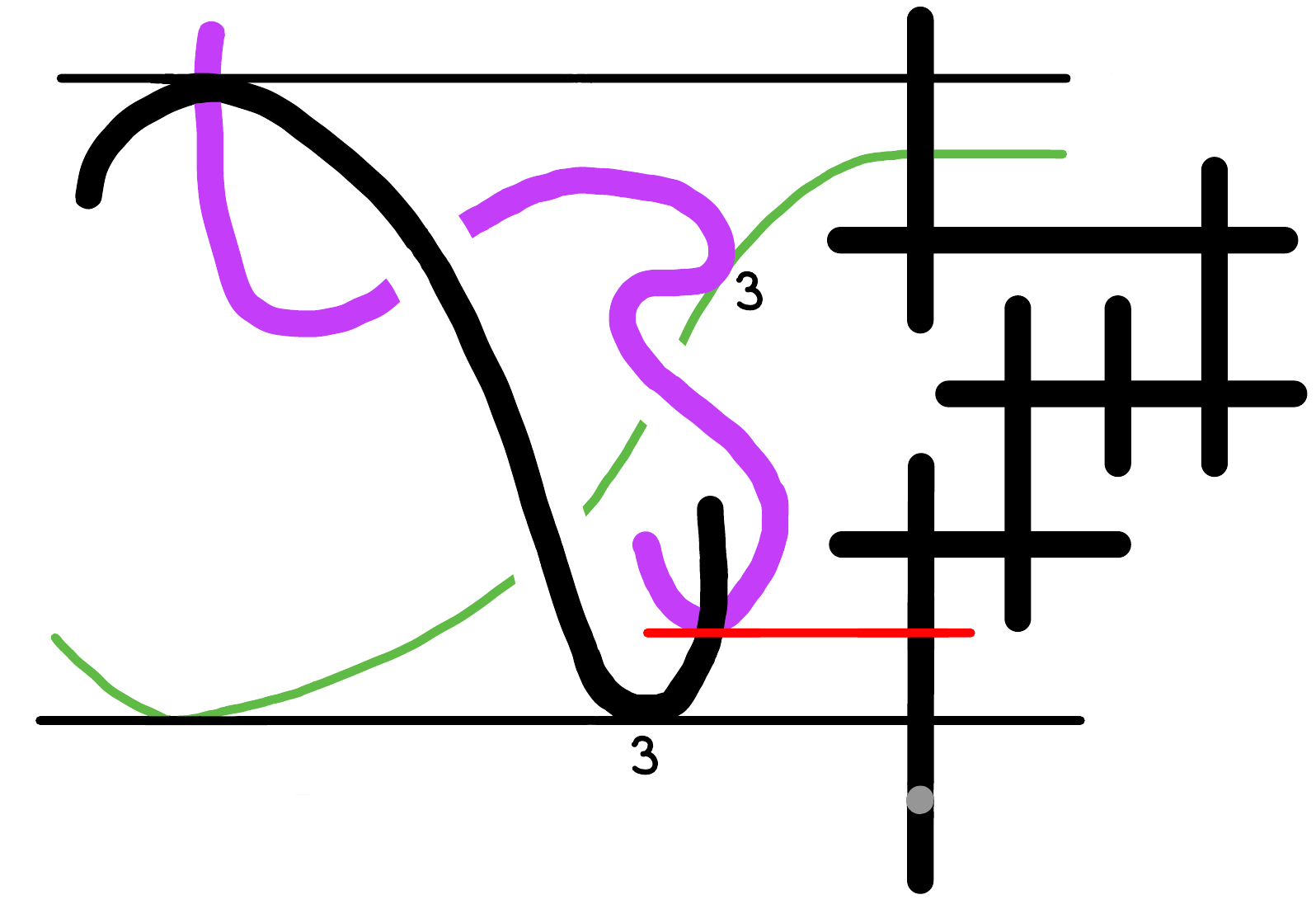} }\end{array}
 & \rightarrow
  & \begin{array}{c}\addstackgap[2pt]{\includegraphics[width=0.22\textwidth]{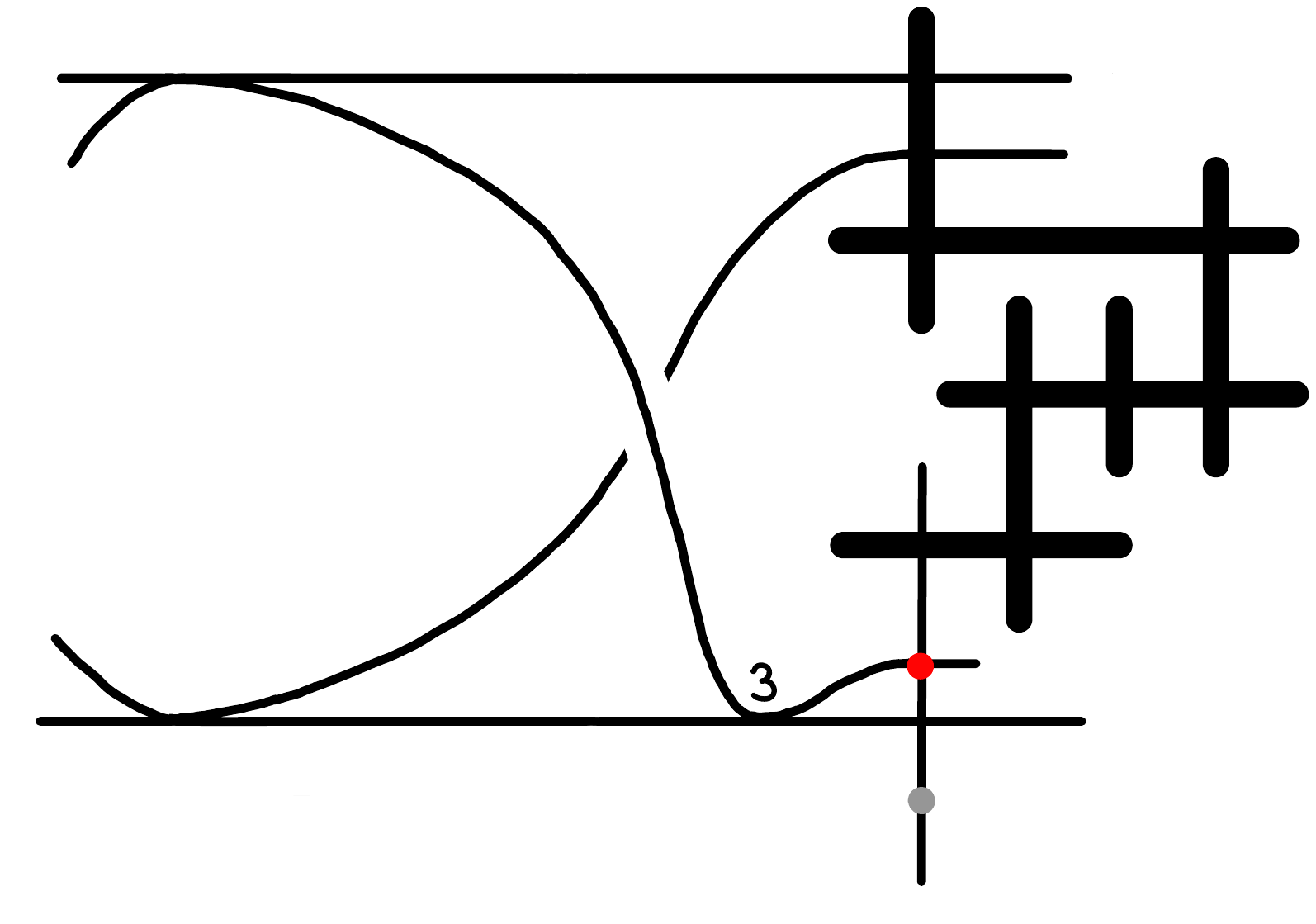} }\end{array}
  & \leftarrow
  & \begin{array}{c}\addstackgap[2pt]{\includegraphics[width=0.21\textwidth]{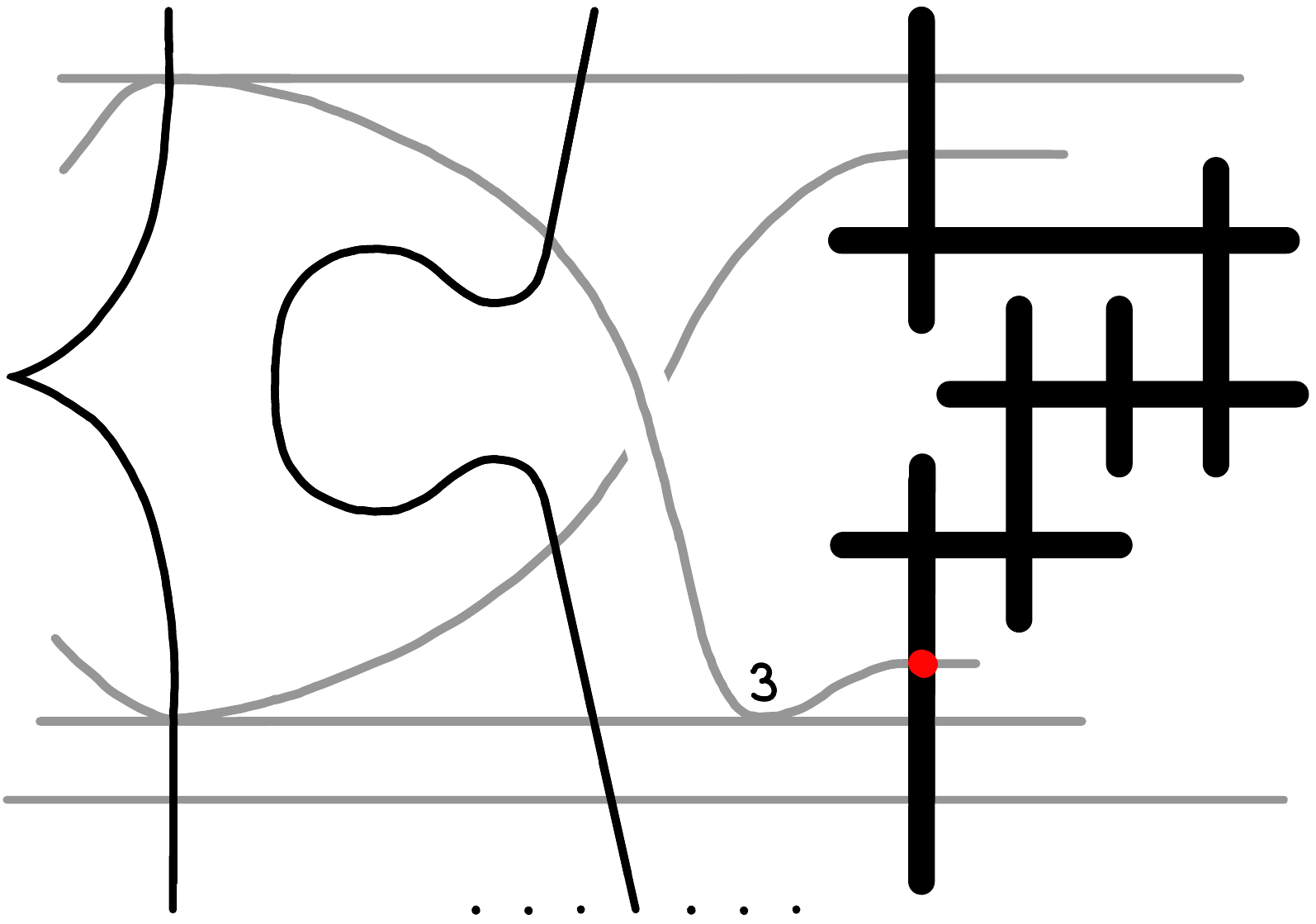} }\end{array}
  \\
  &
  & \downarrow
  &
  & \downarrow
  \\
  &
  & \begin{array}{c}\addstackgap[2pt]{\includegraphics[width=0.18\textwidth]{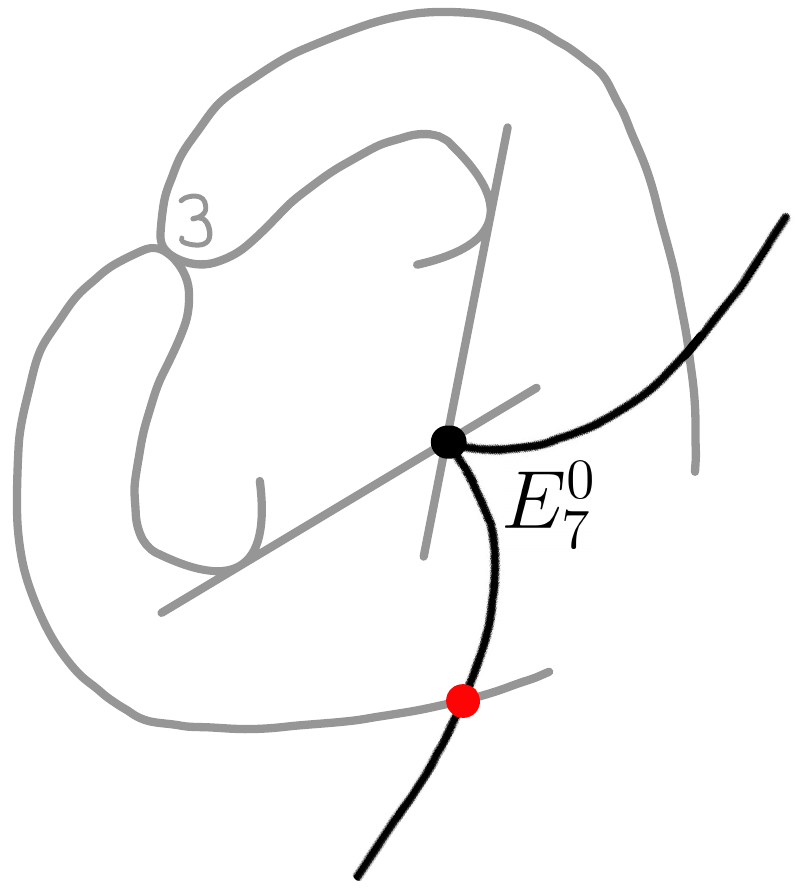}} \end{array}
  & \leftarrow
  & \begin{array}{c}\addstackgap[2pt]{ \includegraphics[width=0.20\textwidth]{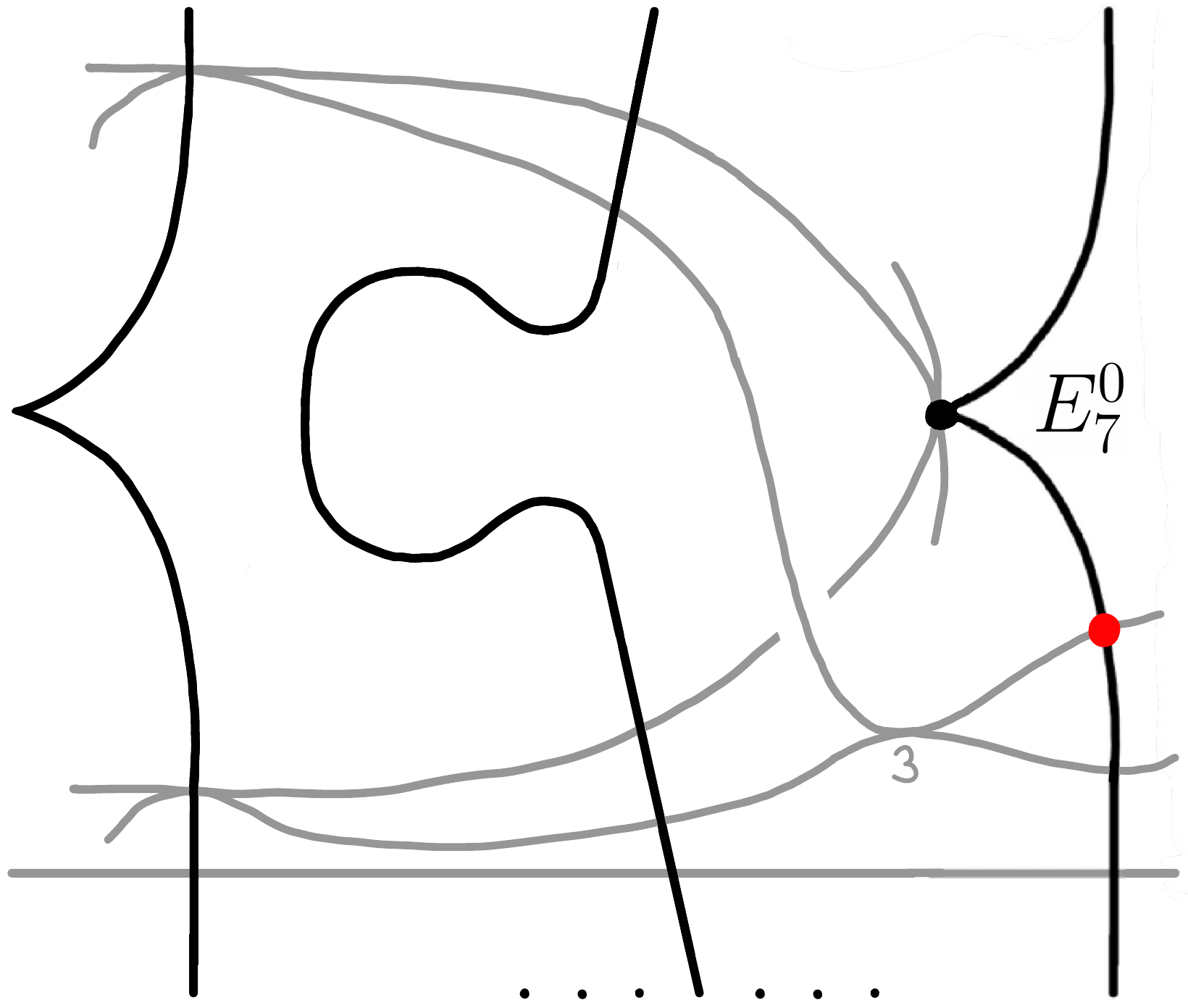}}  \end{array}
\end{array}
$
\captionof{figure}{Non-Jacobian and Jacobian fibrations with global vector fields originating from $\widetilde X$ of type \hyperref[Tab1F]{$1F$}, where the center is one of the two special points on the $(-1)$-component of the ${\rm III}^*$-member}
\label{figure 1F special IIIstar configs jaco non-jaco}
\end{adjustbox}
\end{table}

\begin{enumerate}
\item[(3)]\label{discussion 1F general IIIstar} By the proof of Proposition \ref{prop: 1F char3 fixed points revised}(\ref{prop 1F t0 general revised}), the point $\widetilde P$ can be chosen among the $1$-dimensional family of points in $B^0\setminus\{0,Q_+,Q_-\}$ on the unique $(-1)$-component $B$ of the ${\rm III}^*$-member. Blowing up such a point, the $(-1)$-curve $B$ contributes a $(-2)$-curve on $\widetilde Z$ to the resulting ${\rm III}^*$-configuration, which is the reduced multiple fiber of index $3$. Since the rank bound of Lemma \ref{lemma (-2)curves on Ztilde} is not reached yet, we now want to exclude the possibility that $\widetilde Z$ contains an additional fiber of type ${\rm I}_2$ or ${\rm III}$, whose components were not yet visible as negative curves on $\widetilde X$.

Assume, for a contradiction, that there exists such an additional reducible fiber, and denote its configuration by $\Gamma\in\{ {\rm I}_2,{\rm III}\}$. We study the possible intersection behavior of the red exceptional curve $E$ on $\widetilde Z$ with the two $(-2)$-components of $\Gamma$. Since $E$ is a $3$-section, it meets $\Gamma$ with total multiplicity $3$. Hence there is a component $F$ of $\Gamma$ with $E.F=0$ or $E.F=1$.
\begin{itemize}
\item If $E.F=0$, then the image of $F$ in $\widetilde X$ is a $(-2)$-curve. But we know all $(-2)$-curves on $\widetilde X$, and all of them are already contained in the old $E_7$-configuration. This is a contradiction.
\item If $E.F=1$, then the image of $F$ in $\widetilde X$ is a $(-1)$-curve through $\widetilde P$. Since $\widetilde P$ was chosen to lie on no other $(-1)$-curve besides $B$, this is again a contradiction.
\end{itemize}
Thus there is no additional reducible fiber. Note that this argument heavily relies on the fact that we are working in characteristic $3$.
The situation is summarized in the following Figure \ref{figure 1F general IIIstar configs jaco non-jaco} and Corollary \ref{cor: 1F (-2)configs char3}(\ref{cor 1F configs general IIIstar}). 
\end{enumerate}

\begin{table}[H]
\begin{adjustbox}{center}
$
\begin{array}{ccccc}
 \begin{array}{c} \addstackgap[2pt]{\includegraphics[width=0.22\textwidth]{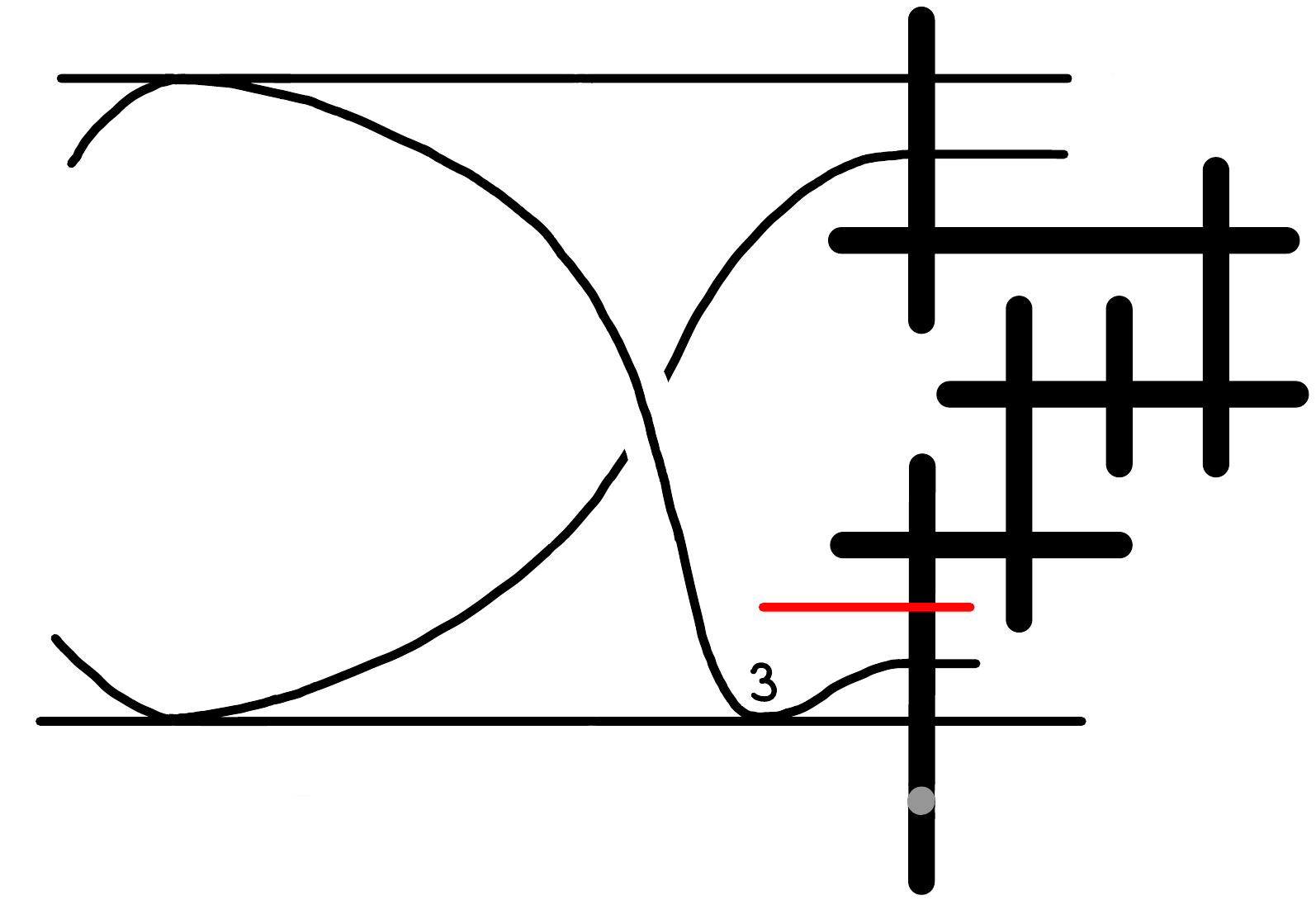} }\end{array}
 & \rightarrow
  & \begin{array}{c}\addstackgap[2pt]{\includegraphics[width=0.22\textwidth]{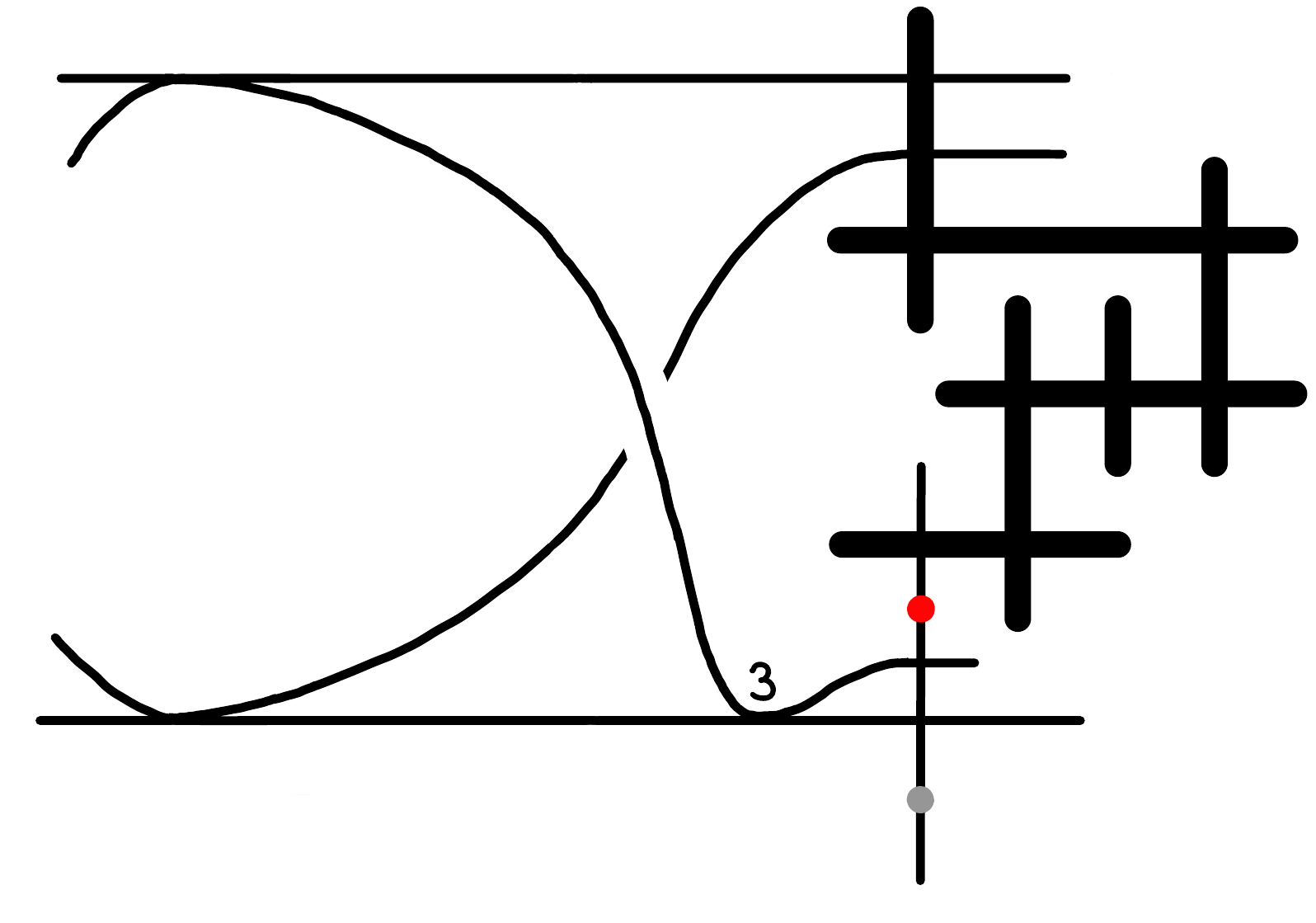} }\end{array}
  & \leftarrow
  & \begin{array}{c}\addstackgap[2pt]{\includegraphics[width=0.21\textwidth]{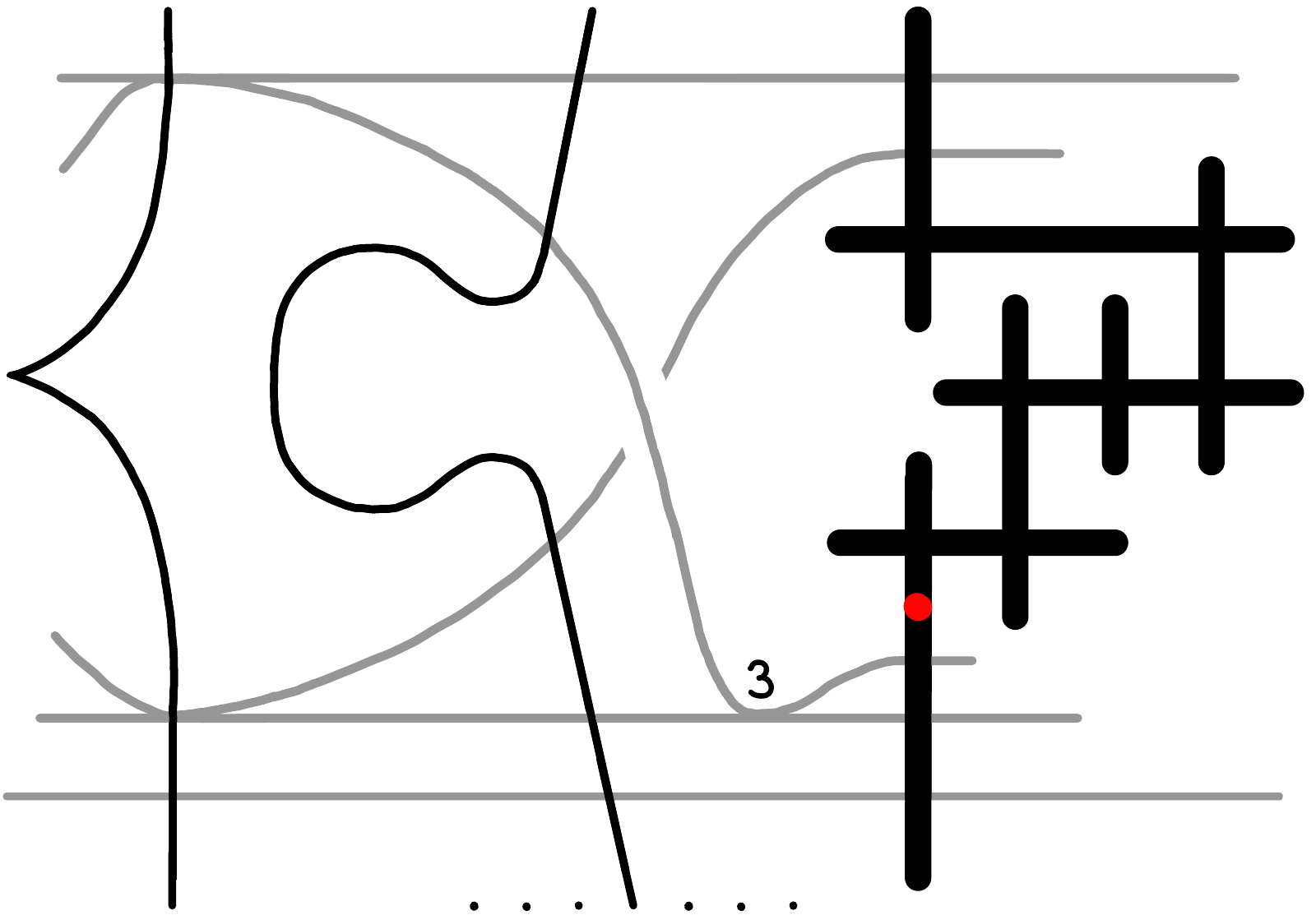} }\end{array}
  \\
  &
  & \downarrow
  &
  & \downarrow
  \\
  &
  & \begin{array}{c}\addstackgap[2pt]{\includegraphics[width=0.18\textwidth]{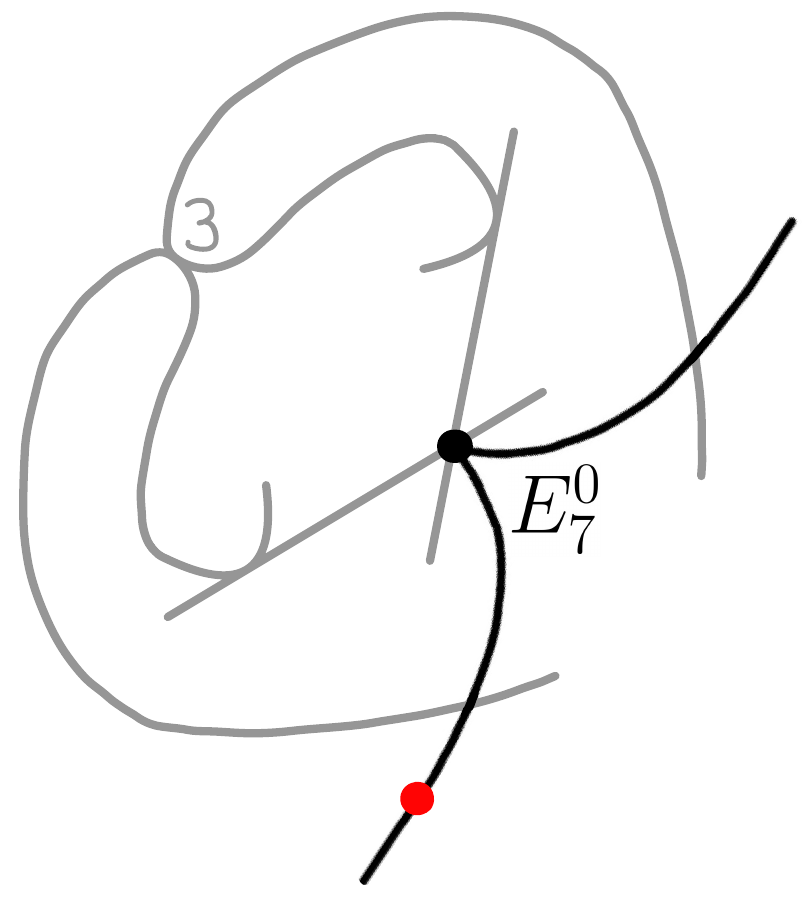}} \end{array}
  & \leftarrow
  & \begin{array}{c}\addstackgap[2pt]{ \includegraphics[width=0.20\textwidth]{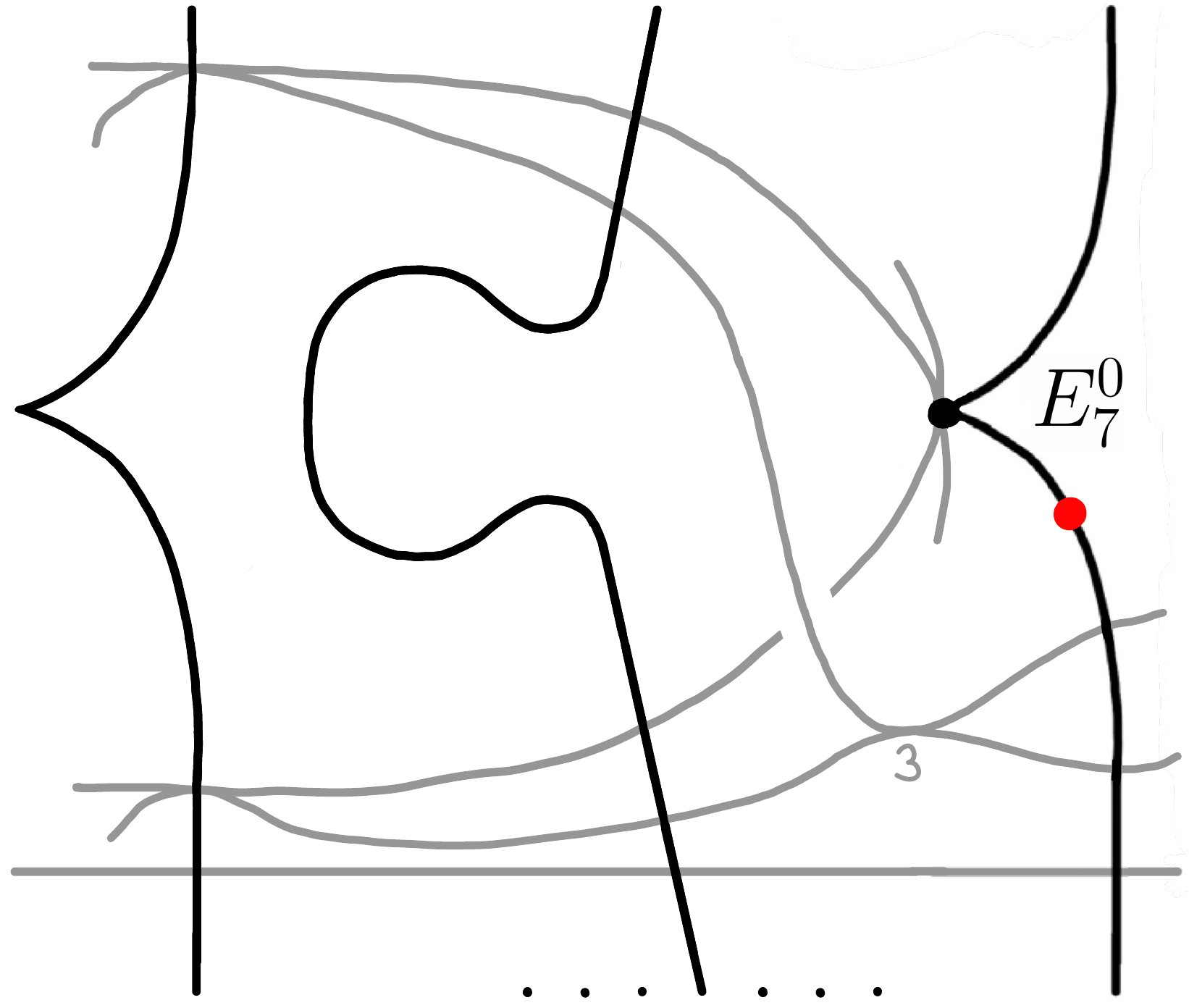}}  \end{array}
\end{array}
$
\captionof{figure}{Non-Jacobian and Jacobian fibrations with global vector fields originating from $\widetilde X$ of type \hyperref[Tab1F]{$1F$}, where the center is a general point on the $(-1)$-component of the ${\rm III}^*$-member}
\label{figure 1F general IIIstar configs jaco non-jaco}
\end{adjustbox}
\end{table}
\end{Discussion}
\begin{Corollary} \label{cor: 1F (-2)configs char3}
Let $\widetilde Z$ be one of surfaces of Corollary \ref{cor: 1F char3 nonJac revised}. Then one of the following holds.
\begin{enumerate}
\item\label{cor 1F configs multiple II}
$\widetilde Z$ contains nine $(-2)$-curves with dual graph of type $\widetilde E_8$ forming configuration ${\rm II}^*$. Moreover, $\widetilde Z$ is isomorphic to the unique surface in Corollaries \ref{cor: 1B non-Jac} and \ref{cor: 1B (-2)configs} in characteristic $3$, obtained from a weak del Pezzo surface of type \hyperref[Tab1B]{$1B$}.

\item\label{cor 1F configs special IIIstar}
$\widetilde Z$ contains ten $(-2)$-curves with dual graph of type $\widetilde E_7+\widetilde A_1$ forming configurations ${\rm III}^*$ and ${\rm I}_2$. Moreover, the ${\rm III}^*$-fiber is the unique multiple fiber and has multiplicity $m=3$. Furthermore, $\widetilde Z$ is isomorphic to the unique surface in Corollaries \ref{cor: 1C non-Jac} and \ref{cor: 1C (-2)configs} in characteristic $3$, obtained from a weak del Pezzo surface of type \hyperref[Tab1C]{$1C$}.

\item\label{cor 1F configs general IIIstar}
$\widetilde Z$ contains eight $(-2)$-curves with dual graph of type $\widetilde E_7$ forming configuration ${\rm III}^*$. Moreover, the ${\rm III}^*$-fiber is the unique multiple fiber and has multiplicity $m=3$, and such surfaces $\widetilde Z$ form a $1$-dimensional family.
\end{enumerate}
\end{Corollary}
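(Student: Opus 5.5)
The corollary collects the three cases of Corollary \ref{cor: 1F char3 nonJac revised} together with the curve analysis of Discussion \ref{discussion: 1F curves char3 revised}. I would prove it case by case, following the trichotomy of Proposition \ref{prop: 1F char3 fixed points revised}. Throughout I would use three tools: the explicit description of the five $(-1)$-curves $S_\pm, R_\pm, B$ on $X$; the rank bound of Lemma \ref{lemma (-2)curves on Ztilde}; and the fact that every $(-1)$-curve on $\widetilde Z$ is a $3$-section.

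\textbf{Case (\ref{prop 1F s0 revised}).} Here $\widetilde P=S_+\cap R_+$, and the two curves are tangent with multiplicity $2$ there. After the blow-up their strict transforms are $(-2)$-curves meeting transversally. Each of them meets one end of the $E_7$-chain, so together with the $E_7$-configuration they form a connected tree of nine $(-2)$-curves. Checking the dual graph gives $\widetilde E_8$, that is, a fiber of type ${\rm II}^*$. This fiber attains the bound $8$ in Lemma \ref{lemma (-2)curves on Ztilde}, so there are no other reducible fibers.

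For the identification, I would contract the remaining visible $3$-section on $\widetilde Z$ that meets only one $(-2)$-curve. This yields a degree $1$ weak del Pezzo surface with an $E_6+A_2$-configuration. It still has global vector fields by Blanchard's Lemma, so it is of type $1B$ by \cite[Table 6]{WeakDelPezzoGlobalVectorFields}. Uniqueness in Corollary \ref{cor: 1B non-Jac} then identifies $\widetilde Z$ with the surface constructed there.

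\textbf{Case (\ref{prop 1F t0 special revised}).} The blow-up of $Q_+=B\cap R_+$ turns $B$ into a $(-2)$-curve, which completes $E_7$ to $\widetilde E_7$. This is the reduced multiple fiber ${\rm III}^*$ with $m=3$. The strict transform of $R_+$ is a new $(-2)$-curve disjoint from this fiber. By the Kodaira--N\'eron classification and the rank bound, it must lie in a fiber of type ${\rm I}_2$ or ${\rm III}$ whose second component is not visible on $\widetilde X$.

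To decide between ${\rm I}_2$ and ${\rm III}$, I would contract the other $(-1)$-curve through $\widetilde P$ (the green curve in the figure). This yields a weak del Pezzo surface with an $E_7+A_1$-configuration and global vector fields, hence of type $1C$. Corollaries \ref{cor: 1C non-Jac} and \ref{cor: 1C (-2)configs} then give the fiber configuration ${\rm III}^*+{\rm I}_2$ and the identification with the surface from type $1C$.

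\textbf{Case (\ref{prop 1F t0 general revised}).} Again $B$ becomes a component of the multiple fiber $3{\rm III}^*$. It remains to exclude an extra fiber $\Gamma$ of type ${\rm I}_2$ or ${\rm III}$. The exceptional curve $E$ is a $3$-section, and $\Gamma$ has two components, so $E$ meets some component $F$ of $\Gamma$ with $E.F\le1$.
\begin{itemize}
\item If $E.F=0$, then the image of $F$ in $\widetilde X$ is a $(-2)$-curve outside the $E_7$-configuration. No such curve exists.
\item If $E.F=1$, then the image of $F$ in $\widetilde X$ is a $(-1)$-curve through $\widetilde P$ other than $B$. This is excluded by the choice of $\widetilde P$.
\end{itemize}
The one-dimensionality of the family is Corollary \ref{cor: 1F char3 nonJac revised}(3). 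It holds because $\Aut(\widetilde X)$ is finite while the admissible points form an open subset of $B^0$.

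\textbf{Main obstacle.} I expect the identifications with types $1B$ and $1C$ to be the hardest step. One has to verify the intersection pattern after contracting the alternative $(-1)$-curve, so that the resulting $(-2)$-configuration really is $E_6+A_2$, respectively $E_7+A_1$. I would do this by tracking intersection numbers with $E$ and with the fiber components, using the multiplicities of the components of ${\rm II}^*$ and ${\rm III}^*$ and the requirement that every $(-1)$-curve meets a fiber with total multiplicity $3$.
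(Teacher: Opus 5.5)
Your overall route is the paper's: the three-way case split, the rank bound, re-contraction to types $1B$ and $1C$ to identify the surfaces, and the $E.F\le 1$ argument against an extra fiber in case (3). Case (3) is fine as written. The gap is in the intersection data that drives the two identification steps.

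\textbf{Case (2).} The curve you propose to contract does not exist. The only $(-1)$-curves of $\widetilde X$ through $\widetilde P=Q_+$ are $B$ and $R_+$, and both become $(-2)$-curves on $\widetilde Z$. Apart from $E$, whose contraction just returns $\widetilde X$ of type $1F$, there is no ``other $(-1)$-curve through $\widetilde P$''. The curve that works is $R_-$:
\begin{itemize}
\item it avoids $\widetilde P$, since it passes through $Q_-$ instead;
\item it meets no $(-2)$-curve of $\widetilde X$;
\item it meets $B$ once, at $Q_-$;
\item and, the missing input, it meets $R_+$ in the single point $[1:1:1:0]$ with multiplicity $3$.
\end{itemize}
For the last point: on $s=1$ both curves have $x=1$ and $y=\pm(t^3-1)=\pm(t-1)^3$. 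Since $\partial_x(x^3+t^3x+t^6)=t^3\neq 0$ there, $(t,y)$ are local coordinates, and the multiplicity is $\dim k[[u]]/(2u^3)=3$. As $R_-$ is a $3$-section and the extra fiber $R_+ + N$ is not multiple, $R_-.N=0$. Contracting $R_-$ turns $B$ into a $(-1)$-curve and $R_+$ into a curve of self-intersection $7$, leaving exactly $E_7+A_1$. Then type $1C$ and Corollaries \ref{cor: 1C non-Jac} and \ref{cor: 1C (-2)configs} give ${\rm I}_2$. The $3$-section constraint alone cannot supply this: $R_-.R_+=1$, $R_-.N=2$ would leave only $E_7$.

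\textbf{Case (1).} It is not true that each of $S_+,R_+$ meets an end of $E_7$. The curve $R_+$ meets no $(-2)$-curve of $\widetilde X$; it meets only $B$. Only $S_+$ meets the $E_7$-configuration, at the end of its longest arm. With your data (both curves meeting $E_7$ and each other) the dual graph would contain a cycle. The correct picture lengthens the arm of length $3$ to length $5$, giving arms $1,2,5$, i.e.\ $\widetilde E_8$.

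Your choice of curve to contract, ``the remaining $3$-section meeting only one $(-2)$-curve'', is also ambiguous. $S_-$ meets only the component at the end of the old long arm, which has multiplicity $3$ in ${\rm II}^*$, and it meets it transversally; contracting $S_-$ gives $E_6+A_2$. But $R_-$ also meets only one $(-2)$-curve, namely $R_+$ with multiplicity $3$, and contracting it gives an $E_8$-configuration instead. You must single out $S_-$ (the paper specifies ``transversely'').
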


\subsection{Case \hyperref[Tab1G]{$1G$}} \label{subsection 1G char3 revised} 
This case exists only if $\Char(k)=p=3$.

\begin{Proposition} \label{prop: 1G char3 fixed points revised}
Let $\widetilde X$ be of type \hyperref[Tab1G]{$1G$}. Then, $\widetilde{P}$ is admissible if and only if one of the following holds.
\begin{enumerate}
\item\label{prop 1G s0 revised} $C$ is of type ${\rm II}$ and $\widetilde P$ lies on one of the two $(-1)$-curves on $\widetilde X$ which are not contained in the anti-canonical member of type ${\rm I}_9$.
\item\label{prop 1G t0 revised} $C$ is of type ${\rm I}_9$ and $\widetilde P$ lies on the unique $(-1)$-curve contained in this anti-canonical member. Moreover, $\widetilde P$ is a torsion point of exact order $m$ on $C^0\cong \mathbb G_m$; in particular, $(m,3)=1$.
\end{enumerate}
Moreover, in both cases $({\rm Stab}_{\Aut_{\widetilde X}^0}(\widetilde P))^0\cong \mu_3$, and in case (\ref{prop 1G s0 revised}) we have $m=3$.
\end{Proposition}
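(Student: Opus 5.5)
The plan follows Strategy~\ref{strategy of proof non-Jaco} in the same way as the proofs of Propositions~\ref{prop: 1E char3 fixed points revised} and~\ref{prop: 1F char3 fixed points revised}. By Proposition~\ref{prop: char3 equations and liftable actions}, $X$ is given by $y^2=x^3+s^2x^2+st^3x+t^6$, and $\Aut_{\widetilde X}^0\cong\mu_3$ acts by $[s:t:x:y]\mapsto[s:\lambda t:x:y]$. The $A_8$-singularity sits at $[1:0:0:0]$. As in the proof of validity of the strategy, it suffices to find the points $P\in X$, different from $Q$ and from the singular point, whose connected stabilizer is non-trivial. I will then check the torsion condition on the relevant anti-canonical member.

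I would split into two charts.

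\emph{Chart $s=1$.} Here the action is $t\mapsto\lambda t$, so the connected stabilizer is non-trivial if and only if $t=0$, and then it is all of $\mu_3$. The fiber over $t=0$ is $y^2=x^2(x+1)$, a nodal cubic with node $[1:0:0:0]$, which is the $A_8$-point. Hence the candidate points are the smooth points $[1:0:x:y]$ with $(x,y)\neq(0,0)$. They lie on the non-contracted component $B$ of the anti-canonical member over $t=0$, and I expect $B$ to be the unique $(-1)$-curve in this member. On $\widetilde Y$ this member becomes the ${\rm I}_9$-fiber, obtained by adding the affine component to the $A_8$-configuration. Its identity component is therefore $C^0\cong\mathbb G_m$. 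Every point of $C^0$ other than the neutral element is admissible, and the exact order $m$ is arbitrary as long as the point is torsion. Since $\mathbb G_m(k)$ has no $3$-torsion in characteristic $3$, this forces $(m,3)=1$, which is case~(\ref{prop 1G t0 revised}).

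\emph{Chart $s=0$.} Away from the base point we set $t=1$. After rescaling by a weight-$1$ factor, the action becomes $[0:1:x:y]\mapsto[0:1:\lambda^{\pm1}x:y]$, so the connected stabilizer is non-trivial precisely when $x=0$. Then $y^2=1$, giving $P_\pm=[0:1:0:\pm1]$, each with connected stabilizer $\mu_3$. The curve $X\cap\{s=0\}$ is $y^2=x^3+1=(x+1)^3$. It is cuspidal with cusp $[0:1:-1:0]$, so it is of type ${\rm II}$ and $C^0\cong\mathbb G_a$. The points $P_\pm$ are smooth points distinct from the neutral element, so they have exact order $3$, which gives $m=3$.

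It remains to place $P_\pm$ on the negative curves, and I expect this step to be the main obstacle. The divisor $\{x=0\}$ on $X$ splits as $y^2=t^6$, i.e.\ into $S_\pm=\{x=0,\ y=\pm t^3\}$, and $P_\pm\in S_\pm$. I would argue that the strict transforms of $S_\pm$, together with $B$, are exactly the $(-1)$-curves of type~$1G$. By \cite{OguisoShioda}, a Jacobian surface with an ${\rm I}_9$-fiber has Mordell--Weil group $\mathbb Z/3\mathbb Z$, so besides the zero section there are exactly two further sections, and these must be $S_\pm$. This matches the configuration in Table~\ref{Table char3 equations and liftable actions}. Hence in case~(\ref{prop 1G s0 revised}) the point $\widetilde P$ lies on one of the two $(-1)$-curves not contained in the ${\rm I}_9$-member, which completes the identification claimed in the statement.
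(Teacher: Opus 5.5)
Your proposal is correct and follows essentially the same route as the paper's proof. The paper also computes stabilizers of the liftable $\mu_3$-action in two charts: over $t=0$ the points of $B^0\cong\mathbb G_m$ have stabilizer $\mu_3$ and are admissible exactly when torsion, which forces $(m,3)=1$, and over $s=0$ one gets $P_\pm=[0:1:0:\pm1]$ on the cuspidal member $y^2=(x+1)^3$. It then places $P_\pm$ on $S_\pm=\{x=0,\ y=\pm t^3\}$. The only difference is that you justify that $S_\pm$ and $B$ exhaust the $(-1)$-curves via the Mordell--Weil group $\mathbb Z/3\mathbb Z$, whereas the paper simply asserts this.
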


\begin{proof}
By Proposition \ref{prop: char3 equations and liftable actions}, $X$ is given by
$y^2=x^3+s^2x^2+st^3x+t^6$, and the action of $\Aut_{\widetilde X}^0\cong \mu_3$ on $X$ is given by
$[s:t:x:y]\mapsto [s:\lambda t:x:y]$. To find admissible $P \in X$ (according to Strategy \ref{strategy of proof non-Jaco}), we distinguish the following cases:

\begin{enumerate}[leftmargin=0.8cm]
\item[(a)]
If $s\neq 0$, we may set $s=1$. Then the action is $[1:t:x:y]\mapsto [1:\lambda t:x:y]$. Thus a point in this chart has non-trivial connected stabilizer only if $t=0$. The point $[1:0:0:0]$ is the $A_8$-singularity and hence cannot be the image of $\widetilde P$. The remaining points on this fiber are the points $[1:0:x:y]$ with $x\neq 0$ and $y^2=x^3+x^2$. They lie on the smooth locus of the non-contracted component of the anti-canonical member over $t=0$. On $\widetilde X$, this component is the unique $(-1)$-curve contained in this member, and after blowing up the base point of $|-K_{\widetilde X}|$ it becomes the affine component of the associated Jacobian fiber of type ${\rm I}_9$. Its identity component is isomorphic to $\mathbb G_m$. Thus such a point is a valid Halphen point precisely if it is a torsion point of exact order $m$ in $\mathbb G_m$, and in characteristic $3$ this implies $(m,3)=1$. The connected stabilizer of each such point is all of $\mu_3$.

\item[(b)]
It remains to consider the chart $s=0$. Away from the base point, we may set $t=1$. After rescaling, the action becomes $[0:1:x:y]\mapsto [0:1:\lambda x:y]$. Hence the connected stabilizer is non-trivial precisely if $x=0$. The equation then gives $y^2=1$, so the only possible points are $P_+=[0:1:0:1]$ and $P_-=[0:1:0:-1]$. For both of these points, the connected stabilizer $({\rm Stab}_{\mu_3}(P_\pm))^0$ is all of $\mu_3$.

The anti-canonical curve containing these points is $C=X\cap\{s=0\}$. It is the cuspidal curve $y^2=x^3+1=(x+1)^3$ with cusp at $[0:1:-1:0]$, whereas the neutral element of $C^0$ is the point where the strict transform of $C$ meets the exceptional curve over the base point of $|-K_{\widetilde X}|$. Thus $P_+$ and $P_-$ lie in the smooth locus of $C$ and are distinct from the neutral element. Therefore $C^0\cong \mathbb G_a$, and since $\Char(k)=3$, both points have exact \mbox{order $3$.}
\end{enumerate}

It remains to identify the position of these points on the configuration of negative curves. The $(-1)$-curves on $\widetilde X$ are visible on the anti-canonical model. Indeed, the divisor $\{x=0\}$ on $X$ splits as the union of the two horizontal curves $S_\pm=\{x=0,\ y=\pm t^3\}$. Together with $B=\{t=0,\ y^2=x^3+s^2x^2\}$, the non-contracted component of the fiber over $t=0$, their strict transforms are the three $(-1)$-curves on the weak del Pezzo surface of type \hyperref[Tab1G]{$1G$}. Moreover, $P_+$ lies on $S_+$ and $P_-$ lies on $S_-$. Thus the two points on the curve of type ${\rm II}$ are exactly the two points described in (\ref{prop 1G s0 revised}), while the points on $B^0$ are exactly the points described in (\ref{prop 1G t0 revised}).
\end{proof}

\begin{Corollary} \label{cor: 1G char3 nonJac revised}
Let $\widetilde Z$ be arising from an $\widetilde X$ of type \hyperref[Tab1G]{$1G$} and assume that $h^0(\widetilde Z,T_{\widetilde Z})\neq 0$. Then $\Aut^0_{\widetilde Z}\cong \mu_3$, and the following hold.
\begin{enumerate}
\item[(1)]\label{cor 1G uniqueness(1) II}
If $\widetilde Z$ has one multiple fiber $3{\rm II}$, then $\widetilde{Z}$ is unique up to isomorphism.
\item[(2)]\label{cor 1G torsion I9}
Otherwise, $\widetilde Z$ has one multiple fiber $m{\rm I}_9$, where $m>1$ is the exact order of $\widetilde P$ on $C^0\cong\mathbb G_m$; in particular, $(m,3)=1$. Conversely, every such $m$ occurs. For fixed $m$, the resulting surfaces are parametrized by primitive $m$-th roots of unity modulo inversion. Hence there is one such surface if $m=2$, and $\varphi(m)/2$ such surfaces if $m>2$.
\end{enumerate}
\end{Corollary}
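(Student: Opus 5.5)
Everything except the moduli statements follows, as in the previous cases, by combining Corollary \ref{cor: approach for classification non-jacobian} with Proposition \ref{prop: 1G char3 fixed points revised}: the admissible centers are either $P_\pm=[0:1:0:\pm1]$ on the type ${\rm II}$ curve (giving $m=3$ and a multiple fiber $3{\rm II}$, since the strict transform of the cuspidal curve $X\cap\{s=0\}$ becomes the reduced multiple fiber), or points of exact order $m$ on $B^0\cong\mathbb G_m$ (giving $m{\rm I}_9$, with $(m,3)=1$ because $\mathbb G_m$ has no points of order $3$ in characteristic $3$). In both cases the connected stabilizer is all of $\mu_3$, hence $\Aut^0_{\widetilde Z}\cong\mu_3$. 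Conversely, every $m>1$ with $(m,3)=1$ occurs, since $\mathbb G_m(k)$ contains points of every such exact order, and by the Halphen construction recalled after Proposition \ref{prop: torsionpoint} each such point yields a non-Jacobian surface of index $m$.

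For (1), I would use the involution $[s:t:x:y]\mapsto[s:t:x:-y]$: it preserves $y^2=x^3+s^2x^2+st^3x+t^6$, fixes the unique $A_8$-singularity $[1:0:0:0]$, and hence lifts to $\widetilde X$. It interchanges $P_+$ and $P_-$, so the two blow-ups are isomorphic.

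For (2), I would make the group structure on $B^0$ explicit. The nodal curve $y^2=x^2(x+1)$ (with $s=1$, $t=0$) has tangent directions $y=\pm x$ at the node. The parametrization $u=(y+x)/(y-x)$ identifies the smooth locus with $\mathbb G_m$, sending the point at infinity, i.e.\ the neutral element $Q$, to $u=1$. A point of exact order $m$ then corresponds to a primitive $m$-th root of unity $\zeta$. The two blow-ups at $\zeta$ and $\zeta^{-1}$ are isomorphic via the same involution $y\mapsto -y$, which acts on $B^0$ as $u\mapsto u^{-1}$. This gives at most $\varphi(m)/2$ classes for $m>2$, and exactly one class for $m=2$, since then $\zeta=-1$ is unique.

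The main obstacle is the converse: showing that different classes $\{\zeta,\zeta^{-1}\}$ give non-isomorphic surfaces. I would first argue that an isomorphism $\widetilde Z\to\widetilde Z'$ must map the exceptional curve $E$ to $E'$. Both surfaces contain the ${\rm I}_9$ configuration together with $(-1)$-curves that are $m$-sections. The exceptional curve is distinguished as the unique $(-1)$-curve meeting the multiple fiber $m{\rm I}_9$ only in the former affine component $B$, at a point that is not on $S_\pm$. If this rigidity can be established, which I expect to be the delicate part, the isomorphism descends to an isomorphism of $\widetilde X$ mapping $\widetilde P$ to $\widetilde P'$. Since $\widetilde X$ is unique, this is an automorphism of $\widetilde X$. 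Any automorphism of $\widetilde X$ preserves $\widetilde Q$, the $A_8$-chain and $B$, so it induces a group automorphism of $B^0\cong\mathbb G_m$, i.e.\ $u\mapsto u^{\pm1}$. This would show that $\zeta$ is determined up to inversion. If the rigidity of $E$ fails, I would fall back on classifying all $(-1)$-curves on $\widetilde Z$ disjoint from the ${\rm I}_9$ chain in the appropriate way, and show that their contractions give back type $1G$ with the same unordered pair $\{\zeta,\zeta^{-1}\}$.
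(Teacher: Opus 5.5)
\textbf{Verdict: there is a genuine gap in the converse of part (2).} Everything up to that point is fine: $\Aut^0_{\widetilde Z}$, part (1), the existence of every $m$ with $(m,3)=1$, and the upper bound on the number of classes.

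\textbf{Where your route differs from the paper.} For the orbit computation you argue more lightly. The paper proves $\Aut(\widetilde X)\cong\mathbb Z/2\mathbb Z$: the map to $\Aut(A_8)$ has trivial kernel, because such automorphisms would be cohomologically trivial on the Jacobian surface, which Dolgachev--Martin's table excludes in characteristic $3$. You only use that every automorphism fixes $\widetilde Q$ and preserves $B$ together with its two points on the cycle. Hence it acts on $B^0\cong\mathbb G_m$ by $u\mapsto u^{\pm1}$, and this already shows that the $\Aut(\widetilde X)$-orbit of $\zeta$ is exactly $\{\zeta,\zeta^{-1}\}$.

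\textbf{The gap.} The step you propose for the converse fails. An isomorphism $\widetilde Z_\zeta\to\widetilde Z_{\zeta'}$ need not send $E$ to $E'$, and $E$ has no intrinsic characterization of the kind you suggest.
\begin{enumerate}
\item Since $\widetilde P\notin S_\pm$, the strict transforms of $S_\pm$ are again $(-1)$-curves on $\widetilde Z$. They meet the ${\rm I}_9$-cycle in the components at distance $3$ and $6$ from $B$. ``The former affine component $B$'' is not visible on $\widetilde Z$ alone.
\item Worse, the Jacobian fibration of $\widetilde Z$ has an ${\rm I}_9$-fiber (the multiple fiber is tame), so its Mordell--Weil group is $\mathbb Z/3\mathbb Z$. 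Translation by a generator $\sigma$ acts on $\widetilde Z$ and changes the class of $E$ on the generic fiber by $\pm m\sigma\neq 0$, since $(m,3)=1$. So an automorphism of $\widetilde Z$ already moves $E$ to another $(-1)$-curve, and your rigidity claim is false even for $\zeta=\zeta'$.
\item Also, every $(-1)$-curve $D$ satisfies $D.\overline F=1$, so ``disjoint from the ${\rm I}_9$ chain'' is not the right condition in your fallback. The fallback itself is left unexecuted.
\end{enumerate}
The paper's proof also reads the isomorphism classes directly off the $\Aut(\widetilde X)$-orbits without treating isomorphisms that move $E$. Your instinct that this step needs an argument is right.

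\textbf{How to close it.} Use an intrinsic invariant instead of tracking $(-1)$-curves.
\begin{enumerate}
\item Since $h^0(\widetilde Z,-K_{\widetilde Z})=1$, the reduced multiple fiber $\overline F$ is the unique member of $|-K_{\widetilde Z}|$, so every isomorphism preserves it.
\item Hence every isomorphism preserves the class of $\mathcal O_{\overline F}(\overline F)$ in $\Pic^0(\overline F)\cong\mathbb G_m$. This identification is canonical up to inversion, since the only automorphisms of the algebraic group $\mathbb G_m$ are $u\mapsto u^{\pm1}$.
\item Write $\beta\colon\widetilde Z\to\widetilde X$ for the blow-up, so $\beta|_{\overline F}\colon\overline F\cong C$. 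From $\overline F=\beta^*C-E$ and $\mathcal O_C(C)\cong\mathcal O_C(\widetilde Q)$ you get $\mathcal O_{\overline F}(\overline F)\cong\mathcal O_C(\widetilde Q-\widetilde P)$.
\item Under $x\mapsto\mathcal O_C(x-\widetilde Q)$ this class corresponds to $-\widetilde P\in C^0$, i.e.\ to $\zeta^{-1}$.
\end{enumerate}
Thus the unordered pair $\{\zeta,\zeta^{-1}\}$ is an isomorphism invariant of $\widetilde Z$. Distinct pairs therefore give non-isomorphic surfaces, which proves the converse.
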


\begin{proof}
Everything except the uniqueness statement in (\hyperref[cor 1G uniqueness(1) II]{1}) and the isomorphism classification in (\hyperref[cor 1G torsion I9]{2}) follows by combining Corollary \ref{cor: approach for classification non-jacobian} with Proposition \ref{prop: 1G char3 fixed points revised}.

For (\hyperref[cor 1G uniqueness(1) II]{1}), recall from the proof of Proposition \ref{prop: 1G char3 fixed points revised} that the two possible images of $\widetilde P$ on $X$ are $P_+=[0:1:0:1]$ and $P_-=[0:1:0:-1]$. The involution
$\iota:[s:t:x:y]\mapsto [s:t:x:-y]$
preserves the equation $y^2=x^3+s^2x^2+st^3x+t^6$, fixes the unique singular point of $X$, and interchanges $P_+$ and $P_-$. Hence $\iota$ lifts to an automorphism of $\widetilde X$, and the two possible blow-ups are isomorphic.

For (\hyperref[cor 1G torsion I9]{2}), we have
$C^0\cong\mathbb G_m$, so $C^0(k)=k^*$. Since $k$ is algebraically
closed, for every $m>1$ with $(m,3)=1$ there exist points of exact
order $m$ on $C^0$. Conversely, a $k$-point of $\mathbb G_m$ has no
non-trivial $3$-torsion, since $
T^{3^r}-1=(T-1)^{3^r}$ for every $r\geq1$. Thus every torsion point of $C^0(k)$ has order
prime to $3$. For a point $\zeta\in C^0(k)$ of exact order $m$, denote
the corresponding blow-up by $\widetilde Z_\zeta$.

It remains to determine the orbits of these points under the full
automorphism group $\Aut(\widetilde X)$. Let $\Gamma$ be the
configuration of $(-2)$-curves on $\widetilde X$, which is of type
$A_8$. Since every automorphism of $\widetilde X$ preserves $\Gamma$,
there is a homomorphism $\rho\colon
\Aut(\widetilde X)
\to
\Aut(\Gamma)
\cong
\mathbb Z/2\mathbb Z$.
The involution $y \mapsto -y$
lifts to $\widetilde X$ and acts non-trivially on $\Gamma$. Hence
$\rho$ is surjective.
We claim that $\ker(\rho)$ is trivial. Indeed, an element of
$\ker(\rho)$ fixes every component of $\Gamma$. The classes of these
eight components together with $-K_{\widetilde X}$ generate
$\Pic(\widetilde X)_{\mathbb Q}$. Since every automorphism fixes the
canonical class, every element of $\ker(\rho)$ acts trivially on
$\Pic(\widetilde X)_{\mathbb Q}$ and hence, since
$\Pic(\widetilde X)$ is torsion-free, trivially on
$\Pic(\widetilde X)$. It is therefore cohomologically trivial.

Such an automorphism fixes the base point of
$|-K_{\widetilde X}|$ and hence lifts to a cohomologically trivial
automorphism of the associated Jacobian $\widetilde{Y}$.
By \cite[Table 3]{DolgachevMartin}, no such non-trivial automorphism
exists in characteristic $3$ (the only relevant entry containing an
$A_8$-configuration is Case 12 there, which occurs only $\Char(k) \neq 3$). Hence, $\ker(\rho)=\{1\}$ and $\Aut(\widetilde X)
\cong
\mathbb Z/2\mathbb Z$.
Under the identification $C^0\cong\mathbb G_m$, the involution
$\iota$ induces inversion. Therefore,
$
\widetilde Z_\zeta\cong\widetilde Z_{\zeta'}
$ if and only if $\zeta' \in \{\zeta, \zeta^{-1}\}$.
If $m=2$, the unique
primitive root $\zeta=-1$ is fixed by inversion, so there is a unique $\widetilde{Z}$. If $m>2$, inversion acts without fixed points on the
$\varphi(m)$ primitive $m$-th roots of unity, and hence there are $\varphi(m)/2$ isomorphism classes.
\end{proof}

\begin{Discussion} \label{discussion: 1G curves char3 revised}
The Jacobian surface $\widetilde Y$ associated with $\widetilde X$ has singular fibers ${\rm I}_9$ and ${\rm II}$. Indeed, the discriminant of $y^2=x^3+s^2x^2+st^3x+t^6$ is $-s^3t^9$, hence $\widetilde Y$ is elliptic and there are two singular fibers. Tate's algorithm gives a multiplicative fiber of type ${\rm I}_9$ over $t=0$, and the fiber over $s=0$ is the irreducible cuspidal fiber of type ${\rm II}$. Thus the reducible fiber contributes the root lattice $A_8$. By \cite{OguisoShioda}, the Mordell--Weil group of the Jacobian surface is $\mathbb Z/3\mathbb Z$. The two non-zero sections are visible on the anti-canonical model as $S_\pm=\{x=0,\ y=\pm t^3\}$. If the components of the ${\rm I}_9$-fiber are numbered cyclically, then the zero-section meets one component and $S_+$ and $S_-$ meet the components at distance $3$ and $6$.

To determine the number and configuration of $(-2)$-curves on the non-Jacobian surface $\widetilde Z$, we treat the cases (\hyperref[cor 1G uniqueness(1) II]{1}) and (\hyperref[cor 1G torsion I9]{2}) of Corollary \ref{cor: 1G char3 nonJac revised} separately.

\begin{enumerate}
\item[(1)]\label{discussion 1G multiple II} By the proof of Proposition \ref{prop: 1G char3 fixed points revised}(\ref{prop 1G s0 revised}), the point $\widetilde P$ is one of the two points on $\widetilde X$ lying on one of the two $(-1)$-curves $S_\pm$. By Corollary \ref{cor: 1G char3 nonJac revised}(\hyperref[cor 1G uniqueness(1) II]{1}), we may choose either of them and take the $\widetilde P$ corresponding to $P_+=[0:1:0:1]$ on $X$.

After blowing up this point, the strict transform of $S_+$ becomes a $(-2)$-curve on $\widetilde Z$. Together with the strict transforms of the old $A_8$-configuration on $\widetilde X$, it forms a fiber of type ${\rm II}^*$. The strict transform of the cuspidal curve $X\cap\{s=0\}$ is the reduced multiple fiber, so the multiple fiber is $3{\rm II}$. The rank bound of Lemma \ref{lemma (-2)curves on Ztilde} is saturated by the ${\rm II}^*$-fiber, hence there are no further reducible fibers.

The situation is summarized in Figure \ref{figure 1G multiple II configs jaco non-jaco} and Corollary \ref{cor: 1G (-2)configs char3}(\ref{cor 1G configs multiple II}). Using the known multiplicities of the components of Kodaira--N{\'e}ron fibers, we see that on $\widetilde Z$ all thin curves are $3$-sections.
\end{enumerate}

\begin{table}[H]
\begin{adjustbox}{center}
$
\begin{array}{ccccc}
 \begin{array}{c} \addstackgap[2pt]{\includegraphics[width=0.12\textwidth]{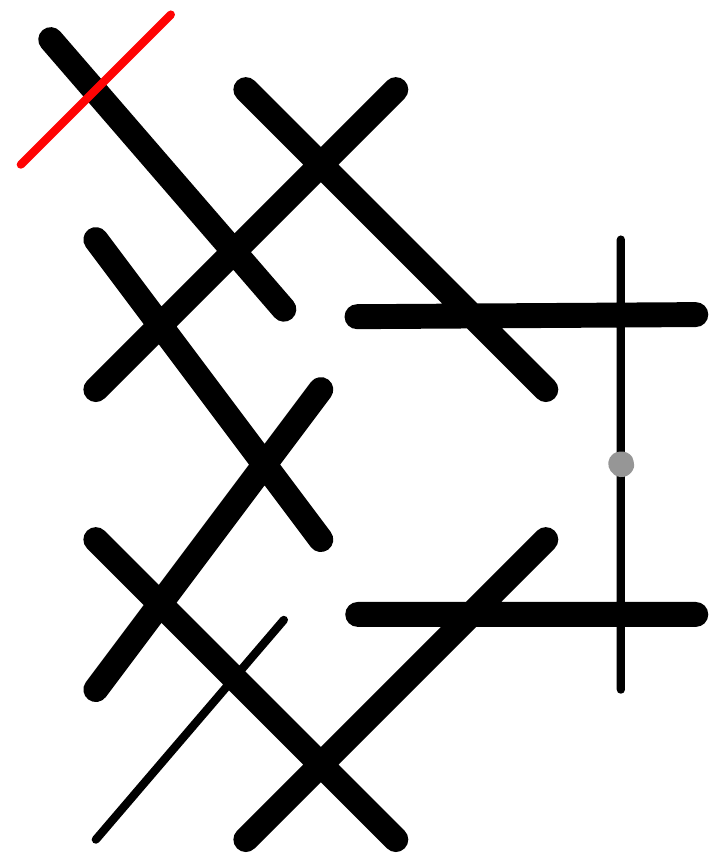} }\end{array}
 & \rightarrow
  & \begin{array}{c}\addstackgap[2pt]{\includegraphics[width=0.12\textwidth]{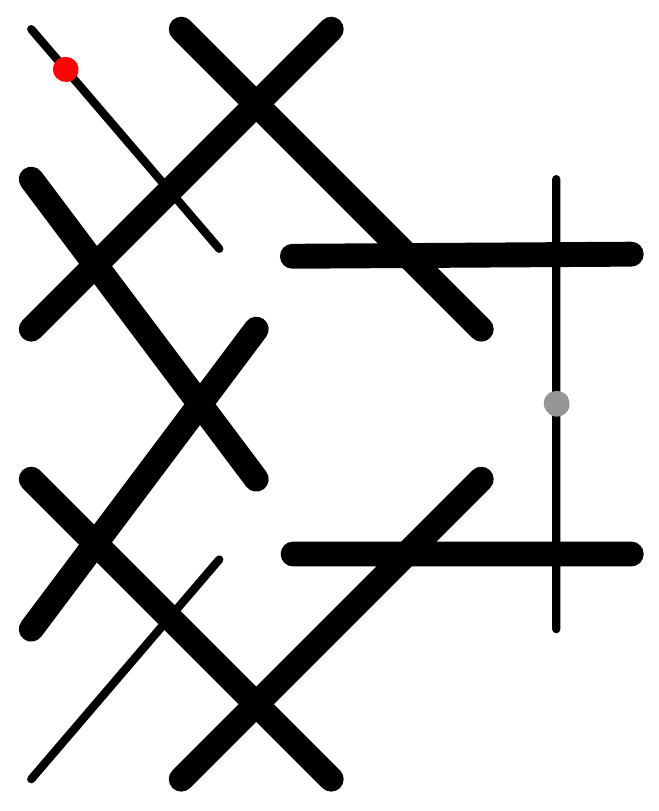} }\end{array}
  & \leftarrow
  & \begin{array}{c}\addstackgap[2pt]{\includegraphics[width=0.20\textwidth]{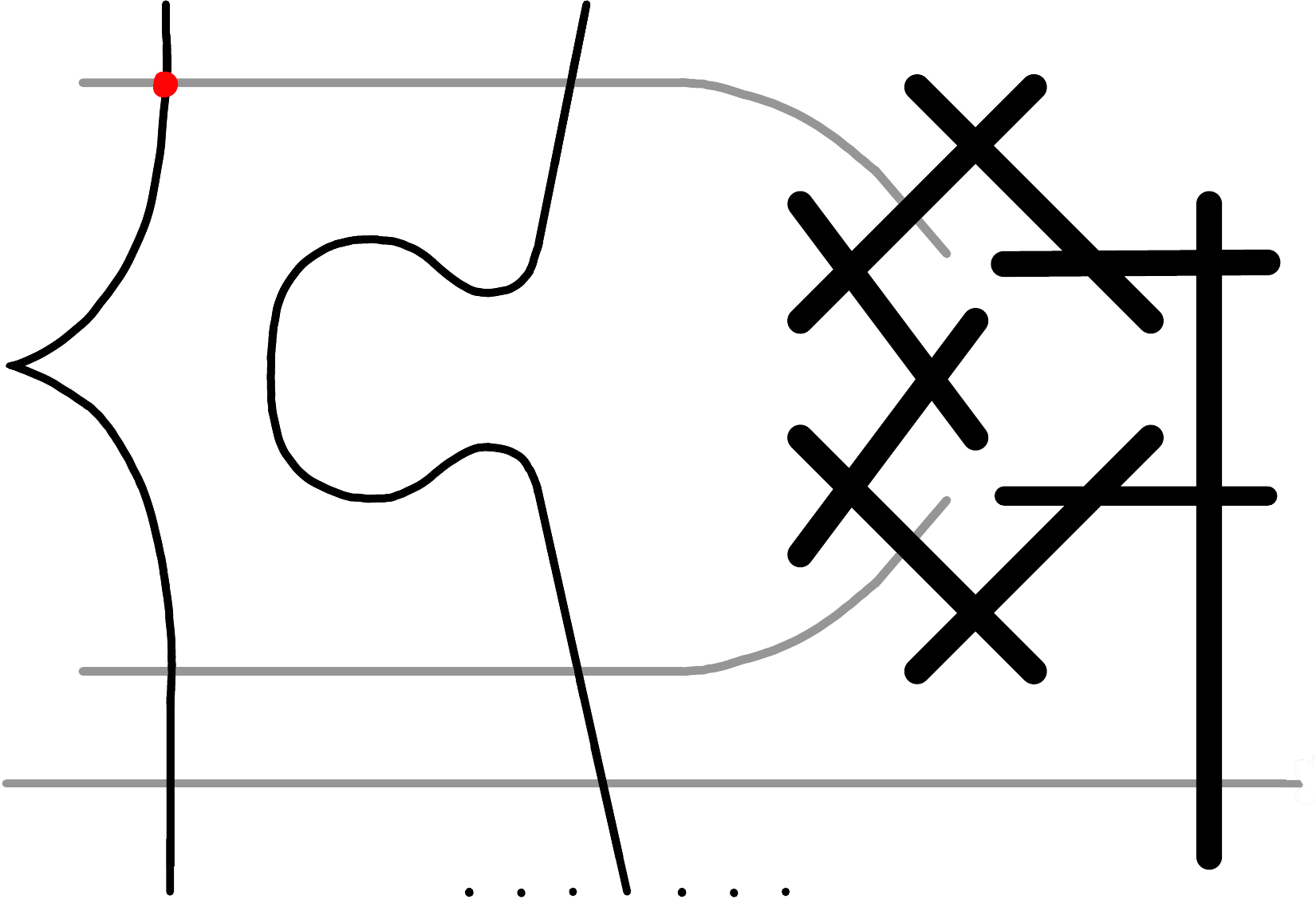} }\end{array}
  \\
  &
  & \downarrow
  &
  & \downarrow
  \\
  &
  & \begin{array}{c}\addstackgap[2pt]{\includegraphics[width=0.14\textwidth]{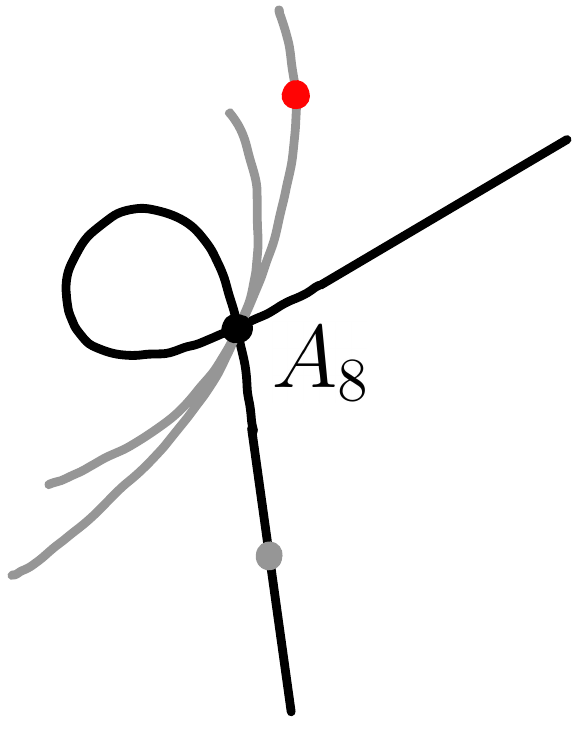}} \end{array}
  & \leftarrow
  & \begin{array}{c}\addstackgap[2pt]{ \includegraphics[width=0.20\textwidth]{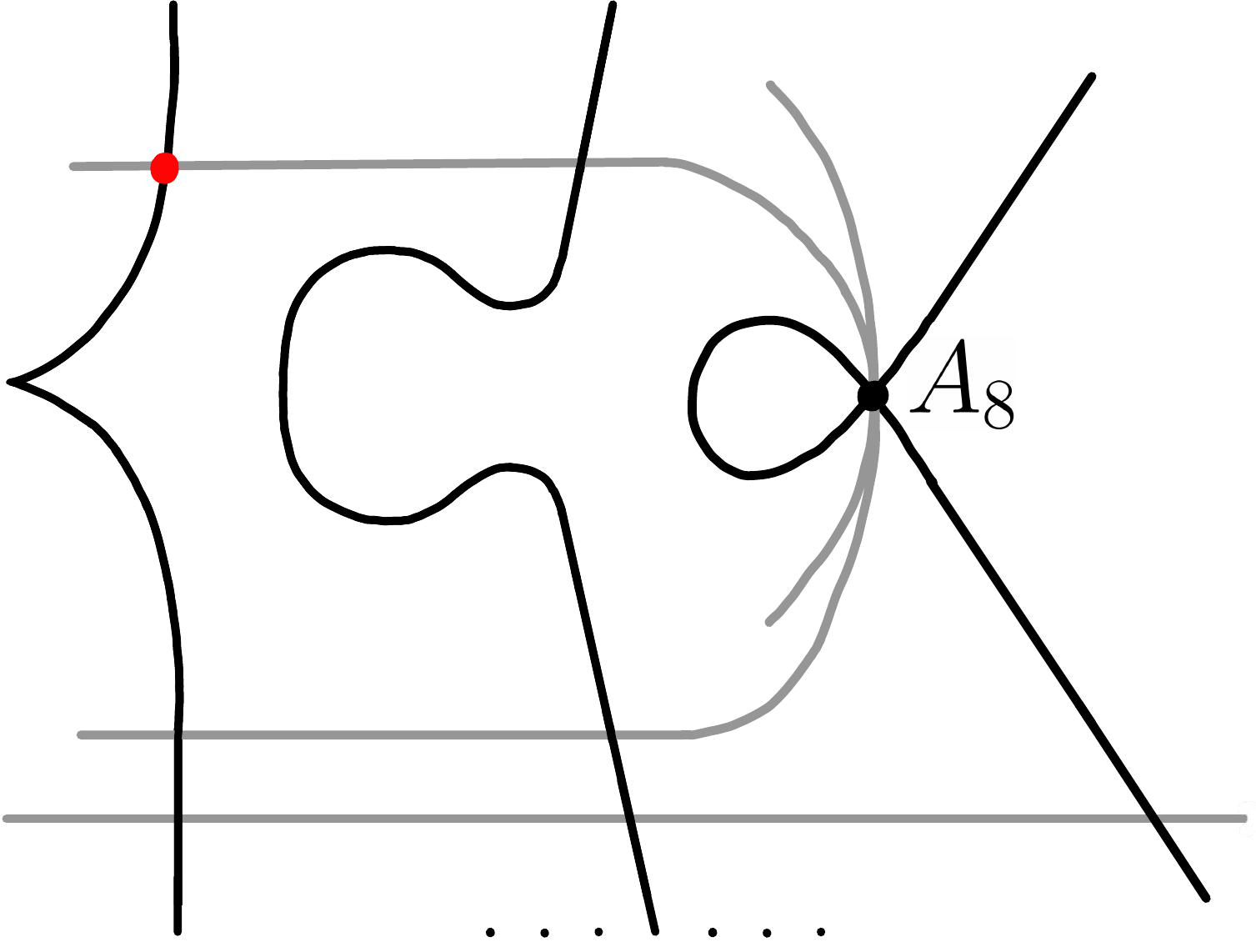}}  \end{array}
\end{array}
$
\captionof{figure}{Non-Jacobian and Jacobian fibrations with global vector fields originating from $\widetilde X$ of type \hyperref[Tab1G]{$1G$}, where the center lies on the type ${\rm II}$ curve}
\label{figure 1G multiple II configs jaco non-jaco}
\end{adjustbox}
\end{table}

\begin{enumerate}
\item[] \emph{This $\widetilde Z$ is not ``new'':}
In the left-hand diagram of Figure
\ref{figure 1E configs jaco non-jaco} in case
\hyperref[subsection 1E char3 revised]{$1E$}, we contract the thin black
$3$-section meeting the end component of the shortest of the three arms
of the ${\rm II}^*$-configuration. This yields another weak del Pezzo
surface of degree $1$ whose configuration of $(-2)$-curves is of type
$A_8$ and which still has global vector fields by Blanchard's Lemma.
By the classification in
\cite[Table 6]{WeakDelPezzoGlobalVectorFields}, this is the unique weak
del Pezzo surface of type \hyperref[Tab1G]{$1G$}, that we just blew up to produce $\widetilde{Z}$. Consequently, the surfaces obtained in Corollary
\ref{cor: 1G char3 nonJac revised}
(\hyperref[cor 1G uniqueness(1) II]{1}) and
Corollary \ref{cor: 1E char3 nonJac revised} are isomorphic.
\end{enumerate}
\end{Discussion}

\begin{enumerate}
\item[(2)]\label{discussion 1G multiple I9} By the proof of Proposition \ref{prop: 1G char3 fixed points revised}(\ref{prop 1G t0 revised}), the point $\widetilde P$ lies on the unique $(-1)$-component $B$ of the anti-canonical member of type ${\rm I}_9$ and has exact order $m$ on $C^0=B^0\cong\mathbb G_m$. Since $\Char(k)=3$, this forces $(m,3)=1$.

After blowing up such a point, the strict transform of $B$ becomes a $(-2)$-curve on $\widetilde Z$. Together with the old $A_8$-configuration, it forms the reduced multiple fiber of type ${\rm I}_9$. Hence the multiple fiber is $m{\rm I}_9$. The rank bound of Lemma \ref{lemma (-2)curves on Ztilde} is attained by this ${\rm I}_9$-fiber, hence there are no further reducible fibers.

The situation is summarized in the following Figure \ref{figure 1G multiple I9 configs jaco non-jaco} and Corollary \ref{cor: 1G (-2)configs char3}(\ref{cor 1G configs multiple I9}). Using the known multiplicities of the components of Kodaira--N{\'e}ron fibers, we observe that on $\widetilde Z$ all thin smooth curves in the picture are $m$-sections.
\end{enumerate}

\begin{table}[H]
\begin{adjustbox}{center}
$
\begin{array}{ccccc}
 \begin{array}{c} \addstackgap[2pt]{\includegraphics[width=0.12\textwidth]{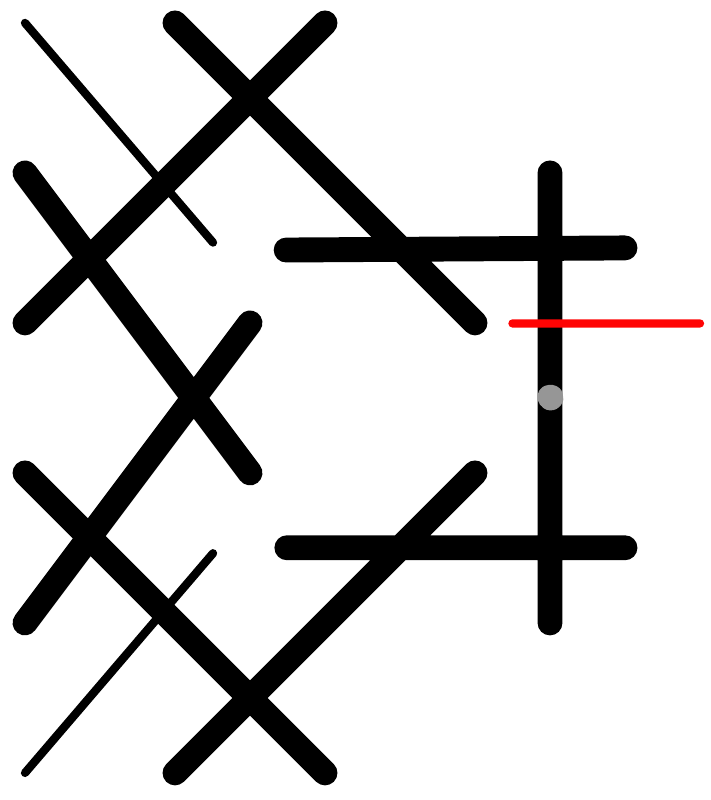} }\end{array}
 & \rightarrow
  & \begin{array}{c}\addstackgap[2pt]{\includegraphics[width=0.12\textwidth]{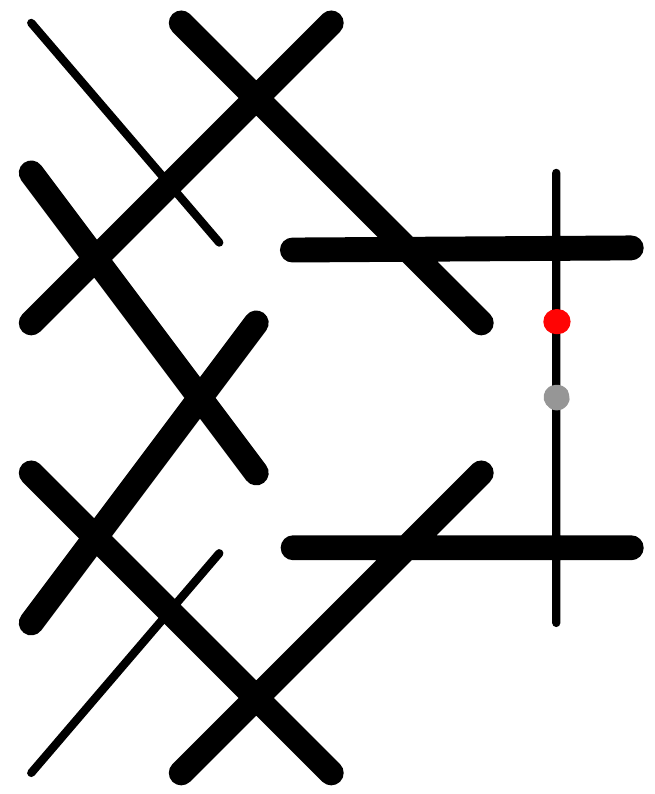} }\end{array}
  & \leftarrow
  & \begin{array}{c}\addstackgap[2pt]{\includegraphics[width=0.20\textwidth]{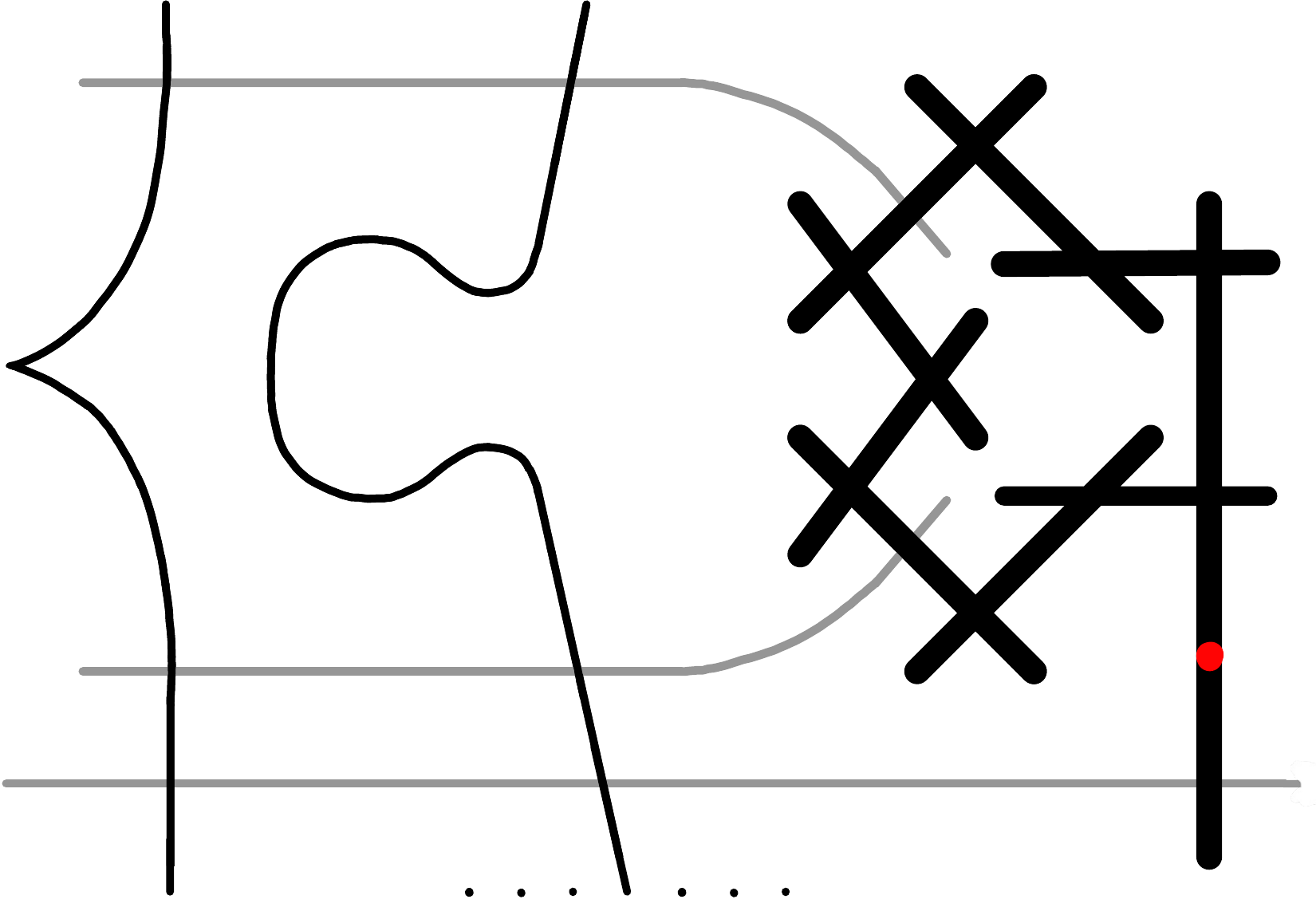} }\end{array}
  \\
  &
  & \downarrow
  &
  & \downarrow
  \\
  &
  & \begin{array}{c}\addstackgap[2pt]{\includegraphics[width=0.14\textwidth]{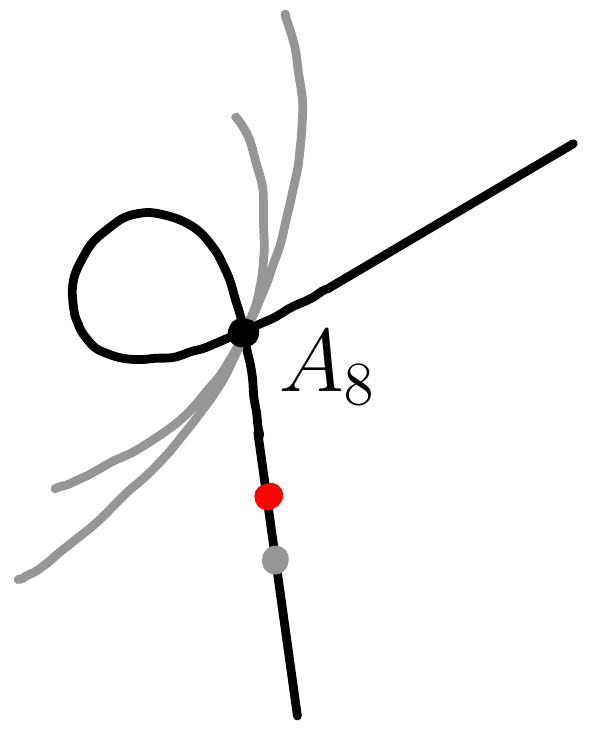}} \end{array}
  & \leftarrow
  & \begin{array}{c}\addstackgap[2pt]{ \includegraphics[width=0.20\textwidth]{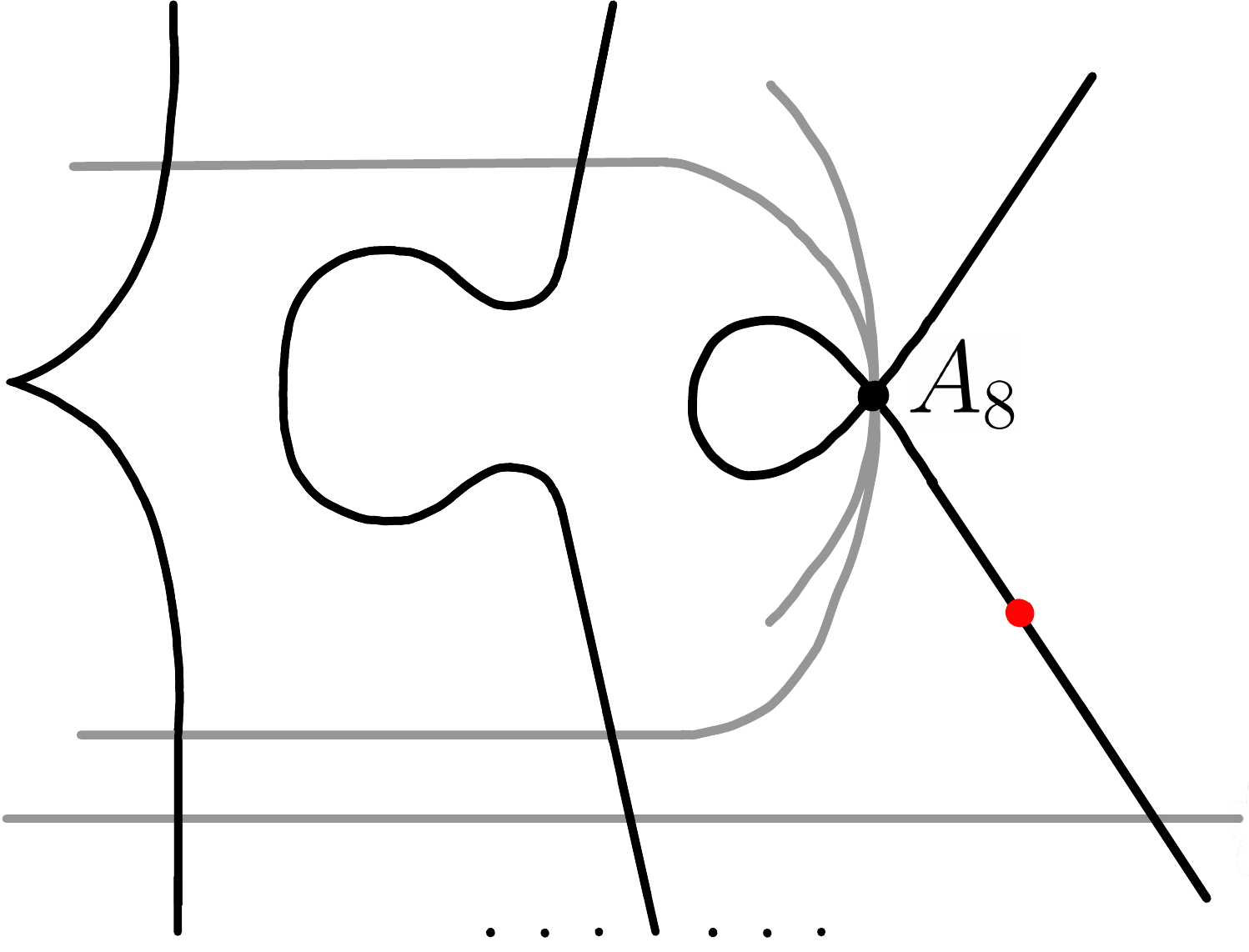}}  \end{array}
\end{array}
$
\captionof{figure}{Non-Jacobian and Jacobian fibrations with global vector fields originating from $\widetilde X$ of type \hyperref[Tab1G]{$1G$}, where the center is a torsion point of exact order $m$ on the $(-1)$-component of the ${\rm I}_9$-member}
\label{figure 1G multiple I9 configs jaco non-jaco}
\end{adjustbox}
\end{table}

\begin{Corollary} \label{cor: 1G (-2)configs char3}
Let $\widetilde Z$ be one of the surfaces of Corollary \ref{cor: 1G char3 nonJac revised}. Then one of the following holds.
\begin{enumerate}
\item\label{cor 1G configs multiple II}
$\widetilde Z$ contains nine $(-2)$-curves with dual graph of type $\widetilde E_8$ forming configuration ${\rm II}^*$. Moreover, the unique multiple fiber is $3{\rm II}$. Furthermore, $\widetilde{Z}$ is isomorphic to the unique surface in Corollaries \ref{cor: 1E char3 nonJac revised} and \ref{cor: 1E (-2)configs char3}, obtained from a weak del Pezzo surface of type \hyperref[Tab1E]{$1E$}.

\item\label{cor 1G configs multiple I9}
$\widetilde Z$ contains nine $(-2)$-curves with dual graph of type $\widetilde A_8$ forming configuration ${\rm I}_9$. Moreover, this ${\rm I}_9$-fiber is the unique multiple fiber and has multiplicity $m$, where $m>1$ is prime to $3$. If $m=2$, then $\widetilde Z$ is unique up to isomorphism. For every fixed $m>2$ prime to $3$, there are $\varphi(m)/2$ such surfaces $\widetilde Z$.
\end{enumerate}
\end{Corollary}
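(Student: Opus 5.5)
The statement to prove is Corollary~\ref{cor: 1G (-2)configs char3}. The plan is to assemble it from Corollary~\ref{cor: 1G char3 nonJac revised} and the two items of Discussion~\ref{discussion: 1G curves char3 revised}, treating the two cases of Proposition~\ref{prop: 1G char3 fixed points revised} separately. In both cases the starting point is the same. The $(-2)$-curves on $\widetilde Z={\rm Bl}_{\widetilde P}(\widetilde X)$ are the strict transforms of the $A_8$-configuration on $\widetilde X$, together with the strict transforms of those $(-1)$-curves of $\widetilde X$ that pass through $\widetilde P$. By the explicit list in the proof of Proposition~\ref{prop: 1G char3 fixed points revised}, the only $(-1)$-curves on $\widetilde X$ are $S_\pm$ and $B$. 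This accounts for every $(-2)$-curve visible from $\widetilde X$.

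\emph{Case (1).} Here $\widetilde P$ lies on $S_+$, which we may assume by the involution $\iota$. The curve $S_+$ meets the $A_8$-chain in the component at distance $3$ from the zero-section component. Its strict transform therefore attaches to the chain $A_8$ at the third node, and since $\widetilde P$ lies on no other $(-1)$-curve, no further curve becomes a $(-2)$-curve. Nine $(-2)$-curves in this configuration form the graph $\widetilde E_8$, whose arms have lengths $2,3,6$ counted from the trivalent vertex. This is the configuration ${\rm II}^*$, and Lemma~\ref{lemma (-2)curves on Ztilde} rules out any further $(-2)$-curves. The multiple fiber is the strict transform of $X\cap\{s=0\}$, which is cuspidal, so it is $3{\rm II}$ with $m=3$ by Proposition~\ref{prop: 1G char3 fixed points revised}. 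For the identification with the surface of type $1E$, I would use the ``not new'' argument: contracting a suitable $3$-section on the $1E$-surface produces an $A_8$ weak del Pezzo surface with global vector fields. By \cite[Table 6]{WeakDelPezzoGlobalVectorFields} this surface is of type $1G$, and the uniqueness in Corollaries~\ref{cor: 1E char3 nonJac revised} and \ref{cor: 1G char3 nonJac revised}(1) then identifies the two surfaces.

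\emph{Case (2).} Here $\widetilde P$ lies on $B^0$ and on no other $(-1)$-curve, because $S_\pm$ meet the fiber over $t=0$ only at points that are not on $B^0$ away from the neutral element. The strict transform of $B$ closes the $A_8$-chain into a cycle of nine $(-2)$-curves, that is, $\widetilde A_8={\rm I}_9$. Lemma~\ref{lemma (-2)curves on Ztilde} again excludes further $(-2)$-curves. This cycle is the reduced multiple fiber, and its multiplicity $m$ is the order of $\widetilde P$ on $C^0\cong\mathbb G_m$, hence prime to $3$. The moduli count is taken directly from Corollary~\ref{cor: 1G char3 nonJac revised}(2).

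The main obstacle is the identification in case (1): one must check that the contracted curve really is a $(-1)$-curve disjoint from enough of the ${\rm II}^*$-configuration to leave exactly an $A_8$. I would verify this by removing, from the $\widetilde E_8$ graph, the vertex at the end of the length-$2$ arm. What remains is a chain of eight vertices, namely $A_8$, which confirms the type $1G$ identification.
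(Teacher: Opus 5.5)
Your proposal is correct and follows essentially the paper's own argument. In case (1) the strict transform of $S_+$ completes the $A_8$-chain to $\widetilde E_8$, and in case (2) that of $B$ closes it into ${\rm I}_9$; Lemma~\ref{lemma (-2)curves on Ztilde} excludes further $(-2)$-curves, the moduli come from Corollary~\ref{cor: 1G char3 nonJac revised}, and the identification with $1E$ is made, as in the paper, by contracting the $3$-section on the $1E$-surface that meets the end of the shortest arm.

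The one imprecision is your opening claim that the $(-2)$-curves on $\widetilde Z$ are exactly those strict transforms. This fails for such blow-ups in general: new $(-2)$-curves do appear, e.g.\ in case $1C$. Here it holds only a posteriori, via the rank bound you invoke in both cases.
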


\subsection{Case \hyperref[Tab1H]{$1H$}} \label{subsection 1H char3 revised}
This case exists only if $\Char(k)=p=3$.

\begin{Proposition} \label{prop: 1H char3 fixed points revised}
Let $\widetilde X$ be of type \hyperref[Tab1H]{$1H$}. Then, $\widetilde{P}$ is admissible if and only if $C$ is of type ${\rm II}^*$ and $\widetilde P$ lies on the unique $(-1)$-curve contained in this anti-canonical member. Moreover, then
$({\rm Stab}_{\Aut^0_{\widetilde X}}(\widetilde P))^0\cong \mathbb G_a$
and $m=3$.
\end{Proposition}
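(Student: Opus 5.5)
We follow Strategy \ref{strategy of proof non-Jaco} exactly as in the preceding subsections. By Proposition \ref{prop: char3 equations and liftable actions}, $X$ is given by $y^2=x^3+s^4x+s^3t^3$, and $\Aut_{\widetilde X}^0\cong\mathbb G_a$ acts by $[s:t:x:y]\mapsto[s:t-(a^3+a)s:x+a^3s^2:y]$. The $E_8$-singularity is at $[0:1:0:0]$, i.e.\ over $s=0$, and the base point $Q$ is at $[0:0:1:1]$. We compute stabilizers in the two charts of the projection to $[s:t]$.

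In the chart $s=1$, the action is $[1:t:x:y]\mapsto[1:t-(a^3+a):x+a^3:y]$. Fixing $P$ requires the subgroup scheme cut out by $a^3+a=0$ and $a^3=0$. Since $a^3=0$ forces $a=0$, the connected stabilizer is trivial. More precisely, the ideal $(a^3+a,a^3)$ contains $a$, so even the scheme-theoretic stabilizer is trivial.

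Hence admissible points lie over $s=0$. Setting $t=1$, the $\mathbb G_a$-action is trivial on this fiber, since every coefficient of $a$ is multiplied by a power of $s$. Thus every point of $X\cap\{s=0\}$ is fixed by all of $\mathbb G_a$, and the connected stabilizer is $\mathbb G_a$ itself. This fiber is $y^2=x^3$, which on $X$ is the image of the non-contracted component of the anti-canonical member through the $E_8$-point. On $\widetilde X$, its strict transform is the unique $(-1)$-curve $B$ in that member, and after blowing up $\widetilde Q$ the member becomes the ${\rm II}^*$-fiber. So $C$ is of type ${\rm II}^*$, and $\widetilde P\in B^0=C^0$.

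The main point to check is that $C^0\cong\mathbb G_a$, so that every point other than $\widetilde Q$ has exact order $3$ and therefore $m=3$. This holds because the identity component of a ${\rm II}^*$ fiber is additive, i.e.\ $B^0$ is $B$ minus its intersection point with the $E_8$-chain. Alternatively, one can read it off from the cuspidal equation $y^2=x^3$, whose smooth locus, with neutral element $Q$, is $\mathbb G_a$. Since $p=3$, every nonzero element of $\mathbb G_a$ has order $3$.

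We also have to exclude the remaining points of the fiber over $s=0$. The point $P=[0:1:0:0]$ is singular on $X$ and so lies on a $(-2)$-curve, not on $C^0$. The point $Q$ is the neutral element. Every other point of $\{s=0\}$ corresponds to a point of $B^0\setminus\{\widetilde Q\}$.

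Conversely, every such point is admissible, with $({\rm Stab})^0\cong\mathbb G_a$ by the computation above. As in the proof of validity of the strategy, $\pi$ is an isomorphism near $\widetilde P$, so this stabilizer computed on $X$ agrees with the one on $\widetilde X$.
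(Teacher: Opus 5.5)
Your proposal is correct and follows essentially the same route as the paper. Both compute the stabilizers chart by chart: in the chart $s=1$ the stabilizer is trivial, even scheme-theoretically, since $a=(a^3+a)-a^3$. The fiber $X\cap\{s=0\}$ is fixed pointwise by $\mathbb G_a$, and its non-singular, non-neutral points are exactly the points of $B^0\setminus\{\widetilde Q\}$ with $B^0=C^0\cong\mathbb G_a$, so they have exact order $3$.
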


\begin{proof}
By Proposition \ref{prop: char3 equations and liftable actions}, $X$ is given by
$y^2=x^3+s^4x+s^3t^3$, and the action of
$\Aut^0_{\widetilde X}\cong\mathbb G_a$ on $X$ is given by $[s:t:x:y]\mapsto
[s:t-(a^3+a)s:x+a^3s^2:y]$.
To find admissible $P \in X$ (according to Strategy \ref{strategy of proof non-Jaco}), we distinguish the following cases:

\begin{enumerate}[leftmargin=0.8cm]
    \item[(a)]
We first consider the locus $s=0$. Away from the base point, we may set $t=1$, and the action restricts to the identity. The point $[0:1:0:0]$ is the unique $E_8$-singularity of $X$, whereas all other points $P=[0:1:x:y]$ with $(x,y)\neq(0,0)$ and $y^2=x^3$ lie in the smooth locus of the cuspidal curve $C=X\cap\{s=0\}$. Thus $C^0\cong\mathbb G_a$, every such point is a non-zero point of exact order $3$, and, as observed above, is fixed by all of $\Aut^0_{\widetilde X}\cong\mathbb G_a$.

\item[(b)]
It remains to consider the locus $s\neq0$. We may set $s=1$, and the action becomes $[1:t:x:y]\mapsto [1:t-(a^3+a):x+a^3:y]$. Hence a point in this chart is fixed if and only if $a^3=0$ and $a^3+a=0$, which implies $a=0$. Thus every point in this chart has trivial stabilizer.
\end{enumerate}

Finally, the $(-1)$-curve on $\widetilde X$ is visible on the anti-canonical model as $B=\{s=0,\ y^2=x^3\}$, the non-contracted component of the anti-canonical member over $s=0$. Its strict transform is the unique $(-1)$-curve on the weak del Pezzo surface of type \hyperref[Tab1H]{$1H$}, and together with the old $E_8$-configuration it forms the anti-canonical member of type ${\rm II}^*$. This proves the claim.
\end{proof}

\begin{Corollary} \label{cor: 1H char3 nonJac revised}
Let $\widetilde Z$ be arising from an $\widetilde X$ of type \hyperref[Tab1H]{$1H$} and assume that $h^0(\widetilde Z,T_{\widetilde Z})\neq0$. Then, $\widetilde Z$ has one multiple fiber $3{\rm II}^*$, $\Aut^0_{\widetilde Z}\cong\mathbb G_a$ and such surfaces form a $1$-dimensional family.
\end{Corollary}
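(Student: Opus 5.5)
Most of the statement follows by combining Corollary \ref{cor: approach for classification non-jacobian} with Proposition \ref{prop: 1H char3 fixed points revised}. Every admissible $\widetilde P$ lies on the $(-1)$-component $B$ of the anti-canonical member $C$ of type ${\rm II}^*$, away from $\widetilde Q$ and from the $E_8$-configuration. Its connected stabilizer is all of $\mathbb G_a$, so $\Aut^0_{\widetilde Z}\cong\mathbb G_a$. Since $C^0\cong\mathbb G_a$ in characteristic $3$, the order is $m=3$.

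For the multiple fiber, I would argue as in the cases $1C$, $1D$ and $1F(3)$. The blow-up in $\widetilde P\in B$ turns the strict transform of $B$ into a $(-2)$-curve. Together with the $E_8$-configuration it forms a configuration of type $\widetilde E_8$, namely the strict transform of $C$. This strict transform is the reduced multiple fiber, so the multiple fiber is $3{\rm II}^*$. This is recorded in the Discussion and Corollary following the statement rather than needed here.

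The remaining point is the moduli count. The admissible centers form the $1$-dimensional set $B^0\setminus\{\widetilde Q\}$, and I have to show that they do not collapse to finitely many isomorphism classes. Following Step (\ref{strategy moduli}) of Strategy \ref{strategy of proof non-Jaco}, I would compute the orbits of $\Aut(\widetilde X)$ on this set.

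The connected part $\Aut^0_{\widetilde X}\cong\mathbb G_a$ fixes every point of $\{s=0\}$, as seen in part (a) of the proof of Proposition \ref{prop: 1H char3 fixed points revised}. So it acts trivially on the centers. The group of components $\Aut(\widetilde X)/\Aut^0_{\widetilde X}$ is finite, by the description in \cite[Table 6]{WeakDelPezzoGlobalVectorFields} or equivalently by \cite{DolgachevMartin} applied to the associated Jacobian $\widetilde Y$. Hence $\Aut(\widetilde X)$ acts on the curve $B^0\setminus\{\widetilde Q\}$ through a finite group, and its orbits are finite.

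The main obstacle is this finiteness. As a check that the reductive part cannot act with dense orbits, note that a potential $\mathbb G_m$-scaling $[s:t:x:y]\mapsto[s:\mu^{-1}t:\mu^{2}x:\mu^{3}y]$ would move the points $[0:1:x:y]$ transitively. However, it does not preserve the term $s^4x$ of $y^2=x^3+s^4x+s^3t^3$. Only finitely many rescalings preserve the equation, which is consistent with $\Aut^0_{\widetilde X}$ having no $\mathbb G_m$-factor, in contrast to type $1I$.

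Finally, I would check that two blow-ups can only be isomorphic through an isomorphism that either respects the exceptional curves or identifies them with alternative contractions, as in the validity proof of Strategy \ref{strategy of proof non-Jaco}. Any such identification is again controlled by finitely many data. Therefore each isomorphism class meets the family of centers in finitely many points, and the surfaces form a $1$-dimensional family.
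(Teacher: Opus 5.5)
Your first steps are fine. $\Aut^0_{\widetilde Z}\cong\mathbb G_a$ and $m=3$ follow from Proposition~\ref{prop: 1H char3 fixed points revised}. Since $\Aut^0_{\widetilde X}$ fixes every center and the component group of $\Aut_{\widetilde X}$ is finite, the $\Aut(\widetilde X)$-orbits on $B^0\setminus\{\widetilde Q\}$ are finite.

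The gap is in your last paragraph. Finite orbits only control isomorphisms $\widetilde Z_{\widetilde P}\to\widetilde Z_{\widetilde P'}$ that send the exceptional curve $E$ to the exceptional curve $E'$. Suppose instead an isomorphism sends $E$ to a different $(-1)$-curve $D$. Then contracting $D$ presents $\widetilde Z_{\widetilde P'}$ as a blow-up of $\widetilde X$ in another center, and you must show that only finitely many centers arise this way. Saying this is ``controlled by finitely many data'' is exactly the claim that needs proof. The $(-1)$-curves of a non-Jacobian rational genus-one surface are its $m$-sections, and in general there can be infinitely many of them. The validity proof of Strategy~\ref{strategy of proof non-Jaco} only says that an alternative contraction is again of type $1A$--$1I$. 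It gives no bound on how many centers on the same $\widetilde X$ produce the same surface.

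The missing idea, which the paper uses, is that $E$ is the \emph{unique} $(-1)$-curve on $\widetilde Z$:
\begin{enumerate}
\item Since $-K_{\widetilde Z}\sim\overline F$, the reduced ${\rm II}^*$-fiber, every $(-1)$-curve $D$ satisfies $D.\overline F=1$.
\item All components of $\overline F$ but one have multiplicity at least $2$. So $D$ meets only the multiplicity-one component and is disjoint from the $E_8$-configuration, exactly like $E$.
\item Hence $D-E\in K_{\widetilde Z}^{\perp}$ is orthogonal to the $E_8$-configuration. That configuration has full rank in the negative definite lattice $K_{\widetilde Z}^{\perp}/\mathbb Z K_{\widetilde Z}$, so $D-E=nK_{\widetilde Z}$ for some $n$.
\item Then $0=(D-E)^2=-2-2\,D.E$ gives $D.E=-1$, which forces $D=E$.
\end{enumerate}
Therefore every isomorphism between two such blow-ups preserves the exceptional curve and descends to an automorphism of $\widetilde X$ carrying one center to the other. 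Only with this does your finite-orbit argument yield a $1$-dimensional family. Your check that no $\mathbb G_m$-rescaling preserves the equation is harmless but plays no role.
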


\begin{proof}
Everything except the moduli statement follows by combining Corollary \ref{cor: approach for classification non-jacobian} with Proposition \ref{prop: 1H char3 fixed points revised}, in whose proof we saw that the possible centers $\widetilde{P}$ form the $1$-dimensional locus $U=C^0\setminus\{0\}$ and that $\Aut_{\widetilde X}^0\cong\mathbb G_a$ fixes $C^0$ pointwise, hence also its closure $B$. Since $B$ is the unique $(-1)$-curve on $\widetilde X$, it is preserved by the entire automorphism group $\Aut(\widetilde{X})=\Aut_{\widetilde{X}}(k)$. Since $\Aut_{\widetilde X}$ is of finite type its component group is finite, and hence the $\Aut(\widetilde X)$-orbits in $U$ are finite.

We will now observe that every isomorphism identifying two such blow-ups descends to $\widetilde X$. Let $E$ be the exceptional curve of $\widetilde Z\to\widetilde X$, and let $\overline F\sim-K_{\widetilde Z}$ be the reduced fiber of type ${\rm II}^*$. If $D$ is a $(-1)$-curve on $\widetilde Z$, then $D.\overline F=-K_{\widetilde Z}.D=1$. Hence, by the known multiplicities of the components of a ${\rm II}^*$-fiber, $D$ meets the unique multiplicity-one component once and is disjoint from the eight components forming the $E_8$-configuration, just as $E$ is. Therefore $D-E$ is orthogonal to both $K_{\widetilde Z}$ and the $E_8$-configuration. Since the latter has full rank in $K_{\widetilde Z}^{\perp}/\mathbb Z K_{\widetilde Z}$, we have $D-E=nK_{\widetilde Z}$ for some $n\in\mathbb Z$. Then $0=(nK_{\widetilde Z})^2=(D-E)^2=-2-2\,D.E$, so $D.E=-1$, which forces $D=E$. Thus $E$ is the unique $(-1)$-curve on $\widetilde Z$.

Consequently, every isomorphism between two such blow-ups sends the exceptional curve to the exceptional curve and descends to an automorphism of $\widetilde X$ carrying one center to the other. Hence two centers in $U$ yield isomorphic surfaces only if they lie in the same finite $\Aut(\widetilde X)$-orbit. Therefore the resulting surfaces $\widetilde Z$ form a $1$-dimensional family up to isomorphism.
\end{proof}

\begin{Discussion} \label{discussion: 1H curves char3 revised}
The Jacobian surface $\widetilde Y$ associated with $\widetilde X$ has a unique singular fiber of type ${\rm II}^*$. Indeed, the discriminant of
$y^2=x^3+s^4x+s^3t^3$ is $-s^{12}$, hence $\widetilde Y$ is elliptic and its only singular fiber lies over $s=0$ and is of type ${\rm II}^*$. By \cite{OguisoShioda}, the Mordell--Weil group of the Jacobian surface is trivial.
The unique $(-1)$-curve on $\widetilde X$ is visible on the anti-canonical model as $B=\{s=0,\ y^2=x^3\}$.
Blowing up the gray base point on $B$ gives the zero-section of $\widetilde Y$, while the strict transform of $B$ becomes the affine component of the ${\rm II}^*$-fiber.

For the non-Jacobian surface $\widetilde Z$ of Corollary \ref{cor: 1H char3 nonJac revised}, we blow up a non-zero point $\widetilde P\in B^0\cong\mathbb G_a$. The strict transform of $B$ then becomes a $(-2)$-curve. Together with the old $E_8$-configuration, it forms the reduced multiple fiber of type ${\rm II}^*$. Since $\widetilde P$ has exact order $3$, the multiple fiber is $3{\rm II}^*$. The rank bound of Lemma \ref{lemma (-2)curves on Ztilde} is attained by this ${\rm II}^*$-fiber, hence there are no further reducible fibers.

The situation is summarized in the following Figure \ref{figure 1H configs jaco non-jaco} and Corollary \ref{cor: 1H (-2)configs char3}. Using the known multiplicities of the components of Kodaira--N{\'e}ron fibers, we observe that on $\widetilde Z$ the red exceptional curve is a $3$-section and we saw in the proof of the above Corollary \ref{cor: 1H char3 nonJac revised}, there is a unique $(-1)$-curve on $\widetilde{Z}$.

\begin{table}[H]
\begin{adjustbox}{center}
$
\begin{array}{ccccc}
 \begin{array}{c} \addstackgap[2pt]{\includegraphics[width=0.22\textwidth]{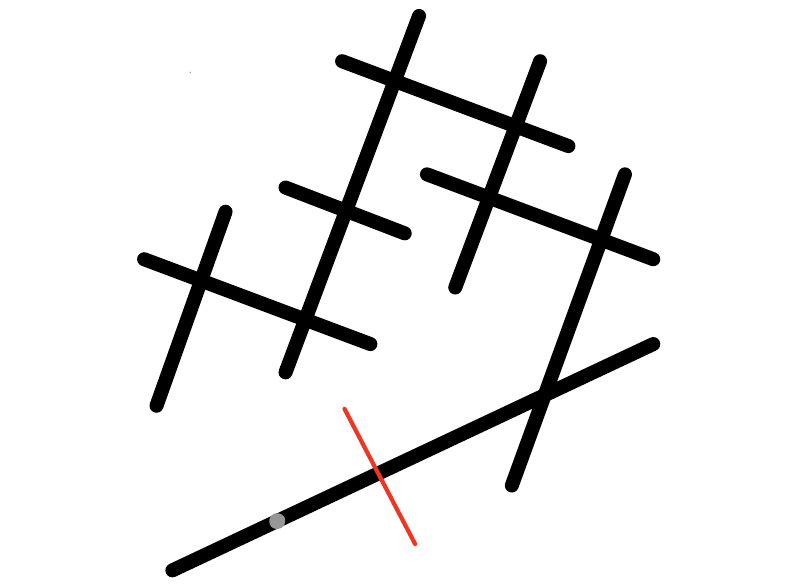} }\end{array}
 & \rightarrow
  & \begin{array}{c}\addstackgap[2pt]{\includegraphics[width=0.22\textwidth]{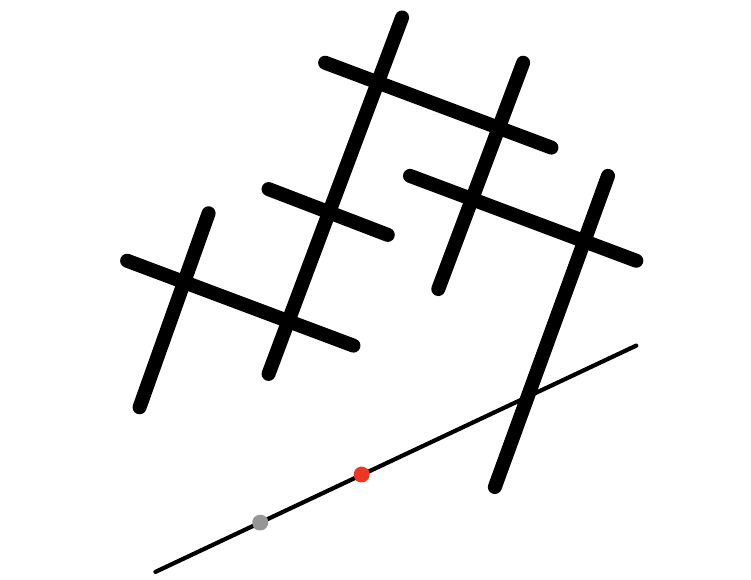} }\end{array}
  & \leftarrow
  & \begin{array}{c}\addstackgap[2pt]{\includegraphics[width=0.22\textwidth]{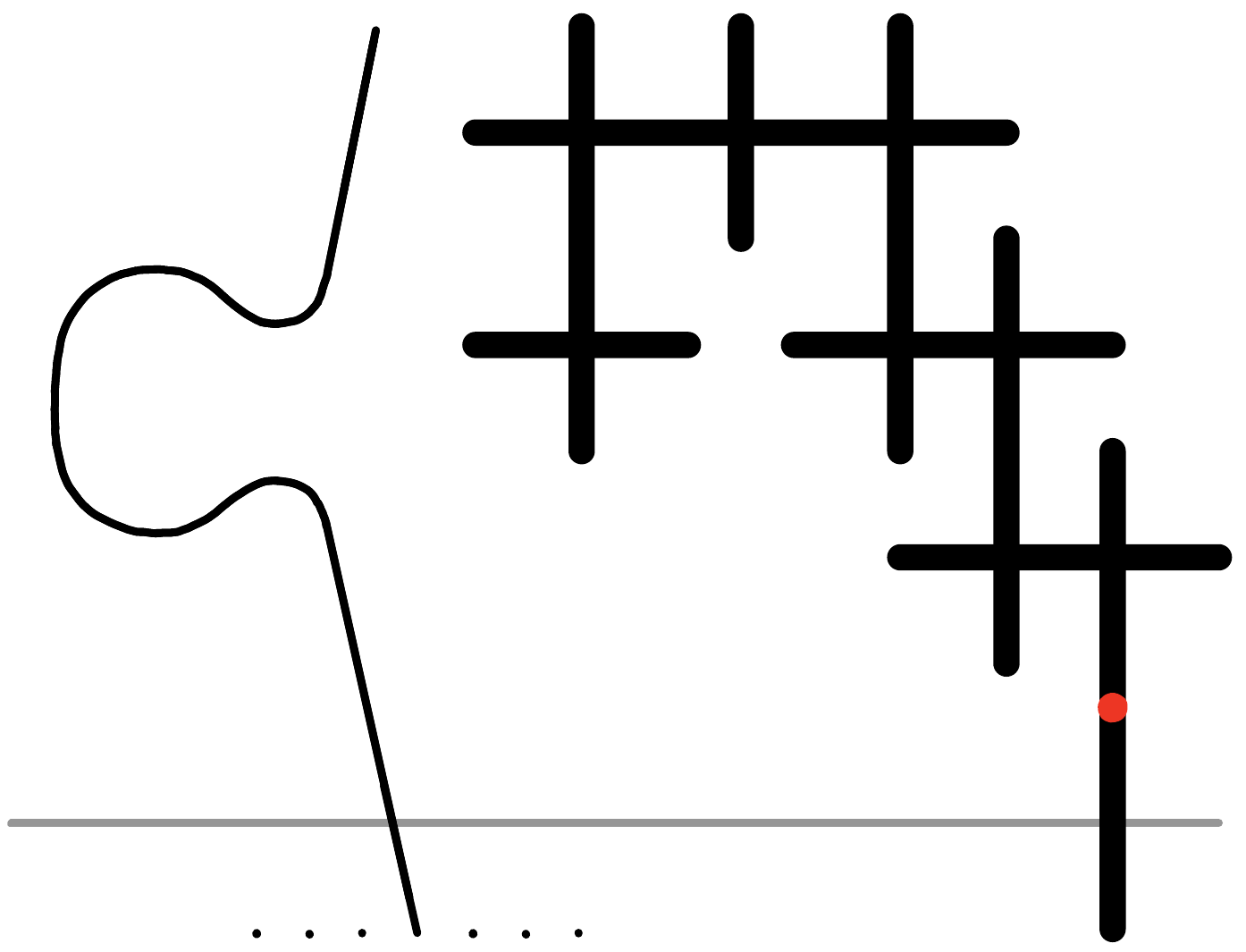} }\end{array}
  \\
  &
  & \downarrow
  &
  & \downarrow
  \\
  &
  & \begin{array}{c}\addstackgap[2pt]{\includegraphics[width=0.22\textwidth]{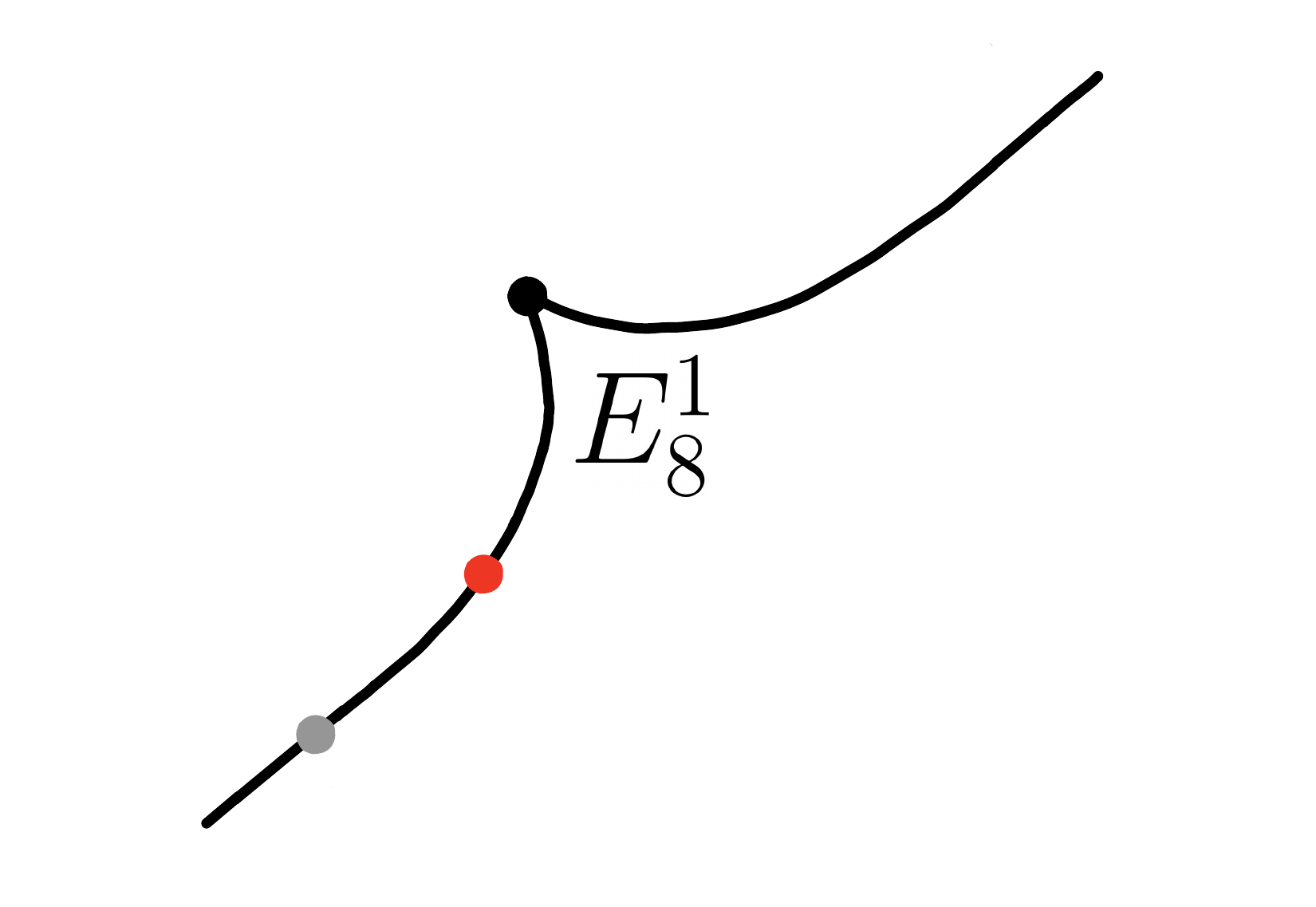}} \end{array}
  & \leftarrow
  & \begin{array}{c}\addstackgap[2pt]{ \includegraphics[width=0.22\textwidth]{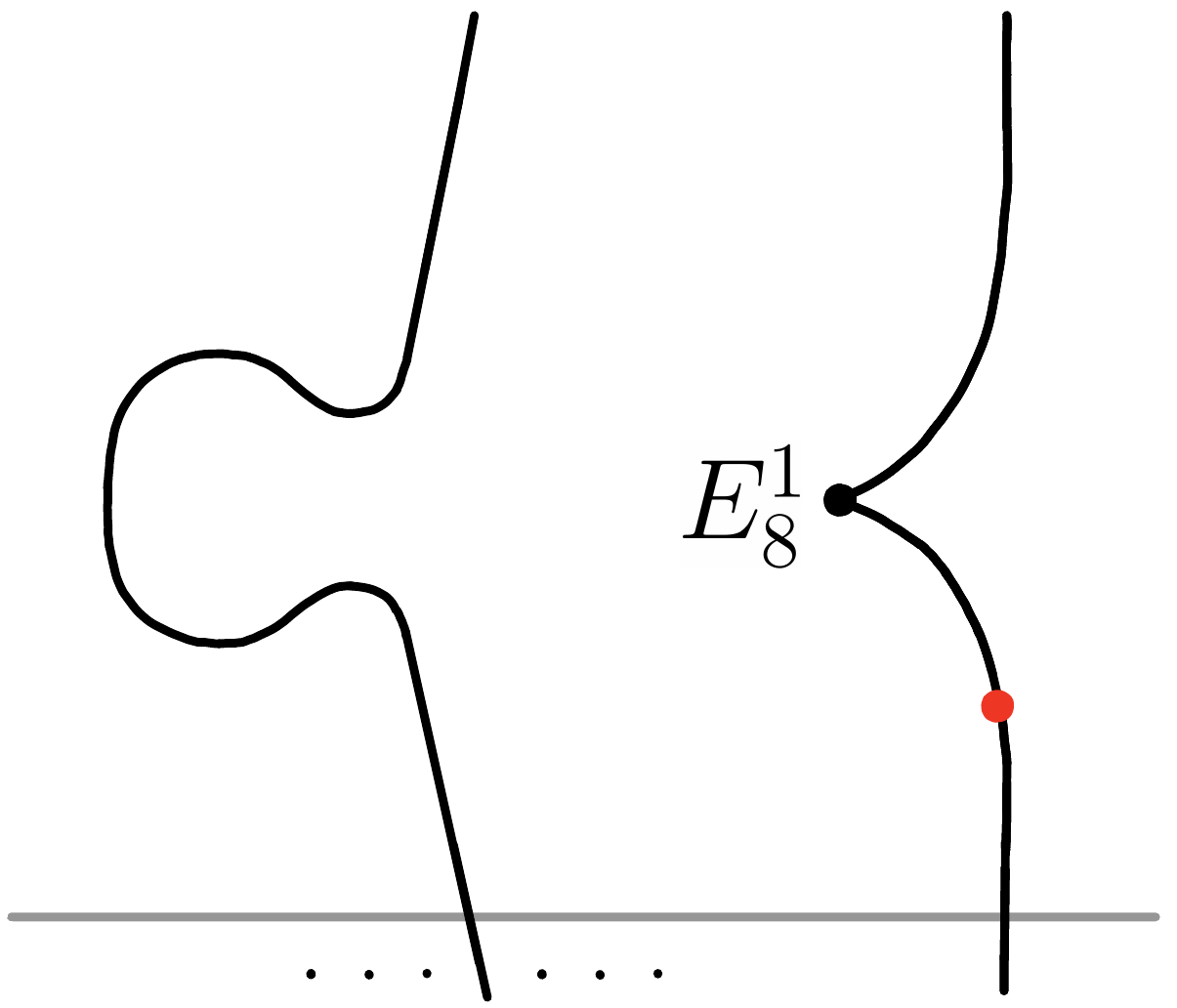}}  \end{array}
\end{array}
$
\captionof{figure}{Non-Jacobian and Jacobian fibrations with global vector fields originating from $\widetilde X$ of type \hyperref[Tab1H]{$1H$}, where the center is a non-zero point on the unique $(-1)$-component of the ${\rm II}^*$-member}
\label{figure 1H configs jaco non-jaco}
\end{adjustbox}
\end{table}
\end{Discussion}

\begin{Corollary} \label{cor: 1H (-2)configs char3}
Each of the $\widetilde Z$ of Corollary \ref{cor: 1H char3 nonJac revised} contains nine $(-2)$-curves with dual graph of type $\widetilde E_8$ forming configuration ${\rm II}^*$. Moreover, this ${\rm II}^*$-fiber is the unique multiple fiber and has multiplicity $3$. 
\end{Corollary}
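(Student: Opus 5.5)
The statement is Corollary \ref{cor: 1H (-2)configs char3}. Almost everything needed is already in Discussion \ref{discussion: 1H curves char3 revised} and the proof of Corollary \ref{cor: 1H char3 nonJac revised}, so the plan is to assemble those facts in three steps.

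\emph{Step 1: the ${\rm II}^*$-configuration.} By Proposition \ref{prop: 1H char3 fixed points revised}, the center $\widetilde P$ is a non-zero point of $B^0\cong\mathbb G_a$. Here $B$ is the unique $(-1)$-curve on $\widetilde X$, and it meets the $E_8$-configuration in the way prescribed by the anti-canonical member of type ${\rm II}^*$. Blowing up a point on $B$ lowers $B^2$ by one, so the strict transform of $B$ is a $(-2)$-curve on $\widetilde Z$. Since $\widetilde P$ lies on none of the $(-2)$-curves, the eight curves of the $E_8$-configuration stay $(-2)$-curves, and their intersections with $B$ are unchanged. The resulting nine curves therefore have dual graph $\widetilde E_8$. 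By Subsection \ref{subsection reducible fibers}, any connected configuration of $(-2)$-curves lies in a single fiber, so they form a fiber of type ${\rm II}^*$ with the standard multiplicities.

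\emph{Step 2: this fiber is the multiple fiber, with $m=3$.} The strict transform of the anti-canonical member $C$ through $\widetilde P$ is the support of the multiple fiber of the Halphen construction. By Proposition \ref{prop: torsionpoint} and Corollary \ref{cor: approach for classification non-jacobian}, its multiplicity is the exact order of $\widetilde P$ in $C^0\cong\mathbb G_a$. Since $p=3$, this order is $3$.

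\emph{Step 3: no further $(-2)$-curves.} A ${\rm II}^*$-fiber has $e_P-1=8$, which already attains the bound of Lemma \ref{lemma (-2)curves on Ztilde}. Hence every other fiber is irreducible. Because every $(-2)$-curve lies in some fiber, the nine curves above are all the $(-2)$-curves on $\widetilde Z$, and by the at-most-one-multiple-fiber statement of Subsection \ref{subsection rational genus one fibrations generalities} the ${\rm II}^*$-fiber is the unique multiple fiber.

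The only step requiring care is Step 1: one has to check that the multiplicity-one component of the reduced fiber is exactly the strict transform of $B$, so that the reduced fiber $\overline F$ satisfies $\overline F\sim -K_{\widetilde Z}$. This follows from $C\sim -K_{\widetilde X}$ together with $K_{\widetilde Z}=\pi^*K_{\widetilde X}+E$ and $\widetilde P\in B$ being a smooth point of $C$.
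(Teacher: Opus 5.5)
Your proposal is correct and follows essentially the same route as the paper's Discussion~\ref{discussion: 1H curves char3 revised}. There, the strict transform of $B$ becomes a $(-2)$-curve that completes the $E_8$-configuration to ${\rm II}^*$, the exact order $3$ of $\widetilde P$ on $C^0\cong\mathbb G_a$ gives multiplicity $3$, and the rank bound of Lemma~\ref{lemma (-2)curves on Ztilde} excludes further reducible fibers. Your extra check that $\pi^*C-E\sim -K_{\widetilde Z}$ is the standard ${\rm II}^*$-divisor, with the strict transform of $B$ as its multiplicity-one component, simply makes explicit a step the paper leaves implicit.
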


 \subsection{Case \hyperref[Tab1I]{$1I$}} \label{subsection 1I char3 revised}
 This case exists only if $\Char(k)=p=3$.

\begin{Proposition} \label{prop: 1I char3 fixed points revised}
Let $\widetilde X$ be of type \hyperref[Tab1I]{$1I$}. Then, $\widetilde{P}$ is admissible if and only if one of the following holds.
\begin{enumerate}
\item\label{prop 1I IIstar revised} $C$ is of type ${\rm II}^*$ and $\widetilde P$ lies on the unique $(-1)$-curve contained in this anti-canonical member.
\item\label{prop 1I II revised} $C$ is of type ${\rm II}$ and $\widetilde P$ lies in the smooth locus of this irreducible cuspidal curve.
\end{enumerate}
Moreover, in case (\ref{prop 1I IIstar revised}), we have
$\bigl(\Stab_{\Aut_{\widetilde X}^0}(\widetilde P)\bigr)^0\cong\mathbb G_a$,
whereas in case (\ref{prop 1I II revised}), we have
$\bigl(\Stab_{\Aut_{\widetilde X}^0}(\widetilde P)\bigr)^0\cong\mu_3$.
In both cases, $m=3$.
\end{Proposition}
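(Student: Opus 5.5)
We follow the pattern of the preceding cases, working on the anti-canonical model. By Proposition \ref{prop: char3 equations and liftable actions}, $X$ is given by $y^2=x^3+s^5t$ in $\mathbb P(1,1,2,3)$. The connected automorphism scheme $\Aut^0_{\widetilde X}\cong\mathbb G_a\rtimes\mathbb G_m$ acts by $[s:t:x:y]\mapsto[s:t-a^3s:x+as^2:y]$ and $[s:t:x:y]\mapsto[\lambda s:\lambda^{-5}t:x:y]$. The $E_8$-singularity lies at $[0:1:0:0]$, over $s=0$. The discriminant is a multiple of $s^{10}$ up to the quasi-elliptic convention; in any case the member over $s=0$ is the ${\rm II}^*$-member and all other members are irreducible cuspidal curves of type ${\rm II}$ (quasi-elliptic, $p=3$). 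Since $C^0\cong\mathbb G_a$ on every member, any admissible point automatically has $m=3$ by Corollary \ref{cor: approach for classification non-jacobian}. It therefore remains only to compute connected stabilizers.

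\emph{Case $s=0$.} Set $t=1$. In weighted coordinates the $\mathbb G_a$-part acts trivially. The $\mathbb G_m$-part acts by $[0:\lambda^{-5}:x:y]=[0:1:\lambda^{-10}x:\lambda^{-15}y]$, so on $y^2=x^3$ with $(x,y)\neq(0,0)$ it moves points with stabilizer $\mu_5\cap\ldots$. This stabilizer is étale since $p=3$; more precisely, $\lambda^{10}=\lambda^{15}=1$ forces $\lambda^5=1$. Hence $\bigl(\Stab(P)\bigr)^0$ contains $\mathbb G_a$ and meets $\mathbb G_m$ trivially in its connected part. I would argue that $\Stab(P)^0$ is exactly $\mathbb G_a$: the quotient of $\Stab(P)^0$ by $\mathbb G_a$ embeds into $\Stab_{\mathbb G_m}(P)^0$, which is trivial. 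The admissible points are then the points of $B^0\setminus\{0\}$, where $B=\{s=0,\ y^2=x^3\}$ is the unique $(-1)$-curve, as in Proposition \ref{prop: 1H char3 fixed points revised}. This gives (\ref{prop 1I IIstar revised}).

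\emph{Case $s\neq0$.} Set $s=1$ and take $P=[1:t_0:x_0:y_0]$ with $y_0^2=x_0^3+t_0$. A general element acts by $(a,\lambda)$, and after normalizing $s$ back to $1$ the result is $[1:\lambda^{-6}(t-a^3):\lambda^{-2}(x+a):\lambda^{-3}y]$ (up to the order of composition in the semidirect product). The fixed-point equations are
\[
t_0=\lambda^{-6}(t_0-a^3),\qquad x_0=\lambda^{-2}(x_0+a),\qquad y_0=\lambda^{-3}y_0 .
\]
I would solve these scheme-theoretically over $k[a,\lambda^{\pm1}]$ near the identity. If $y_0\neq0$, then $\lambda^3=1$, so $\lambda$ lies in $\mu_3$. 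Then $a=(\lambda^2-1)x_0$ and $a^3=(\lambda^6-1)x_0^3=0$ in characteristic $3$ on $\mu_3$, so the first equation is satisfied using $\lambda^6=1$. The stabilizer is therefore the image of $\mu_3$ under $\lambda\mapsto(a,\lambda)=((\lambda^2-1)x_0,\lambda)$, a copy of $\mu_3$. If $y_0=0$, the connected stabilizer is larger: one gets a $\mathbb G_m$ conjugate when $t_0$ is suitable. But then $x_0^3=-t_0$, and I would check whether such a point is singular on its fiber, i.e. the cusp. The cusp of $y^2=x^3+t_0$ is exactly $y=0$, $x=-t_0^{1/3}$, so these points are excluded because they do not lie in $C^0$. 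Every point of the smooth locus of each type ${\rm II}$ member thus has connected stabilizer $\mu_3$, and every such point other than the base point is $3$-torsion. This gives (\ref{prop 1I II revised}).

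The main obstacle is the scheme-theoretic stabilizer computation in the semidirect product. One must keep the correct composition law and show that the connected stabilizer is precisely $\mu_3$, neither $\mu_3\times\alpha_3$ nor something larger. I would handle this by computing the stabilizer's tangent space at the identity, via the Lie algebra condition $\partial_a+c\,\lambda\partial_\lambda$ vanishing at $P$, which is one-dimensional and multiplicative, so $\mu_3$ and not $\alpha_3$. I would then confirm the order of the finite part using the equation $\lambda^3=1$.
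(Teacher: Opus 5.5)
Your proposal is correct and follows essentially the same route as the paper: you split into the locus $s=0$ and the chart $s=1$, with the cusps $y_0=0$ excluded. On $s=0$, $\mathbb G_a$ acts trivially and the $\mathbb G_m$-stabilizer is the \'etale $\mu_5$, which leaves $\mathbb G_a$. On $s=1$, $y_0\neq 0$ forces $\lambda^3=1$, determines $a=(\lambda^2-1)x_0$ and makes the $t$-equation automatic, giving $\mu_3$.

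The only slips are cosmetic. For $y^2=x^3+s^5t$ in characteristic $3$ the discriminant is identically zero, not merely a multiple of $s^{10}$. The closing Lie-algebra check is unnecessary, since your direct scheme-theoretic solution of the fixed-point equations already identifies the stabilizer as $\mu_3$.
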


\begin{proof} 
By Proposition \ref{prop: char3 equations and liftable actions}, $X$ of $\widetilde X$ is given by
$
y^2=x^3+s^5t$,
and $\Aut_{\widetilde X}^0\cong\mathbb G_a\rtimes\mathbb G_m$ acts on $X$ by
$\mathbb G_a:\ [s:t:x:y]\mapsto[s:t-a^3s:x+as^2:y]
$
and
$\mathbb G_m:\ [s:t:x:y]\mapsto[\lambda s:\lambda^{-5}t:x:y]$. 
To find admissible $P \in X$ (according to Strategy \ref{strategy of proof non-Jaco}), we distinguish the following cases:

\begin{enumerate}[leftmargin=0.8cm]
\item[(a)]
We first consider the locus $s=0$. Away from the base point, we may set $t=1$. The additive subgroup $\mathbb G_a$ fixes this locus pointwise. The point $[0:1:0:0]$ is the unique $E_8$-singularity of $X$, whereas every other point $P=[0:1:x:y]$ with $(x,y) \neq(0,0)$ and $y^2=x^3$,
lies in the smooth locus of the cuspidal curve $C=X\cap\{s=0\}$. Hence $C^0\cong\mathbb G_a$, and all these points are non-zero points of exact order $3$. The $\mathbb G_m$-factor acts on this locus as $[0:1:x:y] \mapsto [0:1:\lambda^{10}x: \lambda^{15}y]$, hence an element $(a, \lambda) \in \mathbb{G}_a \rtimes \mathbb{G}_m$ has non-trivial stabilizer if and only if $\lambda^{10}=1=\lambda^{15}$. So, $\Stab{\mathbb G_a\rtimes\mathbb G_m}(P) = \mathbb{G}a \rtimes \mu_5$ and, since we are working in characteristic $3$, $(\Stab{\mathbb G_a\rtimes\mathbb G_m}(P))^0 = \mathbb{G}_a$.

\item[(b)]
It remains to consider the locus $s\neq0$. We may set $s=1$, in which case the combined action is $[1:t:x:y]\longmapsto
[1:\lambda^{-6}t-a^3:\lambda^{-2}x+a:\lambda^{-3}y]$.
Let $P=[1:t_0:x_0:y_0]$. Since $P$ lies in the smooth locus of the anti-canonical curve through it, we have $y_0\neq0$: the point with $y=0$ is the cusp of the curve $y^2=x^3+t_0$. The stabilizer equations are
\[
\lambda^{-3}y_0=y_0,
\qquad
\lambda^{-2}x_0+a=x_0,
\qquad
\lambda^{-6}t_0-a^3=t_0.
\]
The first equation is equivalent to $\lambda^3=1$, and the second then uniquely determines $a=(1-\lambda^{-2})x_0$ and so the third equation is automatic. Hence the stabilizer is $\mu_3$.
\end{enumerate}

Finally, the curve $B=\{s=0,\ y^2=x^3\}\subset X$ is the non-contracted component of the anti-canonical member over $s=0$. Its strict transform is the unique $(-1)$-curve on the weak del Pezzo surface of type \hyperref[Tab1I]{$1I$}; together with the old $E_8$-configuration, it forms the anti-canonical member of type ${\rm II}^*$. All anti-canonical curves meeting the chart $s=1$ are irreducible cuspidal curves and hence are of type ${\rm II}$. This proves the claim.
\end{proof}

\begin{Corollary} \label{cor: 1I char3 nonJac revised}
Let $\widetilde Z$ be arising from an $\widetilde X$ of type \hyperref[Tab1I]{$1I$}. Then the following hold.
\begin{enumerate}
\item[(1)]\label{cor 1I uniqueness IIstar}
If $C$ is of type ${\rm II}^*$, then $\widetilde Z$ has the unique multiple fiber $3{\rm II}^*$, $\Aut_{\widetilde Z}^0\cong\mathbb G_a$, and $\widetilde Z$ is unique up to isomorphism.
\item[(2)]\label{cor 1I uniqueness II}
If $C$ is of type ${\rm II}$, then $\widetilde Z$ has the unique multiple fiber $3{\rm II}$, $\Aut_{\widetilde Z}^0\cong\mu_3$, and $\widetilde Z$ is unique up to isomorphism.
\end{enumerate}
\end{Corollary}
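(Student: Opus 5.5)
Everything except the uniqueness statements follows by combining Corollary \ref{cor: approach for classification non-jacobian} with Proposition \ref{prop: 1I char3 fixed points revised}: in case (1) the center lies on $B^0\setminus\{0\}\cong\mathbb G_a\setminus\{0\}$, the stabilizer is $\mathbb G_a$, and $m=3$, so the multiple fiber is the strict transform of the ${\rm II}^*$-member. In case (2) the center lies in the smooth locus of a cuspidal member of type ${\rm II}$, the stabilizer is $\mu_3$, $m=3$, and the multiple fiber is $3{\rm II}$. For the fiber types I will argue as in Discussion \ref{discussion: 1H curves char3 revised}. In case (1), $B$ becomes a $(-2)$-curve completing the $E_8$-configuration to ${\rm II}^*$. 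In case (2), the $E_8$-configuration plus the exceptional-curve-free component $B$ still forms ${\rm II}^*$, which is now a non-multiple fiber. In both cases Lemma \ref{lemma (-2)curves on Ztilde} excludes further reducible fibers.

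For uniqueness in (1), I will use the $\mathbb G_m$-factor, which is not contained in the stabilizer. By the proof of Proposition \ref{prop: 1I char3 fixed points revised}(a), $\lambda\in\mathbb G_m$ sends $[0:1:x:y]$ with $y^2=x^3$, $(x,y)\neq(0,0)$, to $[0:1:\lambda^{10}x:\lambda^{15}y]$. Parametrizing $B^0\setminus\{0\}$ by $x=u^2$, $y=u^3$ with $u\neq0$, the action becomes $u\mapsto \lambda^5 u$, which is transitive on $k^*$. Hence all centers of case (1) lie in a single $\Aut(\widetilde X)$-orbit, so all resulting blow-ups are isomorphic.

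For uniqueness in (2), I need transitivity of $\Aut_{\widetilde X}(k)$ on the set of points of $X$ with $s\neq0$ and $y\neq0$, which is exactly the set of admissible centers of type ${\rm II}$ after discarding the cusps. In the chart $s=1$ the action reads $[1:t:x:y]\mapsto[1:\lambda^{-6}t-a^3:\lambda^{-2}x+a:\lambda^{-3}y]$. Given $(t_0,x_0,y_0)$ with $y_0\neq0$, I first choose $\lambda$ so that $\lambda^{-3}y_0=1$. Then I choose $a=-\lambda^{-2}x_0$ to move $x$ to $0$. The equation $y^2=x^3+t$ then forces the new $t$ to be $1$. So every such point is moved to $[1:1:0:1]$, which gives a single orbit and hence a unique $\widetilde Z$. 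The dimension count is consistent with this: the group is $2$-dimensional, the stabilizer is finite ($\mu_3$), and the set of admissible points is $2$-dimensional.

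The main obstacle is correctly identifying the multiple fiber and the reducible fibers in case (2), in particular the claim that ${\rm II}^*$ survives as a non-multiple fiber. The torsion and stabilizer claims themselves are handled by Proposition \ref{prop: 1I char3 fixed points revised}, and the transitivity computations above are routine once the explicit action is written down.
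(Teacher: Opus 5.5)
Your argument for the statement is correct and is essentially the paper's proof. The non-uniqueness parts follow from Corollary~\ref{cor: approach for classification non-jacobian} and Proposition~\ref{prop: 1I char3 fixed points revised}. Uniqueness in (1) comes from transitivity of the $\mathbb G_m$-factor on $B^0\setminus\{0\}$. Uniqueness in (2) comes from moving $(t,x,y)$ to $(1,0,1)$ by choosing $\lambda^{-3}y=1$ and $a=-\lambda^{-2}x$.

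One side remark is false, although the corollary does not depend on it. In case (2) the center has $s\neq 0$, so it does not lie on $B=\{s=0,\ y^2=x^3\}$. Hence $B$ remains a $(-1)$-curve on $\widetilde Z$, and by adjunction it is a $3$-section rather than a fiber component. The $E_8$-configuration is instead completed to the non-multiple ${\rm II}^*$-fiber by a new $(-2)$-curve that is not visible as a negative curve on $\widetilde X$. This curve meets $E$ with intersection number $3$ and meets $B$ once (compare Discussion~\ref{discussion: 1I curves char3 revised}).

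Your conclusion that ${\rm II}^*$ is a non-multiple reducible fiber is right, but the reason you gave is wrong. Since the statement only asserts the multiple fiber $3{\rm II}$ in case (2), you can simply drop that remark.
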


\begin{proof}
Everything except the uniqueness statements follows by combining Corollary \ref{cor: approach for classification non-jacobian} with Proposition \ref{prop: 1I char3 fixed points revised}.

For (\hyperref[cor 1I uniqueness IIstar]{1}), the possible centers form $B^0\setminus\{0\}$, where $B$ denotes the strict transform on $\widetilde X$ of the curve $\{s=0,\ y^2=x^3\}\subset X$. On this locus, the $\mathbb G_m$-factor acts transitively, thus all corresponding blow-ups are isomorphic.

For (\hyperref[cor 1I uniqueness II]{2}), we work on the chart $s=1$. By Proposition \ref{prop: 1I char3 fixed points revised}, the admissible centers are the smooth points $(t,x,y)$ of the cuspidal curves $y^2=x^3+t$, that is, the points with $y\neq0$. They form a single $\mathbb G_a\rtimes\mathbb G_m$-orbit: choosing $\lambda$ such that $\lambda^{-3}y=1$ and then $a=-\lambda^{-2}x$ sends $(t,x,y)$ to $(1,0,1)$. Hence all corresponding blow-ups are isomorphic as well.
\end{proof}

\begin{Discussion} \label{discussion: 1I curves char3 revised}
The Jacobian surface $\widetilde Y$ associated with $\widetilde X$ is quasi-elliptic and has a unique reducible fiber of type ${\rm II}^*$. Indeed, for $y^2=x^3+s^5t$ the discriminant is identically zero, and the anti-canonical member over $s=0$ gives the only reducible fiber. Thus this fiber contributes the root lattice $E_8$, and the Mordell--Weil group is trivial.
The unique $(-1)$-curve on $\widetilde X$ is visible on the anti-canonical model as
$B=\{s=0,\ y^2=x^3\}$. Blowing up the base point on $B$ gives the zero-section of $\widetilde Y$, while the strict transform of $B$ becomes the affine component of the ${\rm II}^*$-fiber.

To determine the number and configuration of $(-2)$-curves on the non-Jacobian surface $\widetilde Z$, we treat the cases (\hyperref[cor 1I uniqueness IIstar]{1}) and (\hyperref[cor 1I uniqueness II]{2}) of Corollary \ref{cor: 1I char3 nonJac revised} separately.

\begin{enumerate}
\item[(1)]\label{discussion 1I multiple IIstar}
By the proof of Proposition \ref{prop: 1I char3 fixed points revised}(\ref{prop 1I IIstar revised}), the point $\widetilde P$ lies on the unique $(-1)$-curve $B$ contained in the anti-canonical member of type ${\rm II}^*$. By Corollary \ref{cor: 1I char3 nonJac revised}(\hyperref[cor 1I uniqueness IIstar]{1}), all such choices lead to isomorphic surfaces.

After blowing up such a point, the strict transform of $B$ becomes a $(-2)$-curve on $\widetilde Z$. Together with the old $E_8$-configuration, it forms the reduced multiple fiber of type ${\rm II}^*$. Since $\widetilde P$ has exact order $3$, the multiple fiber is $3{\rm II}^*$. The rank bound of Lemma \ref{lemma (-2)curves on Ztilde} is attained by this ${\rm II}^*$-fiber, hence there are no further reducible fibers.

The situation is summarized in Figure \ref{figure 1I multiple IIstar configs jaco non-jaco} and Corollary \ref{cor: 1I (-2)configs char3}(\ref{cor 1I configs multiple IIstar}). The red exceptional curve $E$ is a $(-1)$-curve and hence, by adjunction, a $3$-section, in accordance
with the intersection behavior shown in the figure.

Moreover, the surface obtained in this case is not isomorphic to any of
the surfaces of Corollary \ref{cor: 1H char3 nonJac revised}. Indeed,
the same lattice argument as in the proof of that corollary applies here
and shows that $E$ is the unique $(-1)$-curve on $\widetilde Z$. Hence any
isomorphism from $\widetilde Z$ to a surface arising from case
\hyperref[Tab1H]{$1H$} would therefore descend to an isomorphism between weak
del Pezzo surfaces of types \hyperref[Tab1I]{$1I$} and
\hyperref[Tab1H]{$1H$}, which is impossible (for example, by  their connected automorphism schemes being different).
\end{enumerate}

\begin{enumerate}
    \item[] \begin{Remark}
\label{rem: 1H elliptic versus 1I quasi-elliptic}
The distinction from the surfaces obtained in case
\hyperref[Tab1H]{$1H$} can also be seen directly from the geometric
generic fibers, namely every surface of Corollary
\ref{cor: 1H char3 nonJac revised} is elliptic, whereas the unique
surface in Corollary
\ref{cor: 1I char3 nonJac revised}
(\hyperref[cor 1I uniqueness IIstar]{1}) is quasi-elliptic.

Indeed, write the image of the blow-up center on the anti-canonical
model as $P_u=[0:1:u^2:u^3], u\in k^\times$.
The Halphen pencil on the resulting surface is induced on $X$ by the
two sections $s^3$ and $y-u^3t^3$
of $\mathcal O_X(3)$. Thus, if $\tau$ denotes the parameter of the pencil, its generic member is given by
\[
y-u^3t^3=\tau s^3.
\]
\begin{itemize}
    \item
First consider a surface arising from case
\hyperref[Tab1H]{$1H$}, whose anti-canonical model has equation
$y^2=x^3+s^4x+s^3t^3$.
Away from $s=0$, substitution of
$y=u^3t^3+\tau s^3$ gives an equation whose derivative with respect to
$x$ is $s^4$, and hence the generic member is smooth there. On the
chart $t=1$, set $\xi=x-u^2$ and $\eta=y-u^3$.
The equation becomes
\[
2u^3\eta+\eta^2
 =
\xi^3+s^3+u^2s^4+s^4\xi.
\]

Substituting $\eta=\tau s^3$ and using the blow-up chart
$\xi=sv$, the strict transform of the generic member is given by
\[
v^3+1-2u^3\tau+u^2s+s^2v-\tau^2s^3=0.
\]
For $s\neq0$, its derivative with respect to $v$ is $s^2\neq0$,
whereas along the exceptional divisor $s=0$, its derivative with
respect to $s$ is $u^2\neq0$. Hence, the geometric generic fiber is smooth, so every surface
obtained from case \hyperref[Tab1H]{$1H$} is elliptic.

\item
For case \hyperref[Tab1I]{$1I$}, the anti-canonical model is instead $y^2=x^3+s^5t$.
Away from $s=0$, after the same substitution, the derivative of the
resulting equation with respect to $t$ is $s^5$, so the generic member is again smooth there. On the chart $t=1$, the local equation at $P_u$
is
\[
2u^3\eta+\eta^2=\xi^3+s^5.
\]
After substituting $\eta=\tau s^3$ and passing to the blow-up chart
$\xi=sv$, the strict transform is given by
\[
v^3-2u^3\tau+s^2-\tau^2s^3=0.
\]
Over $\overline{k(\tau)}$, choose $v_0$ such that $v_0^3=2u^3\tau$ and set $w=v-v_0$. Since the characteristic is $3$, the equation then
becomes
\[
w^3+s^2-\tau^2s^3
=
w^3+s^2(1-\tau^2s)
=0.
\]
As $1-\tau^2s$ is a unit and $2$ is invertible, a formal change of the
$s$-coordinate transforms this equation into $w^3+(s')^2=0$.
Thus the geometric generic fiber has a cusp and is therefore a
cuspidal rational curve. Hence the unique surface obtained in case
\hyperref[Tab1I]{$1I$}
(\hyperref[cor 1I uniqueness IIstar]{1}) is quasi-elliptic.
\end{itemize}
\end{Remark}
\end{enumerate}

\begin{table}[H]
\begin{adjustbox}{center}
$
\begin{array}{ccccc}
 \begin{array}{c} \addstackgap[2pt]{\includegraphics[width=0.22\textwidth]{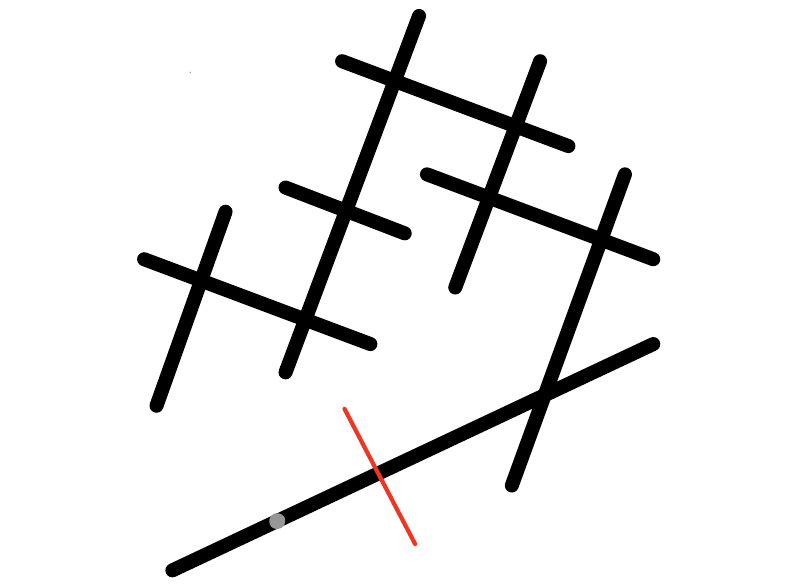} }\end{array}
 & \rightarrow
  & \begin{array}{c}\addstackgap[2pt]{\includegraphics[width=0.22\textwidth]{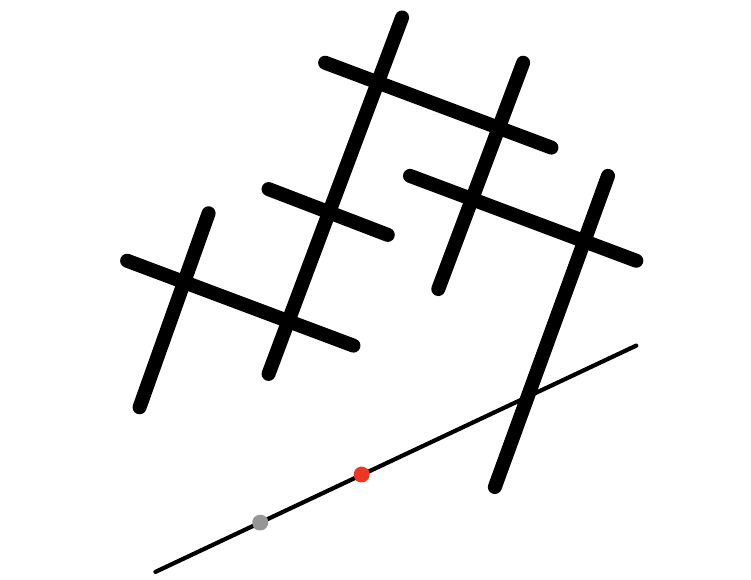} }\end{array}
  & \leftarrow
  & \begin{array}{c}\addstackgap[2pt]{\includegraphics[width=0.22\textwidth]{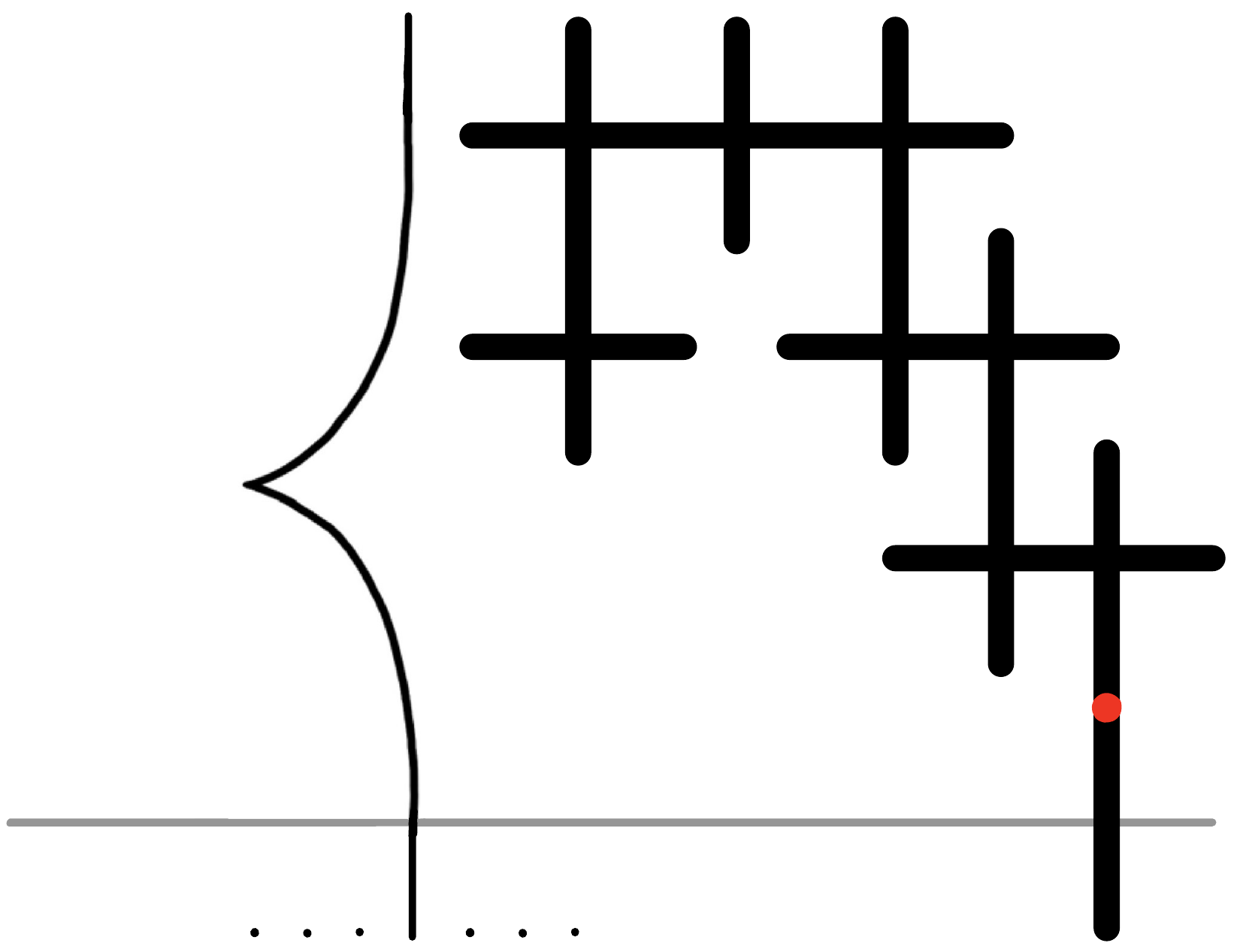} }\end{array}
  \\
  &
  & \downarrow
  &
  & \downarrow
  \\
  &
  & \begin{array}{c}\addstackgap[2pt]{\includegraphics[width=0.22\textwidth]{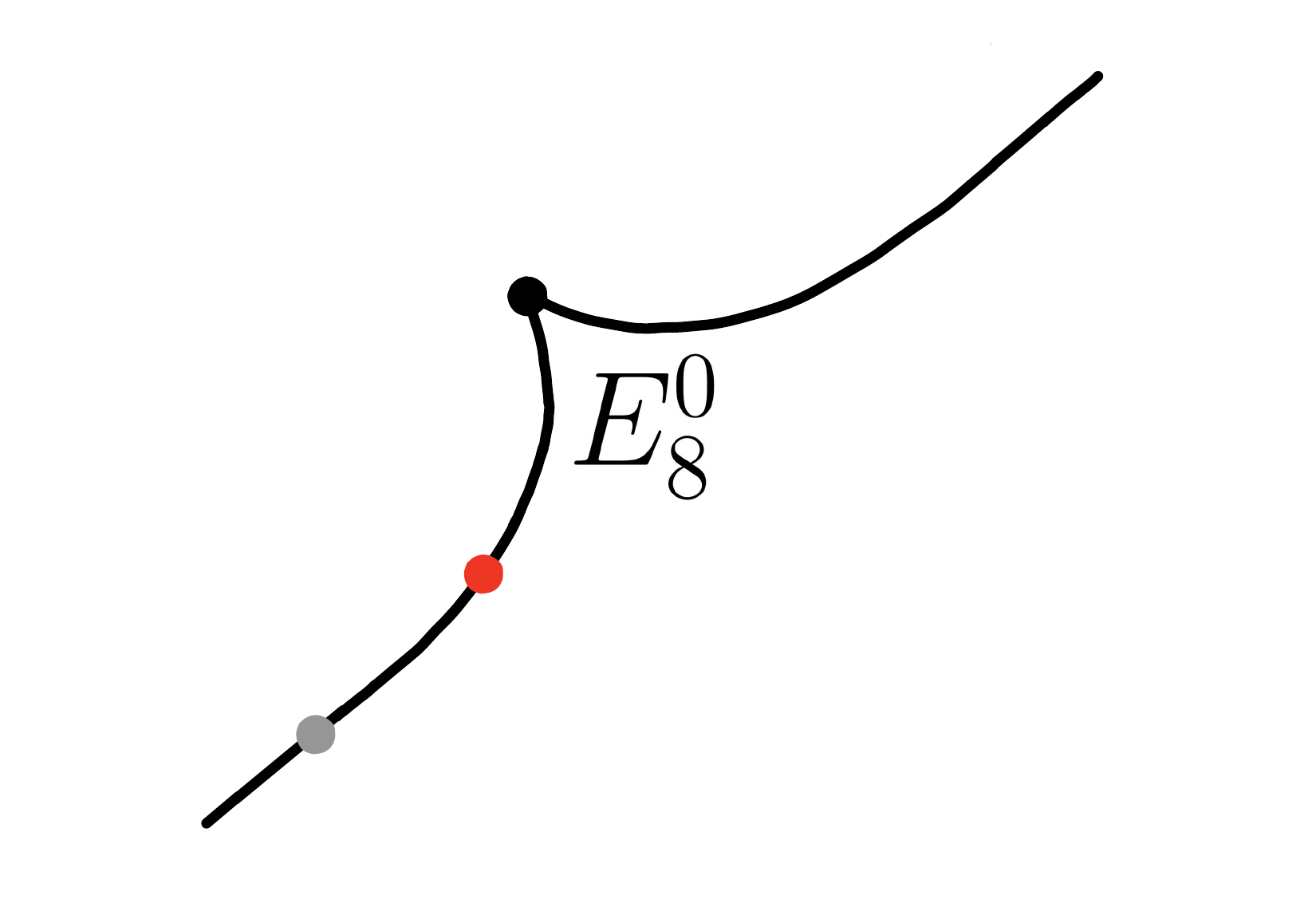}} \end{array}
  & \leftarrow
  & \begin{array}{c}\addstackgap[2pt]{ \includegraphics[width=0.22\textwidth]{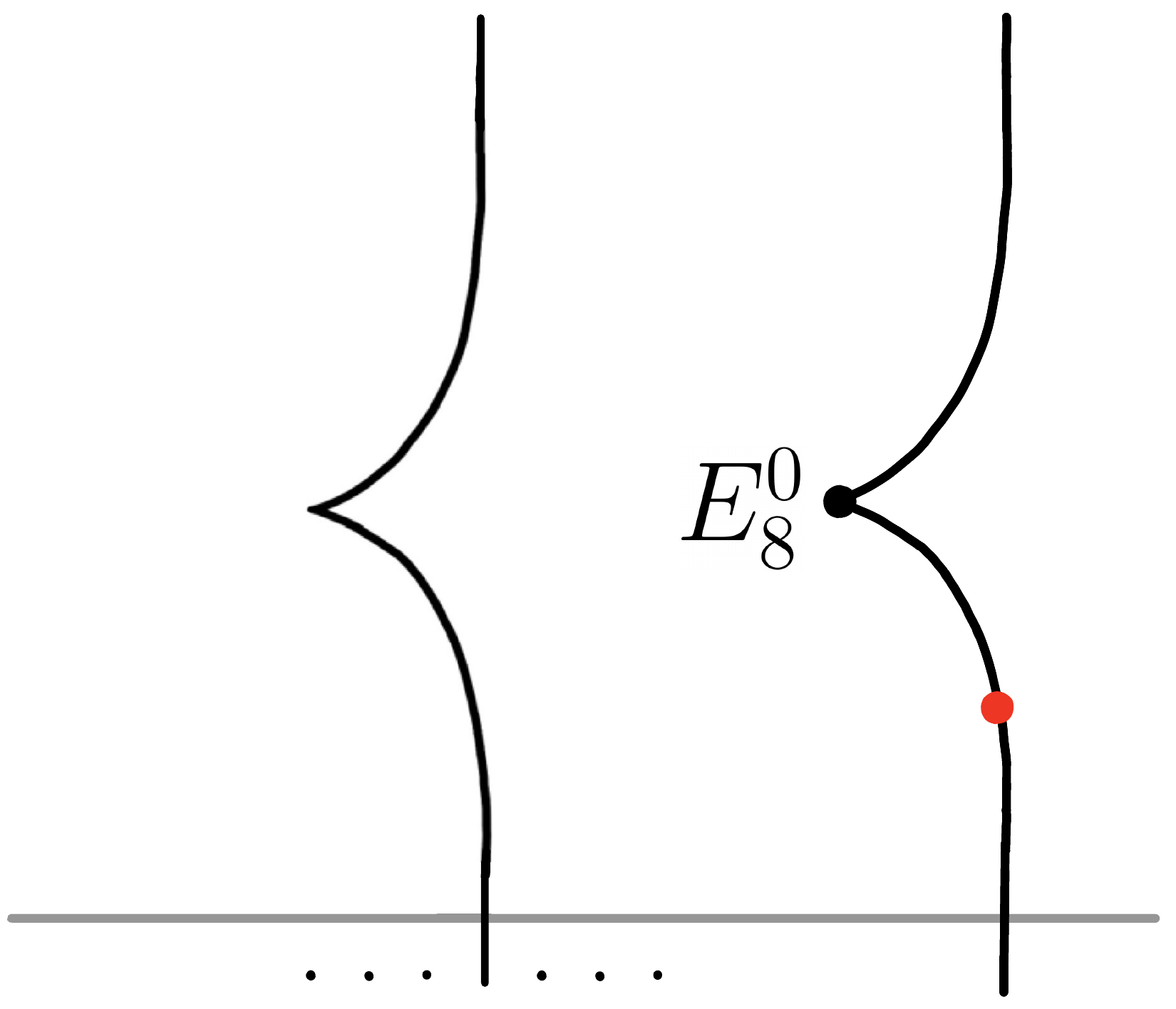}}  \end{array}
\end{array}
$
\captionof{figure}{Non-Jacobian and Jacobian fibrations with global vector fields originating from $\widetilde X$ of type \hyperref[Tab1I]{$1I$}, where the center lies on the $(-1)$-component of the ${\rm II}^*$-member}
\label{figure 1I multiple IIstar configs jaco non-jaco}
\end{adjustbox}
\end{table}

\begin{enumerate}
\item[(2)]\label{discussion 1I multiple II}
By the proof of Proposition \ref{prop: 1I char3 fixed points revised}(\ref{prop 1I II revised}), the point $\widetilde P$ is a smooth point of an irreducible cuspidal anti-canonical curve of type ${\rm II}$. By Corollary \ref{cor: 1I char3 nonJac revised}(\hyperref[cor 1I uniqueness II]{2}), we may choose the point corresponding to $[1:1:0:1]$ on $X$.

After blowing up this point, the strict transform of the type ${\rm II}$ curve becomes the reduced multiple fiber, so the multiple fiber is $3{\rm II}$. The old $E_8$-configuration remains vertical on $\widetilde Z$ and, being connected, is contained in a single fiber. The Kodaira--N{\'e}ron classification shows that the only fiber type containing an $E_8$-configuration of $(-2)$-curves is ${\rm II}^*$. So there has to be one further $(-2)$-component, not visible as a negative curve on $\widetilde X$ completing ${\rm II}^*$, and since it attains the rank bound of Lemma \ref{lemma (-2)curves on Ztilde}, hence there are no further reducible fibers. 

In order to understand how this new $(-2)$-component on $\widetilde Z$ (drawn then in purple in Figure
\ref{figure 1I multiple II configs jaco non-jaco} below) intersects the exceptional divisor $E$ of the blow-up of $\widetilde P$ and the strict
transform $B$ of the unique $(-1)$-curve on
$\widetilde X$, let us observe the following.
\end{enumerate}

\begin{enumerate}
\item[] \emph{This $\widetilde Z$ is not ``new'':}
By adjunction, both $E$ and $B$ are $3$-sections. Since the center
$\widetilde P$ does not lie on the old $E_8$-configuration, the exceptional
divisor $E$ is disjoint from its strict transform. The multiplicities of
the components of the ${\rm II}^*$-fiber therefore show that $E$ meets the
new $(-2)$-component with total intersection multiplicity $3$.
Moreover, the component of the old $E_8$-configuration met by $B$ has
multiplicity $2$ in the ${\rm II}^*$-fiber. Since $B$ is a $3$-section,
it follows that $B$ has one additional intersection, of multiplicity $1$,
with the new $(-2)$-component, even though at this point we do not yet know
the precise position of this intersection.

Nevertheless, contracting $B$ yields another weak del Pezzo surface of
degree $1$ with an \linebreak $E_7$-configuration of $(-2)$-curves and, by
Blanchard's Lemma, still with global vector fields. By the classification in
\cite[Table 6]{WeakDelPezzoGlobalVectorFields}, this is the unique surface
of type \hyperref[Tab1F]{$1F$}. Since $\widetilde Z$ has the multiple
fiber $3{\rm II}$ by Corollary
\ref{cor: 1I char3 nonJac revised}(\hyperref[cor 1I uniqueness II]{2}),
Corollary
\ref{cor: 1F char3 nonJac revised}(\hyperref[cor 1F uniqueness(1) solo]{1})
shows that $\widetilde Z$ is the unique surface in the first subcase of
case \hyperref[Tab1F]{$1F$}. By comparison with Figure
\ref{figure 1F multiple II configs jaco non-jaco}, we see that the
intersection behavior summarized in Figure
\ref{figure 1I multiple II configs jaco non-jaco} below is correct.
\end{enumerate}

\begin{table}[H]
\begin{adjustbox}{center}
$
\begin{array}{ccccc}
 \begin{array}{c} \addstackgap[2pt]{\includegraphics[width=0.22\textwidth]{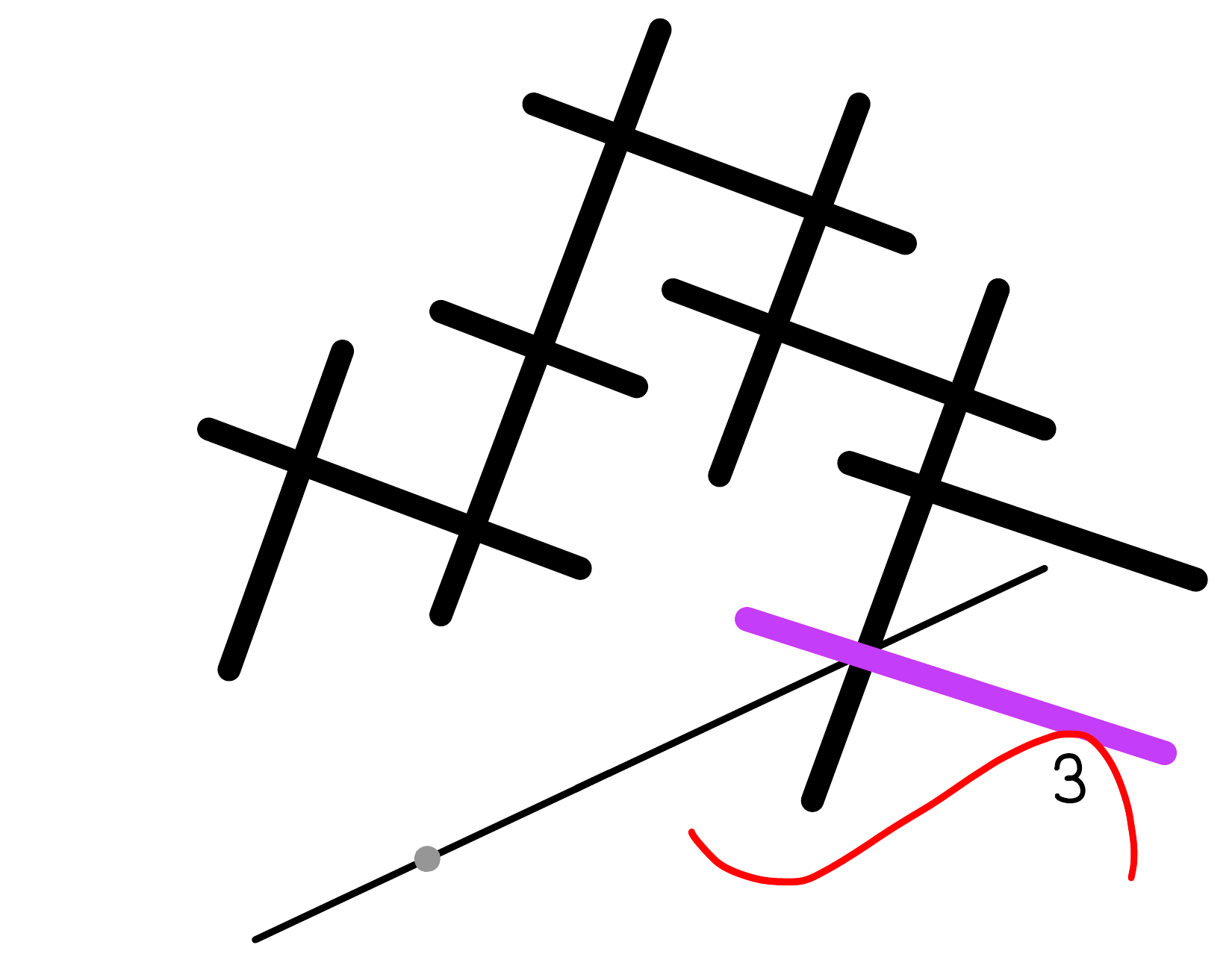} }\end{array}
 & \rightarrow
  & \begin{array}{c}\addstackgap[2pt]{\includegraphics[width=0.22\textwidth]{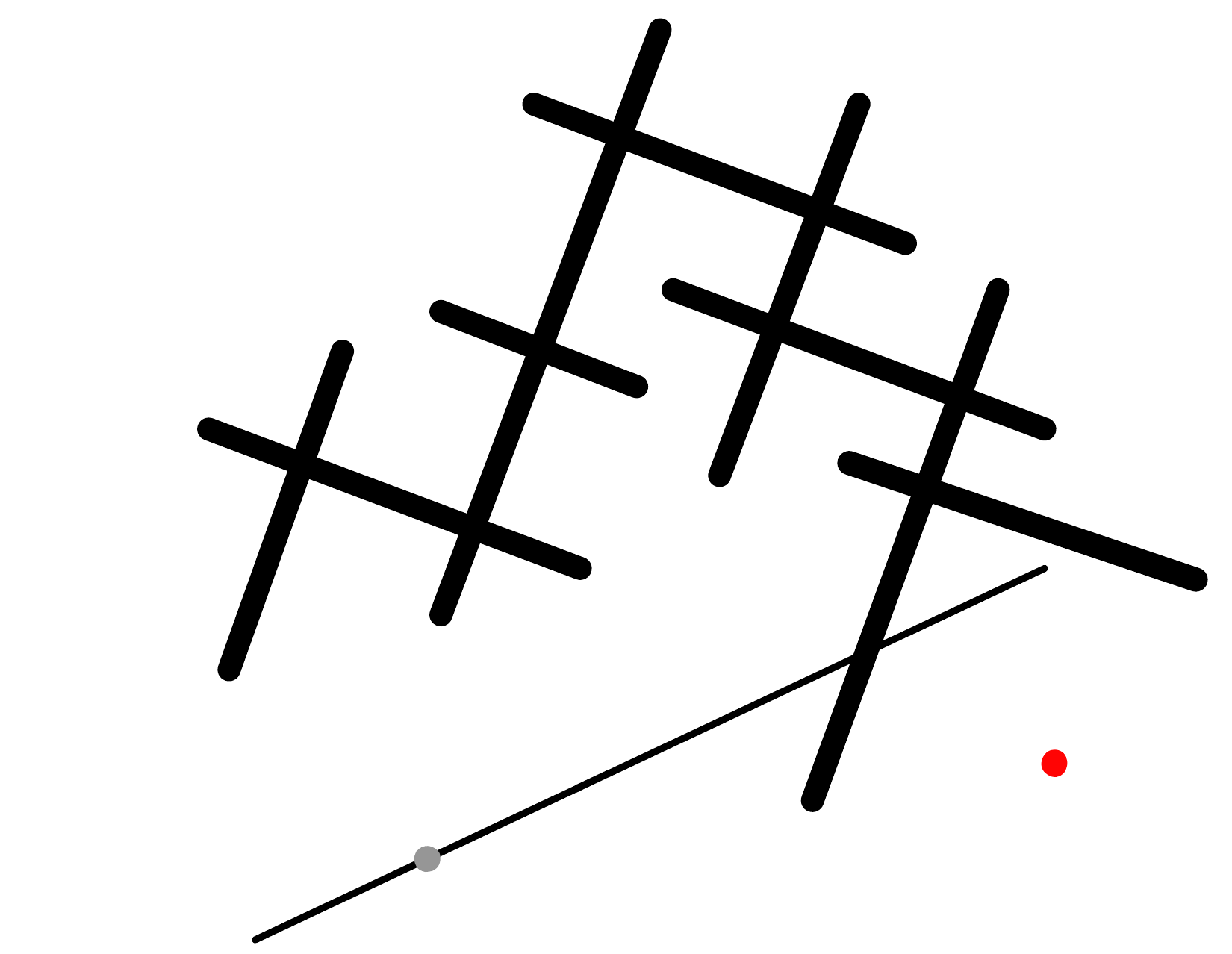} }\end{array}
  & \leftarrow
  & \begin{array}{c}\addstackgap[2pt]{\includegraphics[width=0.22\textwidth]{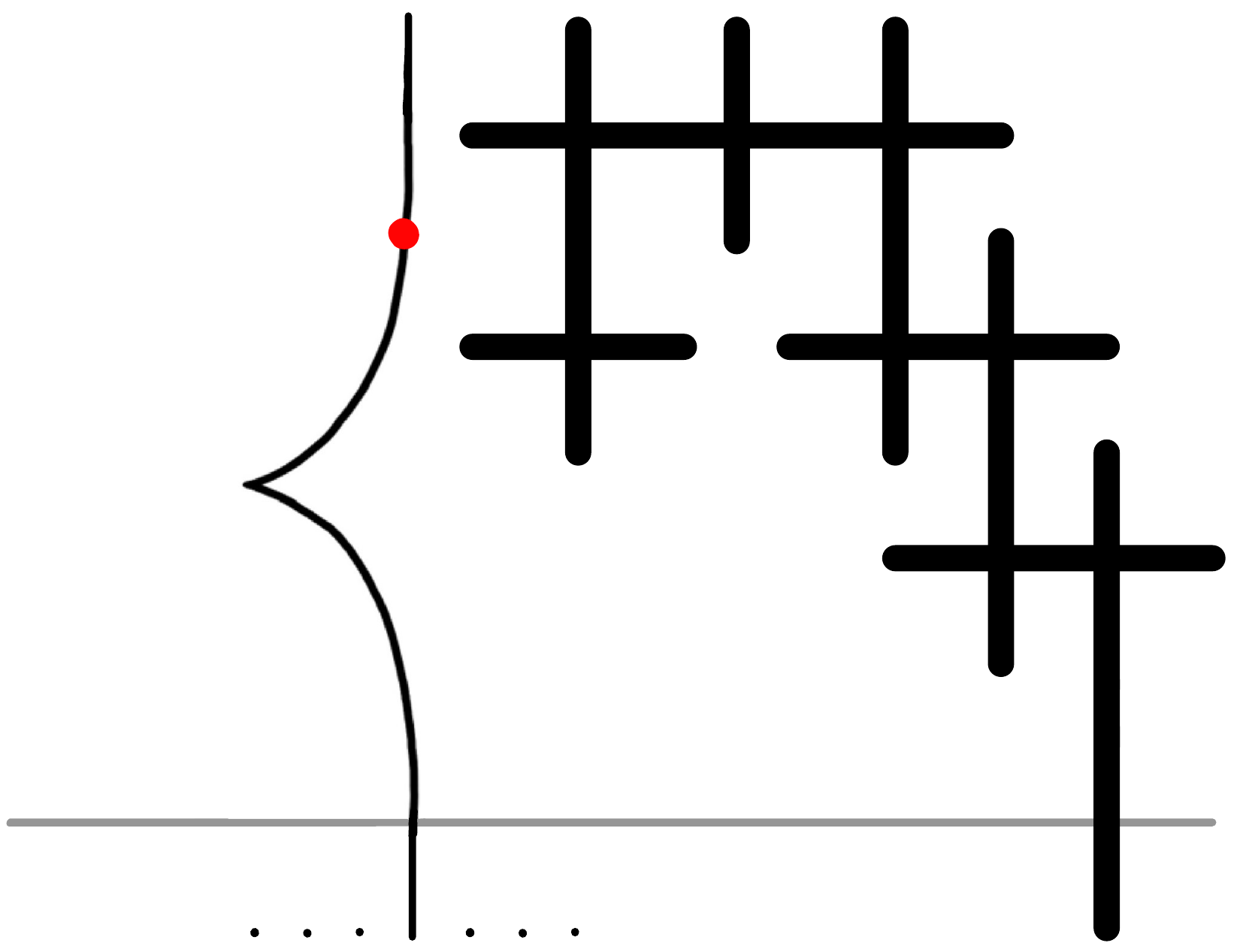} }\end{array}
  \\
  &
  & \downarrow
  &
  & \downarrow
  \\
  &
  & \begin{array}{c}\addstackgap[2pt]{\includegraphics[width=0.22\textwidth]{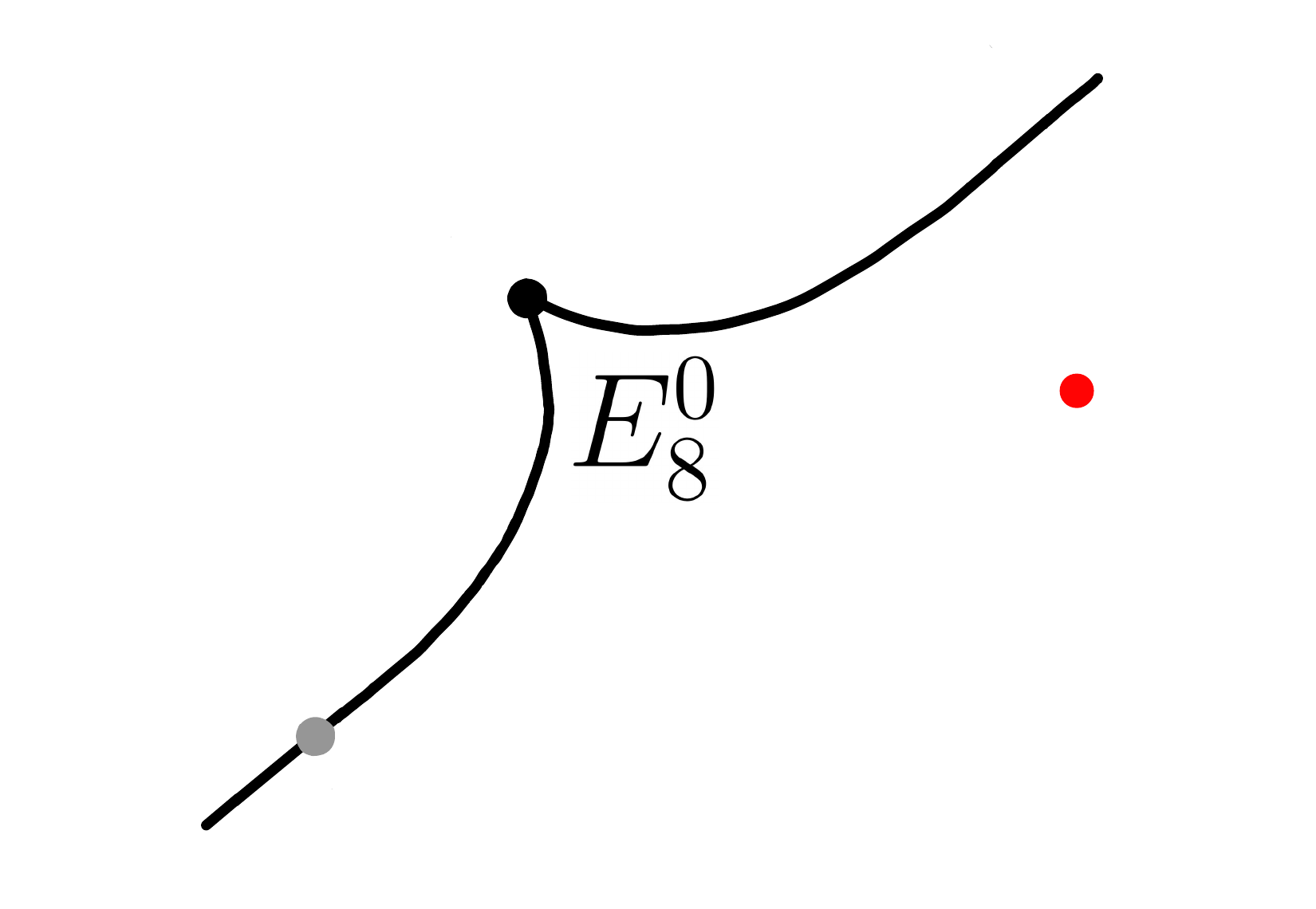}} \end{array}
  & \leftarrow
  & \begin{array}{c}\addstackgap[2pt]{ \includegraphics[width=0.22\textwidth]{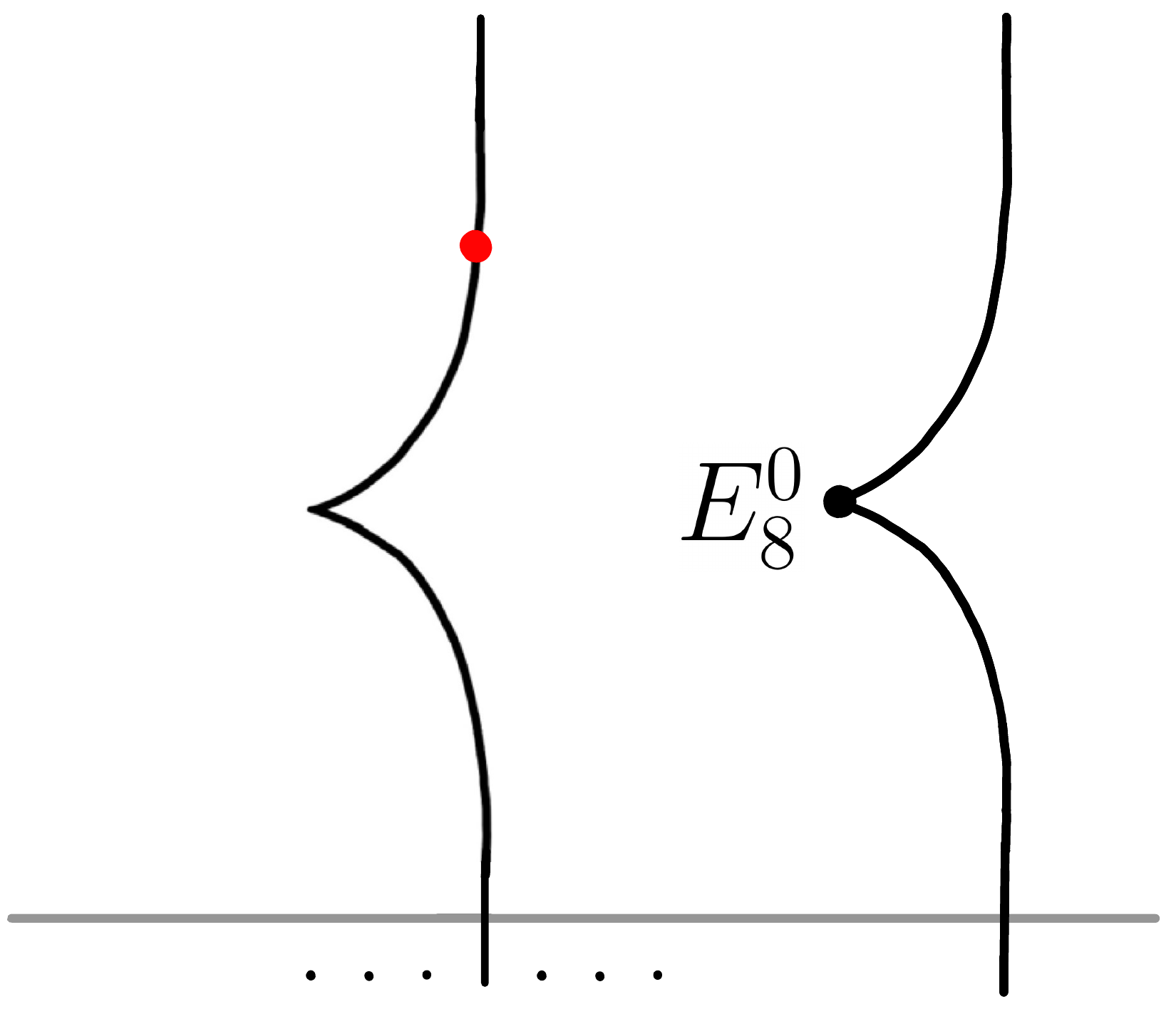}}  \end{array}
\end{array}
$
\captionof{figure}{Non-Jacobian and Jacobian fibrations with global vector fields originating from $\widetilde X$ of type \hyperref[Tab1I]{$1I$}, where the center lies on a type ${\rm II}$ curve}
\label{figure 1I multiple II configs jaco non-jaco}
\end{adjustbox}
\end{table}
\end{Discussion}

\begin{Corollary} \label{cor: 1I (-2)configs char3}
Let $\widetilde Z$ be one of the surfaces of Corollary \ref{cor: 1I char3 nonJac revised}. Then one of the following holds.
\begin{enumerate}
\item\label{cor 1I configs multiple IIstar}
$\widetilde Z$ contains nine $(-2)$-curves with dual graph of type $\widetilde E_8$ forming configuration ${\rm II}^*$. Moreover, this ${\rm II}^*$-fiber is the unique multiple fiber and has multiplicity $3$.

\item\label{cor 1I configs multiple II}
$\widetilde Z$ contains nine $(-2)$-curves with dual graph of type $\widetilde E_8$ forming configuration ${\rm II}^*$. Moreover, the unique multiple fiber is $3{\rm II}$. Furthermore, $\widetilde Z$ is isomorphic to the unique surface in Corollaries \ref{cor: 1F char3 nonJac revised}(\hyperref[cor 1F uniqueness(1) solo]{1}) and \ref{cor: 1F (-2)configs char3}(\ref{cor 1F configs multiple II}), obtained from a weak del Pezzo surface of type \hyperref[Tab1F]{$1F$}.
\end{enumerate}
\end{Corollary}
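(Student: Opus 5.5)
This corollary collects what Proposition \ref{prop: 1I char3 fixed points revised}, Corollary \ref{cor: 1I char3 nonJac revised} and Discussion \ref{discussion: 1I curves char3 revised} establish, so the plan is to assemble those pieces in the order of the two subcases.

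\emph{Case (\ref{cor 1I configs multiple IIstar}).} By Proposition \ref{prop: 1I char3 fixed points revised}(\ref{prop 1I IIstar revised}), the center $\widetilde P$ is a non-zero point of $B^0\cong\mathbb G_a$, where $B$ is the unique $(-1)$-curve on $\widetilde X$. Since $p=3$, this point has exact order $3$. Blowing it up turns the strict transform of $B$ into a $(-2)$-curve, and this curve is adjacent to the old $E_8$-configuration. The nine curves therefore form an $\widetilde E_8$ dual graph, which is the support of the anti-canonical member. Its strict transform is the reduced fiber $\overline F$, and $F=3\overline F$ by Proposition \ref{prop: torsionpoint}. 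This fiber is the unique multiple fiber, by the discussion of the canonical bundle formula. Lemma \ref{lemma (-2)curves on Ztilde} gives $\sum(e_P-1)\le 8$, and this fiber already attains the bound. Hence there are no further $(-2)$-curves, and the count is exactly nine.

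\emph{Case (\ref{cor 1I configs multiple II}).} Here $\widetilde P$ lies on an irreducible cuspidal member, so the multiple fiber is $3{\rm II}$. The $E_8$-configuration on $\widetilde X$ is untouched by the blow-up and, being connected and vertical, lies in a single fiber. By Kodaira--N\'eron, a fiber containing $E_8$ must be of type ${\rm II}^*$. This forces exactly one extra $(-2)$-component, and Lemma \ref{lemma (-2)curves on Ztilde} then excludes any other reducible fiber. For the identification, I follow the argument "not new" in the Discussion. Multiplicities in ${\rm II}^*$ together with the fact that $B$ is a $3$-section show that $B$ meets the multiplicity-$2$ component and the new component once. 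Contracting $B$ then gives a degree $1$ weak del Pezzo surface with an $E_7$-configuration. By Blanchard's Lemma (Lemma \ref{lemma Blanchard generalities}) this surface still has global vector fields. By \cite[Table 6]{WeakDelPezzoGlobalVectorFields} it is of type $1F$. Corollary \ref{cor: 1F char3 nonJac revised}(1) then identifies $\widetilde Z$ with the unique surface having multiple fiber $3{\rm II}$ in that case.

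I expect the main obstacle to be the step in case (\ref{cor 1I configs multiple II}) where $B$ is contracted. One must make sure that the resulting $(-2)$-configuration is exactly $E_7$, with no extra $A_1$ arising from the new component. This follows because $B$ meets the new component transversally once, so that component becomes a curve of self-intersection $-1$ rather than remaining $-2$. I would check this directly from the intersection numbers.
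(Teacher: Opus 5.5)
Your proposal is correct and follows essentially the same route as the paper, whose argument is carried out in Discussion \ref{discussion: 1I curves char3 revised}. In both subcases you use the rank bound of Lemma \ref{lemma (-2)curves on Ztilde}. In the second subcase you then show, by a $3$-section count, that $B$ meets the new component once, contract $B$, and apply Blanchard's Lemma, \cite[Table 6]{WeakDelPezzoGlobalVectorFields} and Corollary \ref{cor: 1F char3 nonJac revised}(1). Your explicit check that the new component becomes a $(-1)$-curve after contracting $B$ is correct, and it is exactly what separates type $1F$ ($E_7$) from type $1C$ ($E_7+A_1$), a point the paper leaves implicit.
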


\hfill $\blacksquare$

\newpage

\bibliographystyle{alpha}
\bibliography{quellipt}

\end{document}